\DeclareMathAlphabet{\mathsf}{OT1}{\sfdefault}{m}{n}
\SetMathAlphabet{\mathsf}{bold}{OT1}{\sfdefault}{b}{n}
\numberwithin{equation}{section}
\definecolor{WIMgreen}{RGB}{60 134 132}
\definecolor{UMblue}{RGB}{4 47 86}
\definecolor{myteal}{RGB}{0 123 137}
\definecolor{material_green}{RGB}{27 43 52}
\definecolor{dracula_pink}{RGB}{180 93 149}
\definecolor{dracula_blue}{RGB}{40 42 54}
\definecolor{dracula_turq}{RGB}{92 143 159}
\definecolor{dracula_orange}{RGB}{255 184 108}
\definecolor{material_petrol}{RGB}{2 119 189}
\definecolor{Purple}{RGB}{103 58 183}
\theoremstyle{plain}
\newtheorem{theorem}{Theorem}[section]
\newtheorem*{theorem*}{Theorem}
\newtheorem{proposition}[theorem]{Proposition}
\newtheorem{lemma}[theorem]{Lemma}
\newtheorem{corollary}[theorem]{Corollary}
\theoremstyle{definition}
\newtheorem{definition}[theorem]{Definition}
\theoremstyle{remark}
\newtheorem{remark}[theorem]{Remark}
\newtheorem{example}[theorem]{Example}
\def\C{\mathbb C}
\def\CC{\mathbb C}
\def\E{\mathbb{E}}
\def\N{\mathbb{N}}
\def\N{\mathbb{N}}
\def\R{\mathbb{R}}
\definecolor{darkred}{rgb}{0,0.6,0}
\newcommand{\PP}{\mathbb{P}}
\renewcommand{\Re}{\operatorname{Re}}
\renewcommand{\hat}{\widehat}
\renewcommand{\tilde}{\widetilde}%
\newcommand{\overbar}[1]{\mkern 1.5mu\overline{\mkern-1.5mu#1\mkern-1.5mu}\mkern 1.5mu}
\newcommand{\underbars}[1]{\mkern 1.5mu\underline{\mkern-1.5mu#1\mkern-1.5mu}\mkern 1.5mu}
\newcommand\iu{\mathrm{i}}
\newcommand*\diff{\mathop{}\!\mathrm{d}}
\newcommand{\one}{\mathbf{1}}
\newcommand{\vertiii}[1]{{\left\vert\kern-0.25ex\left\vert\kern-0.25ex\left\vert #1
\right\vert\kern-0.25ex\right\vert\kern-0.25ex\right\vert}}
\let\originalleft\left
\let\originalright\right
\renewcommand{\left}{\mathopen{}\mathclose\bgroup\originalleft}
\renewcommand{\right}{\aftergroup\egroup\originalright}
\newcommand{\bbGamma}{{\mathpalette\makebbGamma\relax}}
\newcommand{\makebbGamma}[2]{%
  \raisebox{\depth}{\scalebox{1}[-1]{$\mathsurround=0pt#1\mathds{L}$}}%
}
\newcommand\scalemath[2]{\scalebox{#1}{\mbox{\ensuremath{\displaystyle #2}}}}
\newcommand{\specificthanks}[1]{\@fnsymbol{#1}}
\newcommand\blfootnote[1]{%
  \begingroup
  \renewcommand\thefootnote{}\footnote{#1}%
  \addtocounter{footnote}{-1}%
  \endgroup
}
\title{\fontsize{16}{19} \selectfont Markov additive friendships}
\author{Leif Döring\thanks{University of Mannheim, Institute of Mathematics, B6
26, 68159 Mannheim, Germany.\\
Email: \href{mailto:doering@uni-mannheim.de}{doering@uni-mannheim.de}}
\and Lukas Trottner\thanks{Aarhus University, Department of Mathematics, Ny Mungegade 118, 8000 Aarhus C, Denmark. \\ 
Email: \href{mailto:trottner@math.au.dk}{trottner@math.au.dk}} \and Alexander R. Watson\thanks{University
College London, UK. Email:
\href{mailto:alexander.watson@ucl.ac.uk}{alexander.watson@ucl.ac.uk}}}
\date{\today}
\begin{document}
\maketitle

\begin{abstract}
  The Wiener--Hopf factorisation of a Lévy or Markov additive process describes
  the way that it attains new maxima and minima in terms of a pair of so-called
  ladder height processes. Vigon's theory of friendship for Lévy processes addresses the
  inverse problem: when does a process exist which has certain prescribed
  ladder height processes? We give a complete answer to this problem for Markov additive
  processes, provide simpler sufficient conditions for constructing processes
  using friendship, and address in part the question of the uniqueness of the
  Wiener--Hopf factorisation for Markov additive processes.%
  \blfootnote{\textit{2020 MSC}: 60G51, 60J25, 47A68.}
\end{abstract}

\section{Introduction}

Lévy processes and Markov additive processes are a staple of applied
probability and have found a home in areas as diverse as queueing theory,
stochastic finance and fragmentation modelling. For many applications, it is
beneficial to know how the processes make new maxima (or minima), and a key
tool for Lévy processes is the theory of friendship, which makes it possible to
build models which cross levels in a prescribed way. In this article, we
approach the theory of friendship for Markov additive processes, beginning with
a review of the situation for Lévy processes, before exploring how this changes
with the introduction of a Markov component.

The Wiener--Hopf factorisation is one of the central results in fluctuation
theory for Lévy processes. Its spatial version tells us that, for a Lévy
process $\xi$ with characteristic exponent 
$\psi(\theta) \coloneqq \log \E[\mathrm{e}^{\mathrm{i}\theta \xi_1}]$, we have the identity
\begin{equation} \label{eq: wh levy}
  \psi(\theta) = -c\psi^-(-\theta)\psi^+(\theta), \quad \theta \in \R,
\end{equation}
where the functions $\psi^\pm$ are the characteristic exponents of the
ascending and descending ladder height processes $H^\pm$.  The processes $H^+$
and $H^-$ are (killed) subordinators whose ranges are the set of new suprema
and infima of $\xi$, respectively, and $c>0$ is a constant whose value
influences the time scale of $H^\pm$.  We refer to \cite[Chapter
6]{kyprianou2014} for details. $H^+$ and $H^-$ are of central importance for
both theoretical and practical considerations since they are the building block
for first passage identities of $\xi$ \cite{DoneyKyprianou2006}.  However, for
any given Lévy process $\xi$, the factorisation \eqref{eq: wh levy}, and the
identities derived from it, are in most cases not explicit.  Vigon flipped the
perspective on the Wiener--Hopf factorisation in his pioneering thesis
\cite{vigondiss}. Instead of considering some fixed Lévy process $\xi$ and
looking for its Wiener--Hopf factors, Vigon starts with two subordinators $H^+$
and $H^-$ and finds necessary and sufficient criteria on their characteristics
such that the right hand side of \eqref{eq: wh levy} determines the
Lévy--Khintchine exponent of some Lévy process $\xi$. Vigon calls such
subordinators \textit{friends} (\textit{amis}) and refers to the resulting Lévy
process $\xi$ as \textit{le fruit de l'amitié}, which we translate (more
conservatively) as the \textit{bonding process}. 

We recall his results explicitly at this point. Let $H^+$ and $H^-$ be two Lévy
subordinators with drifts and Lévy measures $(d^+,\Pi^+)$ and $(d^-,\Pi^-)$,
respectively. Let moreover  $\dagger^\pm = -\psi^\pm(0)$ be their respective
killing rates and denote by $\overbar{\Pi}{}^\pm(x) = \Pi^\pm(x,\infty)$ the
tails of $\Pi^\pm$, where $x > 0$. We call $H^+$ and $H^-$ \textit{compatible}
if $d^{\mp} > 0$  implies that $\Pi^\pm$ has a càdlàg Lebesgue density
$\uppartial \Pi^\pm$  on $(0,\infty)$, which can be expressed as the right tail
of a signed measure $\eta^\pm$, i.e., $\uppartial \Pi^\pm(x) =
\eta^\pm(x,\infty)$ for $x > 0$. For the sake of notational consistency, if
$d^\mp = 0$  we let  $\uppartial \Pi^\pm$ be some version of the Lebesgue
density of the absolutely continuous part of $\Pi^+_i$. Then, we have the
following result.

\begin{theorem}[Vigon's theorem of friends] \label{theo: friends levy}
  $H^+$ and $H^-$ are friends if, and only if, they are compatible and the function 
  \begin{align*}
    \Upsilon(x) &= \one_{(0,\infty)}(x) \Big(\int_{x+}^\infty \big(\overbar{\Pi}^-(y-x) - \psi^-(0)\big) \, \Pi^+(\diff{y}) + d^- \uppartial \Pi^+(x) \Big) \\
    &\quad + \one_{(-\infty,0)}(x) \Big(\int_{(-x)+}^\infty \big(\overbar{\Pi}^+(y+x) -\psi^+(0)\big) \, \Pi^-(\diff{y}) + d^+ \uppartial \Pi^-(-x) \Big), \quad x \in \R,
  \end{align*}
  is a.e.\ equal to a function decreasing on $(0,\infty)$ and increasing on $(-\infty,0)$. Moreover, when $H^+$ and $H^-$ are friends, we have for a.e.\ $x \in \R$ the identity
  \begin{equation} \label{eq: ami levy}
    \one_{(0,\infty)}(x)\Pi(x,\infty) + \one_{(-\infty,0)}(x)\Pi(-\infty,x) = \Upsilon(x),
  \end{equation}
  for the Lévy measure $\Pi$ of the bonding Lévy $\xi$.
\end{theorem}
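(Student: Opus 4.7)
My plan is to turn the multiplicative Wiener--Hopf identity $\psi(\theta) = -c\psi^-(-\theta)\psi^+(\theta)$ into an additive identity at the level of Lévy measure tails and drift/killing coefficients, and then read off that $\Upsilon$ must be exactly the two-sided tail of the bonding Lévy measure. Concretely, for a subordinator with killing $\dagger$, drift $d$ and Lévy measure $\Pi$, integration by parts in the Lévy--Khintchine formula gives
\begin{equation*}
\psi^+(\theta) = -\dagger^+ + \iu d^+ \theta + \iu\theta\int_0^\infty \e^{\iu\theta x}\overbar{\Pi}{}^+(x)\diff x,
\end{equation*}
and symmetrically for $\psi^-(-\theta)$. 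Expanding the product $-c\psi^-(-\theta)\psi^+(\theta)$ produces a nine-term sum in which the pure killing terms match the killing rate of $\xi$, the terms linear in $d^\pm$ reproduce the drift of $\xi$ (after a suitable convention), and the cross-products of tail integrals can be reorganised by Fubini into a single Fourier transform over $\R$ whose density is (up to sign) the tail of the candidate bonding Lévy measure.

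Carrying out this reorganisation is where the expression defining $\Upsilon$ should emerge. The drift-times-tail cross terms $d^{\mp}\cdot \iu\theta\int \e^{\iu\theta x}\overbar{\Pi}{}^{\pm}(x)\diff x$ are the ones that require compatibility: they translate, on the Fourier-inverse side, into a derivative of $\overbar{\Pi}{}^{\pm}$, which is exactly why we need a càdlàg Lebesgue density $\uppartial\Pi^{\pm}$ expressible as the tail of a signed measure. The convolution-type integrals $\int_{x+}^\infty \overbar{\Pi}{}^\mp(y\mp x)\Pi^\pm(\diff y)$ and the killing contributions $\psi^\mp(0)$ absorbed into them arise directly from expanding the tail-tail and tail-killing cross products.

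For necessity, given that the bonding $\xi$ exists with Lévy measure $\Pi$, matching Fourier transforms above yields $\Pi(x,\infty) = \Upsilon(x)$ for $x > 0$ and $\Pi(-\infty,x) = \Upsilon(x)$ for $x < 0$ a.e., and monotonicity of $\Upsilon$ is then automatic because tails of positive measures are monotone. Compatibility is forced by the observation that, if $d^\mp > 0$, then the Fourier expansion contains a term that can only be interpreted as a tail of a measure if $\overbar{\Pi}{}^{\pm}$ is differentiable in the required sense.

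For sufficiency, I would use compatibility and monotonicity of $\Upsilon$ to define a candidate Lévy measure $\Pi$ via the identity \eqref{eq: ami levy}, check that the candidate $\psi$ obtained from the WH product has Lévy--Khintchine form, and verify the integrability $\int(1\wedge x^2)\,\Pi(\diff x) < \infty$. The main obstacle I anticipate is precisely this last integrability check: near infinity it should follow from finiteness of $\psi^\pm$, but near zero the small-jump behaviour of $\Pi$ is governed by how $\Upsilon(x)$ blows up as $x\to 0$, and controlling this is exactly where the compatibility hypothesis pays off, since the singular $d^\mp \uppartial\Pi^\pm$ pieces in $\Upsilon$ must be matched against the convolution terms in a way that preserves $\int_0 x^2 \diff\Upsilon < \infty$. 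Verifying that the drift and killing rate extracted from the $\theta\to 0$ and $\theta\to\infty$ asymptotics of $-c\psi^-(-\theta)\psi^+(\theta)$ are consistent, and that the product genuinely produces a Lévy exponent rather than something merely resembling one, is the final piece of book-keeping.
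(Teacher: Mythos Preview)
Your overall strategy---invert the Wiener--Hopf identity on the Fourier side and read off the Lévy tails---is the right one, and it is essentially what the paper does (the paper does not prove this Lévy result directly but its proof of the MAP generalisation, Theorems~\ref{theo: eq amicales} and~\ref{theo: friends}, specialises to $n=1$). However, your implementation has a genuine technical gap. The integration-by-parts representation
\[
\psi^+(\theta) = -\dagger^+ + \iu d^+\theta + \iu\theta\int_0^\infty \e^{\iu\theta x}\,\overbar{\Pi}{}^+(x)\,\diff x
\]
is only valid as an \emph{improper} integral in general, since $\overbar{\Pi}{}^+\notin L^1(0,\infty)$ unless the subordinator has finite mean. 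When you then multiply two such improper integrals and ``reorganise by Fubini'', you need absolute convergence, which you do not have. This is not a cosmetic issue: it is precisely the reason Vigon (and the paper, following him) work in the space of tempered distributions, writing $\mathscr{F}^{-1}\psi^+ = -\dagger^+\delta - d^+\delta' + \bbGamma\Pi^+$ and using the convolution calculus for $\mathcal{S}'(\R)$ together with distributional primitives, see \eqref{eq:levy_dist} and the proof of Theorem~\ref{theo: eq amicales}. In that framework the product becomes a well-defined convolution of tempered distributions, the primitive can be taken distributionally, and restriction to $(0,\infty)$ yields $\Upsilon$ directly; the integrability $\int(1\wedge x^2)\,\Pi(\diff x)<\infty$ then falls out of the structure of $\bbGamma^2$ rather than needing a separate estimate as you anticipate.

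For sufficiency, your plan to define $\Pi$ from $\Upsilon$ and then verify the Lévy--Khintchine form is again correct in outline, but you are missing the key observation that makes it work: one shows (distributionally) that $-\psi^-(-\theta)\psi^+(\theta)$ equals $\mathscr{F}\{\bbGamma^2\mu - \mathtt{k}\delta - c\delta' + \tau\delta''\}$ for a \emph{signed} measure $\mu$ integrating $1\wedge x^2$, and the monotonicity of $\Upsilon$ is exactly what forces $\mu$ to be positive (since $\mu(x,\infty)=\Upsilon(x)$ a.e.\ on $(0,\infty)$). Compatibility enters not to control blow-up of $\Upsilon$ at $0$, as you suggest, but to ensure that the cross-term $d^-\delta'\ast\bbGamma\Pi^+$ can be absorbed into $\bbGamma^2\mu$ at all---this is where the existence of a càdlàg density $\uppartial\Pi^+=\overbar{\eta}^+$ for a signed measure $\eta^+$ is used.
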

\begin{remark} 
  In Vigon's original formulation, \eqref{eq: ami levy} holds everywhere rather
  than almost everywhere, the reason being that it is claimed that the
  convolution $x \mapsto \overbar{\tilde{\Pi}}{}^- \ast \Pi^+(x)$ is a càdlàg
  function.  However, this fails, for example, when $H^\pm$ are pure jump
  processes with jumps of size $\{1,2\}$ and 
  $\Pi^\pm(\{1\}) > \Pi^\pm(\{2\}) > 0$,
  even though these processes can be shown to be friends (they are
  \textit{philanthopes discrètes} \cite{vigondiss}).  In this example, the
  statement can be easily  repaired by considering the closed tails
  $\Pi^\pm([x,\infty))$ instead of the open tails 
  $\overbar{\Pi}^\pm(x) = \Pi^\pm((x,\infty))$,
  but it is not clear that such an approach can work in
  general, for instance, when one of the Lévy measures is singular continuous.
  We arrived at the above statement of the result after consultation with
  Vigon, but we remark that the original formulation remains valid when
  $\Pi^\pm$ are absolutely continuous. 
\end{remark}

Equation \eqref{eq: ami levy} is called \textit{équation amicale} by Vigon and
characterises the Lévy measure of $\xi$ in terms of the characteristics of its
ascending and descending ladder height processes.  Aside from \cite{vigondiss},
this equation is proved in \cite[Proposition~3.3]{vigon2002} and
\cite[Theorem~16]{doney2007}. A textbook treatment of the theorem of friends
appears in \cite[Theorem~6.22]{kyprianou2014}.

The monotonicity conditions on $\Upsilon$ do not appear easy to handle at first
sight.  However, they simplify dramatically for a certain class of processes.
Vigon calls a subordinator with decreasing Lévy density a
\textit{philanthopist}, and develops the following neat result:
\begin{theorem}[Vigon's theorem of philanthropy] \label{theo: vigon philan}
  Two philanthropists are always friends.
\end{theorem}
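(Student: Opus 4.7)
The plan is to deduce the theorem from Theorem~\ref{theo: friends levy} by verifying (i) that two philanthropists are always compatible and (ii) that the function $\Upsilon$ is automatically monotone in the required sense.

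For (i), a philanthropist has, by definition, an absolutely continuous Lévy measure with a density $\uppartial\Pi^\pm$ which (taking the right-continuous version) is non-increasing and càdlàg on $(0,\infty)$. Any such function is the right tail of a positive Borel measure $\eta^\pm$ on $(0,\infty)$, namely the measure associated with the non-increasing function $\uppartial\Pi^\pm$ via $\eta^\pm(x,\infty) = \uppartial\Pi^\pm(x)$. Hence compatibility holds irrespective of whether $d^\mp$ vanishes.

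For (ii), by the symmetry between the $(0,\infty)$ and $(-\infty,0)$ pieces of $\Upsilon$, it suffices to treat $x > 0$. Using absolute continuity of $\Pi^+$ and the substitution $z = y - x$, I would rewrite the integral term as
\begin{equation*}
  \int_{x+}^\infty \bigl(\overbar{\Pi}^-(y-x) - \psi^-(0)\bigr)\,\Pi^+(\diff y)
  = \int_0^\infty \bigl(\overbar{\Pi}^-(z) + \dagger^-\bigr)\, \uppartial\Pi^+(x+z)\,\diff z,
\end{equation*}
where $\dagger^- = -\psi^-(0) \geq 0$. Since $\uppartial\Pi^+$ is non-increasing on $(0,\infty)$, the integrand is non-increasing in $x$ for each fixed $z > 0$, so the integral is non-increasing in $x$ by monotone convergence. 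The drift contribution $d^-\uppartial\Pi^+(x)$ is likewise non-increasing in $x$ because $\uppartial\Pi^+$ is. Summing these non-increasing terms shows that $\Upsilon$ is non-increasing on $(0,\infty)$; the analogous argument with the roles of $H^+$ and $H^-$ interchanged yields non-decreasing behaviour on $(-\infty,0)$.

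Combining (i) and (ii), Theorem~\ref{theo: friends levy} applies and produces the bonding process $\xi$, proving that $H^+$ and $H^-$ are friends. The main subtlety is really only bookkeeping: keeping track of the killing constants $\psi^\pm(0)$ when splitting $\Upsilon$ into manifestly monotone pieces, and ensuring that the substitution $z = y-x$ can be carried out because $\Pi^+$ is absolutely continuous. No delicate analytic obstacle arises, since the monotonicity of the philanthropic density transfers directly, term by term, to the monotonicity of $\Upsilon$.
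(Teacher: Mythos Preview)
The paper does not actually give its own proof of Theorem~\ref{theo: vigon philan}; it is stated as a known result and attributed to Vigon, with a reference to \cite{kyprianou22} for a direct proof under moment assumptions. That said, your argument is correct and is precisely the specialisation to the L\'evy case ($n=1$) of the paper's proof of the MAP analogue, Theorem~\ref{theo: philan}: the same substitution $z=y-x$ to rewrite the convolution against $\uppartial\Pi^+(x+z)$, and the same term-by-term monotonicity check. The only cosmetic difference is that you absorb the killing constant $\dagger^-$ into the integral, whereas the paper's MAP proof keeps $-\bm{\Psi}^-(0)$ grouped with the $\overbar{\bm{\Pi}}^+$ and $\uppartial\bm{\Pi}^+$ terms (this is where the fellowship condition enters in the MAP setting, but is automatic for L\'evy philanthropists).
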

For a direct proof of this result under under additional second moment
assumptions we refer to \cite[Theorem 4.4]{kyprianou22}.  Let us also remark
that the class of Laplace exponents of philanthropists (or equivalently,
Bernstein functions with decreasing Lévy density) is known as the \textit{Jurek
class} of Bernstein functions \cite{schilling2012}. 

The class of philanthropists is quite large and contains many tractable
examples.  As a consequence, the theorem of philanthropy offers a new tool for
the construction of Lévy processes, quite different from the classical
techniques of specifying the Lévy triplet, transition semigroup or
characteristic exponent.  However, the form of the équation amicale means that
the Lévy measure of the bonding process may not be simple to express, and there
is a tradeoff between explicitness of the bonding process and explicitness of
the corresponding friends, regardless of which side of the Wiener--Hopf
factorisation we start on.  One of the major achievements of this approach is
the class of \textit{hypergeometric Lévy processes}, built from friendships of
$\beta$-subordinators \cite{kyprianou2010,KuznetsovPardo2013,kyprianou2014} and
motivated by the relation with killed and conditioned stable processes (see
\cite{Caballero2011} and \cite[Theorem~1]{KuznetsovPardo2013}.)

Coming from this well-established theory for Lévy processes, our goal in this
paper is to extend Vigon's theory of friends and philanthropy to the class of
Markov additive processes (MAPs).  A MAP $(\xi,J)$ with state space
$\R \times \{1,\ldots,n\}$
can be thought of as a regime-switching Lévy process: depending
on the state (or phase) of a Markov chain $J$, the process $\xi$ follows a
different Lévy process; see Section~\ref{sec: map intro} for a full
description.  Many concepts in Lévy process theory have direct analogues in the
theory of MAPs.  For any MAP $(\xi,J)$ there exists a matrix form of the
characteristic exponent, which we call the MAP exponent,
$\bm{\Psi}\colon \R \to \mathbb{C}^{n\times n}$, such that 
\[\big(\E^{0,i}[\exp(\mathrm{i} \theta \xi_t);\,  J_t = j] \big)_{i,j \in [n]} = \mathrm{e}^{t\bm{\Psi}(\theta)}, \quad \theta \in \R, t \geq 0,\]
where $\PP^{x,i}$ indicates that the process $(\xi,J)$ is a.s.\ started in 
$(x,i) \in \R \times [n]$. 
The Lévy measure $\Pi$ of a Lévy process $\xi$ has a natural analogue 
in the \textit{Lévy measure matrix} $\bm{\Pi}$ of a MAP $(\xi,J)$, which 
describes the jump structure of $\xi$ in the  Markovian environment 
governed by $J$. 
Much as in the Lévy case, the new suprema of $\xi$ can be related to a
MAP subordinator $(H^+,J^+)$, 
referred to as the \textit{ascending ladder height MAP};
likewise, the new infima can be related to the
\textit{descending ladder height MAP} $(H^-,J^-)$.
Again, we make these statements precise in Section \ref{sec: map intro}. 

Our question arises from considering the Wiener--Hopf factorisation
of the MAP exponent $\bm{\Psi}$ into
the MAP exponents $\bm{\Psi}^+,\bm{\Psi}^-$ of $(H^+,J^+)$ and $(H^-,J^-)$,
respectively, which was first proved, under certain constraints, 
in \cite[Theorem 26]{dereich2017} and \cite[equation~(19)]{ivanovs17}.
This states that, if $\xi$ is either killed at the same (possibly zero)
rate in all phases or is killed with positive rate in every phase,
$\xi$ is nonlattice and $J$ is irreducible with stationary distribution $\bm{\pi}$, 
then, for an appropriate time scaling of $H^+$ and $H^-$, we have
the matrix identity
\begin{equation} \label{eq: wh map}
  \bm{\Psi}(\theta) = - \bm{\Delta}_{\bm{\pi}}^{-1} \bm{\Psi}^-(-\theta)^\top \bm{\Delta_{\bm{\pi}}} \bm{\Psi}^+(\theta), \quad \theta \in \R,
\end{equation}
where $\bm{\Delta}_{\bm{\pi}}$ is the diagonal matrix with entries
given by the vector $\bm{\pi}$.
Notice here that when the modulating space is one-dimensional, i.e., $J_t =1$ for all $t \geq 0$, the above equality reduces to \eqref{eq: wh levy}. We will show in Theorem \ref{c:whf-norm} that \eqref{eq: wh map} holds for any irreducible MAP, irrespectively of the lifetimes of the Lévy components.

Even when compared to the already quite scarce number of explicit Wiener--Hopf
Lévy factorisations, the situation for MAPs is even more tenuous. Apart from
the \textit{deep factorisation} of the stable process in \cite{kyprianou2016},
to the best of our knowledge there is no known example of an explicit MAP
Wiener--Hopf factorisation. As will become apparent from our analysis this is
not a mere artifact of the youth of the MAP Wiener--Hopf
factorisation, but also a consequence of the increased complexity
due to phase transitions.

Despite these difficulties, the theorem below, which is our main result,
provides a complete picture of friendship of MAPs. This is established
in Section~\ref{sec: map friends} in the form of Theorems~\ref{theo: eq amicales}
and~\ref{theo: friends}. 
\begin{theorem} \label{theo: main}
  Two MAP subordinators $(H^+,J^+)$  and $(H^-,J^-)$ are $\bm{\pi}$-friends 
  (in the sense of Definition~\ref{def: friends})
  if,
  and only if, they are $\bm{\pi}$-compatible (in the sense of Definition~\ref{def: comp})
  and  the matrix-valued function 
  \begin{align*}
    \bm{\Upsilon}(x) &= \Big(\int_{x+}^\infty \bm{\Delta}_{\bm{\pi}}^{-1}\Big(\overbar{\bm{\Pi}}^-(y-x) - \bm{\Psi}^-(0)\Big)^\top \bm{\Delta}_{\bm{\pi}} \, \bm{\Pi}^+(\diff{y})  + \bm{\Delta}^-_{\bm{d}} \uppartial \bm{\Pi}^+(x)\Big) \one_{(0,\infty)}(x)\\
    &\quad + \Big(\int_{(-x)+}^\infty \bm{\Delta}_{\bm{\pi}}^{-1} \big(\bm{\Pi}^-(\diff{y}) \big)^\top \bm{\Delta}_{\bm{\pi}} \, \big(\overbar{\bm{\Pi}}{}^+(y+x) - \bm{\Psi}^+(0)\big)  +  \bm{\Delta}_{\bm{\pi}}^{-1} \big(\bm{\Delta}_{\bm{d}}^+\uppartial\bm{\Pi}^-(-x)\big)^\top \bm{\Delta}_{\bm{\pi}}\Big) \one_{(-\infty,0)}(x),
  \end{align*}
  is a.e.\ equal to a function 
  decreasing on $(0, \infty)$ and increasing on $(-\infty,0)$. 
  Moreover, when $(H^+,J^+)$ is a $\bm{\pi}$-friend of $(H^-,J^-)$, then for a.e.\ $x \in \R,$
  \[\one_{(0,\infty)}(x)\bm{\Pi}(x,\infty) + \one_{(-\infty,0)}(x) \bm{\Pi}(-\infty,x) = \bm{\Upsilon}(x),\]
  for the Lévy measure matrix of the bonding MAP $(\xi,J)$.
\end{theorem}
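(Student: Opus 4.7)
The plan is to expand the Wiener--Hopf factorisation \eqref{eq: wh map}, which by Theorem~\ref{c:whf-norm} holds for every irreducible MAP, entry by entry, and to recover the Lévy measure of the bonding MAP by a Fourier-inversion argument, mirroring Vigon's proof of Theorem~\ref{theo: friends levy}. Fixing $i,j\in[n]$, the $(i,j)$-entry of the factorisation reads
\[\bm{\Psi}_{ij}(\theta) = -\sum_{k=1}^{n} \pi_i^{-1}\pi_k \,\bm{\Psi}^-_{ki}(-\theta)\,\bm{\Psi}^+_{kj}(\theta),\]
so the task reduces to identifying the Lévy measure components $\bm{\Pi}_{ij}$ inside the right-hand side.

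First I would substitute the Lévy--Khintchine expansion of each factor $\bm{\Psi}^\pm$, decomposing it into its killing part $-\bm{\Psi}^\pm(0)$, the drift contribution (present only on the diagonal and contributing $\mathrm{i}\theta\,\bm{\Delta}_{\bm{d}}^\pm$), and the characteristic exponent of the Lévy measure $\bm{\Pi}^\pm$. Expanding the product and exchanging sums and integrals by Fubini produces three families of terms: twisted convolutions of $\overbar{\bm{\Pi}}^-$ and $\bm{\Pi}^+$ in the form dictated by the $\bm{\Delta}_{\bm{\pi}}^{-1}(\cdot)^\top\bm{\Delta}_{\bm{\pi}}$ duality; cross-terms between killing matrices and Lévy tails of the form $\bm{\Psi}^\pm(0)\overbar{\bm{\Pi}}^\mp$; and drift-times-Lévy contributions that appear only when some coordinate $d_i^\mp$ is positive. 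The $\bm{\pi}$-compatibility assumption is precisely what is needed to make sense of this last family, since it guarantees the existence of the densities $\uppartial \bm{\Pi}^\pm$ appearing in $\bm{\Upsilon}$.

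Once the matrix product has been written in this way, the équation amicale follows by dividing through by $-\mathrm{i}\theta$, identifying the outcome as the Fourier transform of the function $x \mapsto \one_{(0,\infty)}(x)\bm{\Pi}(x,\infty)+\one_{(-\infty,0)}(x)\bm{\Pi}(-\infty,x)$, and matching entrywise. For the "only if" direction, the tails on the left are automatically entrywise monotone matrix-valued functions, which is the claimed property of $\bm{\Upsilon}$. For the converse, the monotonicity and compatibility assumptions are just enough to interpret $\bm{\Upsilon}$ as the tails of a positive matrix Lévy measure $\bm{\Pi}$; plugging this back into the right-hand side of \eqref{eq: wh map} yields a valid MAP exponent, and the uniqueness of the Wiener--Hopf factorisation (addressed elsewhere in the paper) then identifies its ladder processes with $(H^\pm,J^\pm)$, so that $(H^+,J^+)$ and $(H^-,J^-)$ are indeed $\bm{\pi}$-friends with bonding MAP $(\xi,J)$.

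The principal obstacle is managing the signed cross-terms: in contrast to the convolutions of positive tails, the killing-times-tail contributions $\bm{\Psi}^\pm(0)\overbar{\bm{\Pi}}^\mp$ are genuinely signed, so $\bm{\Upsilon}$ is not a priori the tail of a positive measure. It is precisely the monotonicity requirement that enforces the cancellation needed to yield a positive matrix Lévy measure, and verifying this equivalence rigorously is the technical heart of the argument. A further subtlety, already flagged in the remark following Theorem~\ref{theo: friends levy}, is that convolutions such as $\overbar{\bm{\Pi}}^- \ast \bm{\Pi}^+$ need not be càdlàg when $\bm{\Pi}^\pm$ have singular continuous parts, so the équation amicale can in general be asserted only almost everywhere.
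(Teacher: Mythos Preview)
Your overall Fourier-inversion strategy matches the paper's, and the identification of $\bm{\Upsilon}$ as the tail function via distributional primitives (what you describe informally as ``dividing through by $-\mathrm{i}\theta$'') is indeed how the équations amicales and the necessity direction are obtained. The paper carries this out rigorously in $\mathcal{S}'(\R)$: one takes the inverse Fourier transform of each matrix entry of \eqref{eq: wienerhopf}, obtains a convolution identity of tempered distributions, takes a distributional primitive, and then restricts to $(0,\infty)$ and $(-\infty,0)$ separately. Along the way one must show that the integration constants vanish and, on the off-diagonal, that the resulting object really is a finite signed measure $\nu_{i,j}$ with $\nu_{i,j}(\R)=0$; this is the origin of condition~\ref{friends cond4} of $\bm{\pi}$-compatibility and is not a triviality you can simply read off.

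There is, however, a genuine gap in your sufficiency argument. You write that once $\bm{\Upsilon}$ is the tail of a positive matrix measure, one plugs back into \eqref{eq: wh map} and then invokes uniqueness of the Wiener--Hopf factorisation to identify the ladder processes with $(H^\pm,J^\pm)$. But this is not what is required, and it is circular. By Definition~\ref{def: friends}, $\bm{\pi}$-friendship means only that the right-hand side of \eqref{eq: wienerhopf} is the exponent of \emph{some} MAP $(\xi,J)$ with $\bm{\pi}^\top\bm{\Psi}(0)\le\bm{0}^\top$; it does \emph{not} ask that $(H^\pm,J^\pm)$ be the ladder height processes of that MAP. So uniqueness plays no role here. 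What you must actually verify is the characterisation in Lemma~\ref{lemma: map exp}: each diagonal entry of the product is a (killed) Lévy--Khintchine exponent, each off-diagonal entry is the Fourier transform of a finite \emph{positive} measure, and $-\bm{\Psi}(0)\one\ge\bm{0}$. The monotonicity of $\bm{\Upsilon}$ handles positivity of the tails away from zero, but on the off-diagonal you also need $\mu_{i,j}(\{0\})\ge 0$, which is exactly inequality \eqref{eq: friends atom0} in the compatibility definition; and you need the signed-measure structure \eqref{eq: friend dens} even to make sense of the off-diagonal entries as measures at all. These are the conditions that do the work, not uniqueness.
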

The notions of $\bm{\pi}$-friendship and $\bm{\pi}$-compatibility, which we
have not yet defined, are made precise in Section~\ref{sec: map friends}.
$\bm{\pi}$-friendship is the obvious counterpart to friendship of Lévy
processes, meaning that the matrix Wiener--Hopf factorisation 
\eqref{eq: wh map} holds.  Meanwhile, $\bm{\pi}$-compatibility is partly the analogue of
Vigon's compatibility of Lévy processes, but also places more stringent
requirements on the Lévy measure matrices at and near zero and conditions on
the rates of the Markov chains.

The second part of our work seeks an extension of Vigon's theory of philanthropy;
that is, sufficient conditions that allow one to more easily establish that
two MAP subordinators are friends. The additional challenges of $\bm{\pi}$-friendship
make it difficult to find `unilateral' conditions, which can be verified
separately for each of two subordinators and lead to $\bm{\pi}$-friendship
between them. Instead, we introduce the notion of $\bm{\pi}$-fellowship,
and Theorem~\ref{theo: philan} states that two mutual $\bm{\pi}$-fellows
are indeed $\bm{\pi}$-friends. Although this theory is harder to apply
than in the Lévy case, we use it to give general
sufficient conditions for mutual $\bm{\pi}$-fellowship
(Theorems~\ref{theo: friend drift} and~\ref{theo: quasi help})
and use
these to give explicit examples of
spectrally positive MAPs with completely monotone ascending ladder
jump measures, as well as a class of MAPs with double
exponential jump structure and known Wiener--Hopf factorisation. 

Finally, since our approach is to analyse the MAP Wiener--Hopf factorisation 
equation \eqref{eq: wh map}, we need to know about the uniqueness of this
in order to draw probabilistic conclusions, and in Section~\ref{sec: unique}
we prove this under a wide range of assumptions, the main result
being Theorem~\ref{theo:unique}.

The remainder of the paper is structured as follows.
Section \ref{sec: map intro} is devoted to introducing notation and the most
important facts on MAPs that we need in the rest of the paper. Section
\ref{sec: map friends} deals with friendship of MAPs and general consequences
thereof, while in Section \ref{sec: map philan} we explore
$\bm{\pi}$-fellowship and develop constructive criteria for
$\bm{\pi}$-compatibility in order to generate examples of
$\bm{\pi}$-friendship, and thereby of MAPs with explicit Wiener--Hopf factorisation.
The final Section \ref{sec: unique} discusses the uniqueness of the matrix
Wiener--Hopf factorisation, and verifies that this applies to the examples
found in previous sections.

\paragraph{Notation}
We collect some notational conventions that are used throughout the paper.  Small bold letters $\bm{a}$ represent column vectors in $\R^n$ and capitalized letters $\bm{A}$ refer to matrices in $\R^{n\times n}$. The vectors containing only zeros, respectively ones, are denoted by $\bm{0}$ and $\bm{1}$, respectively. For measures $\mu^\pm$ concentrated on $\R_{\pm} \setminus\{0\}$, where $\R_+ \coloneqq [0,\infty), \R_- \coloneqq (-\infty,0]$, we let $\overbar{\mu}^+(x) \coloneqq \mu^+((x,\infty)) \equiv \mu(x,\infty)$ be the right tail of $\mu^+$ for $x > 0$ and $\overbar{\mu}^-(x) \coloneqq \mu^-((-\infty,x)) \equiv \mu^-(-\infty,x)$ be the left tail of $\mu^-$ for $x < 0$. For a (signed) finite measure $\mu$ on $\R$, its distribution function is represented by $\underbars{\mu}(x) \coloneqq \mu((-\infty,x])$, $x\in \R$. Moreover, for a given (signed) measure $\mu$ on $\R$, $\tilde{\mu}(\diff{x}) \coloneqq \mu(-\diff{x})$ denotes the reflected measure and for a function $f\colon \R \to \R$ we let $\tilde{f} \coloneqq f(-\cdot)$. Then, given a finite measure $\mu$, we interpret $\tilde{\underbars{\mu}}$ in the sense of reflecting the distribution function of $\mu$, i.e., $\tilde{\underbars{\mu}}(x) = \underbars{\mu}(-x)$.  For $n\in \N$ we let $[n] \coloneqq \{1,\dotsc,n\}$.

\section{Fundamentals on Markov additive processes}\label{sec: map intro}
A \emph{Markov additive process (MAP)} with $n$ phases
is a strong Markov process 
$(\xi,J) = (\xi_t,J_t)_{t\ge 0}$ with state space $\R \times [n]$
and lifetime $\zeta \in (0,\infty]$,
with probability measures $(\PP^{x,i})_{x\in \R,i\in [n]}$ and
associated expectations $(\E^{x,i})_{x\in \R, i\in [n]}$,
adapted to a filtration $\mathcal{F} = (\mathcal{F}_t)_{t\ge 0}$ satisfying the usual hypotheses,
with the property that
\[
  \E^{x,i}
  \bigl[ f(\xi_{t+s} - \xi_t, J_{t+s}) \one_{\{\zeta>t\}} \mathbin{\big|} \mathcal{F}_t \bigr]
  = \E^{0,J_t} \bigl[ f(\xi_s, J_{s})\bigr] \one_{\{\zeta>t\}},
  \quad s,t\ge 0,
\]
for all bounded measurable $f$.
The process $\xi$ is called the \emph{ordinator} of the MAP, and the process
$J$ the \emph{modulator}. Any such MAP can be decomposed into a Markov process
on $[n]$ with generator matrix $\bm{Q} = (q_{i,j})_{i,j\in [n]}$
and killing rate $\dag_i$ in state $i$,
a collection of Lévy processes $(\xi^{(i)})_{i\in [n]}$, and a
collection of probability distributions $(F_{i,j})_{i,j\in [n]}$
with the convention $F_{i,i} = \delta_0$.
Roughly speaking, the MAP evolves as follows.
$J$ evolves as a Markov process with generator matrix $\bm{Q}$.
When $J_t = i$, $\xi_t$ evolves as $\xi^{(i)}$, and the process
is killed at rate $\dag_i$.
When $J$ jumps to state $j$, which occurs with rate $q_{i,j}$,
$\xi$ experiences a jump
whose distribution is $F_{i,j}$, and the evolution of $\xi$
then begins to follow $\xi^{(j)}$.

The distribution of a MAP can be described more formally using the \emph{MAP
exponent} of $(\xi,J)$, which is a matrix-valued function $\bm{\Psi}$
such that
\[
  (\mathrm{e}^{t\bm{\Psi}(\theta)})_{i,j}
  = \E^{0,i}\bigl[ \mathrm{e}^{\iu\theta \xi_t} ; J_t = j\bigr].
\]
The preceding description of the MAP then gives rise to the structure
\[
  \bm{\Psi}(\theta)
  = \bm{\Delta}_{\bm{\psi}(\theta)} + \bm{Q} \odot \bm{G}(\theta)
  - \bm{\Delta}_{\bm{\dag}},
\]
where $\bm{\Delta}_{\bm{v}}$ is the diagonal matrix whose $(i,i)$th
entry is $v_i$; $\psi_i$ is the characteristic exponent of the
Lévy process $\xi^{(i)}$, satisfying 
$\mathrm{e}^{t\psi_i(\theta)} = \E^0[\mathrm{e}^{\iu\theta \xi^{(i)}_t}]$;
$G_{i,j}(\theta) = \int \mathrm{e}^{\iu\theta x} \, F_{i,j}(\diff{x})$;
and $\odot$ is the Hadamard (elementwise) product.

This decomposition reveals that we can view $(\xi,J)$ either as
a killed Markov process $J$ on top of which we run unkilled Lévy
processes, or as an unkilled Markov process $J$ on which we run
killed Lévy processes, killing the whole MAP when
one of the component processes dies. We will typically
take the former perspective.

We remark that, given only $\bm{\Psi}$, one can readily obtain
$\bm{\dag} = - \bm{\Psi}(0) \one$ and
$\bm{Q} = \bm{\Psi}(0) + \bm{\Delta}_{\bm{\dag}}$.

If $\bm{\pi} \in \R^n$ represents a probability distribution
on $[n]$ with full support,
in that $\pi(i) > 0$ for all $i$ and $\sum_{i=1}^n \pi(i) = 1$,
then we say that $\bm{\pi}$ is \emph{invariant} for $J$ if
\[
  \bm{\pi}^\top \bm{Q} = \bm{0}^\top;
\]
that is, $\bm{\pi}$ is an invariant distribution for an unkilled version
of $J$. Under this condition, we can speak about $\bm{\pi}$-duality
  for the MAP
\cite[section A.2]{dereich2017}.
The function
\begin{equation*}
  \hat{\bm{\Psi}}(\theta) = \bm{\Delta}_{\bm{\pi}}^{-1} \bm{\Psi}(-\theta)^\top
  \bm{\Delta}_{\bm{\pi}},
  \quad \theta \in \R,
\end{equation*}
is the MAP exponent of some MAP $(\hat{\xi},\hat{J})$ which can be obtained
by time-reversal:
\[
  \E^{0,\bm{\pi}}
  \bigl[ f( \xi_{(t-s)-} - \xi_t, J_{(t-s)-}; s \le t) \one_{\{\zeta>t\}} \bigr]
  = \E^{0,\bm{\pi}} 
  \bigl[ f( \hat{\xi}_s, \hat{J}_s; s \le t) \one_{\{\hat{\zeta}>t\}} \bigr]
\]
holds for any functional $f$ and $t\ge 0$;
here, $\PP^{0,\bm{\pi}} = \sum_{i=1}^n \pi(i) \PP^{0,i}$.
Moreover,
$\bm{\pi}$ is invariant for $\hat{J}$, and $\hat{J}$ is also
killed with rates $\bm{\dag}$.
We say that $(\hat{\xi},\hat{J})$ is the \emph{$\bm{\pi}$-dual} of $(\xi,J)$.

A key aspect of the theory of MAPs is the \emph{Wiener--Hopf factorisation}, 
which expresses the characteristics of a MAP in terms of its ladder processes.
We begin with the 
\emph{local time at the supremum}, which we define by stitching
together the local times of the constituent Lévy processes.
During a given phase $i$, if $\xi^{(i)}$ is such that $0$ is regular for $(0,\infty)$,
it is a continuous increasing functional $L$
which increases precisely at the time at which $\xi$ is at its running supremum.
On the other hand, if $\xi^{(i)}$ is such that $0$ is irregular for $(0,\infty)$,
it is an increasing jump process $L$ whose jumps occur at the isolated times
at which $\xi$ makes new suprema (in the strict sense), and whose jump sizes
are independent with a standard exponential distribution.
At a phase switch occuring at time $T$, if $\xi_{T-}< \xi_T = \sup_{t\le T} \xi_t$
and $\xi^{(J_T)}$ is such that $0$ is irregular for $(0,\infty)$,
  it is necessary to introduce an additional jump of $L$
with standard exponential distribution at time $T$.

The \emph{inverse local time at the supremum} is defined by
$L^{-1}_t = \sup\{ s \ge 0: L_s > t \}$, and the phase
at this time is $J^+_t = J_{L^{-1}_t}$.
If we further let $\bm{L}^{-1}_t$
represent the vector whose $i$-th entry is $\int_0^t \one_{\{J^+_s = i\}}\, \diff{s}$,
and define $H^+_t = \xi_{L^{-1}_t}$, then both
$(\bm{L}^{-1}, H^+, J^+)$ and $(H^+, J^+)$ are MAPs,
the former having an $(n+1)$-dimensional ordinator.
These two MAPs are called the \emph{ascending ladder process} and the
\emph{ascending ladder height process}, respectively.

The descending ladder processes may be defined similarly by considering
the local time at the infimum, and we write $(H^-,J^-)$ for the descending
ladder height process.
Some care is required here
when some components of the MAP are compound
Poisson processes. We adopt the convention that the ascending ladder process
records strict new suprema, and the descending ladder process weak new infima;
see \cite[section 6.2]{kyprianou2014} for a discussion of this distinction in the setting
of Lévy processes.
Likewise, the case where a phase switch occurs at the time of a new
infimum must be handled correctly. For a further discussion of this,
we refer to \cite[section~5.1]{ivanovs17}.

Denote the MAP exponent of $(H^+,J^+)$ by $\bm{\Psi}^+$ and that of
$(H^-,J^-)$ by $\bm{\Psi}^-$.
The local times at both the maximum and the minimum are only uniquely
defined up to a multiplicative constant (which may differ in each phase).
Changing normalisation amounts to multiplying $\bm{\Psi}^+$
or $\bm{\Psi}^-$ by a diagonal matrix with strictly positive diagonal
entries.
The Wiener--Hopf factorisation can be expressed
in the following result, which is really a corollary of 
the more general Theorem~\ref{t:whf-gen} proven 
in Appendix~\ref{s:whf}.
\begin{theorem}\label{c:whf-norm}
  For suitable normalisation of the local times at the maximum and minimum,
  \[
    -\bm{\Psi}(\theta) 
    = \bm{\Delta}_{\bm{\pi}}^{-1} 
    \bm{\Psi}^-(-\theta)^\top 
    \bm{\Delta_{\bm{\pi}}} 
    \bm{\Psi}^+(\theta), 
    \quad \theta \in \R.
  \]
\end{theorem}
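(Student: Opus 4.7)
The plan is to obtain this identity as a direct consequence of the more general factorisation established in Theorem~\ref{t:whf-gen}, combined with the $\bm{\pi}$-duality relations between the ascending and descending ladder structures and a careful choice of the local-time normalisations.

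First, I would invoke Theorem~\ref{t:whf-gen}, which presumably establishes that $-\bm{\Psi}(\theta)$ factorises as a product of two matrix-valued exponents, one encoding the ascending ladder height MAP $(H^+,J^+)$ via $\bm{\Psi}^+$, and one encoding the analogous upward structure for the $\bm{\pi}$-dual process $(\hat\xi, \hat J)$. The point is that, under the $\bm{\pi}$-duality built into the paragraph preceding the theorem, time reversal sends new infima of $\xi$ to new suprema of $\hat\xi$; consequently, the descending ladder height MAP $(H^-, J^-)$ of $(\xi,J)$ coincides in law (under $\PP^{0,\bm{\pi}}$, and up to the choice of local-time normalisation) with the ascending ladder height MAP of $(\hat\xi, \hat J)$. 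Feeding the identity
\[
  \hat{\bm{\Psi}}(\theta)
  = \bm{\Delta}_{\bm{\pi}}^{-1} \bm{\Psi}(-\theta)^\top \bm{\Delta}_{\bm{\pi}}
\]
into the ladder-level analogue yields
\[
  \hat{\bm{\Psi}}^+(\theta)
  = \bm{\Delta}_{\bm{\pi}}^{-1} \bm{\Psi}^-(-\theta)^\top \bm{\Delta}_{\bm{\pi}},
\]
and substituting this into the factorisation of the previous step produces the identity claimed, modulo a diagonal prefactor on each side.

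The second step is to eliminate that residual prefactor. Since the local times at the maximum and at the minimum are each defined only up to multiplication by a positive scalar in each phase, changing normalisation amounts to right-multiplying $\bm{\Psi}^+$ (respectively $\bm{\Psi}^-$) by a diagonal matrix with strictly positive diagonal entries. Choosing these two diagonal matrices to absorb whatever diagonal factor appears from the appendix's normalisation convention delivers the sharp identity in the form stated.

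The main obstacle I expect is the rigorous identification of the descending ladder height MAP of $(\xi,J)$ with the ascending ladder height MAP of its $\bm{\pi}$-dual, in full generality. This requires a time-reversal argument under $\PP^{0,\bm{\pi}}$ that is sensitive to (i) phases $i$ in which $\xi^{(i)}$ is irregular for a half-line, where the local time at the running extremum acquires auxiliary independent exponential jumps at the instants of phase transitions, and (ii) the strict-versus-weak convention for compound Poisson components of the MAP. Both effects must be tracked simultaneously for the upward construction applied to $(\hat\xi,\hat J)$ and the downward construction applied to $(\xi,J)$, and one must check that the resulting normalisations are of the form that a diagonal rescaling can reconcile. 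Once this identification is in place, the rest of the argument is essentially bookkeeping.
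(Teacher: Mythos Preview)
Your overall plan is right: set $\bm{\beta}=\bm{0}$ in Theorem~\ref{t:whf-gen} and absorb the diagonal factor $\bm{\Delta}_{\bm{c}}$ by choice of local-time normalisation (note: this is \emph{left} multiplication of the exponents, not right). However, you misidentify the substantive step. Theorem~\ref{t:whf-gen} is stated in terms of $\bm{\kappa}$ and $\hat{\bm{\kappa}}$, the ladder exponents of the \emph{unkilled} process under $\PP_*$, not in terms of $\bm{\Psi}^\pm$. The key identification the paper carries out is
\[
  \mathrm{e}^{-t\bm{\kappa}(\bm{\dag},-\iu\theta)}
  = \E_*\bigl[\mathrm{e}^{\iu\theta H^+_t - \langle \bm{\dag},\bm{L}^{-1}_t\rangle};J^+_t\bigr]
  = \E\bigl[\mathrm{e}^{\iu\theta \xi_{L^{-1}_t}}\one_{\{\zeta>L^{-1}_t\}};J_{L^{-1}_t}\bigr]
  = \mathrm{e}^{t\bm{\Psi}^+(\theta)},
\]
i.e.\ evaluating the first argument of $\bm{\kappa}$ at the killing vector $\bm{\dag}$ converts the unkilled ladder exponent into the killed one. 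The analogous computation gives $\hat{\bm{\kappa}}(\bm{\dag},\iu\theta)=-\bm{\Psi}^-(-\theta)$. This is the content of the proof, and you do not mention it.

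By contrast, the step you flag as the ``main obstacle'' --- the time-reversal identification of the descending ladder of $(\xi,J)$ with the ascending ladder of $(\hat\xi,\hat J)$ --- is essentially definitional in the paper's setup: $\hat{\bm{\kappa}}$ is \emph{defined} via the dual in the appendix, with the delicate compound-Poisson and irregularity issues delegated to \cite{ivanovs17}. So your proposed duality step is not wrong, but it is already packaged into the statement of Theorem~\ref{t:whf-gen}, and the proof you need to supply is the killing identification above rather than a fresh time-reversal argument.
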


\medskip 

Our analytical approach to study the MAP Wiener--Hopf factorisation fundamentally relies on the theory developed by Vigon in \cite{vigondiss,vigon2002}, which allows to translate the Wiener--Hopf factorisation of a characteristic Lévy exponent into a convolution identity on the space of tempered distributions. Denote by $\mathcal{S}(\R)$  the usual Schwartz space of rapidly decreasing functions $\varphi\colon\R \to \mathbb{C}$. Tempered distributions are the elements of its dual space $\mathcal{S}^\prime(\R)$. We define the Fourier transform of a function $\varphi \in \mathcal{S}(\R)$ by $\mathscr{F}\varphi(x) \coloneqq \int_{\R} \varphi(y) \mathrm{e}^{\mathrm{i}xy} \diff{y}$, $x \in \R$. The Fourier transform is an isomporphism on $\mathcal{S}(\R)$, which is extended on the space of tempered distributions via the duality relation $\langle \mathscr{F}T,\varphi\rangle = \langle T, \mathscr{F}\varphi\rangle$, $\varphi \in \mathcal{S}(\R)$, for a given tempered distribution $T \in \mathcal{S}^\prime(\R)$. We also recall that for $T \in \mathcal{S}^\prime(\R)$, its distributional derivative is specified by $\langle T^\prime, \varphi \rangle = - \langle T, \varphi^\prime \rangle$ for $\varphi \in \mathcal{S}(\R)$.

To make the connection to the MAP Wiener--Hopf factorisation, let $\psi$ be the characteristic exponent of a Lévy process with characteristic triplet $(a,\sigma,\Pi)$ and killing rate $\dagger$ and $\kappa$ be the characteristic exponent of a finite variation Lévy process with drift $d$, Lévy measure $\Lambda$ and killing rate $\overbar{\dagger}$, i.e., for $x \in \R$,
\[\psi(x) = -\dagger + \mathrm{i}ax - \frac{\sigma^2}{2}x^2 + \int_{\R} (\mathrm{e}^{\mathrm{i}xy} - 1 - \one_{[-1,1]}(y)\mathrm{i}xy) \,\Pi(\diff{y}), \quad \kappa(x) = -\overbar{\dagger} + \mathrm{i}d x + \int_{\R} (\mathrm{e}^{\mathrm{i}xy} -1) \, \Lambda(\diff{y}).\] 
By the integrability properties of $\Pi$ and $\Lambda$, for any $\varphi \in \mathcal{S}(\R)$, the compensated integrals
\[\langle \bbGamma^2 \Pi,\varphi \rangle \coloneqq \int_{\R} (\varphi(x) - 1 - \one_{[-1,1]}(x)\varphi^\prime(x)) \, \Pi(\diff{x}), \quad \langle \bbGamma \Lambda,\varphi \rangle \coloneqq \int_{\R}(\varphi(x) - 1) \, \Lambda(\diff{x}),\] 
are well defined. In fact, these relations define tempered distributions $\bbGamma^2 \Pi \in \mathcal{S}^\prime(\R)$ and $\bbGamma \Lambda \in \mathcal{S}^\prime(\R)$, which now allow us to express $\psi,\kappa$ (interpreted as tempered distributions induced by the slowly growing functions $\psi,\kappa$ via $\langle \psi, \varphi \rangle \coloneqq \int \psi \varphi$ and $\langle \kappa,\varphi \rangle \coloneqq \int \kappa \varphi$) as Fourier transforms of tempered distributions:
\begin{equation}\label{eq:levy_dist}
\mathscr{F}\Big\{-\dagger \delta - a \delta^\prime +\frac{\sigma^2}{2} \delta^{\prime\prime} + \bbGamma^2 \Pi\Big\} = \psi, \quad \mathscr{F}\Big\{-\overbar{\dagger} \delta - d \delta^\prime + \bbGamma \Lambda\Big\} = \kappa.
\end{equation}
The elements of the matrix product in the rhs of \eqref{eq: wh map} are given as  linear combinations of products $fg$ of (i) a subordinator exponent $f$ with a negative subordinator exponent $g$, or (ii) a (negative or positive) subordinator exponent $f$ and the Fourier transform of a finite measure $g = \mathscr{F}\mu$, or (iii) Fourier transforms of finite measures. Since all of these factors are locally bounded and have polynomial growth, their products again induce a tempered distribution, which by \eqref{eq:levy_dist} is given by the Fourier transform of two tempered distributions.  Moreover, for any characteristic Lévy exponent $\psi$, by splitting $\bbGamma^2 \Pi$ into its restrictions on $[-1,1]$  and $[-1,1]^{\mathrm{c}}$, it is clear from \eqref{eq:levy_dist} that the tempered distribution $\mathscr{F}^{-1}\psi$ can be split into the sum of a tempered distribution with compact support $[-1,1]$ and a tempered distribution induced by the finite measure $\Pi\vert_{[-1,1]^{\mathrm{c}}}$, see also \cite[Proprieté 3.9]{vigon2002}. 

Consequently, standard convolution theorems for tempered distributions show that for factors $f,g$ as above with $f = \mathscr{F}T, g = \mathscr{F}S$ and $S,T$ as in \eqref{eq:levy_dist} or induced by a finite measure it holds that $\mathscr{F}^{-1}(fg) = S \ast T$, with the usual definition of convolutions of appropriate tempered distributions. Thus, by taking inverse Fourier transforms, the MAP Wiener--Hopf factorisation can be interpreted component wise in the sense of equalities of tempered distributions associated either to characteristic Lévy exponents or finite measures with a linear combination of convolved tempered distributions having factors of these types.

\section{Friendship of MAPs} \label{sec: map friends}

The matrix Wiener--Hopf factorisation \eqref{eq: wh map} motivates the following definition.
\begin{definition}\label{def: friends}
  Let $\bm{\pi} \in (0,1]^n$ be a stochastic vector and $(H^+,J^+)$ and $(H^-,J^-)$ be Markov additive subordinators. We say that $(H^+,J^+)$ is 
  a \emph{$\bm{\pi}$-friend}
  of $(H^-, J^-)$ if there exists a Markov additive process $(\xi,J)$, such that 
  \begin{equation}\label{eq: wienerhopf}
    \bm{\Psi}(\theta) = - \bm{\Delta}^{-1}_{\bm{\pi}}\bm{\Psi}^-(-\theta)^\top \bm{\Delta}_{\bm{\pi}} \bm{\Psi}^+(\theta),\quad \theta \in \R,
  \end{equation}
  and $\bm{\pi}^\top\bm{\Psi}(0) \le \mathbf{0}^\top$.
  Here, $\bm{\Delta}_{\bm{\pi}} = \mathrm{diag}(\bm{\pi})$ and $\bm{\Psi}, \bm{\Psi}^+$ and $\bm{\Psi}^-$ are the MAP exponents of $(\xi,J)$, $(H^+,J^+)$ and $(H^-,J^-)$, respectively. In this case, we call $(\xi,J)$ 
  the \emph{bonding MAP} of the $\bm{\pi}$-friends $(H^+,J^+)$ with $(H^-,J^-)$.
\end{definition}

While the requirement \eqref{eq: wienerhopf} is motivated by the matrix Wiener--Hopf factorisation, the additional condition $\bm{\pi}^\top \bm{\Psi}(0) \leq \bm{0}^\top$ makes sure that $\bm{\pi}$ is a valid candidate for an invariant distribution of the modulator $J$. 
This also entails that, as suggested by the name, $\bm{\pi}$-friendship is a symmetric relation:
\begin{proposition}
  \label{prop:dual-friends}
  $(H^+,J^+)$ is a $\bm{\pi}$-friend of $(H^-,J^-)$ if, and only if,
  $(H^-,J^-)$ is a $\bm{\pi}$-friend of $(H^+,J^+)$.
\end{proposition}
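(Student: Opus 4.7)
The plan is to realise the candidate reverse bonding MAP as the $\bm{\pi}$-dual of the given one. Suppose $(H^+, J^+)$ is a $\bm{\pi}$-friend of $(H^-, J^-)$ with bonding MAP $(\xi, J)$ of exponent $\bm{\Psi}$. I would set
\[
  \hat{\bm{\Psi}}(\theta) \coloneqq \bm{\Delta}_{\bm{\pi}}^{-1} \bm{\Psi}(-\theta)^\top \bm{\Delta}_{\bm{\pi}}, \quad \theta \in \R,
\]
and aim to prove that $\hat{\bm{\Psi}}$ is the MAP exponent of an actual MAP for which \eqref{eq: wienerhopf} holds with $+$ and $-$ interchanged and $\bm{\pi}^\top \hat{\bm{\Psi}}(0) \leq \bm{0}^\top$. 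The converse direction will then come for free because $\bm{\Psi} \mapsto \hat{\bm{\Psi}}$ is an involution.

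The algebra is a direct substitution: transposing the factorisation for $\bm{\Psi}(-\theta)$ and cancelling the inner $\bm{\Delta}_{\bm{\pi}}^{-1} \bm{\Delta}_{\bm{\pi}}$ yields
\[
  \hat{\bm{\Psi}}(\theta) = -\bm{\Delta}_{\bm{\pi}}^{-1} \bm{\Psi}^+(-\theta)^\top \bm{\Delta}_{\bm{\pi}} \bm{\Psi}^-(\theta),
\]
which is \eqref{eq: wienerhopf} with the roles of $+$ and $-$ swapped. The auxiliary inequality for the reversed pair follows from $\bm{\Psi}(0)\bm{1} = -\bm{\dag}$ (automatic for any MAP exponent):
\[
  \bm{\pi}^\top \hat{\bm{\Psi}}(0) = \bm{1}^\top \bm{\Psi}(0)^\top \bm{\Delta}_{\bm{\pi}} = (\bm{\Psi}(0)\bm{1})^\top \bm{\Delta}_{\bm{\pi}} = -\bm{\dag}^\top \bm{\Delta}_{\bm{\pi}} \leq \bm{0}^\top.
\]

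The main obstacle is showing that $\hat{\bm{\Psi}}$ is the exponent of an actual MAP, not merely a matrix function of the right Wiener--Hopf shape. Reading off components from the decomposition $\bm{\Psi}(-\theta) = \bm{\Delta}_{\bm{\psi}(-\theta)} + \bm{Q} \odot \bm{G}(-\theta) - \bm{\Delta}_{\bm{\dag}}$ under conjugation by $\bm{\Delta}_{\bm{\pi}}^{\pm 1}$, the natural candidate characteristics are the reflected Lévy exponents $\hat{\psi}_i(\theta) = \psi_i(-\theta)$, the off-diagonal rates $\hat{Q}_{i,j} = (\pi_j/\pi_i) Q_{j,i} \geq 0$ for $i \neq j$, and the reflected jump laws $\hat{F}_{i,j} = \tilde{F}_{j,i}$. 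Completing $\hat{\bm{Q}}$ to a generator by $\hat{Q}_{i,i} = -\sum_{j\neq i}\hat{Q}_{i,j}$ and then matching the diagonal of $\hat{\bm{\Psi}}(0)$ forces the killing vector to be $\hat{\dag}_i = \dag_i - (\bm{\pi}^\top \bm{Q})_i/\pi_i$. The key observation is that the hypothesis $\bm{\pi}^\top \bm{\Psi}(0) \leq \bm{0}^\top$ in Definition~\ref{def: friends} rearranges exactly to $(\bm{\pi}^\top \bm{Q})_i \leq \pi_i \dag_i$, forcing $\hat{\dag}_i \geq 0$ and certifying that $(\hat{\bm{\psi}}, \hat{\bm{Q}}, \hat{\bm{F}}, \hat{\bm{\dag}})$ are valid MAP characteristics. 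This step is the sole place where the auxiliary inequality in Definition~\ref{def: friends} is used, and reveals why it is the right condition to encode beyond the bare factorisation equation.
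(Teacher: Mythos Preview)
Your proof is correct and follows essentially the same route as the paper: define $\hat{\bm{\Psi}}$ as the $\bm{\pi}$-dual, verify the swapped factorisation algebraically, and check the two remaining conditions. The only cosmetic difference is that the paper packages the verification that $\hat{\bm{\Psi}}$ is a genuine MAP exponent into a citation of Lemma~\ref{lemma: map exp} (noting that its condition~\ref{cond3} for $\hat{\bm{\Psi}}$ is exactly $\bm{\pi}^\top\bm{\Psi}(0)\le\bm{0}^\top$), whereas you unpack the dual characteristics $(\hat{\psi}_i,\hat{\bm{Q}},\hat{F}_{i,j},\hat{\bm{\dag}})$ explicitly and check $\hat{\dag}_i\ge 0$ by hand; these are the same argument at different levels of abstraction.
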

\begin{proof}
  Consider the matrix function
  \begin{equation}\label{eq:duality}
    \hat{\bm{\Psi}}(\theta) = \bm{\Delta}_{\bm{\pi}}^{-1} \bm{\Psi}(-\theta)^\top
    \bm{\Delta}_{\bm{\pi}}
    = - \bm{\Delta}_{\bm{\pi}}^{-1} \bm{\Psi}^+(-\theta)^\top \bm{\Delta}_{\bm{\pi}}
    \bm{\Psi}^-(\theta).
  \end{equation}
  This function meets the conditions of Lemma~\ref{lemma: map exp}. In particular,
  the inequality $\bm{\pi}^\top\bm{\Psi}(0) \le \bm{0}^\top$ ensures that condition \ref{cond3} of
  that lemma is
  satisfied for $\hat{\bm{\Psi}}$; and in turn, the condition $\bm{\pi}^\top\hat{\bm{\Psi}}(0)
  \le \bm{0}^\top$ is implied by \ref{cond3} for $\bm{\Psi}$.
\end{proof}

Proposition~\ref{prop:dual-friends} says that the bonding MAP of
$(H^-,J^-)$ with $(H^+,J^+)$ is equal to the $\bm{\pi}$-dual of the
bonding MAP of $(H^+,J^+)$ with $(H^-,J^-)$.
As a consequence of this result, we will often refer to two MAPs as being
\textit{$\bm{\pi}$-friends}, rather than imposing an order.

Our goal in this section is two-fold:
\begin{enumerate}[label=(\arabic*), ref=(\arabic*)]
  \item given two $\bm{\pi}$-friends, express the Lévy measure matrix of the bonding MAP in terms of $\bm{\pi}$ and the Lévy measure matrices of the friends;
  \item find necessary and sufficient criteria for two MAP subordinators to be $\bm{\pi}$-friends.
\end{enumerate}

\subsection{The Lévy measure matrix of the bonding MAP}

Let us now investigate the relationship between the Lévy measure matrices of friends and their bonding MAP,
complementing the \'equations amicales invers\'es for MAPs derived in
\cite{doering21}
and extending the équations amicales for L\'evy processes in \cite{vigon2002}.

The following results show that the Lévy characteristics and transitional jumps of friends must satisfy certain compatibility conditions. The proofs of the propositions will be given along the proof of Theorem \ref{theo: eq amicales}. Proposition \ref{prop: dens}, describing the existence of densities of the jump measures associated to $\bm{\pi}$-friends subject to existence of Lévy drifts of the respective friend is the natural generalization of Proposition 3.1 in \cite{vigon2002}. Proposition \ref{prop: comp cond} and the following corollary demonstrate that transitional jumps and Lévy jumps in a friendship must be finely synchronized. This aspect has no counterpart for friendships of Lévy processes and is among the reasons why constructing explicit examples of MAP friendships is far from being a trivial extension of the strategy known for Lévy friendships.

The measures
\[ 
  \chi^+_i(\diff{x}) 
  = d^+_i \delta_0(\diff{x}) + \one_{(0,\infty)}(x)\overbar{\Pi}{}^+_i(x)\diff{x},
\]
and
\[
  \tilde{\chi}^-_i(\diff{x}) 
  = d^-_i \delta_0(\diff{x}) + \one_{(-\infty,0)}(x)\overbar{\tilde{\Pi}}{}^-_i(x)\diff{x},
\]
defined for $x \in \R$ and $i \in [n]$, will play an important role throughout
the section.
Probabilistically,
$\chi^+_i$ can be interpreted as
the invariant measure of the overshoot process associated to (an unkilled version of) $H^{+,(i)}$,
and $\tilde{\chi}{}^-_i(-\diff{x})$ as the same quantity for $H^{-,(i)}$;
see \cite[Theorem 3.10]{doering21}.

\begin{proposition}\label{prop: dens}
  Suppose that $(H^+,J^+)$ is a $\bm{\pi}$-friend of $(H^-,J^-)$. Then, for any $i \in [n]$, $\Pi^\pm_i$  has a density $\uppartial \Pi^\pm_i$  on $(0,\infty)$ if $d^\mp_i > 0$, which has a càdlàg  version. Moreover, for any $i,j \in [n]$ with $i \neq j$, $F^{\pm}_{i,j}$  restricted to $(0,\infty)$ has a density $f^\pm_{i,j}$  if $d^\mp_i > 0$, which can also be chosen càdlàg. 
\end{proposition}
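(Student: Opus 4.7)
The plan is to prove this jointly with Theorem~\ref{theo: eq amicales}, extracting the density claim from the distributional form of the Wiener--Hopf factorisation. Using the framework of Section~\ref{sec: map intro}, I would first cast the matrix identity \eqref{eq: wienerhopf} as an $n \times n$ system of convolution identities between tempered distributions: for each pair $j, i \in [n]$,
\[
\mathscr{F}^{-1}\bm{\Psi}(\theta)_{j,i} = -\pi_j^{-1}\sum_{k=1}^n \pi_k\, \mathscr{F}^{-1}\bigl(\bm{\Psi}^-(-\theta)_{k,j}\bigr) \ast \mathscr{F}^{-1}\bigl(\bm{\Psi}^+(\theta)_{k,i}\bigr),
\]
with each factor on the right being either the inverse Fourier transform of a diagonal entry having subordinator structure (drift yielding $-d^\pm_i\delta'$, Lévy part $\bbGamma\Pi^\pm_i$, and a killing $\delta$-mass) or of an off-diagonal entry $q^\pm_{k,\ell}G^\pm_{k,\ell}$, which is the Fourier transform of a finite transitional measure.

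For the density of $\Pi^+_i$, I would specialise to $j = i$ and restrict the resulting distributional identity to $(0, \infty)$. On this open set $\mathscr{F}^{-1}(\bm{\Psi}^+(\theta)_{i,i})$ coincides with the Radon measure $\Pi^+_i$, while the drift component $-\iu d^-_i\theta$ of $\bm{\Psi}^-(-\theta)_{i,i}$ inverse-transforms to $d^-_i\delta'$; convolving produces a contribution equal to $d^-_i (\Pi^+_i)'$ on $(0, \infty)$, where the derivative is distributional. I would then verify that all other summands restrict on $(0, \infty)$ to signed Radon measures: the left-hand side $\mathscr{F}^{-1}(\bm{\Psi}(\theta)_{i,i})|_{(0,\infty)} = \Pi_i$ by the structure of MAP exponents; the non-drift part of $\mathscr{F}^{-1}(\bm{\Psi}^-(-\theta)_{i,i})$ is supported in $(-\infty, 0]$ and, convolved with $\Pi^+_i$, gives a Radon measure on $(0, \infty)$ via a Fubini argument based on $\int (1 \wedge y)\Pi^-_i(\diff y) < \infty$; and each off-diagonal summand $k \neq i$ reduces to the convolution of the finite reflected measure $q^-_{k,i}\tilde F^-_{k,i}$ with the finite measure $q^+_{k,i}F^+_{k,i}$, whose restriction to $(0, \infty)$ is plainly Radon.

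Rearranging, the identity on $(0, \infty)$ takes the form $d^-_i (\Pi^+_i)' = \nu$ for some signed Radon measure $\nu$. Since $d^-_i > 0$, the distributional derivative of $\Pi^+_i$ is itself a Radon measure. A standard argument (two distributions with the same distributional derivative differ by a multiple of Lebesgue measure) then forces $\Pi^+_i$ to be absolutely continuous on $(0, \infty)$ with a density of locally bounded variation, which admits a càdlàg version $\uppartial \Pi^+_i$ by taking right limits.

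The analogous claim for $F^+_{i,j}$ with $j \neq i$ follows by running the same scheme at the off-diagonal entry $(i, j)$ of the Wiener--Hopf identity: the factor $\bm{\Psi}^+(\theta)_{i,j} = q^+_{i,j} G^+_{i,j}(\theta)$ carries the transitional jump measure $F^+_{i,j}$, and its coupling with the drift $d^-_i$ of $\bm{\Psi}^-(-\theta)_{i,i}$ in the $k = i$ summand forces $F^+_{i,j}|_{(0,\infty)}$ to be absolutely continuous with a càdlàg density $f^+_{i,j}$. The descending counterparts ($\Pi^-_i$ and $F^-_{i,j}$ when $d^+_i > 0$) follow by invoking the $\bm{\pi}$-duality of Proposition~\ref{prop:dual-friends}, which interchanges the roles of $\pm$. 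The main obstacle I anticipate is the precise justification that the convolutions involving the infinite-activity tempered distribution $\bbGamma\Pi^-_i$ and the Radon measure $\Pi^+_i$ indeed restrict to Radon measures (rather than distributions of higher order) on $(0, \infty)$; this is a bookkeeping task that relies on the decomposition of each $\bbGamma\Pi^\pm_i$ into a compactly supported tempered distribution and a finite-measure part on $[-1,1]^{\mathrm{c}}$, as set up at the end of Section~\ref{sec: map intro}.
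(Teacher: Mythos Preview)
Your strategy is close in spirit to the paper's, but the step you flag as ``bookkeeping'' is in fact the crux, and as stated it fails. Concretely, you want to isolate $d^-_i(\Pi^+_i)'$ and declare that every other term on $(0,\infty)$ is a signed Radon measure. The problematic term is $\bbGamma\tilde\Pi^-_i * \Pi^+_i\big|_{(0,\infty)}$. When $\Pi^-_i$ has infinite activity, $\bbGamma\tilde\Pi^-_i$ is a distribution of order one (the pairing $\int(\varphi(x)-\varphi(0))\,\tilde\Pi^-_i(\diff x)$ needs $\varphi\in C^1$ to converge), and convolving a first-order distribution with a Radon measure gives, in general, a first-order distribution rather than a Radon measure. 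Equivalently, since $\bbGamma\tilde\Pi^-_i=(\overbar{\tilde\Pi}{}^-_i)'$, the term in question is the distributional derivative of the function $\overbar{\tilde\Pi}{}^-_i*\Pi^+_i$; at this stage of the argument you only know this function is locally integrable, not of locally bounded variation, so its derivative need not be a measure. The compact-support/finite-measure splitting from Section~\ref{sec: map intro} does not help here: the finite-measure part is harmless, but the compactly supported piece near zero is still order one.

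The paper circumvents this by taking a primitive of the entire distributional identity \emph{before} restricting to $(0,\infty)$. After integration, $\bbGamma\tilde\Pi^-_i$ becomes the locally integrable function $\overbar{\tilde\Pi}{}^-_i$, so $\overbar{\tilde\Pi}{}^-_i*\Pi^+_i$ is a function on $(0,\infty)$, and the left-hand side becomes the tail $\overbar{\Pi}_i$. Every summand is then a function except the single term $d^-_i\,\Pi^+_i$, which is therefore forced to be absolutely continuous. The c\`adl\`ag version is obtained afterwards by first extracting local boundedness of the density from the primitive identity and then showing continuity of $\overbar{\tilde\Pi}{}^-_i*\Pi^+_i$. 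So the missing idea in your plan is precisely this passage to primitives; once you insert it, the rest of your outline (including the off-diagonal case and the dual argument for the descending side) goes through essentially as in the paper.
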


\begin{proposition}\label{prop: comp cond}
  Suppose that $(H^+,J^+)$ is a $\bm{\pi}$-friend of $(H^-, J^-)$. 
  Then, for any $i,j \in [n]$ with $i \neq j$, it holds that
  \[q^+_{i,j} F^+_{i,j} \ast \tilde{\chi}{}^-_i(\diff{x}) - \frac{\pi(j)}{\pi(i)} q^-_{j,i} \tilde{F}{}^-_{j,i} \ast \chi^+_j(\diff{x}) = \underbars{\nu}{}_{i,j}(x)\diff{x}, \quad x \in \R,\]
  for some finite signed measure $\nu_{i,j}$ with $\nu_{i,j}(\R) = 0$.
  Moreover,
  \begin{align*}
    q_{i,j} F_{i,j}(\{0\}) &= (\dagger^-_i- q^-_{i,i})q^+_{i,j} F^+_{i,j}(\{0\}) + (\dagger^+_j-q^+_{j,j}) \frac{\pi(j)}{\pi(i)}q^-_{j,i} F^-_{j,i}(\{0\}) - \nu_{i,j}(\{0\})\\
    &\quad- \sum_{k \neq i,j} \frac{\pi(k)}{\pi(i)} q^-_{k,i} q^+_{k,j} \tilde{F}{}^-_{k,i} \ast F^+_{k,j}(\{0\}).
  \end{align*}
\end{proposition}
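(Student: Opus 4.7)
The strategy is to extract the $(i,j)$-entry, $i \neq j$, of the Wiener--Hopf identity~\eqref{eq: wienerhopf} and translate the resulting equality of characteristic functions into a convolution identity on tempered distributions, following the framework laid out at the end of Section~\ref{sec: map intro}. Expanding $-\bm{\Delta}_{\bm{\pi}}^{-1}\bm{\Psi}^-(-\theta)^\top\bm{\Delta}_{\bm{\pi}}\bm{\Psi}^+(\theta)$ at position $(i,j)$ yields the sum $-\sum_k \tfrac{\pi(k)}{\pi(i)}(\bm{\Psi}^-(-\theta))_{k,i}(\bm{\Psi}^+(\theta))_{k,j}$, in which the $k = i$ and $k = j$ terms involve the full diagonal scalar exponents $(\bm{\Psi}^-(-\theta))_{i,i} = \psi^-_i(-\theta) + q^-_{i,i} - \dagger^-_i$ and $(\bm{\Psi}^+(\theta))_{j,j} = \psi^+_j(\theta) + q^+_{j,j} - \dagger^+_j$, while the remaining factors are of the form $q^\pm_{\cdot,\cdot}G^\pm_{\cdot,\cdot}$. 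The key computation is the pair of Fourier identities
\[
  \psi^+_j(\theta) = \mathrm{i}\theta\,\mathscr{F}\chi^+_j(\theta),\qquad \psi^-_i(-\theta) = -\mathrm{i}\theta\,\mathscr{F}\tilde{\chi}^-_i(\theta),
\]
obtained by rewriting $\mathrm{e}^{\mathrm{i}\theta y}-1 = \mathrm{i}\theta\int_0^y \mathrm{e}^{\mathrm{i}\theta x}\,\diff x$ and invoking Fubini.

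Substituting these into the $(i,j)$ entry of the factorisation and matching with $(\bm{\Psi}(\theta))_{i,j} = q_{i,j} G_{i,j}(\theta)$ produces, for every $\theta \in \R$, an identity of the form
\[
  \mathrm{i}\theta\,\mathscr{F} T(\theta) = \mathscr{F} M(\theta),
\]
where $T \coloneqq q^+_{i,j}F^+_{i,j}\ast\tilde{\chi}^-_i - \tfrac{\pi(j)}{\pi(i)}q^-_{j,i}\tilde{F}^-_{j,i}\ast\chi^+_j$ and $M$ is the finite signed measure formed from the $\theta$-independent residue,
\[
  M = q_{i,j}F_{i,j} - (\dagger^-_i - q^-_{i,i})q^+_{i,j}F^+_{i,j} - \tfrac{\pi(j)}{\pi(i)}q^-_{j,i}(\dagger^+_j - q^+_{j,j})\tilde{F}^-_{j,i} + \sum_{k \neq i,j}\tfrac{\pi(k)}{\pi(i)}q^-_{k,i}q^+_{k,j}\tilde{F}^-_{k,i}\ast F^+_{k,j}.
\]
Because $\mathrm{i}\theta\,\mathscr{F} T = \mathscr{F}(-T')$ in $\mathcal{S}^\prime(\R)$, the Fourier isomorphism upgrades this identity of continuous functions to the distributional equation $T' = -M$ on $\mathcal{S}^\prime(\R)$.

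The absolutely continuous measure $V(\diff x) \coloneqq -\underbars{M}(x)\,\diff x$ also satisfies $V' = -M$: integration by parts on a Schwartz test function, exploiting boundedness of $\underbars{M}$ and $\varphi(\pm\infty) = 0$, gives $\langle V',\varphi\rangle = \int\varphi\,\diff M$. Hence $T - V$ is a constant multiple of Lebesgue measure, so $T$ is absolutely continuous with density $-\underbars{M}(x) + c$ for some $c \in \R$. To fix $c$, one analyses the density of $T$ as $x \to -\infty$: the density of $F^+_{i,j}\ast\tilde{\chi}^-_i$ at $x < 0$ equals $\int_0^\infty \overbar{\Pi}^-_i(s + |x|)\,F^+_{i,j}(\diff s)$ and vanishes by dominated convergence, while the density of $\tilde{F}^-_{j,i}\ast\chi^+_j$ at $x < 0$ is a convolution integral against $\tilde{F}^-_{j,i}$ on $(-\infty,x)$ and vanishes since $\tilde{F}^-_{j,i}((-\infty,x]) \to 0$. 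Combined with $\underbars{M}(-\infty) = 0$, this forces $c = 0$ and gives $T(\diff x) = -\underbars{M}(x)\,\diff x = \underbars{\nu}_{i,j}(x)\,\diff x$ with $\nu_{i,j} \coloneqq -M$. The property $\nu_{i,j}(\R) = 0$ follows immediately by setting $\theta = 0$ in $\mathrm{i}\theta\,\mathscr{F} T(\theta) = \mathscr{F} M(\theta)$, and the supplementary formula for $q_{i,j}F_{i,j}(\{0\})$ is obtained by reading $\nu_{i,j}(\{0\}) = -M(\{0\})$ off the explicit expression for $M$ and rearranging, using $\tilde{F}^-_{j,i}(\{0\}) = F^-_{j,i}(\{0\})$. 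The main obstacle in the plan is the tail analysis at $-\infty$ needed to pin down the additive constant; everything else is a direct unwinding of the tempered-distribution machinery from Section~\ref{sec: map intro}.
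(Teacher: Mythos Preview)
Your approach is essentially the paper's, packaged on the Fourier side rather than via inverse Fourier transforms: the identity $\psi_j^+(\theta) = \mathrm{i}\theta\,\mathscr{F}\chi_j^+(\theta)$ is the Fourier dual of the paper's $(\chi_j^+)' = d_j^+\delta' - \bbGamma\Pi_j^+$, and both routes give $T' = -M$ with the same $T$ and $M$, after which one integrates and fixes the constant. Two points need tightening, however.

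First, the Fubini derivation of $\psi_j^+(\theta) = \mathrm{i}\theta\,\mathscr{F}\chi_j^+(\theta)$ as a \emph{pointwise} identity requires $\chi_j^+(\R) = d_j^+ + \int_0^\infty\overbar{\Pi}_j^+(x)\,\diff x < \infty$, i.e.\ finite mean, which is not assumed. The identity is still valid in $\mathcal{S}'(\R)$ and that suffices for $T' = -M$; but then your ``set $\theta = 0$'' step for $\nu_{i,j}(\R) = 0$ should be justified by evaluating the continuous function $\theta\mapsto -\psi_i^-(-\theta)\,q_{i,j}^+G_{i,j}^+(\theta) - \tfrac{\pi(j)}{\pi(i)}q_{j,i}^-G_{j,i}^-(-\theta)\,\psi_j^+(\theta)$ at $\theta = 0$, not by treating $\mathscr{F}T$ as a function.

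Second, the tail analysis for $c = 0$ is incomplete. You omit the $d_j^+\tilde{F}_{j,i}^-$ piece of $\tilde{F}_{j,i}^-\ast\chi_j^+$ on $(-\infty,0)$, which need not have a density until one has extracted (from the absolute continuity of $T$ itself) the conclusion of Proposition~\ref{prop: dens}; the paper proves the two propositions jointly for this reason. More seriously, ``vanishes since $\tilde{F}_{j,i}^-((-\infty,x]) \to 0$'' does not control $\int_{(-\infty,x)}\overbar{\Pi}_j^+(x-s)\,\tilde{F}_{j,i}^-(\diff s)$ pointwise when $\overbar{\Pi}_j^+$ blows up at $0^+$. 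The paper sidesteps this by testing against a smooth $\varphi$ supported in a unit interval drifting to $-\infty$: one then only needs $\int_{-n}^{-n+1}\!\int_{(-\infty,x)}\overbar{\Pi}_j^+(x-s)\,\tilde{F}_{j,i}^-(\diff s)\,\diff x \to 0$, and Fubini bounds this by $\bigl(\int_0^1\overbar{\Pi}_j^+(u)\,\diff u + \overbar{\Pi}_j^+(1)\bigr)\,\tilde{F}_{j,i}^-((-\infty,-n+1])$, which does tend to zero. This weak (integrated) formulation is what makes the constant determination go through without moment assumptions.
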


\begin{corollary}\label{coro: balance atoms}
  Suppose that $(H^+,J^+)$ is a $\bm{\pi}$-friend of $(H^-, J^-)$ and that the measures $F^\pm_{i,j}$ are continuous on $(0,\infty)$ for distinct $i,j \in [n]$. Then, 
  \[\bm{\Delta}^-_{\bm{d}}\bm{\Pi}^+(\{0\}) = \bm{\Delta}_{\bm{\pi}}^{-1} \big(\bm{\Delta}_{\bm{d}}^+\bm{\Pi}^-(\{0\})\big)^\top \bm{\Delta}_{\bm{\pi}}.\]
\end{corollary}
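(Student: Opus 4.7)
The plan is to verify the matrix identity entry by entry, reducing it to a scalar atom-at-zero calculation drawn from Proposition~\ref{prop: comp cond}. Unpacking both sides, the $(i,j)$ entry of the claimed identity reads $d^-_i \bm{\Pi}^+_{i,j}(\{0\}) = \tfrac{\pi(j)}{\pi(i)} d^+_j \bm{\Pi}^-_{j,i}(\{0\})$. The diagonal case $i = j$ is immediate: the diagonal blocks $\bm{\Pi}^\pm_{i,i}$ are Lévy measures of the Lévy components $H^{\pm,(i)}$ and thus carry no mass at $\{0\}$. For $i \neq j$, writing $\bm{\Pi}^\pm_{i,j}(\{0\}) = q^\pm_{i,j} F^\pm_{i,j}(\{0\})$, the claim becomes the scalar identity
\[d^-_i q^+_{i,j} F^+_{i,j}(\{0\}) = \tfrac{\pi(j)}{\pi(i)} d^+_j q^-_{j,i} F^-_{j,i}(\{0\}).\]

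To obtain this, I would invoke Proposition~\ref{prop: comp cond}, which asserts that
\[q^+_{i,j} F^+_{i,j} \ast \tilde{\chi}^-_i - \tfrac{\pi(j)}{\pi(i)} q^-_{j,i} \tilde{F}^-_{j,i} \ast \chi^+_j = \underbars{\nu}_{i,j}(x)\diff x\]
as measures on $\R$. Since the right-hand side is absolutely continuous, the left-hand side has no atoms; in particular, its mass at $\{0\}$ vanishes. The task thus reduces to computing the atoms at zero of the two convolutions on the left.

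This is where the continuity hypothesis on $F^\pm_{i,j}$ on $(0,\infty)$ plays its role. The measure $\tilde{\chi}^-_i = d^-_i \delta_0 + \one_{(-\infty,0)}(x)\overbar{\tilde{\Pi}}{}^-_i(x)\diff x$ has a unique atom at $0$ of mass $d^-_i$, while $F^+_{i,j}$ is supported on $[0,\infty)$ and, by hypothesis, has at most a single atom, located at $0$. Hence the only pair $(x,y)$ in the respective atomic supports satisfying $x+y = 0$ is $(0,0)$, giving $(F^+_{i,j} \ast \tilde{\chi}^-_i)(\{0\}) = d^-_i F^+_{i,j}(\{0\})$. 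A symmetric computation yields $(\tilde{F}^-_{j,i} \ast \chi^+_j)(\{0\}) = d^+_j F^-_{j,i}(\{0\})$. Substituting these into the atom-at-zero identity produces the desired scalar relation, which completes the proof.

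I do not anticipate any significant obstacle here: the corollary is essentially the $\{0\}$-shadow of Proposition~\ref{prop: comp cond}, and the continuity hypothesis is precisely what is needed to rule out stray atoms of $F^\pm_{i,j}$ away from zero that could otherwise contribute to the convolution atoms. The only part requiring some care is verifying that the supports of the atomic parts of $F^+_{i,j}$ and $\tilde{\chi}^-_i$ (respectively $F^-_{j,i}$ and $\chi^+_j$) only combine to produce a $\{0\}$-atom, but this is automatic once one recalls that MAP subordinators have $F^\pm_{i,j}$ supported on $[0,\infty)$ and that $\chi^\pm$ are supported on the corresponding half-line.
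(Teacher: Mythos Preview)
Your proof is correct and follows essentially the same approach as the paper: invoke Proposition~\ref{prop: comp cond} to obtain absolute continuity of $q^+_{i,j} F^+_{i,j} \ast \tilde{\chi}^-_i - \tfrac{\pi(j)}{\pi(i)} q^-_{j,i} \tilde{F}^-_{j,i} \ast \chi^+_j$, then evaluate the atom at $0$ using the continuity hypothesis on $F^\pm_{i,j}$. You are a bit more explicit than the paper in handling the diagonal case and in justifying why the only contribution to the convolution atom at $0$ comes from the $\delta_0$ parts, but the argument is the same.
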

\begin{proof}
  By Proposition \ref{prop: comp cond}, the measure $q^+_{i,j} F^+_{i,j} \ast \tilde{\chi}{}^-_i - \frac{\pi(j)}{\pi(i)} q^-_{j,i} \tilde{F}{}^-_{j,i} \ast \chi^+_j$ is absolutely continuous. Using that the measures $F^\pm_{i,j}$ are continuous on $(0,\infty)$, we obtain for  $i \neq j$ that
  \[0 = q^+_{i,j} F^+_{i,j} \ast \tilde{\chi}{}^-_i(\{0\}) - \frac{\pi(j)}{\pi(i)} q^-_{j,i} \tilde{F}{}^-_{j,i} \ast \chi^+_j(\{0\}) = d^-_i q^+_{i,j}F^+_{i,j}(\{0\}) - d^+_j\frac{\pi(j)}{\pi(i)}q^-_{j,i}F^-_{j,i}(\{0\}),\]
  which implies 
  \[d^-_i q^+_{i,j}F^+_{i,j}(\{0\}) = d^+_j\frac{\pi(j)}{\pi(i)}q^-_{j,i}F^-_{j,i}(\{0\}),\]
  proving the claim.
\end{proof}

The following statement refines the characterisation of the Lévy measures $\Pi^+_i$ and $\Pi^-_i$ of $\bm{\pi}$-friends in terms of càdlàg densities whenever $d^-_i > 0$ and $d^+_i > 0$, resp., as stated above. The proof is almost identical to that of Théorème 6.4.1 in \cite{vigondiss} if we take into account the équations amicales of MAPs given in Theorem \ref{theo: eq amicales} below and is therefore omitted.
\begin{proposition}\label{prop: levy dens}
  Suppose that $(H^+,J^+)$ is a $\bm{\pi}$-friend of $(H^-,J^-)$. Then, for any $i \in [n]$, if $d^\mp_i > 0$ the measure $\Pi^\pm_i$  restricted to $(0,\infty)$ has a càdlàg  density $\uppartial \Pi^\pm_i = \overbar{\eta}{}^\pm_i$, where $\eta^\pm_i$  is a signed measure on $(0,\infty)$.
\end{proposition}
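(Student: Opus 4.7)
\emph{Proof plan.} We focus on the statement for $\uppartial \Pi^+_i$ under the hypothesis $d^-_i > 0$; the corresponding statement for $\uppartial \Pi^-_i$ is obtained by the same argument applied to the $(-\infty,0)$-branch of $\bm{\Upsilon}$. The starting point is the équation amicale of Theorem~\ref{theo: eq amicales}, which gives $\bm{\Pi}(x,\infty) = \bm{\Upsilon}(x)$ for a.e.\ $x > 0$. Reading off the $(i,i)$-th entry of this matrix identity, the only place in which $\uppartial \Pi^+_i$ appears is the diagonal contribution $d^-_i \uppartial \Pi^+_i(x)$ from $\bm{\Delta}^-_{\bm{d}} \uppartial \bm{\Pi}^+(x)$, since $\bm{\Delta}^-_{\bm{d}}$ is diagonal. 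Because $d^-_i > 0$, one may solve for $\uppartial \Pi^+_i(x)$ and obtain, for a.e.\ $x > 0$, a representation of $\uppartial \Pi^+_i(x)$ as a finite linear combination of the right-tails $\overbar{\Pi}_i$ and $\overbar{\Pi}^+_i$, the convolution-type integral $\int_{x+}^\infty \overbar{\Pi}^-_i(y-x) \, \Pi^+_i(\diff{y})$, and analogous off-diagonal integrals built from the transitional measures $F^\pm_{k,i}$ and rates $q^\pm_{k,i}$ (weighted by $\pi(k)/\pi(i)$).

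The key step, identical in spirit to Vigon's treatment in \cite[Théorème~6.4.1]{vigondiss}, is to show that each term in this linear combination equals, a.e., the right-tail of a signed measure on $(0,\infty)$. For the pure tail terms $\overbar{\Pi}_i$ and $\overbar{\Pi}^+_i$ this is true by definition. For each convolution-type integral $\int_{x+}^\infty \overbar{\mu}(y-x) \, \nu(\diff{y})$ one applies Fubini's theorem to rewrite it as
\[
  \int_{0}^\infty \nu\bigl((x,x+u)\bigr) \, \mu(\diff{u}) = \int_0^\infty \bigl( \overbar{\nu}(x) - \overbar{\nu}(x+u) \bigr) \, \mu(\diff{u}),
\]
which, up to finite boundary corrections, coincides with the right-tail at $x$ of a signed measure built from the reflected convolution $\tilde{\mu} \ast \nu$ together with constant multiples of $\nu$ and $\mu$. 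The off-diagonal terms are treated in the same way, using the centring $\overbar{F}^-_{k,i}(y-x) - 1 = -\underbars{F}^-_{k,i}(y-x)$ and the fact that $F^-_{k,i}$ is a probability measure on $[0,\infty)$.

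Summing these finitely many signed-measure tails and dividing by $d^-_i > 0$ yields $\uppartial \Pi^+_i(x) = \overbar{\eta}^+_i(x)$ for a.e.\ $x > 0$, where $\eta^+_i$ is a signed measure on $(0,\infty)$ built explicitly from the Lévy measures of the ordinators and the transitional measures of the friends. Since Proposition~\ref{prop: dens} supplies a càdlàg version of $\uppartial \Pi^+_i$ and every right-tail of a signed measure is itself càdlàg, the two representatives must agree pointwise on $(0,\infty)$, possibly after modifying $\eta^+_i$ on its countable atomic set. The main technical burden is the bookkeeping of off-diagonal transitional contributions to the $(i,i)$-th entry and the verification of Fubini's integrability hypotheses term by term; the former is immediate once the matrix entries of $\bm{\Upsilon}$ are expanded, and the latter follows from $\bm{\pi}$-compatibility together with the integrability of $\bm{\Upsilon}$ built into Theorem~\ref{theo: eq amicales}, so no new analytic difficulty appears beyond Vigon's Lévy setting.
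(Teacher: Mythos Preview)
The proposal is correct and follows precisely the approach the paper indicates: the paper omits the proof but states that it is almost identical to that of Théorème~6.4.1 in \cite{vigondiss} once the MAP équations amicales of Theorem~\ref{theo: eq amicales} are in hand, and this is exactly what you do---solve the $(i,i)$-entry of \eqref{eq: vigon1} (cf.\ \eqref{eq: vigon diag3}) for $\uppartial\Pi^+_i$ and recognise each remaining term as the right-tail of a signed measure via Fubini. Your closing remark about ``modifying $\eta^+_i$ on its countable atomic set'' is unnecessary: two càdlàg functions that agree a.e.\ on $(0,\infty)$ agree everywhere there, so the càdlàg version from Proposition~\ref{prop: dens} coincides with $\overbar{\eta}^+_i$ pointwise without adjustment.
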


The necessary conditions for friendship given in Proposition \ref{prop: dens}, Proposition \ref{prop: comp cond} and Proposition \ref{prop: levy dens} together with the characterisation of MAP exponents in Lemma \ref{lemma: map exp} motivate the following definition. 

\begin{definition} \label{def: comp}
  A MAP subordinator $(H^+,J^+)$ is called $\bm{\pi}$-compatible with a MAP subordinator $(H^-,J^-)$ if the following conditions are satisfied:
  \begin{enumerate}[label = (\roman*), ref= (\roman*)]
    \item for any distinct $i, j \in [n]$, if $d^\mp_i > 0$, the measures $\Pi^\pm_i$ and $F^\pm_{i,j}$  restricted to $(0,\infty)$ have càdlàg  densities $\uppartial \Pi^\pm_i$ and $f^\pm_{i,j}$, respectively, where $\uppartial \Pi^\pm_i$ is given as the right tail of signed measures $\eta^\pm_i$ on $(0,\infty)$;\label{comp cond1}
    \item for any distinct $i,j \in [n]$, there exists a finite signed measure $\nu_{i,j}$ such that $\nu_{i,j}(\R) = 0$ and
      \begin{equation} \label{eq: friend dens}
        q^+_{i,j} F^+_{i,j} \ast \tilde{\chi}{}^-_i(\diff{x}) - \frac{\pi(j)}{\pi(i)} q^-_{j,i} \tilde{F}{}^-_{j,i} \ast \chi^+_j(\diff{x}) = \underbars{\nu}{}_{i,j}(x)\diff{x}, \quad x \in \R,
      \end{equation}
      and moreover
      \begin{equation}
        \begin{split} \label{eq: friends atom0}
          0 &\leq (\dagger^-_i- q^-_{i,i})q^+_{i,j} F^+_{i,j}(\{0\}) + (\dagger^+_j-q^+_{j,j}) \frac{\pi(j)}{\pi(i)}q^-_{j,i} F^-_{j,i}(\{0\}) - \nu_{i,j}(\{0\})\\
          &\quad- \sum_{k \neq i,j} \frac{\pi(k)}{\pi(i)} q^-_{k,i} q^+_{k,j} \tilde{F}{}^-_{k,i} \ast F^+_{k,j}(\{0\});
        \end{split}
      \end{equation} \label{friends cond4}
    \item the vector $\bm{\Delta}_{\bm{\pi}}^{-1}\bm{\Psi}^-(0)^\top\bm{\Delta}_{\bm{\pi}} \bm{\Psi}^+(0)\one$ is nonnegative.
      \label{friends cond3}
    \item
      the vector $\bm{\pi}^\top\bm{\Delta}_{\bm{\pi}}^{-1}\bm{\Psi}^-(0)^\top\bm{\Delta}_{\bm{\pi}} \bm{\Psi}^+(0)$ is nonnegative.
      \label{friends cond-pi}
  \end{enumerate}
\end{definition}
\begin{remark}
  \begin{enumerate}[label = (\roman*), ref= (\roman*)]
    \item
      When $(H^+,J^+)$ is unkilled, \ref{friends cond3} is satisfied and the vector
      in question is zero; and 
      when $(H^-,J^-)$ is unkilled, the same applies to \ref{friends cond-pi}.
    \item
      If condition \ref{comp cond1} of $\bm{\pi}$-compatibility holds,
      then condition \ref{friends cond4} is equivalent to:
      \begin{enumerate}[label = (\alph*), ref= (\alph*)]
        \item 
          $
          q_{i,j}^+ d_i^- F_{i,j}^+(\{0\}) 
          - \frac{\pi(j)}{\pi(i)} q_{j,i}^- d_i^+ F_{j,i}^-(\{0\}) = 0
          $,
        \item
          the expression
          \begin{multline*}
            q_{i,j}^+ \biggl(
              \int_{\R} \mathbf{1}_{\{y\ge 0, y>x\}} \overbar{\Pi}^-_i(y-x) F_{i,j}^+(\diff{y})
              + d_i^- f_{i,j}^+(x)
            \biggr)
            \\
            {} - \frac{\pi(j)}{\pi(i)}
            q_{j,i}^- \biggl(
              \int_{\R} \mathbf{1}_{\{y \geq 0, -x<y\}} \overbar{\Pi}^+_j(x+y) \, F_{j,i}^-(\diff{y})
              + d_j^+ f_{j,i}^-(-x)
            \biggr)
          \end{multline*}
          is a.e.\ equal to a right-continuous, bounded variation function of $x$,
          denoted $n_{i,j}(x)$, whose limit
          as $x\to\pm\infty$ is $0$, and
        \item
          the inequality
          \begin{multline*}
            (\dagger^-_i- q^-_{i,i})q^+_{i,j} F^+_{i,j}(\{0\}) + (\dagger^+_j-q^+_{j,j}) \frac{\pi(j)}{\pi(i)}q^-_{j,i} F^-_{j,i}(\{0\}) \\
            {} - \sum_{k \neq i,j} \frac{\pi(k)}{\pi(i)} q^-_{k,i} q^+_{k,j} \tilde{F}{}^-_{k,i} \ast F^+_{k,j}(\{0\})
            - \lim_{x \downarrow 0} n_{i,j}(x) + \lim_{x \uparrow 0} n_{i,j}(x)
            \ge 0
          \end{multline*}
          holds.
      \end{enumerate}
  \end{enumerate}
\end{remark}

\begin{lemma} 
  \label{lem:compat-symm}
  $(H^+,J^+)$ is $\bm{\pi}$-compatible with $(H^-,J^-)$ if, and only if,
  $(H^-,J^-)$ is $\bm{\pi}$-compatible with $(H^+,J^+)$.
\end{lemma}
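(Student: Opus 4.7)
The plan is to verify, one by one, that each of the four items of $\bm{\pi}$-compatibility in Definition~\ref{def: comp} is preserved under interchanging the roles of $(H^+,J^+)$ and $(H^-,J^-)$. Item~\ref{comp cond1} is symmetric by inspection: both the hypothesis $d_i^\mp > 0$ and the conclusion about $\Pi_i^\pm$ and $F_{i,j}^\pm$ toggle sign simultaneously under the substitution $+\leftrightarrow -$, so the statement is left unchanged.

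For the convolution identity in item~\ref{friends cond4}, I would take \eqref{eq: friend dens} with the index pair $(j,i)$ in place of $(i,j)$, reflect both sides about the origin, and multiply by $-\pi(j)/\pi(i)$. Since reflection distributes across convolution and exchanges $\chi_k^\pm$ with $\tilde{\chi}_k^\pm$, the left-hand side rearranges exactly into the $\pm$-swapped version of \eqref{eq: friend dens}. For the right-hand side, the total-mass-zero property $\nu_{j,i}(\R)=0$ combined with the reflection identities for distribution functions implies that the resulting a.e.\ density has the form $\underbars{\nu}{}'_{i,j}(x)$ with the explicit identification
\[
  \nu'_{i,j} := \frac{\pi(j)}{\pi(i)}\,\tilde{\nu}_{j,i},
\]
and $\nu'_{i,j}(\R)=0$ is then immediate. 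The inequality \eqref{eq: friends atom0} is handled by the same swap-and-rescale device: the drift--killing terms rearrange directly, the $\nu_{j,i}$-contribution transforms via $\tilde{\nu}_{j,i}(\{0\}) = \nu_{j,i}(\{0\})$, and the convolution sum transforms using $\tilde{F}{}^-_{k,j} \ast F^+_{k,i}(\{0\}) = \tilde{F}{}^+_{k,i}\ast F^-_{k,j}(\{0\})$, which holds because both reflection and commutativity of convolution preserve mass at zero.

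Items~\ref{friends cond3} and~\ref{friends cond-pi} form a symmetric pair that must be treated jointly. Writing $M := \bm{\Psi}^-(0)^\top\bm{\Delta}_{\bm{\pi}}\bm{\Psi}^+(0)$, item~\ref{friends cond3} says $\bm{\Delta}_{\bm{\pi}}^{-1} M \one \ge \bm{0}$, which is equivalent to $M\one \ge \bm{0}$ since $\bm{\Delta}_{\bm{\pi}}$ is positive diagonal; item~\ref{friends cond-pi} simplifies via $\bm{\pi}^\top \bm{\Delta}_{\bm{\pi}}^{-1} = \one^\top$ to $\one^\top M \ge \bm{0}^\top$. Interchanging the two MAP subordinators replaces $M$ by $M^\top$, and therefore swapped~\ref{friends cond3} is $M^\top \one \ge \bm{0}$---the transpose of original~\ref{friends cond-pi}---while swapped~\ref{friends cond-pi} is the transpose of original~\ref{friends cond3}. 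The principal obstacle in carrying out this plan is bookkeeping: tracking all reflections, index transpositions, and rescalings without sign errors in the atom inequality \eqref{eq: friends atom0}, where one juggles convolutional masses at zero, killing rates, and off-diagonal transition rates simultaneously. Once the correspondence $\nu'_{i,j} = (\pi(j)/\pi(i))\,\tilde{\nu}_{j,i}$ is recognised, however, the verification becomes essentially routine.
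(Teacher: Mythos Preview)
Your proposal is correct and follows essentially the same approach as the paper: the paper also treats condition~\ref{comp cond1} as manifestly symmetric, derives the swapped version of~\eqref{eq: friend dens} by taking the $(j,i)$ instance, reflecting and rescaling to identify the new signed measure as $\rho_{i,j}(\diff{x}) = \frac{\pi(j)}{\pi(i)}\nu_{j,i}(-\diff{x})$ (your $\nu'_{i,j}$), handles~\eqref{eq: friends atom0} by the same index swap and multiplication by $\pi(i)/\pi(j)$, and shows that conditions~\ref{friends cond3} and~\ref{friends cond-pi} interchange under the swap via the transpose relation you describe.
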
 
\begin{proof} 
  We prove one direction of the equivalence, the other following immediately
  by swapping $(H^+,J^+)$ and $(H^-,J^-)$.
  Clearly, condition \ref{comp cond1} for $\bm{\pi}$-compatibility between $(H^-,J^-)$ and $(H^+,J^+)$ is satisfied since $(H^+,J^+)$ is assumed to be $\bm{\pi}$-compatible with $(H^-,J^-)$. Next, we have 
  \begin{align*} 
    -\frac{\pi(i)}{\pi(j)} \Big( q^-_{i,j} F^-_{i,j} \ast \tilde{\chi}{}^+_i(\diff{x}) - \frac{\pi(j)}{\pi(i)} q^+_{j,i} \tilde{F}{}^+_{j,i} \ast \chi^-_j(\diff{x})\Big) &= q^+_{j,i} \tilde{F}{}^+_{j,i} \ast \chi^-_j(\diff{x}) -\frac{\pi(i)}{\pi(j)} q^-_{i,j} F^-_{i,j} \ast \tilde{\chi}{}^+_i(\diff{x}) \\
    &= q^+_{j,i} F^+_{j,i} \ast \tilde{\chi}{}^-_j(-\diff{x}) -\frac{\pi(i)}{\pi(j)} q^-_{i,j} \tilde{F}{}^-_{i,j} \ast \chi^+_i(-\diff{x})\\
    &= \underbars{\nu}_{j,i}(-x) \diff{x}\\
    &= -\overbar{\nu}_{j,i}(-x) \diff{x}\\ 
    &= -\tilde{\nu}_{j,i}((-\infty,x]) \diff{x},
  \end{align*}
  where for the penultimate line we used $\nu_{j,i}(\R) = 0$.
  Thus, \eqref{eq: friend dens} holds with with the role of $(H^\pm,J^\pm)$ reversed and $\nu_{i,j}$ replaced by the finite signed measure $\rho_{i,j}(\diff{x}) = \frac{\pi(j)}{\pi(i)}\nu_{j,i}(-\diff{x})$, for which $\rho_{i,j}(\R) = 0$.  Moreover, 
  \begin{align*} 
    &\frac{\pi(i)}{\pi(j)} \Big((\dagger^+_i- q^+_{i,i})q^-_{i,j} F^-_{i,j}(\{0\}) + (\dagger^-_j-q^-_{j,j}) \frac{\pi(j)}{\pi(i)}q^+_{j,i} F^+_{j,i}(\{0\}) - \rho_{i,j}(\{0\})\\
    &\qquad  - \sum_{k \neq i,j} \frac{\pi(k)}{\pi(i)} q^+_{k,i} q^-_{k,j} \tilde{F}{}^+_{k,i} \ast F^-_{k,j}(\{0\})\Big)\\ 
    &\, = (\dagger^-_j-q^-_{j,j}) q^+_{j,i} F^+_{j,i}(\{0\}) + \frac{\pi(i)}{\pi(j)} (\dagger^+_i- q^+_{i,i})q^-_{i,j} F^-_{i,j}(\{0\}) - \nu_{j,i}(\{0\}) - \sum_{k \neq i,j} \frac{\pi(k)}{\pi(j)} q^+_{k,i} q^-_{k,j} \tilde{F}{}^+_{k,i} \ast F^-_{k,j}(\{0\})\\
    &\,\geq 0,
  \end{align*}
  by the assumption that $(H^+,J^+)$ is $\bm{\pi}$-compatible with $(H^-,J^-)$. Consequently, \eqref{eq: friends atom0} is satisfied as well with the role of $(H^\pm,J^\pm)$ reversed and $\rho_{i,j}$ in place of $\nu_{i,j}$. 
  We next need to verify that (a) $\bm{\Delta}_{\bm\pi}^{-1} \bm{\Psi}^+(0)^\top \bm{\Delta}_{\bm{\pi}} \bm{\Psi}^-(0)\one$ using hypothesis (b) $\bm{\pi}^\top \bm{\Delta}_{\bm{\pi}}^{-1} \bm{\Psi}^-(0)^\top \bm{\Delta}_{\bm{\pi}} \bm{\Psi}^+(0) \geq \bm{0}^\top$. Statement (b) is equivalent to $\bm{0} \leq \one^\top \bm{\Psi}^-(0)^\top \bm{\Delta}_{\bm{\pi}} \bm{\Psi}^+(0)$ and statement (a) is equivalent to 
  \[\bm{0}^\top \leq \bm{\Delta}_{\bm{\pi}} \big(\bm{\Delta}_{\bm\pi}^{-1} \bm{\Psi}^+(0)^\top \bm{\Delta}_{\bm{\pi}} \bm{\Psi}^-(0)\one\big) = \big(\one^\top \bm{\Psi}^-(0)^\top \bm{\Delta}_{\bm{\pi}} \bm{\Psi}^+(0) \big)^\top.
  \]
  It follows that (b) implies (a).
  The proof that $\bm{\pi}^\top\bm{\Delta}_{\bm{\pi}}^{-1} \Psi^+(0)^\top \bm{\Delta}_{\bm{\pi}} \bm{\Psi}^-(0)$
  is nonnegative is analogous.
\end{proof}

Given two $\bm{\pi}$-compatible MAP subordinators $(H^+,J^+)$ and $(H^-,J^-)$, let 
\[\uppartial \bm{\Pi}^+(x) \coloneqq (\uppartial \Pi^+_i(x)\one_{\{i=j\}} + q^+_{i,j} f^+_{i,j}(x)\one_{\{i \neq j\}})_{i,j=1,\ldots,n}, \quad x > 0,\]
where $\uppartial \Pi^+_i$ is the absolutely continuous part of the measure
$\Pi^+_i$ and $f^+_{i,j}$ the absolutely continuous part of $F^+_{i,j}$, and in
case $d^-_i > 0$ we use the càdlàg versions guaranteed by the definition
of $\bm{\pi}$-compatibility.
We call $\uppartial \bm{\Pi}^+$ the \textit{Lévy density matrix}
of $(H^+,J^+)$, and define $\uppartial \bm{\Pi}^-$ for $(H^-,J^-)$
analogously.

\begin{theorem}[Équations amicales for MAPs]\label{theo: eq amicales}
  Suppose that $(H^+,J^+)$ is a $\bm{\pi}$-friend of $(H^-,J^-)$. Then, for a.e.\ $x > 0$
  \begin{equation} \label{eq: vigon1}
    \bm{\Pi}(x,\infty) = \int_{x+}^\infty \bm{\Delta}_{\bm{\pi}}^{-1}\Big(\overbar{\bm{\Pi}}^-(y-x) - \bm{\Psi}^-(0)\Big)^\top \bm{\Delta}_{\bm{\pi}} \, \bm{\Pi}^+(\diff{y})  + \bm{\Delta}^-_{\bm{d}} \uppartial \bm{\Pi}^+(x),
  \end{equation}
  and for a.e.\ $x < 0$
  \begin{equation} \label{eq: vigon2}
    \bm{\Pi}(-\infty,x) = \int_{(-x)+}^\infty \bm{\Delta}_{\bm{\pi}}^{-1} \big(\bm{\Pi}^-(\diff{y}) \big)^\top \bm{\Delta}_{\bm{\pi}} \, \big(\overbar{\bm{\Pi}}{}^+(y+x) - \bm{\Psi}^+(0)\big)  +  \bm{\Delta}_{\bm{\pi}}^{-1} \big(\bm{\Delta}_{\bm{d}}^+\uppartial\bm{\Pi}^-(-x)\big)^\top \bm{\Delta}_{\bm{\pi}}.
  \end{equation}
\end{theorem}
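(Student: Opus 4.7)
The plan is to apply the inverse Fourier/tempered-distribution machinery set up at the end of Section~\ref{sec: map intro} entrywise to the matrix Wiener--Hopf identity \eqref{eq: wienerhopf}. Writing the $(i,j)$-entry of \eqref{eq: wienerhopf} yields the scalar identity
\[
\Psi_{i,j}(\theta) = -\sum_{k=1}^n \frac{\pi(k)}{\pi(i)}\, \Psi^-_{k,i}(-\theta)\,\Psi^+_{k,j}(\theta), \qquad \theta\in\R.
\]
Each factor $\Psi^\pm_{k,\ell}$ is either a subordinator characteristic exponent (on the diagonal) or a multiple $q^\pm_{k,\ell}G^\pm_{k,\ell}$ of the characteristic function of a probability measure supported on $\R_\pm$ (off-diagonal). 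By \eqref{eq:levy_dist} and the discussion immediately following it, each such factor is the Fourier transform of a tempered distribution of one of two types: a distributional sum $-\dagger\delta - d\delta' + \bbGamma \Lambda$ for a subordinator exponent, or a (multiple of a) finite measure supported on a half-line. Products of these factors therefore correspond under $\mathscr{F}^{-1}$ to convolutions of the associated distributions, and the plan is to compute these convolutions and read off the tails of $\bm{\Pi}$ on $(0,\infty)$ and on $(-\infty,0)$.

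First I would fix $i,j$ and expand $\mathscr{F}^{-1}\Psi_{i,j}$ on the left and the corresponding sum of convolutions on the right. On the left, \eqref{eq:levy_dist} produces a tempered distribution whose restriction to $(0,\infty)$ captures precisely the tail $\Pi_{i,j}(x,\infty)$: the $\delta, \delta', \delta''$ pieces are compactly supported at $0$, while the compensated jump piece $\bbGamma^2 \Pi_{i,j}$ acts on test functions vanishing near $0$ as the measure $\Pi_{i,j}$ itself, whose distribution function on $(0,\infty)$ is $-\Pi_{i,j}(x,\infty)$ up to a sign coming from the $\bbGamma^2$-normalisation. On the right, the convolution of the ``negative-side'' distribution $\mathscr{F}^{-1}[\Psi^-_{k,i}(-\cdot)]$ (supported on $(-\infty,0]$) with the ``positive-side'' distribution $\mathscr{F}^{-1}\Psi^+_{k,j}$ (supported on $[0,\infty)$) is the key object: the $-d^-_i\delta'$ (drift) piece of $\Psi^-_{k,i}(-\cdot)$ convolved with the compensated Lévy piece $\bbGamma\Pi^+_{k,j}$ produces, after restriction to $(0,\infty)$, precisely $d^-_i\,\uppartial\Pi^+_{k,j}(x)$; and the $\overbar{\Pi}^-_{k,i}$-tail piece convolved with the subordinator measure $\Pi^+_{k,j}(\diff y)$ produces the integrand $\overbar{\Pi}^-_{k,i}(y-x)$ in \eqref{eq: vigon1}.

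Second, I would keep careful track of the killing contributions: the shift $-\bm{\Psi}^-(0)$ appearing inside the tail integral in \eqref{eq: vigon1} arises because convolving the ``atomic'' parts of $\mathscr{F}^{-1}[\Psi^-_{k,i}(-\cdot)]$ corresponding to $-\dagger^-_k\delta$ and the off-diagonal Dirac components encoded in $\bm{Q}^-\odot\bm{G}^-(0)$ against $\bm{\Pi}^+(\diff y)$ shifts the effective tail by precisely the entries of $\bm{\Psi}^-(0)$. Summing over $k$ with the weights $\pi(k)/\pi(i)$ reassembles the matrix product $\bm{\Delta}_{\bm{\pi}}^{-1}(\,\cdot\,)^\top\bm{\Delta}_{\bm{\pi}}$, yielding \eqref{eq: vigon1} almost everywhere on $(0,\infty)$. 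Equation \eqref{eq: vigon2} then follows either by a symmetric argument on $(-\infty,0)$ or, more efficiently, by applying the same identity to the $\bm{\pi}$-dual pair guaranteed by Proposition~\ref{prop:dual-friends} via \eqref{eq:duality}.

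The main obstacle is the bookkeeping of the compensator term $\bbGamma^2$ and of the derivative-of-Dirac pieces: after restricting the convolutional identity to $(0,\infty)$, all compact-support contributions at $0$ (from $\delta, \delta', \delta''$ and from the compensator cutoffs) must either cancel against the corresponding pieces on the left, or reappear precisely as the $\bm{\Delta}^-_{\bm d}\uppartial\bm{\Pi}^+(x)$ term. This is the reason why the équation amicale only holds a.e.\ and why the a priori existence of the càdlàg density $\uppartial\bm{\Pi}^\pm$ under $\bm{\pi}$-compatibility (Proposition~\ref{prop: dens}) is essential: without it, the $\delta'\ast(\text{measure})$ convolution would not be representable as a locally integrable function and the identity would fail to make pointwise sense.
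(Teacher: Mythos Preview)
Your overall framework---taking inverse Fourier transforms of \eqref{eq: wienerhopf} entrywise and identifying the convolutional structure on the right-hand side---is the paper's approach, and your identification of which pieces contribute the drift term $\bm{\Delta}^-_{\bm d}\uppartial\bm{\Pi}^+$ and the tail integrand is correct in spirit. However, there is a genuine gap in the mechanism by which you pass from the distributional identity to the tail formula \eqref{eq: vigon1}.

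Restricting $\bbGamma^2\Pi_{i}$ (or the off-diagonal measure $q_{i,j}F_{i,j}$) to $(0,\infty)$ yields the \emph{measure} $\Pi_{i}\vert_{(0,\infty)}$, not its tail function $x\mapsto\Pi_i(x,\infty)$; and on the right, $\bbGamma\tilde{\Pi}^-_i \ast \bbGamma\Pi^+_i$ restricted to $(0,\infty)$ is again a measure-type distribution, not the convolution of tails that the theorem asserts. The missing step is to take a \emph{primitive} (distributional antiderivative) of the entrywise identity before restricting to $(0,\infty)$: the paper uses Vigon's relations $(\bbGamma\varrho\,\overbar{\Pi}_i)' = -\bbGamma^2\Pi_i + c_i\delta'$ and $(\overbar{\tilde{\Pi}}{}^-_i)' = \bbGamma\tilde{\Pi}^-_i$ (with $\varrho = \one_{(0,\infty)}-\one_{(-\infty,0)}$) to convert the equation of measures into an equation between tail functions. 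This is what makes the integral $\int_{x+}^\infty \overbar{\Pi}^-_i(y-x)\,\Pi^+_i(\diff y)$ appear rather than a convolution of compensated measures.

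Taking primitives introduces an integration constant $c_{\text{int}}$, and this does not vanish automatically. The paper eliminates it by testing the resulting identity against translates $\varphi_z = \varphi(\cdot - z)$ of a bump function and letting $z\to\infty$, using the decay of Lévy tails together with a Fubini estimate on $\overbar{\tilde{\Pi}}{}^-_i \ast \Pi^+_i$. Without this argument your identity would hold on $(0,\infty)$ only up to an unknown additive constant, which is precisely what \eqref{eq: vigon1} excludes. Your remark about ``cancellation of compact-support pieces at $0$'' does not address this, since the constant is supported everywhere on $(0,\infty)$, not at the origin.
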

\begin{proof}[Proof of Propositions \ref{prop: dens} and \ref{prop: comp cond} and Theorem \ref{theo: eq amicales}]
  Define the measures $\tilde{\Pi}{}^-_i(\diff{x}) = \Pi^-_i(-\diff{x})$ and $\tilde{F}{}^-_{i,j}(\diff{x}) = F^-_{i,j}(-\diff{x})$ and recall \eqref{eq:levy_dist}. Taking inverse Fourier transforms on both sides of \eqref{eq: wienerhopf} (intepreted in the sense of distributions) we obtain for $i = j$
  \begin{equation}\label{eq: vigon diag}
    \begin{split}
      (q_{i,i}-\dagger_i)\delta - a_i \delta^\prime + \frac{1}{2}\sigma_i^2 \delta^{\prime \prime} + \bbGamma^2 \Pi_i  &= - \Big(\big((q^-_{i,i} - \dagger^-_i)\delta + d^-_i\delta^\prime + \bbGamma \tilde{\Pi}{}^-_i\big) \ast \big((q^+_{i,i}- \dagger^+_i)\delta - d^+_i \delta^\prime + \bbGamma \Pi^+_i \big) \\
      &\qquad + \sum_{k \neq i} \frac{\pi(k)}{\pi(i)} q^-_{k,i}q^+_{k,i} \tilde{F}{}^-_{k,i} \ast F^+_{k,i}\Big).
    \end{split}
  \end{equation}
  Above, all convolutions are well defined since $\bbGamma \tilde{\Pi}{}^-_i$ and $\bbGamma \Pi^+_i$ can both be decomposed into the sum of a distribution with compact support and a distribution induced by a finite measure, respectively (see also Proprieté 3.9 in \cite{vigon2002}). Let $\varrho(x) = \one_{(0,\infty)}(x) - \one_{(-\infty,0)}(x)$, $\overbar{\Pi}_i(x) = \one_{(0,\infty)}(x)\Pi_i(x,\infty) + \one_{(-\infty,0)}(x)\Pi_i(-\infty,x)$ and $\overbar{\tilde{\Pi}}{}^-_i(x) = \one_{(-\infty,0)}(x) \tilde{\Pi}{}^-_i(-\infty,x)$. Then, $(\bbGamma \varrho \overbar{\Pi}_i)^\prime = - \bbGamma^2 \Pi_i + c_i \delta^\prime$ for some constant $c_i \in \R$ and $(\overbar{\tilde{\Pi}}{}^-_i)^\prime = \bbGamma \tilde{\Pi}{}^-_i$ by Lemma 3.12 in \cite{vigon2002}. Moreover, it is easily shown that for $\underbars{\tilde{F}}{}^-_{i,j}(x) = \one_{\R_-}(x)\tilde{F}{}^-_{i,j}([x,0])$ we have $(\underbars{\tilde{F}}{}^-_{i,j})^\prime = -\tilde{F}{}^-_{i,j}$. Thus, taking primitives on both sides of \eqref{eq: vigon diag} we obtain 
  \begin{align*} 
    &(q_{i,i}-\dagger_i) \one_{\R_{-}} + a_i \delta - \tfrac{1}{2}\sigma^2_i\delta^\prime + \bbGamma \varrho \overbar{\Pi}_i \\
    &\quad =  \big((\dagger^-_i - q^-_{i,i})\one_{\R_{-}} + d^-_i\delta + \overbar{\tilde{\Pi}}{}^-_i\big) \ast \big((q^+_{i,i}- \dagger^+_i)\delta - d^+_i \delta^\prime + \bbGamma \Pi^+_i  \big)\\
    &\qquad - \sum_{k \neq i} \frac{\pi(k)}{\pi(i)} q^-_{k,i}q^+_{k,i} \underbars{\tilde{F}}{}^-_{k,i} \ast F^+_{k,i} + c_i \delta + c_{\text{int}},
  \end{align*}
  for some integration constant $c_{\text{int}} \in \R$. 
  By resticting to $(0,\infty)$ this implies that we have the following equality of distributions in $\mathcal{D}^\prime_{(0,\infty)}$:
  \begin{equation} \label{eq: vigon diag2}
    \begin{split} 
      &\overbar{\Pi}_i\vert_{(0,\infty)} \\
      &\quad= \big((\dagger^-_i - q^-_{i,i})\one_{\R_{-}} + d^-_i\delta + \overbar{\tilde{\Pi}}{}^-_i\big) \ast \Pi^+_i\vert_{(0,\infty)} - \sum_{k \neq i} \frac{\pi(k)}{\pi(i)} q^-_{k,i}q^+_{k,i} \underbars{\tilde{F}}{}^-_{k,i} \ast F^+_{k,i}\vert_{(0,\infty)} + c_{\text{int}}\\ 
      &\quad= (\dagger^-_i - q^-_{i,i})\overbar{\Pi}{}^+_i\vert_{(0,\infty)} + d^-_i \Pi^+_i \vert_{(0,\infty)} + \overbar{\tilde{\Pi}}{}^-_i \ast \Pi^+_i\vert_{(0,\infty)} - \sum_{k \neq i} \frac{\pi(k)}{\pi(i)} q^-_{k,i}q^+_{k,i} \underbars{\tilde{F}}{}^-_{k,i} \ast F^+_{k,i}\vert_{(0,\infty)} + c_{\text{int}}.
    \end{split}
  \end{equation}
  Here we used Proprieté 3.8 in \cite{vigon2002}, telling us that for tempered distributions $S,T$, where $T$ is supported on $\R_-$ and the convolution $T \ast S$ is well defined as a tempered distribution, it holds $(T \ast S)\vert_{(0,\infty)} = (T \ast S\vert_{(0,\infty)})\vert_{(0,\infty)}$. 
  Since all other terms are distributions induced by some function on $(0,\infty)$, it follows that if $d^-_i > 0$, $\Pi^+_i \vert_{(0,\infty)}$ is also induced by a function, i.e.\ $\Pi^+_i$ possesses a Lebesgue density $\uppartial \Pi^+_i$ on $(0,\infty)$. 
  Let us first show that $c_{\mathrm{int}} = 0$. Let $\varphi \in \mathcal{D}_{(0,\infty)}$ be non-negative with $\mathrm{supp}(\varphi) \subset (0,1)$, $\lVert \varphi \rVert_\infty \leq 1$, $\int \varphi = 1/2$ and $\varphi_z = \varphi(\cdot - z)$ for $z  > 0$. Then, with multiple uses of Fubini,
    \begin{equation}\label{eq:calc_const}
      \begin{split} 
        \int_{\R} \varphi_z(x) \overbar{\tilde{\Pi}}{}^-_i \ast \Pi^+_i(x) \diff{x} & \leq  \int_z^{z+1} \overbar{\tilde{\Pi}}{}^-_i \ast \Pi^+_i(x) \diff{x}\\
        &= \int^0_{-\infty} \int_0^\infty \one_{(z,z+1)}(x+y) \, \Pi^+_i(\diff{x}) \overbar{\tilde{\Pi}}{}^-_i(y) \diff{y} \\
        &= \int_0^\infty \Pi^+_i((z+y,z+y+1)) \overbar{\Pi}{}^-_i(y) \diff{y}\\
        &\leq \Pi^+_i((z,z+2)) \int_0^1 \overbar{\Pi}{}^-_i(y) \diff{y} + \int_1^\infty \Pi^+_i((z+y,z+y+1)) \overbar{\Pi}{}^-_i(y) \diff{y}\\
        &\leq \Pi^+_i((z,z+2)) \int_0^1 \overbar{\Pi}{}^-_i(y) \diff{y} + \overbar{\Pi}{}^-_i(1) \int_{1+z}^\infty \int_{(u-(z+2)) \vee 1}^{u-(z+1)} \diff{y} \, \Pi^+_i(\diff{u})\\
        &\leq \Pi^+_i((z,z+2)) \int_0^1 \overbar{\Pi}{}^-_i(y) \diff{y}  \\
        &\qquad + \overbar{\Pi}{}^-_i(1)\big(\Pi^+_{i}(z+2,\infty) + \int_{z+1}^{z+2} (u-z-1) \Pi^+_i(\diff{u}) \big)\\
        &\leq \Pi^+_i((z,z+2)) \int_0^1 \overbar{\Pi}{}^-_i(y) \diff{y} + \overbar{\Pi}{}^-_i(1) \Pi^+_i(z+1,\infty) \underset{z \to \infty}{\longrightarrow} 0.
      \end{split}
    \end{equation}
    Writing 
    \[\mu = \overbar{\Pi}_i\vert_{(0,\infty)} - (\dagger^-_i - q^-_{i,i})\overbar{\Pi}{}^+_i\vert_{(0,\infty)} - d^-_i \Pi^+_i \vert_{(0,\infty)} - \overbar{\tilde{\Pi}}{}^-_i \ast \Pi^+_i\vert_{(0,\infty)} + \sum_{k \neq i} \frac{\pi(k)}{\pi(i)} q^-_{k,i}q^+_{k,i} \underbars{\tilde{F}}{}^-_{k,i} \ast F^+_{k,i}\vert_{(0,\infty)},\]
    it therefore follows from \eqref{eq: vigon diag2} that 
    \[\frac{c_{\mathrm{int}}}{2} = c_{\mathrm{int}} \int \varphi_z(x) \diff{x} = \langle \mu, \varphi_z \rangle \underset{z \to \infty}{\longrightarrow} 0,\]
    hence $c_{\mathrm{int}} = 0.$ Next, let us show that if $d^-_i> 0$, $\uppartial \Pi^+_i$ has a càdlàg version. Reordering \eqref{eq: vigon diag2} using $c_{\mathrm{int}} = 0$ we obtain 
    \begin{equation}\label{eq: vigon diag4}
      \begin{split}
        \overbar{\Pi}_i\vert_{(0,\infty)} &- (\dagger^-_i - q^-_{i,i}) \overbar{\Pi}{}^+_i\vert_{(0,\infty)}+ \sum_{k \neq i} \frac{\pi(k)}{\pi(i)} q^-_{k,i}q^+_{k,i} \underbars{\tilde{F}}{}^-_{k,i} \ast F^+_{k,i}\vert_{(0,\infty)}\\
        &= d^-_i \Pi^+_i\vert_{(0,\infty)}  + \overbar{\tilde{\Pi}}{}^-_i \ast \Pi^+_i\vert_{(0,\infty)}.
      \end{split}
    \end{equation}
    Since the left hand side is a distribution induced by a function that is bounded away from $0$ it follows that the Lebesgue density of $\Pi^+_i$ has a version $g^+_i$ that is bounded away from zero as well, i.e., for any $x>0$ it holds that 
    \begin{equation}\label{eq:dens bounded}
      \sup_{z \geq x} g^+_i(z) < \infty.
    \end{equation}
    Integration by parts then shows
    \[\overbar{\tilde{\Pi}}{}^-_i \ast \Pi^+_i(x) = \int_{(0,\infty)} \int_{x}^{x+y} g^+_i(z) \diff{z} \, \Pi^-_i(\diff{y}),\]
    such that dominated convergence in conjunction with \eqref{eq:dens bounded} and the integrability properties of the Lévy measure $\Pi^-_i$ readily imply that $x \mapsto \overbar{\tilde{\Pi}}{}^-_i \ast \Pi^+_i(x)$ is continuous on $(0,\infty)$. Hence, the function 
    \begin{align*}
      \uppartial \Pi^+_i(x) &\coloneqq \frac{1}{d^-_i}\Big(\overbar{\Pi}_i\vert_{(0,\infty)}(x)- (\dagger^-_i - q^-_{i,i}) \overbar{\Pi}{}^+_i\vert_{(0,\infty)}(x)\\
      &\qquad + \sum_{k \neq i} \frac{\pi(k)}{\pi(i)} q^-_{k,i}q^+_{k,i} \int_{(x,\infty)} F^-_{k,i}([0,y-x)) \, F^+_{k,i}(\diff{y}) - \overbar{\tilde{\Pi}}{}^-_i \ast g^+_i(x) \Big), \quad x > 0,
    \end{align*}
  is càdlàg and by \eqref{eq: vigon diag4} is the desired càdlàg version of the Lebesgue density of $\Pi^+_i$ on $(0,\infty)$.

  It now follows from above that the equality \eqref{eq: vigon diag2} of distributions in $\mathcal{D}^\prime_{(0,\infty)}$ translates to the equality of functions 
  \begin{equation} \label{eq: vigon diag3}
    \begin{split}
      \Pi_i(x,\infty) &= (\dagger^-_i - q^-_{i,i})\overbar{\Pi}{}^+_{i}(x) + d^-_i \uppartial\Pi^+_i(x) + \int_{x+}^{\infty} \overbar{\Pi}{}^-_i(y-x)\, \Pi_i^+(\diff{y})\\
      &\quad -\sum_{k\neq i} \frac{\pi(k)}{\pi(i)}q^-_{k,i}q^+_{k,i}\int_{x+}^\infty F^-_{k,i}([0,y-x]) \, F^+_{k,i}(\diff{y}),
    \end{split}
  \end{equation}
  which holds for a.e.\ $x > 0$.

  Next, for $i \neq j$, it follows again by taking inverse Fourier transforms on the $(i,j)$th entry of \eqref{eq: wienerhopf} that 
  \begin{equation} \label{eq: vigon off1}
    \begin{split}
      q_{i,j} F_{i,j} &= - \Big\{q^+_{i,j} \big((q^-_{i,i} - \dagger^-_i)\delta + d^-_i\delta^\prime + \bbGamma \tilde{\Pi}{}^-_i\big) \ast F^+_{i,j} + \frac{\pi(j)}{\pi(i)} q^-_{j,i}  \tilde{F}{}^-_{j,i} \ast \big((q^+_{j,j}- \dagger^+_j)\delta - d^+_j \delta^\prime + \bbGamma \Pi^+_j \big) \\
      &\qquad + \sum_{k \neq i,j} \frac{\pi(k)}{\pi(i)} q^-_{k,i}q^+_{k,j} \tilde{F}{}^-_{k,i} \ast F^+_{k,j}\Big\}.
    \end{split}
  \end{equation}
  From this it follows that 
  \begin{align*}
    q^+_{i,j} F^+_{i,j} \ast \big(d^-_i \delta^\prime + \bbGamma \tilde{\Pi}{}^-_i\big) + \frac{\pi(j)}{\pi(i)} q^-_{j,i} \tilde{F}{}^-_{j,i} \ast \big(-d^+_j\delta^\prime + \bbGamma  \Pi^+_j\big) &= q^+_{i,j} F^+_{i,j} \ast \big(\tilde{\chi}{}^-_i\big)^\prime - \frac{\pi(j)}{\pi(i)} q^-_{j,i} \tilde{F}{}^-_{j,i} \ast \big(\chi^+_j\big)^\prime\\
    &= \Big(q^+_{i,j} F^+_{i,j} \ast \tilde{\chi}{}^-_i- \frac{\pi(j)}{\pi(i)} q^-_{j,i} \tilde{F}{}^-_{j,i} \ast \chi^+_j\Big)^\prime,
  \end{align*}
  must be induced by a finite signed measure. By Lemma \ref{lem: dist der}, this implies the existence of a finite signed measure $\nu_{i,j}$ such that 
  \begin{equation}\label{eq: comp friends1}
    q^+_{i,j} F^+_{i,j} \ast \tilde{\chi}{}^-_i - \frac{\pi(j)}{\pi(i)} q^-_{j,i} \tilde{F}{}^-_{j,i} \ast \chi^+_j = \underbars{\nu}{}_{i,j} +c
  \end{equation}
  for some $c \in \R$. Letting $\varphi_z$ as before and arguing as in \eqref{eq:calc_const}, we obtain 
  \begin{equation} \label{eq:calc_const2}
    \lim_{z \to \infty} \int \varphi_z(x) \overbar{\tilde{\Pi}}{}^-_i \ast F^+_{i,j}(x) \diff{x} = 0, \quad \lim_{z \to -\infty} \int \varphi_{z}(-x) q^-_{j,i}\tilde{F}{}^-_{j,i} \ast \overbar{\Pi}{}^+_j(x) \diff{x} = 0.
  \end{equation}
  Since for $x > z > 0$ we have  $\tilde{F}{}^-_{j,i} \ast \overbar{\Pi}^+_j(x) \leq \overbar{\Pi}^+_j(z)$ and for $x < z < 0$, $\overbar{\tilde{\Pi}}{}^-_i \ast F^+_{i,j}(x) \leq \overbar{\Pi}^-_i(-z)$, we  obtain 
  \begin{equation} \label{eq:calc_const3}
    \lim_{z \to -\infty} \int \varphi_z(-x) \overbar{\tilde{\Pi}}{}^-_i \ast F^+_{i,j}(x) \diff{x} = 0, \quad \lim_{z \to \infty} \int \varphi_{z}(x) q^-_{j,i}\tilde{F}{}^-_{j,i} \ast \overbar{\Pi}{}^+_j(x) \diff{x} = 0.
  \end{equation}
  Consequently, using also $\lim_{x \to -\infty} \underbars{\nu}_{i,j}(x) = 0$, it follows
  \[\frac{c}{2} = \int \varphi_z(-x) \diff{x} = \big\langle q^+_{i,j} F^+_{i,j} \ast \tilde{\chi}{}^-_i - \frac{\pi(j)}{\pi(i)} q^-_{j,i} \tilde{F}{}^-_{j,i} \ast \chi^+_j - \underbars{\nu}{}_{i,j}, \varphi_z(-\cdot) \big\rangle \underset{z \to -\infty}{\longrightarrow} 0,\] 
  whence, $c = 0$. Since $\nu_{i,j}$ is finite we may now write 
  \[q^+_{i,j} F^+_{i,j} \ast \tilde{\chi}{}^-_i - \frac{\pi(j)}{\pi(i)} q^-_{j,i} \tilde{F}{}^-_{j,i} \ast \chi^+_j = \nu_{i,j}(\R) -\overbar{\nu}{}_{i,j}. \]
  Hence, from $\overbar{\nu}_{i,j}(x) \underset{x \to \infty}{\longrightarrow} 0$ and \eqref{eq:calc_const2}, \eqref{eq:calc_const3} it follows 
  \[\frac{\nu_{i,j}(\R)}{2} = \nu_{i,j}(\R) \int \varphi_z(x) \diff{x} = \big\langle q^+_{i,j} F^+_{i,j} \ast \tilde{\chi}{}^-_i - \frac{\pi(j)}{\pi(i)} q^-_{j,i} \tilde{F}{}^-_{j,i} \ast \chi^+_j + \overbar{\nu}{}_{i,j}, \varphi_z \big\rangle \underset{z \to \infty}{\longrightarrow} 0,\]
  whence, $\nu_{i,j}(\R) = 0$.
  We now also obtain from \eqref{eq: vigon off1},
  \begin{align*}
    q_{i,j} F_{i,j}(\{0\}) &= (\dagger^-_i- q^-_{i,i})q^+_{i,j} F^+_{i,j}(\{0\}) + (\dagger^+_j-q^+_{j,j}) \frac{\pi(j)}{\pi(i)}q^-_{j,i} F^-_{j,i}(\{0\}) - \nu_{i,j}(\{0\})\\
    &\quad- \sum_{k \neq i,j} \frac{\pi(k)}{\pi(i)} q^-_{k,i} q^+_{k,j} \tilde{F}{}^-_{k,i} \ast F^+_{k,j}(\{0\}).
  \end{align*}
  Taking everything together establishes Proposition \ref{prop: comp cond}.
  Let now $\overbar{F}_{i,j}(x) = F_{i,j}((x,\infty)) \one_{(0,\infty)}(x) + F_{i,j}((-\infty,x)) \one_{(-\infty,0)}(x).$ 
  Then, $(\varrho \overbar{F}_{i,j})' = -F_{i,j} + \delta$,
  and hence, together with the considerations above we obtain by taking primitives on \eqref{eq: vigon off1}
  \begin{align*}
    \varrho q_{i,j}\overline{F}_{i,j}  
    &= q^+_{i,j} \big((\dagger^-_i- q^-_{i,i})\one_{\R_{-}} 
    + d^-_i\delta + \overbar{\tilde{\Pi}}{}^-_i\big)  \ast F^+_{i,j} 
    - \frac{\pi(j)}{\pi(i)} q^-_{j,i}  \underbars{\tilde{F}}{}^-_{j,i} \ast \big((q^+_{j,j}- \dagger^+_j)\delta - d^+_j \delta^\prime + \bbGamma \Pi^+_j \big) \\
    &\qquad - \sum_{k \neq i,j} \frac{\pi(k)}{\pi(i)} q^-_{k,i}q^+_{k,j} \underbars{\tilde{F}}{}^-_{k,i} \ast F^+_{k,j} - q_{i,j}\one_{\R_-} + c_{\text{int}},
  \end{align*}
  where $c_{\text{int}}$ is an integration constant. Similarly to the on-diagonal case above, restricting to $(0,\infty)$ shows that $F^+_{i,j}$ possesses a càdlàg density $f^+_{i,j}$ whenever $d^-_i > 0$ (recall that $F^+_{i,j}$ was chosen to be trivial on $(0,\infty)$ when $q^+_{i,j} = 0)$ and that for a.e.\ $x > 0$  
  \begin{equation}\label{eq: vigon off2}
    \begin{split}
      q_{i,j}F_{i,j}(x,\infty) &= q^+_{i,j}\Big((\dagger^-_i - q^-_{i,i}) \overbar{F}{}^+_{i,j}(x) + d^-_i f^+_{i,j}(x) + \int_{x+}^{\infty} \overbar{\Pi}{}^-_i(y-x)\, F^+_{i,j}(\diff{y}) \Big)\\
      &\quad - \frac{\pi(j)}{\pi(i)} q^-_{j,i} \int_{x+}^{\infty} F^-_{j,i}([0,y-x])\, \Pi^+_j(\diff{y})\\
      &\quad -\sum_{k\neq i,j} \frac{\pi(k)}{\pi(i)}q^-_{k,i}q^+_{k,j}\int_{x+}^\infty F^-_{k,i}([0,y-x]) \, F^+_{k,j}(\diff{y})\\
      &= q^+_{i,j}\Big((\dagger^-_i - q^-_{i,i}) \overbar{F}{}^+_{i,j}(x) + d^-_i f^+_{i,j}(x) + \int_{x+}^{\infty} \overbar{\Pi}{}^-_i(y-x)\, F^+_{i,j}(\diff{y}) \Big)\\
      &\quad +\frac{\pi(j)}{\pi(i)}\Big( q^-_{j,i} \int_{x+}^{\infty} \overbar{F}^-_{j,i}(y-x)\, \Pi^+_j(\diff{y}) - q^-_{j,i} \overbar{\Pi}{}^+_j(x) \Big)\\
      &\quad +\sum_{k\neq i,j} \frac{\pi(k)}{\pi(i)} \Big(q^-_{k,i}q^+_{k,j}\int_{x+}^\infty \overbar{F}^-_{k,i}(y-x) \, F^+_{k,j}(\diff{y}) - q^-_{k,i}q^+_{k,j}\overbar{F}{}^+_{k,j}(x)\Big).
    \end{split}
  \end{equation}
  Above, $c_{\text{int}} = 0$ follows by arguing  as in the on-diagonal case and using \eqref{eq:calc_const2}, \eqref{eq:calc_const3}. 
  Combining \eqref{eq: vigon diag3} and \eqref{eq: vigon off2} yields \eqref{eq: vigon1}. Relation \eqref{eq: vigon2} and the claims on existence of càdlàg densities of $F^-_{i,j}$ and $\Pi^-_i$ whenever $d^+_i > 0$ are proved analogously. 
\end{proof}

\subsection{Characterisation of friendship}

We are now ready to fully characterise friendships of MAPs. Combining Theorem \ref{theo: friends} with Theorem \ref{theo: eq amicales} yields our main result Theorem \ref{theo: main}.
\begin{theorem}[Theorem of friends for MAPs] \label{theo: friends}
  Two MAP subordinators $(H^+,J^+)$ and $(H^-,J^-)$ are $\bm{\pi}$-friends if, and only
  if, they are $\bm{\pi}$-compatible and the matrix-valued function
  \begin{align*}
    \bm{\Upsilon}(x) &= \Big(\int_{x+}^\infty \bm{\Delta}_{\bm{\pi}}^{-1}\Big(\overbar{\bm{\Pi}}^-(y-x) - \bm{\Psi}^-(0)\Big)^\top \bm{\Delta}_{\bm{\pi}} \, \bm{\Pi}^+(\diff{y})  + \bm{\Delta}^-_{\bm{d}} \uppartial \bm{\Pi}^+(x)\Big) \one_{(0,\infty)}(x)\\
    &\quad + \Big(\int_{(-x)+}^\infty \bm{\Delta}_{\bm{\pi}}^{-1} \big(\bm{\Pi}^-(\diff{y}) \big)^\top \bm{\Delta}_{\bm{\pi}} \, \big(\overbar{\bm{\Pi}}{}^+(y+x) - \bm{\Psi}^+(0)\big)  +  \bm{\Delta}_{\bm{\pi}}^{-1} \big(\bm{\Delta}_{\bm{d}}^+\uppartial\bm{\Pi}^-(-x)\big)^\top \bm{\Delta}_{\bm{\pi}}\Big) \one_{(-\infty,0)}(x),
  \end{align*}
  where $x \in \R$, is a.e.\ equal to a function
  decreasing on $(0, \infty)$ and increasing on $(-\infty,0)$.
\end{theorem}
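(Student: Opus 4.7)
The ``only if'' direction is essentially a repackaging of results already proved. Given a bonding MAP $(\xi,J)$, Propositions~\ref{prop: dens}, \ref{prop: comp cond} and~\ref{prop: levy dens} supply conditions~\ref{comp cond1} and~\ref{friends cond4} of Definition~\ref{def: comp}, while conditions~\ref{friends cond3} and~\ref{friends cond-pi} translate to $-\bm{\Psi}(0)\one \ge \bm{0}$ and $\bm{\pi}^\top\bm{\Psi}(0) \le \bm{0}^\top$: the first holds because $-\bm{\Psi}(0)$ is a generator plus a nonnegative killing vector, and the second is baked into Definition~\ref{def: friends}. Theorem~\ref{theo: eq amicales} then identifies $\bm{\Upsilon}(x)$ with $\bm{\Pi}(x,\infty)$ on $(0,\infty)$ and with $\bm{\Pi}(-\infty,x)$ on $(-\infty,0)$, which are entrywise monotone in the required directions simply as tails of componentwise positive measures.

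For the converse, the plan is to reverse-engineer a MAP from the candidate exponent $\bm{\Psi}(\theta) \coloneqq -\bm{\Delta}_{\bm{\pi}}^{-1}\bm{\Psi}^-(-\theta)^\top \bm{\Delta}_{\bm{\pi}}\bm{\Psi}^+(\theta)$ by running the distributional argument from the proof of Theorem~\ref{theo: eq amicales} in reverse. I would use the monotonicity hypothesis to define a candidate Lévy measure matrix $\bm{\Pi}$ whose entrywise tails coincide with $\bm{\Upsilon}$; the integrability of $1\wedge x^2$ under each entry is then visible directly from the formula for $\bm{\Upsilon}$, with compatibility condition~\ref{comp cond1} controlling the $d^\mp_i\uppartial\bm{\Pi}^\pm$ correction near zero and the decay of $\bm{\Pi}^\pm$ controlling the tails at infinity. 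From $\bm{\Psi}(0)=\bm{Q}-\bm{\Delta}_{\bm{\dag}}$, I read off the modulator $Q$-matrix and killing vector, with conditions~\ref{friends cond3} and~\ref{friends cond-pi} guaranteeing that $\bm{Q}$ has nonpositive row sums, $\bm{\dag}\ge \bm{0}$, and $\bm{\pi}^\top \bm{Q}=\bm{0}^\top$ on the unkilled chain. The remaining characteristics---drifts $a_i$, Gaussian coefficients $\sigma_i^2 = 2 d^+_i d^-_i$, and transitional jump laws $F_{i,j}$---are then extracted entry by entry from the tempered-distribution identities \eqref{eq: vigon diag2} and \eqref{eq: vigon off1}, now read as definitions rather than as consequences. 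Lemma~\ref{lemma: map exp} then certifies that the assembled $\bm{\Psi}$ is the MAP exponent of a genuine MAP, and the Wiener--Hopf factorisation is built in by construction.

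The main obstacle will be the off-diagonal bookkeeping: the inverse Fourier transform of the $(i,j)$-entry of $\bm{\Psi}$ must produce a \emph{nonnegative} finite measure of the form $q_{i,j}F_{i,j}$ with $F_{i,j}$ a probability measure. On $\R\setminus\{0\}$ this follows from the monotonicity of $\bm{\Upsilon}_{i,j}$, whose distributional derivative is then a positive measure; but the atom at $0$ is more delicate, since it is assembled from the various ingredients appearing in \eqref{eq: friends atom0}, and it is precisely the inequality in part~\ref{friends cond4} of $\bm{\pi}$-compatibility that guarantees $q_{i,j}F_{i,j}(\{0\})\ge 0$. A further technicality is that the primitive-taking needed to turn the distributional form of \eqref{eq: wienerhopf} into measure-valued identities must not pick up spurious integration constants; this is handled by the vanishing-at-infinity computations with the test functions $\varphi_z$ used in \eqref{eq:calc_const}--\eqref{eq:calc_const3}, which transfer essentially unchanged to the reversed setting.
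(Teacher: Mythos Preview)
Your proposal is correct and follows essentially the same route as the paper: necessity is assembled from Propositions~\ref{prop: dens}, \ref{prop: comp cond}, \ref{prop: levy dens} and Theorem~\ref{theo: eq amicales}, while sufficiency verifies the hypotheses of Lemma~\ref{lemma: map exp} by reversing the distributional calculations, using monotonicity of $\bm{\Upsilon}$ for positivity on $\R\setminus\{0\}$ and the inequality in compatibility condition~\ref{friends cond4} for the off-diagonal atom at~$0$. The only organisational difference is that the paper short-circuits the diagonal case by invoking the first step of Vigon's Th\'eor\`eme des amis to produce the signed measure $\tilde{\mu}_i$ integrating $1\wedge x^2$, rather than extracting integrability directly from the formula for $\bm{\Upsilon}$ as you propose.
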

\begin{proof}
  By symmetry, we need only to prove that $(H^+,J^+)$ is a $\bm{\pi}$-friend of $(H^-,J^-)$
  if, and only if, $(H^+,J^+)$ is $\bm{\pi}$-compatible with $(H^-,J^-)$ and
  $\bm{\Upsilon}$ is a.e.\ equal to a function decreasing on $(0,\infty)$ and
  increasing on $(-\infty,0)$.

  Necessity of $\bm{\pi}$-compatibilty is an immediate consequence of the combined conclusions of Lemma \ref{lemma: map exp}, Proposition \ref{prop: dens}, Proposition \ref{prop: comp cond} and Proposition \ref{prop: levy dens}. Necessity of the monotonocity assumptions on $\bm{\Upsilon}$ follows from Theorem \ref{theo: eq amicales} once we notice that when $(H^+,J^+)$ and $(H^-, J^-)$ are $\bm{\pi}$-friends with bonding MAP $(\xi,J)$, we have 
  \[\bm{\Upsilon}(x) = \bm{\Pi}(x,\infty)\one_{(0,\infty)}(x) + \bm{\Pi}(-\infty,x)\one_{(-\infty,0)}(x), \quad x \in \R,\]
  by the équations amicales. Let us therefore turn to sufficiency.

  Condition \ref{friends cond3} 
  of $\bm{\pi}$-compatibility
  ensures that the right hand side of \eqref{eq: wienerhopf} satisfies condition \ref{cond3} of Lemma \ref{lemma: map exp}, so according to the same lemma, it suffices to check the following two properties:
  \begin{enumerate}[label =(\Alph*), ref = (\Alph*)]
    \item The diagonal elements of the right-hand side of \eqref{eq: wienerhopf} can be written as the Lévy--Khintchine exponent of a (killed) Lévy process, which is equivalent to requiring that for any $i \in [n]$, there exists a (positive) measure $\mu_i$ integrating $x \mapsto 1 \wedge x^2$ and constants $c_i \in \R, \mathtt{k}_i, \tau_i \in \R_+$ such that 
      \begin{equation} \label{eq: prop1}
        \begin{split}
          &- \Big(\big((q^-_{i,i} - \dagger^-_i)\delta + d^-_i\delta^\prime + \bbGamma \tilde{\Pi}{}^-_i\big) \ast \big((q^+_{i,i}- \dagger^+_i)\delta - d^+_i \delta^\prime + \bbGamma \Pi^+_i \big) + \sum_{k \neq i} \frac{\pi(k)}{\pi(i)} q^-_{k,i}q^+_{k,i} \tilde{F}{}^-_{k,i} \ast F^+_{k,i}\Big)\\
          &\quad = \bbGamma^2 \mu_i 
          - \mathtt{k}_i \delta 
          - c_i \delta^\prime 
          + \tau_i \delta^{\prime \prime}.
        \end{split}
      \end{equation} \label{prop1}
    \item The off-diagonal elements of \eqref{eq: wienerhopf} constitute the Fourier transform of a finite measure, which is equivalent to requiring that for any $i, j \in [n]$ with $i \neq j$, there exists a finite measure $\mu_{i,j}$ such that 
      \begin{equation} \label{eq: prop2}
        \begin{split}
          \mu_{i,j} &= - \Big(q^+_{i,j} \big((q^-_{i,i} - \dagger^-_i)\delta + d^-_i\delta^\prime + \bbGamma \tilde{\Pi}{}^-_i\big) \ast F^+_{i,j} + \frac{\pi(j)}{\pi(i)} q^-_{j,i}  \tilde{F}{}^-_{j,i} \ast \big((q^+_{j,j}- \dagger^+_j)\delta - d^+_j \delta^\prime + \bbGamma \Pi^+_j \big) \\
          &\qquad + \sum_{k \neq i,j} \frac{\pi(k)}{\pi(i)} q^-_{k,i}q^+_{k,j} \tilde{F}{}^-_{k,i} \ast F^+_{k,j}\Big).
        \end{split}
      \end{equation}\label{prop2}
  \end{enumerate}
  Let us start with \ref{prop1}. According to the first step of the proof of the Théorème des amis in \cite{vigondiss} (at this point \ref{comp cond1} 
  of $\bm{\pi}$-compatibility comes into play), 
  there exists a signed measure $\tilde{\mu}_i$ 
  without an atom at $0$
  integrating $x \mapsto 1 \wedge x^2$ and a constant $\tilde{c}_i \in \R_+$ such that 
  \[-\big((q^-_{i,i} - \dagger^-_i)\delta + d^-_i\delta^\prime + \bbGamma \tilde{\Pi}{}^-_i\big) \ast \big((q^+_{i,i}- \dagger^+_i)\delta - d^+_i \delta^\prime + \bbGamma \Pi^+_i \big) = \bbGamma^2 \tilde{\mu}_i - (q^-_{i,i} - \dagger^-_{i})(q^+_{i,i}- \dagger^+_i)\delta - \tilde{c}_i \delta^\prime + d^-_i d^+_i \delta^{\prime \prime}.\]
  Moreover, $\tilde{\nu}_i = - \sum_{k \neq i} \frac{\pi(k)}{\pi(i)} q^-_{k,i}q^+_{k,i} \tilde{F}{}^-_{k,i} \ast F^+_{k,i}(\cdot \cap \{0\}^{\mathsf{c}})$ is a signed finite measure without atom at $0$ and mass  
  \[\tilde{\nu}_i(\R) = -\sum_{k \neq i} \frac{\pi(k)}{\pi(i)} q^-_{k,i}q^+_{k,i}(1 -\tilde{F}^-_{k,i} \ast F^+_{k,i}(\{0\})),\]
  and thus, $\mu_i \coloneqq \tilde{\mu}_i + \tilde{\nu}_i$ is a signed measure without atom at $0$, integrating $x \mapsto 1 \wedge x^2$. Define
  \[
    \mathtt{k}_i = (q^-_{i,i} - \dagger^-_{i})(q^+_{i,i}- \dagger^+_i) + \sum_{k \neq i} \frac{\pi(k)}{\pi(i)} q^-_{k,i}q^+_{k,i}  \geq 0.
  \]
  Letting further
  $\tau_i = d^-_i d^+_i \geq 0$ and 
  \[
    c_i = \tilde{c}_i - \int_{[-1,1]} x \, \tilde{\nu}_i(\diff{x}) \in \R
  \]
  we obtain \eqref{eq: prop1}. It remains to check that $\mu_i$ is a positive measure. 
  To this end, observe that taking primitives (compare to the proof of Theorem \ref{theo: eq amicales}), we find 
  \begin{align*} 
    &\mathtt{k}_i\one_{\R_{-}} 
    + (c_i + b_i) \delta 
    - \tau_i\delta^\prime + \bbGamma \varrho\overbar{\mu}_i\\
    &\quad =  \big((\dagger^-_i - q^-_{i,i})\one_{\R_{-}} + d^-_i\delta + \overbar{\tilde{\Pi}}{}^-_i\big) \ast \big((q^+_{i,i}- \dagger^+_i)\delta - d^+_i \delta^\prime + \bbGamma \Pi^+_i  \big)\\
    &\qquad - \sum_{k \neq i} \frac{\pi(k)}{\pi(i)} q^-_{k,i}q^+_{k,i} \underbars{\tilde{F}}{}^-_{k,i} \ast F^+_{k,i} + c_{\text{int}},
  \end{align*}
  where $b_i \in \R$ and $c_{\text{int}}$ is an integration constant. Restricting to $(0,\infty)$ and letting $x \to \infty$ shows that $c_{\text{int}} = 0$ and that for a.e.\ $x > 0$,
  \[\mu_i(x,\infty) = \Upsilon_{i,i}(x).\]
  Similarly, we find for a.e.\ $x < 0$
  \[\mu_i(-\infty,x) = \Upsilon_{i,i}(x).\]
  Since ${\Upsilon}_{i,i}$ is a.e.\ increasing on $(0,\infty)$ and a.e.\ decreasing on $(-\infty,0)$ it follows that the tails of $\mu_i$ are a.e.\ increasing on $(-\infty,0)$ and a.e.\ decreasing on $(0,\infty)$.
  Since the tails of $\mu_i$  are càglàd on $(-\infty,0)$ and càdlàg on $(0,\infty)$, this establishes that the tails are increasing on all of $(-\infty,0)$ and decreasing on all of $(0,\infty)$.
  Thus, $\overline{\mu}_i$ is the tail of a positive measure, i.e., $\mu_i$ is a positive measure.

  We proceed with \ref{prop2}.
  Combining condition \ref{friends cond4} of $\bm{\pi}$-compatibility
  with Lemma \ref{lem: dist der}, it follows that 
  \[\big(d^-_i\delta^\prime + \bbGamma \tilde{\Pi}{}^-_i\big) \ast q^+_{i,j}F^+_{i,j} + \big(-d^+_j \delta^\prime + \bbGamma \Pi^+_j \big) \ast \frac{\pi(j)}{\pi(i)} q^-_{j,i}\tilde{F}{}^-_{j,i} = \Big(\tilde{\chi}{}^-_i \ast q^+_{i,j}F^+_{i,j} - \chi^+_j \ast \frac{\pi(j)}{\pi(i)} q^-_{j,i}\tilde{F}{}^-_{j,i}\Big)^\prime = \nu_{i,j},\]
  and thus, 
  \eqref{eq: prop2} indeed defines a finite signed measure. It remains to show that $\mu_{i,j}$ is positive. 
  It follows, again with condition \ref{friends cond4} of $\bm{\pi}$-compatibility,
  \begin{align*}
    \mu_{i,j}(\{0\}) &= (\dagger^-_i- q^-_{i,i})q^+_{i,j} F^+_{i,j}(\{0\}) + (\dagger^+_j-q^+_{j,j}) \frac{\pi(j)}{\pi(i)}q^-_{j,i} F^-_{j,i}(\{0\}) - \nu_{i,j}(\{0\})\\
    &\quad- \sum_{k \neq i,j} \frac{\pi(k)}{\pi(i)} q^-_{k,i} q^+_{k,j} \tilde{F}{}^-_{k,i} \ast F^+_{k,j}(\{0\}) \geq 0.
  \end{align*}
  Moreover, taking primitives and restricting to $(0,\infty)$ we find (compare this to \eqref{eq: vigon off2})  for a.e.\ $x > 0$
  \[\mu_{i,j}(x,\infty) = \Upsilon_{i,j}(x).\]
  Similarly, for a.e.\ $x < 0$,
  \[\mu_{i,j}(-\infty,x) = \Upsilon_{i,j}(x).\]
  Thus, our assumptions on $\bm{\Upsilon}$ guarantee that 
  the tails of $\mu_{i,j}$ are a.e.\ decreasing on $(0,\infty)$ and a.e.\ increasing on $(-\infty,0)$.
  Since the tails are càglàd on $(-\infty,0)$ and càdlàg on $(0,\infty)$ this establishes that the tails are increasing on all of $(-\infty,0)$ and decreasing on all of $(0,\infty)$.
  Together with $\mu_{i,j}(\{0\}) \geq 0$ 
  this establishes that $\mu_{i,j}$ is a finite positive measure.
\end{proof}

In general, not only the central monotonicity conditions on $\bm{\Upsilon}$ but also the necessary requirement of $\bm{\pi}$-compatibility makes engineering MAP friendships significantly harder than for Lévy processes. Especially condition \ref{friends cond4} of Definition \ref{def: comp} appears rather cumbersome and requires skilled matching of the Lévy measure matrices of potential friends. In particular the possible existence of atoms at $0$ for the transitional jumps poses a significant challenge in constructing explicit examples of friendships. We take care of this effect in Section \ref{sec: map philan}, where we develop an extension of Vigon's theory of philanthropy, see \cite[Section 6.6]{kyprianou2014}.

\subsection{Other properties of friendship}

In this section we collect some simple implications of $\bm{\pi}$-friendship.
In particular, in order to interpret the equation \eqref{eq: wienerhopf},
it is essential that the vector $\bm{\pi}$ be the invariant distribution
of (an unkilled version of) the bonding modulator $J$. The results in this section
give some sufficient conditions for this to hold.

\begin{lemma}
  \label{lem:unkilled}
  If $(H^+,J^+)$ and $(H^-,J^-)$ are $\bm{\pi}$-friends 
  and the bonding process $(\xi,J)$ is unkilled, then $\bm{\pi}$ is
  invariant for $J$.
\end{lemma}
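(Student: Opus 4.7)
The plan is to exploit the inequality $\bm{\pi}^\top \bm{\Psi}(0) \le \bm{0}^\top$ that is built into Definition~\ref{def: friends}, combined with the fact, noted in Section~\ref{sec: map intro}, that $\bm{\Psi}(0)\one = -\bm{\dagger}$ and $\bm{Q} = \bm{\Psi}(0) + \bm{\Delta}_{\bm{\dagger}}$. Under the unkilled hypothesis $\bm{\dagger} = \bm{0}$, so these two identities collapse to $\bm{\Psi}(0)\one = \bm{0}$ and $\bm{Q} = \bm{\Psi}(0)$. The invariance relation we need is $\bm{\pi}^\top \bm{Q} = \bm{0}^\top$, which therefore becomes $\bm{\pi}^\top \bm{\Psi}(0) = \bm{0}^\top$.

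Setting $\bm{v}^\top := \bm{\pi}^\top \bm{\Psi}(0)$, the friendship condition gives $\bm{v}^\top \le \bm{0}^\top$ componentwise, while
\[
  \bm{v}^\top \one = \bm{\pi}^\top \bigl(\bm{\Psi}(0) \one\bigr) = \bm{\pi}^\top \bm{0} = 0.
\]
A vector whose entries are all nonpositive and sum to zero is the zero vector, so $\bm{v}^\top = \bm{0}^\top$, giving $\bm{\pi}^\top \bm{Q} = \bm{0}^\top$ as desired.

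There is no real obstacle here: the lemma is a direct structural consequence of Definition~\ref{def: friends} together with the elementary identity $\bm{\Psi}(0) \one = -\bm{\dagger}$ for MAP exponents. The only thing worth flagging is that the strict positivity of $\bm{\pi}$ plays no role; the argument is driven entirely by the sign constraint on $\bm{\pi}^\top \bm{\Psi}(0)$ combined with the zero row-sum property of the bonding generator.
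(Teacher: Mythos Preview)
Your proof is correct and takes essentially the same approach as the paper: both combine the friendship inequality $\bm{\pi}^\top\bm{\Psi}(0)\le\bm{0}^\top$ with the unkilled identity $\bm{\Psi}(0)\one=\bm{0}$ to force $\bm{\pi}^\top\bm{\Psi}(0)=\bm{0}^\top$. You have simply made explicit the one-line observation (a nonpositive vector summing to zero is zero) that the paper leaves to the reader.
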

\begin{proof}
When $J$ is unkilled, $\bm{\Psi}(0)\one = \bm{0}$.
Together with this,
  the condition $\bm{\pi}^\top\bm{\Psi}(0) \leq \bm{0}^\top$ of $\bm{\pi}$-friendship
  actually implies more:
  $\bm{\pi}^\top\bm{\Psi}(0) = \bm{0}^\top$.
  This means that $\bm{\pi}$ is invariant for $J$. 
\end{proof}

\begin{lemma}\label{lemma: killing}
  Suppose that $(H^+,J^+)$ is a $\bm{\pi}$-friend of $(H^-, J^-)$. 
  Then, the bonding MAP $(\xi,J)$ is unkilled if, and only if, either
  \begin{equation} \label{eq: killing}
    - \bm{\Delta}_{\bm{\pi}}^{-1}\bm{\Psi}^-(0)^\top\bm{\Delta}_{\bm{\pi}}
    \bm{\dag}^+
    = \bm{0}
  \end{equation}
  or
  \begin{equation}\label{eq: killing-dual}
    -\bm{\Delta}_{\bm{\pi}}^{-1} \bm{\Psi}^+(0)^\top \bm{\Delta}_{\bm{\pi}} \bm{\dag}^-
    = \bm{0}.
  \end{equation}
  Moreover, if both $(H^+,J^+)$ and $(H^-,J^-)$ are irreducible, then
  the bonding MAP is unkilled if, and only if, at least one of $(H^+,J^+)$ or
  $(H^-,J^-)$ is unkilled.
\end{lemma}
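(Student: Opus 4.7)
The plan is to evaluate the matrix Wiener--Hopf identity in Definition~\ref{def: friends} at $\theta=0$ and combine it with $\bm{\Psi}^+(0)\one=-\bm{\dag}^+$ (recorded at the start of Section~\ref{sec: map intro}), which gives
\[
\bm{\Psi}(0)\one \;=\; -\bm{\Delta}_{\bm{\pi}}^{-1}\bm{\Psi}^-(0)^\top\bm{\Delta}_{\bm{\pi}}\bm{\Psi}^+(0)\one \;=\; \bm{\Delta}_{\bm{\pi}}^{-1}\bm{\Psi}^-(0)^\top\bm{\Delta}_{\bm{\pi}}\bm{\dag}^+.
\]
Since $(\xi,J)$ is unkilled exactly when $\bm{\dag}=-\bm{\Psi}(0)\one=\bm{0}$, this identity immediately yields the equivalence with \eqref{eq: killing}.

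For the equivalence with \eqref{eq: killing-dual}, I would apply Proposition~\ref{prop:dual-friends}: reversing the roles, $(H^-,J^-)$ is a $\bm{\pi}$-friend of $(H^+,J^+)$ whose bonding MAP is the $\bm{\pi}$-dual $(\hat\xi,\hat J)$ of $(\xi,J)$, and the equivalence just established, applied to this reversed friendship, gives $(\hat\xi,\hat J)$ unkilled iff \eqref{eq: killing-dual}. It then remains to verify that $(\xi,J)$ and $(\hat\xi,\hat J)$ share the same killing status, via the chain of equivalences
\[
\bm{\Psi}(0)\one=\bm{0}\ \Longleftrightarrow\ \bm{\pi}^\top\bm{\Psi}(0)\one=0\ \Longleftrightarrow\ \bm{\pi}^\top\bm{\Psi}(0)=\bm{0}^\top,
\]
where the first step uses $\bm{\Psi}(0)\one=-\bm{\dag}\le\bm{0}$ together with $\bm{\pi}>\bm{0}$, the second uses the friendship hypothesis $\bm{\pi}^\top\bm{\Psi}(0)\le\bm{0}^\top$, and the last condition is, via $\hat{\bm{\Psi}}(0)\one=\bm{\Delta}_{\bm{\pi}}^{-1}\bm{\Psi}(0)^\top\bm{\pi}$, exactly $\hat{\bm{\dag}}=\bm{0}$.

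For the second assertion, the easy direction is the observation that $\bm{\dag}^+=\bm{0}$ or $\bm{\dag}^-=\bm{0}$ makes \eqref{eq: killing} or \eqref{eq: killing-dual} hold automatically, so in either case the bonding MAP is unkilled. The substantive converse is this: assuming $(\xi,J)$ unkilled, $(H^-,J^-)$ irreducible, and $\bm{\dag}^-\ne\bm{0}$, I would deduce from \eqref{eq: killing} that $\bm{\dag}^+=\bm{0}$. The hard step is showing that $\bm{\Psi}^-(0)=\bm{Q}^- -\bm{\Delta}_{\bm{\dag}^-}$ is nonsingular: I would argue that $\mathrm{e}^{t\bm{\Psi}^-(0)}$ is the sub-Markov transition matrix of an irreducible continuous-time Markov chain with at least one strictly positive killing rate, whose row sums are survival probabilities converging to zero; hence every eigenvalue of $\bm{\Psi}^-(0)$ has strictly negative real part (equivalently, $-\bm{\Psi}^-(0)$ is a nonsingular M-matrix). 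Nonsingularity of $\bm{\Psi}^-(0)$ combined with $\bm{\pi}>\bm{0}$ then forces $\bm{\dag}^+=\bm{0}$ in \eqref{eq: killing}. The symmetric argument using \eqref{eq: killing-dual} covers the case $\bm{\dag}^+\ne\bm{0}$ by analogous reasoning with the roles of $(H^+,J^+)$ and $(H^-,J^-)$ interchanged.
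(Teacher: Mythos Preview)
Your proposal is correct and follows essentially the same route as the paper. The paper likewise computes $\bm{\dag}=-\bm{\Psi}(0)\one=-\bm{\Delta}_{\bm{\pi}}^{-1}\bm{\Psi}^-(0)^\top\bm{\Delta}_{\bm{\pi}}\bm{\dag}^+$ for the first equivalence, then passes to the reversed friendship for \eqref{eq: killing-dual}, and for the second assertion uses invertibility of $\bm{\Psi}^\pm(0)$ when the corresponding subordinator is irreducible and killed (citing \cite[Theorem~6.2.26]{horn13} rather than your M-matrix argument). The only notable difference is how the two bonding MAPs are linked: the paper invokes Lemma~\ref{lem:unkilled} and the duality construction to pass from $\hat{\bm{\dag}}=\bm{0}$ to $\bm{\dag}=\bm{0}$, whereas your chain $\bm{\Psi}(0)\one=\bm{0}\Leftrightarrow\bm{\pi}^\top\bm{\Psi}(0)\one=0\Leftrightarrow\bm{\pi}^\top\bm{\Psi}(0)=\bm{0}^\top$ is a clean, self-contained algebraic shortcut that avoids needing $\bm{\pi}$ to be invariant.
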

\begin{proof} 
  On the one hand, we have
  \begin{align*}
    \bm{\dag} = -\bm{\Delta}_{\bm{\pi}}^{-1}\bm{\Psi}^-(0)^\top\bm{\Delta}_{\bm{\pi}}
    \bm{\Psi}^+(0)\one
    = - \bm{\Delta}_{\bm{\pi}}^{-1}\bm{\Psi}^-(0)^\top\bm{\Delta}_{\bm{\pi}}
    \bm{\dag}^+.
  \end{align*}

  On the other hand, considering the bonding MAP $(\hat{\xi},\hat{J})$
  of $(H^-,J^-)$ with $(H^+,J^+)$,
  and denoting its killing rates by $\hat{\bm{\dag}}$, we see by the same argument that
  \[
    \hat{\bm{\dag}} =
    - \bm{\Delta}_{\bm{\pi}}^{-1} \bm{\Psi}^+(0)^\top \bm{\Delta}_{\bm{\pi}}
    \bm{\dag}^-.
  \]
  If this is the zero vector, then 
  by the preceding lemma, this implies that $\bm{\pi}$ is invariant for $\hat{J}$.
  Taking the $\bm{\pi}$-dual of $(\hat{\xi},\hat{J})$, as explained at the start
  of section~\ref{sec: map friends}, we obtain $(\xi,J)$ which is also killed at rate
  $\hat{\bm{\dag}} = \bm{0}$.

  We have proved that the bonding MAP is unkilled if, and only if, either
  \eqref{eq: killing} or \eqref{eq: killing-dual} holds. It follows immediately that, if either $(H^+,J^+)$ or $(H^-,J^-)$ is unkilled,
  then so is the bonding MAP.
  For the converse, suppose that both $(H^+,J^+)$
  and $(H^-,J^-)$ are irreducible and killed. Then $\bm{\Psi}^+(0)$ and $\bm{\Psi}^-(0)$
  are invertible by \cite[Theorem 6.2.26]{horn13}.

  If the bonding MAP were unkilled, then one of \eqref{eq: killing}
  or \eqref{eq: killing-dual} would have to be true; but the former implies
  $\bm{\dag}^+ = \bm{0}$, and the latter implies $\bm{\dag}^- = \bm{0}$,
  both of which are contradictions. Hence, $(\xi,J)$ is killed.
\end{proof}

\begin{lemma}
  Suppose that $(H^+,J^+)$ and $(H^-,J^-)$ are two unkilled MAP subordinators such that
  \eqref{eq: wienerhopf} is the matrix exponent of a MAP $(\xi,J)$.
  Then $(\xi,J)$ is unkilled and $\bm{\pi}$ is invariant for $J$.
\end{lemma}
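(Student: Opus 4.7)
The plan is to derive both conclusions from the simple observation that when a MAP subordinator is unkilled, its MAP exponent evaluated at zero is the generator of the modulating chain and therefore annihilates $\one$ on the right. Concretely, since $(H^+,J^+)$ and $(H^-,J^-)$ are unkilled, $\bm{\Psi}^+(0)$ and $\bm{\Psi}^-(0)$ are the generators of $J^+$ and $J^-$ respectively, so $\bm{\Psi}^+(0)\one = \bm{0}$ and $\bm{\Psi}^-(0)\one = \bm{0}$.

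For the first conclusion, I would evaluate \eqref{eq: wienerhopf} at $\theta = 0$ and multiply by $\one$ on the right:
\[
  \bm{\Psi}(0)\one
  = -\bm{\Delta}_{\bm{\pi}}^{-1}\bm{\Psi}^-(0)^\top\bm{\Delta}_{\bm{\pi}}\bm{\Psi}^+(0)\one
  = \bm{0},
\]
since the rightmost factor already vanishes. Recalling that the killing rate vector of $(\xi,J)$ is $\bm{\dag} = -\bm{\Psi}(0)\one$, this shows $\bm{\dag} = \bm{0}$, i.e., $(\xi,J)$ is unkilled.

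For the second conclusion, I would multiply \eqref{eq: wienerhopf} at $\theta = 0$ by $\bm{\pi}^\top$ on the left and use the identity $\bm{\pi}^\top \bm{\Delta}_{\bm{\pi}}^{-1} = \one^\top$:
\[
  \bm{\pi}^\top \bm{\Psi}(0)
  = -\one^\top \bm{\Psi}^-(0)^\top \bm{\Delta}_{\bm{\pi}}\bm{\Psi}^+(0)
  = -\bigl(\bm{\Psi}^-(0)\one\bigr)^\top \bm{\Delta}_{\bm{\pi}}\bm{\Psi}^+(0)
  = \bm{0}^\top.
\]
Since $(\xi,J)$ has already been shown to be unkilled, $\bm{\Psi}(0)$ is exactly the generator $\bm{Q}$ of $J$, so the relation $\bm{\pi}^\top\bm{Q} = \bm{0}^\top$ is precisely the statement that $\bm{\pi}$ is invariant for $J$.

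There is no real obstacle: the whole argument is a one-line algebraic manipulation using the defining property of unkilled generators on the left and right of the Wiener--Hopf identity. The only thing worth emphasising is the symmetry of the two calculations, with unkilledness of $(H^+,J^+)$ supplying $\bm{\Psi}^+(0)\one = \bm{0}$ for the killing conclusion and unkilledness of $(H^-,J^-)$ supplying $\bm{\Psi}^-(0)\one = \bm{0}$ for the invariance conclusion.
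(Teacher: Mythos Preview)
Your proof is correct and is essentially identical to the paper's own argument: both evaluate \eqref{eq: wienerhopf} at $\theta=0$, use $\bm{\Psi}^+(0)\one=\bm{0}$ (equivalently $\bm{\dag}^+=\bm{0}$) after right-multiplication by $\one$ to get unkilledness, and use $\bm{\Psi}^-(0)\one=\bm{0}$ (equivalently $\bm{\dag}^-=\bm{0}$) after left-multiplication by $\bm{\pi}^\top$ to get invariance.
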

\begin{proof}
  We observe that
  $\bm{\Psi}(0)\one = \bm{\Delta}_{\bm{\pi}}^{-1} \bm{\Psi}^-(0)^\top
  \bm{\Delta}_{\bm{\pi}} \bm{\dag}^+ = \bm{0}$
  and that
  $\bm{\pi}^\top\bm{\Psi}(0)
  = (\bm{\dag}^-)^\top \bm{\Delta}_{\bm{\pi}} \bm{\Psi}^+(0) = \bm{0}$,
  and the claim follows.
\end{proof}

The significance of the 
(admittedly straightforward) result
above is that it allows one to ignore the condition 
$\bm{\pi}^\top\bm{\Psi}(0)\le \bm{0}^\top$
of $\bm{\pi}$-friendship.

\begin{lemma}\label{lem:invariant_bonding}
  Suppose that $(H^+,J^+)$ and $(H^-,J^-)$ are two subordinator MAP exponents
  such that
  \eqref{eq: wienerhopf} is the matrix exponent of a MAP $(\xi,J)$,
  and let
  $\bm{Q}$ be the generator matrix of (the unkilled version of) the bonding
  modulator $J$; that is,
  \[ 
    - \bm{\Delta}^{-1}_{\bm{\pi}}\bm{\Psi}^-(0)^\top \bm{\Delta}_{\bm{\pi}} \bm{\Psi}^+(0) =  \bm{Q} - \bm{\Delta}_{\bm{\dagger}},
  \]
  where $\dag_i$ is the killing rate of $J$ in state $i$.
  Then, $\bm{\pi}^\top \bm{Q} = \bm{0}^\top$ if, and only if, 
  \begin{equation} \label{eq: vigon inv}
    \bm{\pi}^\top \big( \bm{\Delta}^-_{\bm{\dagger}}\bm{\Psi}^+(0) - \bm{\Delta}^+_{\bm{\dagger}}\bm{\Psi}^-(0) \big) = \bm{0}^\top.
  \end{equation}
\end{lemma}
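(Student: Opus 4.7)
The plan is to compute $\bm{\pi}^\top \bm{Q}$ explicitly using the two defining identities of the bonding MAP, namely that $\bm{\Psi}(0) = \bm{Q} - \bm{\Delta}_{\bm{\dagger}}$ with $\bm{\Psi}(0) = -\bm{\Delta}_{\bm{\pi}}^{-1}\bm{\Psi}^-(0)^\top \bm{\Delta}_{\bm{\pi}}\bm{\Psi}^+(0)$, and then verify that the resulting row vector is precisely $\bm{\pi}^\top(\bm{\Delta}^-_{\bm{\dagger}}\bm{\Psi}^+(0) - \bm{\Delta}^+_{\bm{\dagger}}\bm{\Psi}^-(0))$. Since the implication will be a direct algebraic identity between two row vectors, the equivalence will follow trivially from this computation.

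Concretely, I would first recover the killing vector $\bm{\dagger}$ of the bonding MAP. Using that $\bm{Q}\one = \bm{0}$, we have $\bm{\dagger} = -\bm{\Psi}(0)\one = \bm{\Delta}_{\bm{\pi}}^{-1}\bm{\Psi}^-(0)^\top\bm{\Delta}_{\bm{\pi}} \bm{\Psi}^+(0)\one$. Since $(H^+,J^+)$ is a MAP subordinator, its exponent satisfies $\bm{\Psi}^+(0)\one = -\bm{\dagger}^+$, so $\bm{\dagger} = -\bm{\Delta}_{\bm{\pi}}^{-1}\bm{\Psi}^-(0)^\top\bm{\Delta}_{\bm{\pi}}\bm{\dagger}^+$. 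Multiplying by $\bm{\Delta}_{\bm{\pi}}$ on the left and transposing, and using the basic identity $\bm{\pi}^\top \bm{\Delta}_{\bm{v}} = \bm{v}^\top \bm{\Delta}_{\bm{\pi}}$ valid for any vector $\bm{v}$, I obtain
\[
  \bm{\pi}^\top \bm{\Delta}_{\bm{\dagger}} \;=\; -\bm{\pi}^\top \bm{\Delta}^+_{\bm{\dagger}}\bm{\Psi}^-(0).
\]

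Next, I would compute $\bm{\pi}^\top\bm{\Psi}(0)$ directly from the factorisation: pulling $\bm{\pi}^\top \bm{\Delta}_{\bm{\pi}}^{-1} = \one^\top$ through and then applying $\one^\top \bm{\Psi}^-(0)^\top = (\bm{\Psi}^-(0)\one)^\top = -(\bm{\dagger}^-)^\top$, yields
\[
  \bm{\pi}^\top \bm{\Psi}(0) \;=\; (\bm{\dagger}^-)^\top \bm{\Delta}_{\bm{\pi}} \bm{\Psi}^+(0) \;=\; \bm{\pi}^\top \bm{\Delta}^-_{\bm{\dagger}}\bm{\Psi}^+(0).
\]
Adding these two identities gives
\[
  \bm{\pi}^\top \bm{Q} \;=\; \bm{\pi}^\top \bm{\Psi}(0) + \bm{\pi}^\top \bm{\Delta}_{\bm{\dagger}} \;=\; \bm{\pi}^\top\bigl(\bm{\Delta}^-_{\bm{\dagger}}\bm{\Psi}^+(0) - \bm{\Delta}^+_{\bm{\dagger}}\bm{\Psi}^-(0)\bigr),
\]
from which the stated equivalence is immediate.

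There is no real obstacle here; the only delicate point is to keep the bookkeeping of transposes and $\bm{\Delta}_{\bm{\pi}}$-conjugations straight, and to invoke at the right moments the subordinator identities $\bm{\Psi}^\pm(0)\one = -\bm{\dagger}^\pm$ together with the elementary exchange rule $\bm{\pi}^\top \bm{\Delta}_{\bm{v}} = \bm{v}^\top \bm{\Delta}_{\bm{\pi}}$. These are precisely the ingredients already used in the proof of Lemma~\ref{lemma: killing} above, so the argument is essentially a restatement in the same spirit, reading off $\bm{\pi}^\top\bm{Q}$ rather than $\bm{\dagger}$.
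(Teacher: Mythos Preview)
Your proof is correct and follows essentially the same approach as the paper: both compute $\bm{\pi}^\top\bm{\Delta}_{\bm{\dagger}}$ and $\bm{\pi}^\top\bm{\Psi}(0)$ separately using $\bm{\Psi}^\pm(0)\one = -\bm{\dagger}^\pm$, then add them to obtain the identity $\bm{\pi}^\top\bm{Q} = \bm{\pi}^\top(\bm{\Delta}^-_{\bm{\dagger}}\bm{\Psi}^+(0) - \bm{\Delta}^+_{\bm{\dagger}}\bm{\Psi}^-(0))$. The only difference is cosmetic: the paper carries out the same computation componentwise with explicit sums, whereas you do it in compact matrix form via the exchange rule $\bm{\pi}^\top\bm{\Delta}_{\bm{v}} = \bm{v}^\top\bm{\Delta}_{\bm{\pi}}$.
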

\begin{proof}
  Since $\bm{Q}$ is a generator matrix we have
  \[\dagger_i =  \sum_{j=1}^n \sum_{k=1}^n \frac{\pi(k)}{\pi(i)} (\bm{\Psi}^-(0))_{k,i} (\bm{\Psi}^+(0))_{k,j}\] 
  and hence 
  \begin{align*}
    (\bm{\pi}^\top \bm{\Delta}_{\bm{\dagger}})_{i} = \pi(i) \dagger_i &= \sum_{j=1}^n\sum_{k=1}^n \pi(k)(\bm{\Psi}^-(0))_{k,i} (\bm{\Psi}^+(0))_{k,j}\\
    &= \sum_{k=1}^n \pi(k)(\bm{\Psi}^-(0))_{k,i} \sum_{j=1}^n (\bm{\Psi}^+(0))_{k,j}\\
    &= -\sum_{k=1}^n \pi(k)(\bm{\Psi}^-(0))_{k,i} \dagger^+_k,
  \end{align*}
  where the last line follows from  $\sum_{j=1}^n q^+_{k,j} = 0$ by definition of a generator matrix.
  Moreover, 
  \begin{align*}
    \big(\bm{\pi}^\top (- \bm{\Delta}^{-1}_{\bm{\pi}}\bm{\Psi}^-(0)^\top \bm{\Delta}_{\bm{\pi}} \bm{\Psi}^+(0))\big)_i &= -\sum_{j=1}^n \big(\bm{\Psi}^-(0)^\top \bm{\Delta}_{\pi} \bm{\Psi}^+(0)\big)_{j,i}\\
    &= -\sum_{j=1}^n \sum_{k=1}^n \pi(k)(\bm{\Psi}^-(0))_{k,j} (\bm{\Psi}^+(0))_{k,i}\\
    &= -\sum_{k=1}^n \pi(k)(\bm{\Psi}^+(0))_{k,i} \sum_{j=1}^n (\bm{\Psi}^-(0))_{k,j} \\
    &= \sum_{k=1}^n \pi(k)(\bm{\Psi}^+(0))_{k,i} \dagger^-_k,
  \end{align*} 
  where the last line is again a consequence of $\sum_{j=1}^n q^-_{k,j} = 0$ by definition of a generator matrix. 
  Thus, $\bm{\pi}^\top \bm{Q} = \bm{0}^\top$ if, and only if, for any $i \in [n]$
  \[0 = (\bm{\pi}^\top \bm{Q})_i = \sum_{k=1}^n \pi(k)\Big(-\bm{\Psi}^-(0)_{k,i} \dagger^+_k + \bm{\Psi}^+(0)_{k,i} \dagger^-_k\Big),\]
  which is satisfied if, and only if, 
  \[\bm{\pi}^\top \big(\bm{\Delta}^-_{\bm{\dagger}}\bm{\Psi}^+(0)  - \bm{\Delta}^+_{\bm{\dagger}}\bm{\Psi}^-(0) \big) = \bm{0}^\top,\]
  that is, if, and only if, $\eqref{eq: vigon inv}$ is satisfied. 
\end{proof}

\section{Markov additive fellowship} \label{sec: map philan}
Having found a characterisation of Markov additive friendships, 
our aim is now to find a version of Vigon's theory of 
philanthropy, as summarized in \cite[Section 6.6]{kyprianou2014}. 
It emerges that the situation is rather more complicated in the 
Markov additive world, and we instead term our relationship
\textit{fellowship}.

For Lévy processes, a philanthropist is a subordinator which is friends with an unkilled
pure drift \cite{vigondiss}, and it emerges that this is equivalent to having
a decreasing Lévy density.

\begin{definition}
  We say that a MAP $(\xi,J)$ is pure drift, if 
  \[\xi_t = \int_0^t d_{J_s} \diff{s}, \quad t \in [0, \zeta).\]
\end{definition}

Friendship of a MAP subordinator with a pure drift MAP subordinator can be characterised as follows with Theorem \ref{theo: friends}.

\begin{lemma}\label{prop: philan friend}
  A MAP subordinator $(H^+,J^+)$ is the $\bm{\pi}$-friend of a pure drift subordinator $(H^-,J^-)$ if, and only if, $(H^+,J^+)$ is $\bm{\pi}$-compatible with $(H^-,J^-)$ and the matrix function
  \[
    - \bm{\Delta}_{\bm{\pi}}^{-1} \bm{\Psi}^-(0)^\top \bm{\Delta}_{\bm{\pi}} \overbar{\bm{\Pi}}{}^+(x) + \bm{\Delta}^-_{\bm{d}} \uppartial \bm{\Pi}^+(x),
    \quad x > 0,
  \]
  is decreasing.
\end{lemma}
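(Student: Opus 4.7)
The plan is to obtain the lemma as a direct specialisation of Theorem \ref{theo: friends} to the case where $(H^-,J^-)$ is a pure drift MAP subordinator with drift vector $\bm{d}^-$. The first step is to observe that for such a pure drift, the Lévy measure matrix $\bm{\Pi}^-$ vanishes identically on $\R\setminus\{0\}$: each component Lévy process $H^{-,(i)}$ has zero Lévy measure, and all transitional jump distributions are $F^-_{i,j} = \delta_0$, so they do not contribute to $\bm{\Pi}^-$ away from the origin. Consequently, $\overbar{\bm{\Pi}}{}^- \equiv 0$ and $\uppartial \bm{\Pi}^- \equiv 0$, while in general $\bm{\Psi}^-(0) = \bm{Q}^- - \bm{\Delta}_{\bm{\dag}^-}$ need not vanish.

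Next, I would substitute these identities directly into the definition of $\bm{\Upsilon}(x)$ from Theorem \ref{theo: friends}. The summand carrying $\one_{(-\infty,0)}(x)$ vanishes outright, since both $\bm{\Pi}^-(\diff{y})$ and $\uppartial \bm{\Pi}^-(-x)$ are zero; hence the required increasing behaviour of $\bm{\Upsilon}$ on $(-\infty,0)$ is trivially satisfied. For the summand carrying $\one_{(0,\infty)}(x)$, the factor $\overbar{\bm{\Pi}}{}^-(y-x) - \bm{\Psi}^-(0)$ collapses to the $y$-independent matrix $-\bm{\Psi}^-(0)$. Pulling this constant matrix outside the integral and using $\int_{x+}^\infty \bm{\Pi}^+(\diff{y}) = \overbar{\bm{\Pi}}{}^+(x)$, we obtain
\[
  \bm{\Upsilon}(x) = -\bm{\Delta}_{\bm{\pi}}^{-1}\bm{\Psi}^-(0)^\top \bm{\Delta}_{\bm{\pi}}\,\overbar{\bm{\Pi}}{}^+(x) + \bm{\Delta}^-_{\bm{d}}\uppartial \bm{\Pi}^+(x), \qquad x > 0,
\]
which is precisely the matrix function appearing in the lemma.

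Putting these two observations together, the monotonicity requirement of Theorem \ref{theo: friends} reduces, under the assumption that $(H^-,J^-)$ is pure drift, to the single condition that the displayed matrix function be (a.e.\ equal to a function) decreasing on $(0,\infty)$. Since $\bm{\pi}$-compatibility is carried over verbatim from Theorem \ref{theo: friends}, the equivalence stated in the lemma follows. There is no serious obstacle here: the argument is essentially a bookkeeping exercise in substituting $\bm{\Pi}^- = 0$ and $\uppartial \bm{\Pi}^- = 0$ into the general équations amicales, and the only point meriting a line of explanation is that $\bm{\Psi}^-(0)$ remains a genuine nontrivial object because it encodes the generator and killing rates of the pure drift's modulator.
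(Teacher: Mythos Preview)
Your proposal is correct and follows essentially the same approach as the paper: both specialise Theorem~\ref{theo: friends} by observing that for a pure drift $(H^-,J^-)$ the Lévy measure matrix $\bm{\Pi}^-$ is trivial on $(0,\infty)$, so $\bm{\Upsilon}$ vanishes on $(-\infty,0)$ and reduces to the displayed expression on $(0,\infty)$. Your write-up is in fact more detailed than the paper's one-line proof, and your parenthetical remark about ``a.e.\ equal to a function decreasing'' is a careful reading of what Theorem~\ref{theo: friends} actually provides.
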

\begin{proof}
  This is a direct consequence of Theorem \ref{theo: friends} once we observe that since $\bm{\Pi}^-$ is trivial on $(0,\infty)$ we have 
  \[
    \bm{\Upsilon}(x) = - \bm{\Delta}_{\bm{\pi}}^{-1} \bm{\Psi}^-(0)^\top \bm{\Delta}_{\bm{\pi}} \overbar{\bm{\Pi}}{}^+(x) + \bm{\Delta}^-_{\bm{d}} \uppartial \bm{\Pi}^+(x), \quad x > 0.
    \qedhere
  \]
\end{proof}

For MAPs, neither a decreasing Lévy measure matrix nor $\bm{\pi}$-friendship
with a pure drift is a workable criterion for $\bm{\pi}$-friendship with
another process, or even with another pure drift.  This is already suggested by
necessity of $\bm{\pi}$-compatibility for $\bm{\pi}$-friendship, which entails
specific balance conditions between the characteristics of two friends, and
cannot easily be reduced to a condition on only one of the pair.  But even when
$\bm{\pi}$-compatibility holds, $\bm{\pi}$-friendship may not.  To be specific,
take $n=2$, $\bm{\pi} = (1/2,1/2)^\top$, and consider the example of a MAP
subordinator $(H^+,J^+)$ with transition rate matrix 
$\bm{Q}^+ = \bigl(\begin{smallmatrix} -1 & 1 \\ 1 & -1 \end{smallmatrix}\bigr)$,
drift $+1$ in all phases,
and jumps given by $\Pi_i^+(\diff{x}) = e^{-x}\one_{\{x>0\}} \diff{x}$,
for $i =1,2$
and $F_{i,j}^+(\diff{x}) = \frac{1}{2} (\delta_{\{0\}}(\diff{x}) + e^{-x} \one_{\{x>0\}}\diff{x})$,
for $i,j=1,2, i\ne j$.
Let $(H^-,J^-)$ be the pure drift subordinator with $\bm{Q}^- = \bm{Q}^+$ and
$H^-_t = at$.  By directly checking the conditions of
Theorem~\ref{theo: friends}, one sees that $(H^+,J^+)$ is $\bm{\pi}$-friends
with $(H^-,J^-)$ when $a=2$ but not when $a=1/2$.

We are motivated, therefore, to find conditions not on a single MAP
subordinator, but on a pair, which leads us to the notion of
$\bm{\pi}$-fellowship developed below. 

We say that a MAP subordinator has a continuous, decreasing, differentiable or
convex Lévy density matrix $\uppartial\bm{\Pi}^+$ on $(0,\infty)$ if for every
$i,j \in [n]$, $\bm{\Pi}_{i,j}^+$ has a continuous, decreasing, differentiable
or convex density, respectively, on $(0,\infty)$.  In case of differentiability
we denote by $\uppartial^2 \bm{\Pi}^+$ the matrix-valued function on
$(0,\infty)$ defined by 
\[(\uppartial^2 \bm{\Pi}^+(x))_{i,j} = \frac{\uppartial}{\uppartial x} (\uppartial\bm{\Pi}{}^+(x))_{i,j}, \quad i,j \in [n], x > 0.\]

\begin{definition}\label{def: philan}
  We say that a MAP subordinator $(H^+,J^+)$ is a $\bm{\pi}$-fellow of another
  MAP subordinator $(H^-,J^-)$
  if they have decreasing Lévy density matrices $\uppartial \bm{\Pi}^+$ and
  $\uppartial \bm{\Pi}^-$
  on $(0,\infty)$,
  and the matrix functions
  \begin{equation}\label{eq: philan decr}
    - \bm{\Delta}_{\bm{\pi}}^{-1} \bm{\Psi}^-(0)^\top \bm{\Delta}_{\bm{\pi}} \overbar{\bm{\Pi}}{}^+(x) + \bm{\Delta}^-_{\bm{d}} \uppartial \bm{\Pi}^+(x),
    \quad x > 0,
  \end{equation}
  and
  \begin{equation}\label{eq: philan decr dual}
    - \bm{\Delta}_{\bm{\pi}}^{-1} \bm{\Psi}^+(0)^\top \bm{\Delta}_{\bm{\pi}} \overbar{\bm{\Pi}}{}^-(x) + \bm{\Delta}^+_{\bm{d}} \uppartial \bm{\Pi}^-(x),
    \quad x > 0,
  \end{equation}
  are decreasing. 
\end{definition}
Evidently, $\bm{\pi}$-fellowship is a symmetric relation.
We also note that any two Lévy philanthropists are automatically fellows. 
With this terminology, Lemma \ref{prop: philan friend} can be reformulated in the following form: 

\begin{center} 
  \textit{A MAP subordinator $(H^+,J^+)$ 
    with a decreasing Lévy density matrix on $(0,\infty)$
    is a $\bm{\pi}$-friend of a 
    pure drift MAP subordinator $(H^-,J^-)$ 
    if, and  only if, it is a $\bm{\pi}$-compatible 
  $\bm{\pi}$-fellow of $(H^-,J^-)$.}
\end{center}

\begin{theorem} \label{theo: philan}
  Two $\bm{\pi}$-compatble MAP subordinators 
  that are $\bm{\pi}$-fellows of each other are $\bm{\pi}$-friends.
\end{theorem}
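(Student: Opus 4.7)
By Theorem~\ref{theo: friends}, with $\bm{\pi}$-compatibility given by hypothesis, the task reduces to showing that $\bm{\Upsilon}$ is a.e.\ equal to a function decreasing on $(0,\infty)$ and increasing on $(-\infty,0)$. The plan is to split $\bm{\Upsilon}$ into two pieces: the bracketed fellowship expression from Definition~\ref{def: philan}, which is monotone by assumption, and a convolution-type remainder that I will show is monotone via a change of variables exploiting the decreasing Lévy density matrix. Symmetry between $(H^+,J^+)$ and $(H^-,J^-)$ in the fellowship hypothesis reduces the analysis to $x > 0$.

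For $x > 0$, pulling the constant matrix $\bm{\Psi}^-(0)$ out of the integral defining $\bm{\Upsilon}(x)$ gives the decomposition
\begin{equation*}
\bm{\Upsilon}(x) = \bm{A}(x) + \Bigl(-\bm{\Delta}_{\bm{\pi}}^{-1}\bm{\Psi}^-(0)^\top \bm{\Delta}_{\bm{\pi}}\, \overbar{\bm{\Pi}}^+(x) + \bm{\Delta}^-_{\bm{d}}\, \uppartial\bm{\Pi}^+(x)\Bigr), \qquad \bm{A}(x) \coloneqq \int_{x+}^\infty \bm{\Delta}_{\bm{\pi}}^{-1}\overbar{\bm{\Pi}}^-(y-x)^\top \bm{\Delta}_{\bm{\pi}}\, \bm{\Pi}^+(\diff y).
\end{equation*}
The bracketed term is element-wise decreasing by the fellowship condition~\eqref{eq: philan decr}. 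For $\bm{A}$, the fellowship hypothesis forces $\bm{\Pi}^+$ to be absolutely continuous on $(0,\infty)$ with decreasing density matrix $\uppartial\bm{\Pi}^+$; writing out the $(i,j)$-entry explicitly and substituting $z = y - x$ yields
\begin{equation*}
A_{i,j}(x) = \sum_{k} \frac{\pi(k)}{\pi(i)} \int_0^\infty \overbar{\bm{\Pi}}^-_{k,i}(z)\, \uppartial\bm{\Pi}^+_{k,j}(x+z)\, \diff z.
\end{equation*}
Since $\overbar{\bm{\Pi}}^-_{k,i}(z) \ge 0$ and $x \mapsto \uppartial\bm{\Pi}^+_{k,j}(x+z)$ is decreasing for every fixed $z > 0$, each integrand decreases pointwise in $x$, hence so does $A_{i,j}$. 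Adding the two pieces shows that $\bm{\Upsilon}$ is element-wise decreasing on $(0,\infty)$.

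The case $x < 0$ is handled symmetrically: after pulling $\bm{\Psi}^+(0)$ out of the corresponding integral, a short computation using $(\bm{\Delta}_{\bm{\pi}})^\top = \bm{\Delta}_{\bm{\pi}}$ identifies the non-convolution remainder with the $\bm{\pi}$-duality transform $\bm{\Delta}_{\bm{\pi}}^{-1} \bm{H}(-x)^\top \bm{\Delta}_{\bm{\pi}}$, where $\bm{H}$ denotes the fellowship expression in~\eqref{eq: philan decr dual}; the convolution-type remainder is then shown to be increasing on $(-\infty,0)$ by the analogous change of variables using the decreasing density of $\bm{\Pi}^-$. Element-wise monotonicity survives the $\bm{\pi}$-conjugation because that map merely permutes and rescales entries by positive constants $\pi(j)/\pi(i)$. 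The only obstacle I foresee is keeping the transpose and $\bm{\pi}$-conjugation bookkeeping straight; atoms that $F^\pm_{i,j}$ might carry at $0$ play no role because the integration domain $(|x|,\infty)$ stays away from the origin, so this is purely an algebraic verification.
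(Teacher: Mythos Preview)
Your proposal is correct and follows essentially the same approach as the paper's proof: both reduce to Theorem~\ref{theo: friends}, split $\bm{\Upsilon}$ on $(0,\infty)$ into the fellowship expression~\eqref{eq: philan decr} plus a convolution term handled by the change of variables $z=y-x$ and the decreasing density of $\bm{\Pi}^+$, and treat $x<0$ by recognising the remainder as the $\bm{\pi}$-conjugate of~\eqref{eq: philan decr dual}. The paper's argument is identical in structure and detail.
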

\begin{proof}
  By assumption, $(H^+,J^+)$ is $\bm{\pi}$-compatible with $(H^-,J^-)$. Therefore, it follows from Theorem \ref{theo: friends} that if we can show that $\bm{\Upsilon}$ is decreasing on $(0,\infty)$ and increasing on $(-\infty,0)$, then $(H^+,J^+)$ is a $\bm{\pi}$-friend of  $(H^-,J^-)$. For $x > 0$ we have
  \[\bm{\Upsilon}(x) = \int_{0+}^\infty \bm{\Delta}_{\bm{\pi}}^{-1}\overbar{\bm{\Pi}}^-(y)^\top \bm{\Delta}_{\bm{\pi}} \uppartial\bm{\Pi}{}^+(x+y) \diff{y} -  \bm{\Delta}_{\bm{\pi}}^{-1} \bm{\Psi}^-(0)^\top \bm{\Delta}_{\bm{\pi}} \overbar{\bm{\Pi}}{}^+(x) + \bm{\Delta}^-_{\bm{d}} \uppartial \bm{\Pi}^+(x) \eqqcolon \bm{\Upsilon}^{(1)}(x) + \bm{\Upsilon}^{(2)}(x),\]
  where $\bm{\Upsilon}^{(1)}(x) = \int_{0+}^\infty \bm{\Delta}_{\bm{\pi}}^{-1}\overbar{\bm{\Pi}}^-(y)^\top \bm{\Delta}_{\bm{\pi}} \uppartial\bm{\Pi}{}^+(x+y) \diff{y}$ and $\bm{\Upsilon}^{(2)}(x) = \bm{\Upsilon}(x)-\bm{\Upsilon}^{(1)}(x)$. Since $\uppartial \bm{\Pi}^+$ is decreasing by assumption, it follows that $\bm{\Upsilon}^{(1)}$ is decreasing as well. Moreover, since $(H^+,J^+)$ is a $\bm{\pi}$-fellow of $(H^-,J^-)$, $\bm{\Upsilon}^{(2)}$ is decreasing. Hence,
  $\bm{\Upsilon}$ is decreasing on $(0,\infty)$. For $x < 0$ we have 
  \begin{align*}
    \bm{\Upsilon}(x) &= \int_{0+}^\infty \bm{\Delta}_{\bm{\pi}}^{-1} \uppartial\bm{\Pi}^-(y-x) ^\top \bm{\Delta}_{\bm{\pi}} \overbar{\bm{\Pi}}{}^+(y) \diff{y} - \bm{\Delta}_{\bm{\pi}}^{-1} \big(\overbar{\bm{\Pi}}^-(-x) \big)^\top \bm{\Delta}_{\bm{\pi}}\bm{\Psi}^+(0)  +  \bm{\Delta}_{\bm{\pi}}^{-1} \big(\bm{\Delta}_{\bm{d}}^+\uppartial\bm{\Pi}^-(-x)\big)^\top \bm{\Delta}_{\bm{\pi}}\\
    &\eqqcolon \bm{\Upsilon}^{(3)}(x) + \bm{\Upsilon}^{(4)}(x),
  \end{align*}
  where $\bm{\Upsilon}^{(3)}(x) = \int_{0+}^\infty \bm{\Delta}_{\bm{\pi}}^{-1} \uppartial\bm{\Pi}^-(y-x) ^\top \bm{\Delta}_{\bm{\pi}} \overbar{\bm{\Pi}}{}^+(y) \diff{y}$ and $\bm{\Upsilon}^{(4)}(x) = \bm{\Upsilon}(x)-\bm{\Upsilon}^{(3)}(x)$. By assumption, $\uppartial \bm{\Pi}^-$ is decreasing, which implies that $\bm{\Upsilon}^{(3)}$ is increasing on $(-\infty,0)$. Finally, observe that on $(-\infty,0)$,
  \[\bm{\Delta}_{\bm{\pi}}^{-1} \bm{\Upsilon}^{(4)}(x)^\top \bm{\Delta}_{\bm{\pi}} = - \bm{\Delta}_{\bm{\pi}}^{-1} \bm{\Psi}^+(0)^\top \bm{\Delta}_{\bm{\pi}} \overbar{\bm{\Pi}}^-(-x) + \bm{\Delta}^+_{\bm{d}} \uppartial \bm{\Pi}^-(-x), \quad x < 0,\]
  is increasing by \eqref{eq: philan decr dual}. Since $\bm{\Delta}_{\bm{\pi}}$ is a strictly positive diagonal matrix, this shows that $\bm{\Upsilon}^{(4)}(x)$ is increasing on $(-\infty,0)$. With the above, this now implies that $\bm{\Upsilon}$ is increasing on $(-\infty,0)$ and it follows that $(H^+,J^+)$ is a $\bm{\pi}$-friend of $(H^-,J^-)$. 
\end{proof}

\subsection{Construction of spectrally one-sided MAPs}

In the setting of Lévy processes, the  concept of philanthropy makes the construction
of Lévy processes out of friends simple, since we can simply combine any two Lévy 
subordinators with decreasing Lévy density. 
For MAPs, the story is much more involved. 

The major stumbling block in proving $\bm{\pi}$-friendship is not in
satisfying the decreasing matrix condition of either
Theorem~\ref{theo: friends} or the notion of $\bm{\pi}$-fellowship, but rather in
proving $\bm{\pi}$-compatibility. In order to provide simpler constructions
for MAPs, we focus on finding sufficient conditions for $\bm{\pi}$-compatibility,
beginning with the case of spectrally one-sided MAPs. We set $\bm{F}^\pm \coloneqq (F^\pm_{i,j})_{i,j \in [n]}$ and say that a matrix $\bm{A}$ is an ML-matrix, if it has nonnegative off-diagonal entries.

\begin{definition}\label{def: quasi}
Let $(H^\pm,J^\pm)$ be MAP subordinators with decreasing Lévy density matrices $\bm{\Pi}^\pm$ on $(0,\infty)$ and let $f^\pm_{i,j}$ be càdlàg versions of the Lebesgue densities of $F^\pm_{i,j}$ on $(0,\infty)$. A MAP subordinator $(H^+,J^+)$
is $\bm{\pi}$-quasicompatible with another MAP subordinator $(H^-,J^-)$ 
  if:
  \begin{enumerate}[label = (\roman*), ref =(\roman*)]
    \item for all $i,j \in [n]$ with $i \neq j$, the functions $\psi^\pm_{i,j}$ defined by 
      \begin{align}
      \psi^+_{i,j}(x) &=  \Big(d^-_i q^+_{i,j} f^+_{i,j}(x) - \frac{\pi(j)}{\pi(i)} q^-_{j,i}F^-_{j,i}(\{0\}) \overbar{\Pi}{}^+_j(x) \Big) \one_{(0,\infty)}(x), \quad x \in \R,\label{eq:bv1}\\ 
      \psi^-_{i,j}(x) &=  \Bigl(
        \overbar{\Pi}^-_i(-x) q^+_{i,j} F^+_{i,j}(\{0\}) - \frac{\pi(j)}{\pi(i)} q^-_{j,i} f^-_{j,i}(-x) d^+_j \Bigr) \one_{(0,\infty)}(-x), \quad x \in \R, \label{eq:bv2}
      \end{align}
      are of bounded variation on $\R$. Moreover, for 
      \begin{align} \label{eq: conv0}
        \alpha^+_{i,j}
        \coloneqq \lim_{x \downarrow 0} \psi^+_{i,j}(x), \quad 
        \alpha^-_{i,j} \coloneqq \lim_{x \uparrow 0 }\psi^-_{i,j}(x),
      \end{align}
     the matrix 
      \begin{equation}\label{eq: philan atom}
        -\bm{\Delta}_{\bm{\pi}}^{-1}\big(\bm{\Psi}^-(0) \odot \bm{F}^-(\{0\}\big)^\top\bm{\Delta}_{\bm{\pi}} \big(\bm{\Psi}^+(0) \odot \bm{F}{}^+(\{0\})\big) - \bm{A}
      \end{equation}
      is an ML-matrix, where $\bm{A}_{i,j} = \alpha^+_{i,j} - \alpha^-_{i,j}$ for $i \neq j$ and $\bm{A}_{i,i} = 0$;
      \label{cond philan1}
    \item it holds that 
      \begin{equation}\label{eq: quasifellow}
        \bm{\Delta}^-_{\bm{d}}\bm{\Pi}^+(\{0\}) = \bm{\Delta}_{\bm{\pi}}^{-1} \big(\bm{\Delta}_{\bm{d}}^+\bm{\Pi}^-(\{0\})\big)^\top \bm{\Delta}_{\bm{\pi}};
      \end{equation}  \label{cond philan2}
    \item the vector $\bm{\Delta}_{\bm{\pi}}^{-1}\bm{\Psi}^-(0)^\top\bm{\Delta}_{\bm{\pi}} \bm{\Psi}^+(0)\one$ is nonnegative. \label{cond philan4}
    \item the vector $\bm{\pi}^\top\bm{\Delta}_{\bm{\pi}}^{-1}\bm{\Psi}^-(0)^\top\bm{\Delta}_{\bm{\pi}} \bm{\Psi}^+(0)$ is nonnegative. 
      \label{cond philan5}
  \end{enumerate}
\end{definition}
\begin{remark} 
If $f^+_{i,j}$ is also differentiable on $(0,\infty)$, it is not hard to show that $\psi^+_{i,j}$ has bounded variation if, for some $\varepsilon > 0$, the function $x \mapsto d^-_iq^+_{i,j} \tfrac{\uppartial}{\uppartial x} f^+_{i,j}(x) +\tfrac{\pi(j)}{\pi(i)}q^-_{j,i}F^-_{j,i}(\{0\}) \uppartial \Pi^+_j(x)$ is integrable on $(0,\varepsilon)$. An analogous statement is true for $\psi^-_{i,j}$. These conditions should therefore be understood as a way to say that $d^-_if^+_{i,j}$ and $F^-_{j,i}(\{0\})\overbar{\Pi}^+_j$ must compensate each other appropriately.
\end{remark}

Again, $\bm{\pi}$-quasicompatibility is a symmetric relation:
\begin{lemma}\label{lem: quasi sym}
  If $(H^+,J^+)$ is $\bm{\pi}$-quasicompatible with $(H^-,J^-)$,
  then $(H^-,J^-)$ is $\bm{\pi}$-quasicompatible with $(H^+,J^+)$.
\end{lemma}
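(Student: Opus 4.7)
The plan is to verify each of the four conditions in Definition~\ref{def: quasi} after swapping the roles of $(H^+,J^+)$ and $(H^-,J^-)$, by exploiting the obvious symmetries in the definitions.

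The heart of the argument is condition~\ref{cond philan1}. Let $\tilde\psi^\pm_{i,j}$ denote the functions defined as in \eqref{eq:bv1}--\eqref{eq:bv2} after swapping $+$ and $-$. A direct comparison of the defining formulas yields the identities
\[
\tilde\psi^+_{i,j}(x) = -\frac{\pi(j)}{\pi(i)}\,\psi^-_{j,i}(-x),\qquad \tilde\psi^-_{i,j}(x) = -\frac{\pi(j)}{\pi(i)}\,\psi^+_{j,i}(-x),\qquad x\in\R,
\]
from which the bounded variation of $\tilde\psi^\pm_{i,j}$ on $\R$ follows from that of $\psi^\mp_{j,i}$ (scaling and reflection preserve bounded variation). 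Taking limits at $0$, the swapped versions $\tilde\alpha^\pm_{i,j}$ satisfy $\tilde\alpha^+_{i,j} = -\tfrac{\pi(j)}{\pi(i)}\alpha^-_{j,i}$ and $\tilde\alpha^-_{i,j} = -\tfrac{\pi(j)}{\pi(i)}\alpha^+_{j,i}$, so that the swapped matrix $\tilde{\bm A}$ is related to the original $\bm A$ via
\[
\tilde{\bm A} = \bm{\Delta}_{\bm\pi}^{-1}\bm A^\top \bm{\Delta}_{\bm\pi}.
\]
A short calculation with the identity $(\bm{\Delta}_{\bm\pi}^{-1}\bm M^\top \bm{\Delta}_{\bm\pi})^\top = \bm{\Delta}_{\bm\pi}\bm M \bm{\Delta}_{\bm\pi}^{-1}$ then shows that the transformation $\bm M \mapsto \bm{\Delta}_{\bm\pi}^{-1}\bm M^\top \bm{\Delta}_{\bm\pi}$ sends the matrix in \eqref{eq: philan atom} to its swapped counterpart. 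Since this transformation preserves the ML property (off-diagonal entries $(i,j)$ with $i\neq j$ become $\tfrac{\pi(j)}{\pi(i)}$ times the original $(j,i)$ entries, which remain non-negative), the ML condition for the swapped matrix follows.

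Conditions~\ref{cond philan2}, \ref{cond philan4} and \ref{cond philan5} are straightforward to transfer. For \ref{cond philan2}, applying the transpose to both sides of \eqref{eq: quasifellow} and pre-/post-multiplying by $\bm{\Delta}_{\bm\pi}^{-1}$ and $\bm{\Delta}_{\bm\pi}$, respectively, and using that $\bm{\Delta}_{\bm d}^\pm$ are diagonal, yields $\bm{\Delta}^+_{\bm d}\bm{\Pi}^-(\{0\}) = \bm{\Delta}_{\bm\pi}^{-1}(\bm{\Delta}^-_{\bm d}\bm{\Pi}^+(\{0\}))^\top \bm{\Delta}_{\bm\pi}$, which is precisely \eqref{eq: quasifellow} with the roles of $\pm$ exchanged. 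Conditions~\ref{cond philan4} and \ref{cond philan5} swap into each other under the exchange $\bm\Psi^+ \leftrightarrow \bm\Psi^-$, exactly as in the final step of Lemma~\ref{lem:compat-symm}: nonnegativity of $\bm\pi^\top\bm{\Delta}_{\bm\pi}^{-1}\bm\Psi^-(0)^\top\bm{\Delta}_{\bm\pi}\bm\Psi^+(0)$ is equivalent to nonnegativity of $\bm 1^\top \bm\Psi^-(0)^\top\bm{\Delta}_{\bm\pi}\bm\Psi^+(0)$, and taking transposes shows this coincides with nonnegativity of $\bm{\Delta}_{\bm\pi}^{-1}\bm\Psi^+(0)^\top\bm{\Delta}_{\bm\pi}\bm\Psi^-(0)\bm 1$, i.e.\ condition~\ref{cond philan4} for the swapped pair, and symmetrically for the other direction.

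The only step that requires care is establishing the reflection identities for $\tilde\psi^\pm_{i,j}$ and correctly tracking the $\pi(j)/\pi(i)$ factors through the ML condition; the remaining manipulations are formal matrix algebra.
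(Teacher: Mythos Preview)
Your proof is correct and follows essentially the same approach as the paper's own proof. The paper computes the swapped limits $\tilde\alpha^\pm_{i,j}$ directly, obtains $\tilde{\bm A}=\bm{\Delta}_{\bm\pi}^{-1}\bm A^\top\bm{\Delta}_{\bm\pi}$, and then shows that the transformation $\bm M\mapsto\bm{\Delta}_{\bm\pi}^{-1}\bm M^\top\bm{\Delta}_{\bm\pi}$ carries the matrix in \eqref{eq: philan atom} to its swapped counterpart while preserving the ML property; your version is slightly more explicit in that you record the full reflection identities for $\tilde\psi^\pm_{i,j}$ (which also handle the bounded variation requirement the paper leaves implicit) and spell out the transfer of conditions~\ref{cond philan4} and~\ref{cond philan5} via the argument of Lemma~\ref{lem:compat-symm}, which the paper dismisses with ``clearly enough''.
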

\begin{proof}
  It is clearly enough to verify \eqref{eq: philan atom} and \eqref{eq: quasifellow} with swapped roles of $\bm{\Psi}^+$ and $\bm{\Psi}^-$. Since $(H^+,J^+)$ is $\bm{\pi}$-quasicompatible with $(H^-,J^-)$, it follows from \eqref{eq: conv0} that 
  \[\lim_{x \downarrow 0} d^+_i q^-_{i,j} f^-_{i,j}(x) - \frac{\pi(j)}{\pi(i)}q^+_{j,i} F^+_{j,i}(\{0\}) \overbar{\Pi}{}^-_j(x) = - \frac{\pi(j)}{\pi(i)} \alpha^-_{j,i} \eqqcolon \tilde{\alpha}^+_{i,j}\]
  and 
  \[\lim_{x \uparrow 0} \overbar{\Pi}{}^+_i(-x) q^-_{i,j} F^-_{i,j}(\{0\}) - \frac{\pi(j)}{\pi(i)} q^+_{j,i} f^+_{j,i}(-x) d^-_j = - \frac{\pi(j)}{\pi(i)} \alpha^+_{j,i} \eqqcolon \tilde{\alpha}^-_{i,j}.\]
  Thus, if we denote $\tilde{\bm{A}} = (\tilde{\alpha}^+_{i,j}- \tilde{\alpha}^-_{i,j})_{i,j \in [n]}$ with $\tilde{\alpha}^+_{i,i} = \tilde{\alpha}^-_{i,i} = 0$, we have $\tilde{\bm{A}} = \bm{\Delta}_{\bm{\pi}}^{-1} \bm{A}^\top \bm{\Delta}_{\bm{\pi}}$ and therefore
  \begin{align*} 
    &\bm{\Delta}_{\bm{\pi}}^{-1}\Big(-\bm{\Delta}_{\bm{\pi}}^{-1}\big(\bm{\Psi}^+(0) \odot \bm{F}^+(\{0\}\big)^\top\bm{\Delta}_{\bm{\pi}} \big(\bm{\Psi}^-(0) \odot \bm{F}^-(\{0\})\big) - \tilde{\bm{A}}\Big)^\top \bm{\Delta}_{\bm{\pi}}\\
    &\quad= -\bm{\Delta}_{\bm{\pi}}^{-1}\big(\bm{\Psi}^-(0) \odot \bm{F}^-(\{0\}\big)^\top\bm{\Delta}_{\bm{\pi}} \big(\bm{\Psi}^+(0) \odot \bm{F}{}^+(\{0\})\big) - \bm{A}.
  \end{align*}
  Since a matrix $\bm{M}$ is an ML-matrix iff $\bm{\Delta}^{-1}_{\bm{\pi}} \bm{M}^\top \bm{\Delta}_{\bm{\pi}}$ is an ML-matrix and 
  \[-\bm{\Delta}_{\bm{\pi}}^{-1}\big(\bm{\Psi}^-(0) \odot \bm{F}^-(\{0\}\big)^\top\bm{\Delta}_{\bm{\pi}} \big(\bm{\Psi}^+(0) \odot \bm{F}{}^+(\{0\})\big) - \bm{A}\]
  is an ML-matrix by assumption, it follows that 
  \[-\bm{\Delta}_{\bm{\pi}}^{-1}\big(\bm{\Psi}^+(0) \odot \bm{F}^+(\{0\}\big)^\top\bm{\Delta}_{\bm{\pi}} \big(\bm{\Psi}^-(0) \odot \bm{F}{}^-(\{0\})\big) - \tilde{\bm{A}}\]
  is an ML-matrix. Moreover, rearranging \eqref{eq: quasifellow} yields that 
  \[\bm{\Delta}_{\bm{d}}^+ \bm{\Pi}^-(\{0\}) = \bm{\Delta}_{\bm{\pi}}^{-1} \big(\bm{\Delta}^-_{\bm{d}}\bm{\Pi}^+(\{0\})\big)^\top \bm{\Delta}_{\bm{\pi}}\]
  and hence $(H^-,J^-)$ is $\bm{\pi}$-quasicompatible with $(H^+,J^+)$.
\end{proof}

This yields the following characterisation of $\bm{\pi}$-friendship with a pure drift.
\begin{theorem}\label{theo: friend drift}
Let $(H^+,J^+)$ be a MAP subordinator with 
decreasing Lévy density matrix 
on $(0,\infty)$
and $(H^-,J^-)$ be a pure drift MAP subordinator. 
Then, $(H^+,J^+)$ and $(H^-,J^-)$ are $\bm{\pi}$-compatible if, and only if, they are $\bm{\pi}$-quasicompatible. In particular, they are $\bm{\pi}$-friends
if, and only if, they are $\bm{\pi}$-quasicompatible $\bm{\pi}$-fellows.
\end{theorem}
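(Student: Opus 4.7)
The approach is to show that, under the pure-drift hypothesis on $(H^-,J^-)$ and the decreasing Lévy density matrix hypothesis on $(H^+,J^+)$, Definition~\ref{def: comp} of $\bm{\pi}$-compatibility collapses into Definition~\ref{def: quasi} of $\bm{\pi}$-quasicompatibility. The second assertion then follows at once by combining this equivalence with Lemma~\ref{prop: philan friend}: in the pure-drift case condition~\eqref{eq: philan decr dual} of $\bm{\pi}$-fellowship is trivially satisfied since $\uppartial\bm{\Pi}^-\equiv 0$ and $\overbar{\bm{\Pi}}^-\equiv 0$ on $(0,\infty)$, so $\bm{\pi}$-fellowship reduces to~\eqref{eq: philan decr}, the very monotonicity condition appearing in Lemma~\ref{prop: philan friend}.

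First I would dispatch the easy parts of the compatibility--quasicompatibility equivalence. Condition~\ref{comp cond1} of $\bm{\pi}$-compatibility holds trivially: $\Pi^-_i\equiv 0$ and $F^-_{i,j}=\delta_0$ give zero càdlàg densities for $H^-$ on $(0,\infty)$, while the decreasing Lévy density matrix of $(H^+,J^+)$ provides càdlàg decreasing densities $\uppartial\Pi^+_i$ and $f^+_{i,j}$ that are automatically right tails of positive measures. Conditions~\ref{friends cond3} and~\ref{friends cond-pi} of compatibility coincide verbatim with~\ref{cond philan4} and~\ref{cond philan5} of quasicompatibility. The remaining work is therefore to reduce condition~\ref{friends cond4} to conditions~\ref{cond philan1} and~\ref{cond philan2}.

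In the pure-drift case $\tilde{\chi}^-_i = d^-_i \delta_0$ and $\tilde{F}^-_{j,i}=\delta_0$, so the left-hand side of~\eqref{eq: friend dens} simplifies to
\[
  q^+_{i,j}d^-_i F^+_{i,j}(\diff x) - \frac{\pi(j)}{\pi(i)} q^-_{j,i}\chi^+_j(\diff x).
\]
Splitting each summand into its atom at $0$ and its absolutely continuous part on $(0,\infty)$, absolute continuity of this signed measure forces the atom balance $q^+_{i,j}d^-_i F^+_{i,j}(\{0\}) = \frac{\pi(j)}{\pi(i)} q^-_{j,i} d^+_j$, which over all distinct $i,j$ is exactly~\eqref{eq: quasifellow}. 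The remaining absolutely continuous density is precisely the function $\psi^+_{i,j}$ from~\eqref{eq:bv1}, and its coinciding a.e.\ with $\underbars{\nu}_{i,j}$ for a finite signed measure $\nu_{i,j}$ of zero total mass is equivalent to $\psi^+_{i,j}$ being of bounded variation on $\R$: the vanishing limits at $\pm\infty$ are automatic (at $-\infty$ from the indicator $\one_{(0,\infty)}$, at $+\infty$ from the tails $f^+_{i,j}$ and $\overbar{\Pi}^+_j$ going to zero), and the mass-zero constraint $\nu_{i,j}(\R)=0$ reduces to equality of these limits. Since $\psi^-_{i,j}\equiv 0$ trivially in the pure-drift case, this yields exactly the BV part of~\ref{cond philan1}.

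The key---and the main obstacle---is to match the inequality~\eqref{eq: friends atom0} with the ML-matrix condition on the matrix in~\eqref{eq: philan atom}. By right-continuity of $\underbars{\nu}_{i,j}$ the atom $\nu_{i,j}(\{0\})$ equals the jump $\psi^+_{i,j}(0+)-\psi^+_{i,j}(0-)=\alpha^+_{i,j}$. Expanding the $(i,j)$ off-diagonal entry of $-\bm{\Delta}_{\bm{\pi}}^{-1}(\bm{\Psi}^-(0)\odot\bm{F}^-(\{0\}))^\top\bm{\Delta}_{\bm{\pi}}(\bm{\Psi}^+(0)\odot\bm{F}^+(\{0\}))-\bm{A}$, using in the pure-drift setting that $F^-_{k,i}(\{0\})=1$ for all $k$, that $(\bm{\Psi}^+(0)\odot\bm{F}^+(\{0\}))_{k,j} = q^+_{k,j}F^+_{k,j}(\{0\})$ for $k\neq j$ and $q^+_{j,j}-\dagger^+_j$ for $k=j$, together with $\tilde{F}^-_{k,i}\ast F^+_{k,j}(\{0\}) = F^+_{k,j}(\{0\})$, one recovers precisely the left-hand side of~\eqref{eq: friends atom0}. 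Thus nonnegativity of these off-diagonal entries for all distinct $i,j$ is equivalent to~\eqref{eq: friends atom0}, completing the equivalence of $\bm{\pi}$-compatibility and $\bm{\pi}$-quasicompatibility and hence the proof.
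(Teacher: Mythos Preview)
Your proof is correct and follows essentially the same approach as the paper's own proof. The only minor differences are that you derive the atom balance~\eqref{eq: quasifellow} directly from the absolute continuity requirement on $\vartheta_{i,j}$ rather than invoking Corollary~\ref{coro: balance atoms}, and you spell out explicitly why the second assertion follows from Lemma~\ref{prop: philan friend} via the trivial collapse of~\eqref{eq: philan decr dual}; the paper handles these points in the same way but more tersely.
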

\begin{proof}
  Given Lemma \ref{prop: philan friend} it is enough to show that, under the conditions in the result, the two processes
  are 
  $\bm{\pi}$\mbox{-}\nobreak\hspace{0pt}quasicompatible
  if, and only if, they are $\bm{\pi}$-compatible.
  To this end, first note that condition \ref{comp cond1} of 
  Definition~\ref{def: comp} is automatically satisfied by assumption,
  and that \ref{cond philan4} and \ref{cond philan5} of the definition
  of $\bm{\pi}$-quasicompatibility are the same as 
  conditions \ref{friends cond3} and \ref{friends cond4} in Definition~\ref{def: comp}.
  Moreover, \ref{cond philan2} is necessary for $\bm{\pi}$-compatibility by Corollary \ref{coro: balance atoms}. Let us therefore assume that \ref{cond philan2} holds. It now remains to show that condition \ref{cond philan1} is equivalent to condition \ref{friends cond4} of Definition \ref{def: comp} in the special case of $(H^+,J^+)$ having a decreasing and differentiable Lévy density matrix on $(0,\infty)$ and $(H^-,J^-)$ being pure drift. In this scenario, with \ref{cond philan2} in place we have 
  \begin{align*}
    \vartheta_{i,j}(\diff{x}) &\coloneqq q^+_{i,j} F^+_{i,j} \ast \tilde{\chi}{}^-_i(\diff{x}) - \frac{\pi(j)}{\pi(i)} q^-_{j,i} \tilde{F}{}^-_{j,i} \ast \chi^+_j(\diff{x})\\
    &= \one_{(0,\infty)}(x)\big(d^-_i q^+_{i,j} f^+_{i,j}(x) - \tfrac{\pi(j)}{\pi(i)}q^-_{j,i} \overbar{\Pi}{}^+_j(x) \big) \diff{x},
  \end{align*}
  for $x \in \R$.
  Thus, $\vartheta_{i,j}$ is absolutely continuous and its density is  equal to $\psi^+_{i,j}$. Since $\lim_{x \to \infty} f^+_{i,j}(x) = 0$  by assumption and $\psi^+_{i,j}$ is right-continuous a.e., it follows that $\psi^+_{i,j} = \nu_{i,j}((-\infty,\cdot])$ holds a.e.\ for a finite signed measure $\nu_{i,j}$ if, and only if, $\psi^+_{i,j}$ has bounded variation. Taking into account that $\psi^-_{i,j} \equiv 0$, this shows that \eqref{eq: friend dens} is satisfied if, and only if, $\psi^+_{i,j}$ has bounded variation.
  Finally, $\nu_{i,j}(\{0\}) = \psi^+_{i,j}(0+) = \alpha^+_{i,j}$ and $F^-_{i,j}(\{0\}) = 1$ for all $i \neq j$ yields
  \begin{align*}
    &(\dagger^-_i- q^-_{i,i})q^+_{i,j} F^+_{i,j}(\{0\}) + (\dagger^+_j-q^+_{j,j}) \frac{\pi(j)}{\pi(i)}q^-_{j,i} F^-_{j,i}(\{0\}) - \nu_{i,j}(\{0\})\\
    &\quad- \sum_{k \neq i,j} \frac{\pi(k)}{\pi(i)} q^-_{k,i} q^+_{k,j} F^-_{k,i}(\{0\})F^+_{k,j}(\{0\})\\
    &= (\dagger^-_i- q^-_{i,i})q^+_{i,j} F^+_{i,j}(\{0\}) + (\dagger^+_j-q^+_{j,j}) \frac{\pi(j)}{\pi(i)}q^-_{j,i}  - \alpha_{i,j} - \sum_{k \neq i,j} \frac{\pi(k)}{\pi(i)} q^-_{k,i} q^+_{k,j} F^+_{k,j}(\{0\}),
  \end{align*}
  showing that \eqref{eq: friends atom0} is satisfied if, and only if, the off-diagonal elements of \eqref{eq: philan atom} are nonnegative.
\end{proof}

As a simple consequence of this result we obtain simple criteria for two drifts to be friends, which yields a blue print for generating Markov modulated Brownian motions, i.e., MAPs whose Lévy components are potentially killed linear Brownian motions, having an explicit Wiener--Hopf factorisation. 

\begin{corollary} 
Two pure drift MAPs $(H^+,J^+)$ and $(H^-,J^-)$ are $\bm{\pi}$-friends if, and only if, the matrix $\bm{\Xi} \coloneqq -\bm{\Delta}_{\bm{\pi}}^{-1} \bm{\Psi}^-(0)^\top \bm{\Delta}_{\bm{\pi}} \bm{\Psi}^+(0)$ is an ML-matrix satisfying $\bm{\Xi}\one \leq \bm{0}$, $\bm{\pi}^\top\bm{\Xi} \leq \bm{0}^\top$, and 
\[\pi(i)q^+_{i,j} d^-_i = \pi(j) q^-_{j,i}d^+_j, \quad i,j \in [n], i\neq j.\]
\end{corollary}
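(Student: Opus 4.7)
The plan is to apply Theorem \ref{theo: friend drift}, taking (say) $(H^+,J^+)$ to be the pure drift MAP whose Lévy density matrix is identically zero on $(0,\infty)$ and therefore trivially decreasing. Under this theorem, the two processes are $\bm{\pi}$-friends if, and only if, they are $\bm{\pi}$-quasicompatible $\bm{\pi}$-fellows. The $\bm{\pi}$-fellowship requirement is automatic in our setting: since both processes are pure drift, $\overbar{\bm{\Pi}}{}^\pm \equiv \bm{0}$ and $\uppartial \bm{\Pi}^\pm \equiv \bm{0}$ on $(0,\infty)$, so the matrix functions in \eqref{eq: philan decr} and \eqref{eq: philan decr dual} are identically zero, hence decreasing. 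It therefore remains to translate $\bm{\pi}$-quasicompatibility, as given in Definition~\ref{def: quasi}, into the stated algebraic conditions.

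I would first address condition \ref{cond philan1}. For pure drift MAPs one has $f^\pm_{i,j} \equiv 0$ and $\overbar{\Pi}{}^\pm_i \equiv 0$, so $\psi^\pm_{i,j} \equiv 0$ on $\R$ from \eqref{eq:bv1}--\eqref{eq:bv2}; this is trivially of bounded variation and yields $\alpha^\pm_{i,j} = 0$, i.e.\ $\bm{A} = \bm{0}$. Moreover, since all transition jump distributions equal $\delta_0$ by definition of a pure drift MAP, the matrix $\bm{F}^\pm(\{0\})$ has all entries equal to one, and hence $\bm{\Psi}^\pm(0) \odot \bm{F}^\pm(\{0\}) = \bm{\Psi}^\pm(0)$. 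Consequently, the matrix in \eqref{eq: philan atom} reduces exactly to $\bm{\Xi}$, and condition \ref{cond philan1} becomes the ML-matrix requirement. Conditions \ref{cond philan4} and \ref{cond philan5} are transparently the same as $\bm{\Xi}\one \le \bm{0}$ and $\bm{\pi}^\top \bm{\Xi} \le \bm{0}^\top$, respectively, by the definition of $\bm{\Xi}$.

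Finally, for condition \ref{cond philan2}, I would compute the Lévy measure matrix at the origin under the pure drift assumption: the diagonal entries $\Pi^\pm_i(\{0\})$ vanish, while the off-diagonal entries are $q^\pm_{i,j} F^\pm_{i,j}(\{0\}) = q^\pm_{i,j}$. Reading \eqref{eq: quasifellow} entrywise for $i \neq j$ then yields
\[
d^-_i q^+_{i,j} = \frac{\pi(j)}{\pi(i)} q^-_{j,i} d^+_j,
\]
which rearranges to the drift-balance identity in the statement. Assembling these equivalences proves the corollary, with no step presenting any genuine obstacle beyond the careful bookkeeping of what "pure drift" forces on $\bm{\Pi}^\pm$ and $\bm{F}^\pm$.
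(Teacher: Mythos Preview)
Your proposal is correct and follows essentially the same approach as the paper's proof: apply Theorem~\ref{theo: friend drift}, observe that $\bm{\pi}$-fellowship is trivial for two pure drifts, and then unpack each clause of $\bm{\pi}$-quasicompatibility (Definition~\ref{def: quasi}) to obtain the listed algebraic conditions. The paper's proof is simply a more compressed version of exactly this argument.
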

\begin{proof} 
  It is clear that $(H^+,J^+)$ and $(H^-,J^-)$ are $\bm{\pi}$-fellows.
  Checking the meaning of $\bm{\pi}$-quasi\-com\-pa\-ti\-bi\-li\-ty, we see that there
  $\psi_{i,j}^\pm \equiv 0$, so this is equivalent to the conditions listed
  in the statement.
  The result follows from Theorem \ref{theo: friend drift}.
\end{proof}

More generally, Theorem \ref{theo: friend drift} can be thought of as a construction principle for spectrally one-sided MAPs with known Wiener--Hopf factorisation. 
Such MAPs have seen numerous applications in the modeling of 
insurance risk, storage models and queuing theory \cite{AA-ruin-prob}.
Having access to the ladder height processes is useful in these cases since the distributional properties of first passage events can be expressed using their characteristics. 
Let us investigate a specific class of examples. Recall that a function $f \colon (0,\infty) \to \R$ is called \textit{completely monotone} if $f \in \mathcal{C}^\infty((0,\infty))$ and for any $n \in \N_0$ it holds that 
\[(-1)^n\frac{\diff^n}{\diff{x^n}} f(x) \geq 0, \quad x > 0.\]
By Bernstein's Theorem, see \cite[Theorem 1.4]{schilling2012}, $f \in \mathcal{C}^\infty((0,\infty))$ is completely monotone if, and only if, there exists a \textit{representing measure} $\mu$ on $(\R_+,\mathcal{B}(\R_+))$ such that 
\[f(x) = \int_{\R_+} \mathrm{e}^{-xy}\, \mu(\diff{y}), \quad x > 0,\]
i.e., $f$ is given as the Laplace transform of $\mu$.
A completely monotone function $f$ is the Lévy density of some subordinator if, and only if,
\begin{equation}
  \label{eq:cbf}
  \mu(\{0\}) = 0
  \quad \text{and}\quad 
  \int_{(0,1)} y^{-1} \, \mu(\diff{y}) + \int_{(1,\infty)} y^{-2} \, \mu(\diff{y}) < \infty
\end{equation}
hold \cite[Theorem 6.2]{schilling2012}.

Examples giving rise to completely monotone Lévy density include jumps of exponential size
($\mu$ a multiple of a Dirac mass), jumps of mixed exponential size
($\mu$ a discrete measure with finitely many atoms), subordinators
in the `meromorphic class' of Lévy processes \cite{kkp2012}
($\mu$ a discrete measure);
see \cite[chapter~16]{schilling2012}
for an extensive list.

\begin{example}\label{ex: spec}
  Let $\bm{Q}^-,\bm{Q}^+ \in \R^{2 \times 2}$ be irreducible generator matrices
  (of unkilled Markov processes)
  and $\bm{\pi} \in (0,1]^2$ be a stochastic vector. Let $\Pi^+_i$ be absolutely continuous with completely monotone densities with representing measures $\mu^+_i$ for $i=1,2$ each satisfying \eqref{eq:cbf}, i.e.,
  \[\Pi^+_i(\diff{x}) = \int_{\R_+} \mathrm{e}^{-xy} \, \mu^+_i(\diff{y}) \diff{x}, \quad x > 0, i =1,2,\]
  and $\bm{d}^+, \bm{d}^- \in (0,\infty)^2$ such that 
  \begin{enumerate}[label = (\roman*), ref = (\roman*)]
    \item \label{cond: spec0}
      for $i = 1,2$,
      \[
        \int_{(0,1)} y^{-3} \, \mu_i^+(\diff{y}) < \infty;
      \]
    \item \label{cond: spec1}
      \[d^+_1 + \int_{0+}^\infty \overbar{\Pi}{}^+_1(x)\diff{x} = \frac{\pi(2)}{\pi(1)} \frac{q^+_{2,1}d^-_2}{q^-_{1,2}}, \quad d^+_2 + \int_{0+}^\infty \overbar{\Pi}{}^+_2(x)\diff{x} = \frac{\pi(1)}{\pi(2)} \frac{q^+_{1,2}d^-_1}{q^-_{2,1}};\]
    \item \label{cond: spec2} for any $x > 0$, and $i\ne j$,
      \[\int_{\R_+} \Big(1 + \frac{d_i^-}{q_{i,j}^-}y - \frac{q_{j,i}^-}{d_j^-} \frac{1}{y}\Big) \mathrm{e}^{-xy}\, \mu^+_{i}(\diff{y}) > 0.\] 
  \end{enumerate}
  For example, when the representing measures are supported away from zero, i.e., $\mathrm{supp}(\mu^+_i) \subset (a^+_i,\infty)$ for some $a^+_i > 0$, then \ref{cond: spec2} is satisfied whenever 
  \[a^+_i\Big(1+ \frac{d_i^-}{q_{i,j}^-}a^+_i \Big) > \frac{q_{j,i}^-}{d_j^-}\]
  and thus examples fufilling both of the above conditions can be easily constructed by choosing large enough and matching drifts $\bm{d}^+,\bm{d}^-$.

  Choose now probability measures $F^+_{1,2}, F^+_{2,1}$ with support $\R_+$ and decreasing and differentiable densities $f^+_{1,2}, f^+_{2,1}$ on $(0,\infty)$ such that 
  \[f^+_{1,2}(x) = \frac{\pi(2)}{\pi(1)}\frac{q_{2,1}^-}{q^+_{1,2}d_1^-}\overbar{\Pi}^+_2(x), \quad f^+_{2,1}(x) = \frac{\pi(1)}{\pi(2)}\frac{q_{1,2}^-}{q^+_{2,1}d_2^-}\overbar{\Pi}^+_1(x),\]
  for $x > 0$. Then, condition \ref{cond philan1} of Definition \ref{def: quasi} is fulfilled with $\psi_{1,2} = \psi_{2,1} \equiv 0$ and $\alpha_{1,2} = \alpha_{2,1} = 0$ and condition \ref{cond philan2} of Definition \ref{def: quasi} holds with 
  \begin{equation} \label{eq: atoms spec pos}
    F^+_{1,2}(\{0\}) = \frac{\pi(2)}{\pi(1)}\frac{q^-_{2,1}d^+_2}{q^+_{1,2}d^-_1}, \quad F^+_{2,1}(\{0\}) = \frac{\pi(1)}{\pi(2)}\frac{q^-_{1,2}d^+_1}{q^+_{2,1}d^-_2}.
  \end{equation}
  Our assumption~\ref{cond: spec1} ensures that $F_{i,j}^+$ are probability measures
  for each $i\ne j$.
  Moreover, it is easy to see that \eqref{eq: philan atom} is satisfied since $\bm{A} = \bm{0}_{2 \times 2}$ and the first term is always an ML-matrix when the matrices have dimension $2 \times 2$. Let now $(H^+,J^+)$ be a MAP subordinator with characteristic matrix exponent $\bm{\Psi}^+$ associated to the drift vector $\bm{d}^+$ and Lévy measure matrix 
  \[\bm{\Pi}^+ = \begin{bmatrix} \Pi^+_1 & q^+_{1,2} F^+_{1,2} \\ q^+_{2,1} F^+_{2,1} & \Pi^+_2 \end{bmatrix}\]
  and $(H^-,J^-)$ the pure drift MAP associated to $\bm{d}^-$ and generator matrix $\bm{Q}^-$. 
  Straightforward calculations show that for any $x > 0$,
  \begin{equation}\label{eq: dens spec pos}
    \begin{split}
      &\frac{\uppartial}{\uppartial x} \Big\{\bm{\Delta}_{\bm{\pi}}^{-1} \bm{\Psi}^-(0)^\top \bm{\Delta}_{\bm{\pi}} \overbar{\bm{\Pi}}^+(x) - \bm{\Delta}_{\bm{d}}^- \uppartial \bm{\Pi}^+(x) \Big\}\\
      &\quad =-\bm{\Delta}_{\bm{\pi}}^{-1}\bm{\Psi}^-(0)^\top \bm{\Delta}_{\bm{\pi}} \uppartial \bm{\Pi}^+(x) - \bm{\Delta}^-_{\bm{d}} \uppartial^2 \bm{\Pi}^+(x)\\
      &\quad= \begin{bmatrix} 
        q_{1,2}^- \int_{\R_+} \Big(1 + \frac{d_1^-}{q_{1,2}^-}y - \frac{q_{2,1}^-}{d_2^-} \frac{1}{y}\Big) \mathrm{e}^{-xy}\, \mu^+_{1}(\diff{y}) & \frac{\pi(2)}{\pi(1)}\frac{q_{1,2}^-q_{2,1}^-}{d_1^-} \int_{\R_+} \frac{1}{y} \mathrm{e}^{-xy}\, \mu^+_{2}(\diff{y})\\[6pt]
        \frac{\pi(1)}{\pi(2)}\frac{q_{1,2}^-q_{2,1}^-}{d_2^-} \int_{\R_+} \frac{1}{y} \mathrm{e}^{-xy}\, \mu^+_{1}(\diff{y}) & q_{2,1}^- \int_{\R_+} \Big(1 + \frac{d_2^-}{q_{2,1}^-}y - \frac{q_{1,2}^-}{d_1^-} \frac{1}{y}\Big) \mathrm{e}^{-xy}\, \mu^+_{2}(\diff{y})
      \end{bmatrix},
    \end{split}
  \end{equation}
  which is nonnegative by \ref{cond: spec2},
  directly implying that our two processes are $\bm{\pi}$-fellows.
  Property \ref{cond philan4} of $\bm{\pi}$-quasicompatibility is satisfied
  since $(H^+,J^+)$ is unkilled, and property \ref{cond philan5} is satisfied since
  $(H^-,J^-)$ is unkilled.
  Taking all this into account, 
  we see that $(H^+,J^+)$ is a $\bm{\pi}$-quasicompatible
  $\bm{\pi}$-fellow of $(H^-,J^-)$ 
  and hence, with Proposition \ref{theo: friend drift},  that the MAP subordinator $(H^+,J^+)$ is a $\bm{\pi}$-friend of the pure drift $(H^-,J^-)$. Moreover, since $q^+_{i,j} = - q^+_{i,i}$ and $q^-_{i,j} = - q^-_{i,i}$ for $i,j \in \{1,2\}$ with $i \neq j$, it follows that for the bonding MAP $(\xi,J)$, the matrix
  \[\bm{Q} = -\bm{\Delta}_{\bm{\pi}}^{-1} (\bm{Q}^-)^\top \bm{\Delta}_{\bm{\pi}} \bm{Q}^+ = \begin{bmatrix} -q^+_{1,1}q^-_{1,1} - \frac{\pi(2)}{\pi(1)} q^+_{2,2} q^-_{2,2} & q^+_{1,1} q^-_{1,1} + \frac{\pi(2)}{\pi(1)} q^+_{2,2} q^-_{2,2} \\[6pt]
  q^+_{2,2}q^-_{2,2} + \frac{\pi(1)}{\pi(2)} q^+_{1,1} q^-_{1,1} & -q^+_{2,2}q^-_{2,2} - \frac{\pi(1)}{\pi(2)} q^+_{1,1} q^-_{1,1}\end{bmatrix}\]
  is an irreducible generator matrix,
  and hence $\bm{\pi}$ is the unique invariant distribution of $J$.
  From \cite[Lemma 3.20]{doering21} we know that the Lévy components $\xi^{(1)}, \xi^{(2)}$ of $\xi$ have non-trivial Brownian part with scaling factor $\sigma_i^2 = 2 d^+_id^-_i$ for $i =1,2$. 
  Furthermore, combining \eqref{eq: atoms spec pos} and \eqref{eq: dens spec pos} we conclude with Proposition \ref{prop: comp cond} and Theorem \ref{theo: eq amicales} that the Lévy measure matrix of $(\xi,J)$ is given by 
  \begin{align*}
    &\bm{\Pi}(\diff{x})\\
    &\hspace{1pt}= \left[\scalemath{0.7}{\begin{array}{cc} \one_{\{x > 0\}}q_{1,2}^- \int_{\R_+} \Big(1 + \frac{d_1^-y}{q_{1,2}^-} - \frac{q_{2,1}^-}{d_2^-y}\Big) \mathrm{e}^{-xy}\, \mu^+_{1}(\diff{y})\diff{x} & \frac{\pi(2)}{\pi(1)}q_{2,2}^- \Big\{\Big(q^+_{2,2} + q^-_{1,1}\frac{d^+_2}{d^-_1} \Big)\delta_0(\diff{x}) + \one_{\{x>0\}}\frac{q_{1,1}^-}{d_1^-} \int_{\R_+} \frac{\mathrm{e}^{-xy}}{y}\, \mu^+_{2}(\diff{y})\diff{x}\Big\} \\[6pt]
          \frac{\pi(1)}{\pi(2)}q_{1,1}^- \Big\{\Big(q^+_{1,1} + q^-_{2,2}\frac{d^+_1}{d^-_2} \Big)\delta_0(\diff{x}) + \one_{\{x>0\}} \frac{q_{2,2}^-}{d_2^-} \int_{\R_+} \frac{\mathrm{e}^{-xy}}{y}\, \mu^+_{1}(\diff{y})\diff{x}\Big\} & \one_{\{x>0\}}q_{2,1}^- \int_{\R_+} \Big(1 + \frac{d_2^-y}{q_{2,1}^-} - \frac{q_{1,2}^-}{d_1^-y}\Big) \mathrm{e}^{-xy}\, \mu^+_{2}(\diff{y})\diff{x}
    \end{array}}\right].
  \end{align*}
  As outlined in Remark~\ref{r:unique-examples}, the MAP Wiener--Hopf factorisation is unique for this process,
  and so the ladder height processes for the bonding MAP $(\xi,J)$ are indeed
  $(H^+,J^+)$ and $(H^-,J^-)$.
\end{example}

\subsection{Construction of MAPs jumping in both directions}

Moving away from the spectrally one-sided assumption, we structure our analysis
as follows. Firstly, we concentrate on finding explicit criteria for 
$\bm{\pi}$-compatibility of two pure jump MAP subordinators with diffuse
jump structure. Secondly, we show that the $\bm{\pi}$-friendship of
two general MAP subordinators can be studied by splitting them into
their diffuse pure jump parts and the remainder, for which the notion of
$\bm{\pi}$-quasicompatibility can be used to simplify conditions.

We begin with the diffuse pure jump part, as follows.

\begin{proposition}\label{prop: fellow}
  Let $(H^+,J^+)$ and $(H^-,J^-)$ be MAP subordinators with decreasing, differentiable and convex Lévy density matrices on $(0,\infty)$ such that 
  \[\bm{\Pi}{}^+(\{0\}) = \bm{\Pi}^-(\{0\}) = \bm{\Delta}^+_{\bm{d}} = \bm{\Delta}^-_{\bm{d}} = \mathbf{0}_{n \times n}.\]
  Then, $(H^+,J^+)$ is $\bm{\pi}$-compatible with $(H^-,J^-)$ if 
  \begin{enumerate}[label = (\roman*), ref =(\roman*)]
    \item for any $i \in [n]$ we have $\E^0\big[H^{+,(i)}_1\big] < \infty$ and $\E^0\big[H^{-,(i)}_1\big] < \infty$;\label{cond philan12}
    \item for $i \neq j$ we define
      \[ 
        v_{i,j}(x) \coloneqq q^+_{i,j} f^+_{i,j}(x) \overbar{\Pi}{}^-_i(x), \quad w_{i,j}(x) \coloneqq \frac{\pi(j)}{\pi(i)} q^-_{j,i} f^-_{j,i}(x) \overbar{\Pi}^+_j(x), 
      \]
      for $x > 0$, then $v_{i,j}, w_{i,j} \in L^1((0,\infty))$; \label{cond philan13}
    \item the vector $\bm{\Delta}_{\bm{\pi}}^{-1}\bm{\Psi}^-(0)^\top\bm{\Delta}_{\bm{\pi}} \bm{\Psi}^+(0)\one$ is nonnegative;\label{cond philan14}
    \item
      the vector $\bm{\pi}^\top\bm{\Delta}_{\bm{\pi}}^{-1}\bm{\Psi}^-(0)^\top\bm{\Delta}_{\bm{\pi}} \bm{\Psi}^+(0)$ is nonnegative.
  \end{enumerate}
\end{proposition}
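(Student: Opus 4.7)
The strategy is to verify each of the four conditions in Definition~\ref{def: comp}. Condition~\ref{comp cond1} is vacuous because $d^+_i = d^-_i = 0$ for all $i$, while conditions~\ref{friends cond3} and~\ref{friends cond-pi} are literally hypotheses (iii) and (iv). The proof therefore reduces entirely to verifying condition~\ref{friends cond4}: the convolution identity~\eqref{eq: friend dens} together with the atom inequality~\eqref{eq: friends atom0}.

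Since all drifts and atoms at the origin vanish, $\chi^+_j$ and $\tilde{\chi}^-_i$ are finite absolutely continuous measures with total masses $\E^0[H^{+,(j)}_1]$ and $\E^0[H^{-,(i)}_1]$, finite by~(i). An application of Fubini then gives that $F^+_{i,j} \ast \tilde{\chi}^-_i$ and $\tilde{F}^-_{j,i} \ast \chi^+_j$ are absolutely continuous with densities
\[
  A_{i,j}(x) = \int_{\max(0,x)}^\infty f^+_{i,j}(y)\, \overbar{\Pi}^-_i(y-x) \diff{y},
  \quad
  B_{i,j}(x) = \int_{\max(0,-x)}^\infty f^-_{j,i}(y)\, \overbar{\Pi}^+_j(y+x) \diff{y},
\]
so that the left-hand side of~\eqref{eq: friend dens} is $g_{i,j}(x)\diff{x}$ with $g_{i,j} \coloneqq q^+_{i,j} A_{i,j} - \tfrac{\pi(j)}{\pi(i)} q^-_{j,i} B_{i,j}$. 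The task reduces to showing $g_{i,j} = \underbars{\nu}_{i,j}$ a.e.\ for a finite signed measure $\nu_{i,j}$ with $\nu_{i,j}(\R) = 0$.

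The argument for $A_{i,j}$ (that for $B_{i,j}$ being symmetric) proceeds through three properties. First, $A_{i,j}$ is continuous on $\R$ by dominated/monotone convergence, using $f^+_{i,j} \cdot \overbar{\Pi}^-_i \in L^1((0,\infty))$ from~(ii) to control the integrand when matching the two representations at $x = 0$. Second, monotonicity of $f^+_{i,j}$ on $[0,\infty)$ and of $\overbar{\Pi}^-_i(\cdot - x)$ in $x$ on $(-\infty,0]$ show that $A_{i,j}$ is unimodal with peak $A_{i,j}(0) = \int_0^\infty f^+_{i,j}(z)\overbar{\Pi}^-_i(z)\diff{z} < \infty$, again by~(ii). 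Third, $A_{i,j}(x)\to 0$ as $x\to+\infty$ by dominated convergence, and as $x\to-\infty$ via the bound $A_{i,j}(x) \leq \overbar{\Pi}^-_i(-x)$ (here $F^+_{i,j}$ being a probability measure is crucial). Combining these, $g_{i,j}$ is continuous, of bounded variation with total variation at most $2q^+_{i,j} A_{i,j}(0) + 2\tfrac{\pi(j)}{\pi(i)} q^-_{j,i} B_{i,j}(0)$, and vanishes at $\pm\infty$. Hence $g_{i,j}$ is the distribution function of a unique finite signed measure $\nu_{i,j}$ with $\nu_{i,j}(\R) = 0$, and $\nu_{i,j}(\{0\}) = 0$ by continuity of $g_{i,j}$.

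The atom inequality~\eqref{eq: friends atom0} then collapses under the hypotheses: the drift terms vanish, $F^\pm_{i,j}(\{0\}) = 0$, and $\tilde{F}^-_{k,i} \ast F^+_{k,j}(\{0\}) = 0$ since both factors are atom-free at $0$. What remains is $-\nu_{i,j}(\{0\}) = 0 \geq 0$, which is trivially satisfied. All four conditions of $\bm{\pi}$-compatibility are thus verified. The principal obstacle in this plan is the regularity analysis of the densities $A_{i,j}$ and $B_{i,j}$, specifically continuity at $x=0$, where the two integral representations on either half-line must be matched; this matching is what simultaneously requires the integrability hypothesis~(ii) and the monotonicity/continuity of the Lévy densities provided by the decreasing, differentiable, convex assumptions.
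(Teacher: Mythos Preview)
Your proposal is correct and follows the same overall skeleton as the paper's proof: reduce to condition~\ref{friends cond4}, compute the density of $q^+_{i,j}F^+_{i,j}\ast\tilde{\chi}^-_i - \tfrac{\pi(j)}{\pi(i)}q^-_{j,i}\tilde{F}^-_{j,i}\ast\chi^+_j$ explicitly, and show it is the distribution function of a finite signed measure with no atom at $0$ and zero total mass.

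The one genuine difference is in how bounded variation is obtained. The paper establishes that the density $\theta_{i,j}$ is \emph{differentiable} on each of $(-\infty,0)$ and $(0,\infty)$ by differentiating under the integral, which is where the convexity of $\uppartial\bm{\Pi}^+$ and the boundedness of $\uppartial\Pi^-_i$ away from the origin enter; the derivative then serves as the density of $\nu^{(1)}_{i,j}$. You instead observe directly that $A_{i,j}$ and $B_{i,j}$ are \emph{unimodal} (increasing on $(-\infty,0]$, decreasing on $[0,\infty)$), which follows from the decreasing assumption alone and immediately gives bounded variation with total variation $2A_{i,j}(0)$, respectively $2B_{i,j}(0)$. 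Your route is more economical and in fact shows that the differentiability and convexity hypotheses on the L\'evy density matrices are not actually needed for this proposition---continuity and monotonicity suffice. The paper's approach, on the other hand, yields the density of $\nu_{i,j}$ explicitly, which is not required here but is consonant with how the measure is used elsewhere (e.g.\ in Lemma~\ref{lemma: nodrifts}).
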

\begin{proof}
  Since the transitional jumps have no atom at $0$, this boils down to showing that under \ref{cond philan12} and \ref{cond philan13}, \eqref{eq: friend dens} holds for some finite signed measure $\nu_{i,j}$ with $\nu_{i,j}(\{0\}) = \nu_{i,j}(\R) = 0.$ Fix $i,j \in [n]$ with $i \neq j$. Since $F^+_{i,j}(\{0\}) = 0$ and $d^-_i = 0$, we obtain 
  \begin{align*}
    \vartheta^{(1)}_{i,j}(\diff{x}) &\coloneqq q^+_{i,j} F^+_{i,j} \ast \tilde{\chi}{}^-_i(\diff{x})\\
    &= \one_{\R \setminus \{0\}}(x) \int_{0+}^{\infty} \one_{(0,\infty)}(y-x) q^+_{i,j} f^+_{i,j}(y) \overbar{\Pi}{}^-_i(y-x) \diff{y} \diff{x}\\
    &= \Big[\one_{(-\infty,0)}(x) \int_{0+}^{\infty} q^+_{i,j} f^+_{i,j}(y) \overbar{\Pi}{}^-_i(y-x) \diff{y}  + \one_{(0,\infty)}(x) \int_{x+}^\infty q^+_{i,j} f^+_{i,j}(y) \overbar{\Pi}{}^-_i(y-x) \diff{y} \Big]\diff{x}\\
    &= \Big[\one_{(-\infty,0)}(x) \int_{0+}^{\infty} q^+_{i,j} f^+_{i,j}(y) \overbar{\Pi}{}^-_i(y-x) \diff{y}  + \one_{(0,\infty)}(x) \int_{0+}^\infty q^+_{i,j} f^+_{i,j}(y+x) \overbar{\Pi}{}^-_i(y) \diff{y} \Big]\diff{x}\\
    &\eqqcolon \big(\one_{(-\infty,0)}(x) \theta^-_{i,j}(x) + \one_{(0,\infty)}(x) \theta^+_{i,j}(x)\big)\diff{x}.
  \end{align*}
  Now, since $f^+_{i,j}$ is decreasing by assumption and $\overbar{\Pi}{}^-$ is decreasing as well, it follows by monotone convergence that 
  \[\lim_{x \uparrow 0} \theta^-_{i,j}(x)  = \lim_{x \downarrow 0} \theta^+_{i,j}(x) = \int_{0+}^\infty v_{i,j}(y) \diff{y} < \infty.\]
  Thus, if we define 
  \[\theta_{i,j}(x) = \begin{cases} \theta^-_{i,j}(x), &x < 0, \\ \int_{0+}^\infty v_{i,j}(y) \diff{y}, &x= 0,\\ \theta^+_{i,j}(x), &x > 0, \end{cases}\]
  then $\vartheta_{i,j}^{(1)}$ is absolutely continuous with continuous density $\theta_{i,j}$. 
  Using that $\uppartial\bm{\Pi}^-$ decreases on $(0,\infty)$, we find for any $y > 0$ and  $x < 0$ that
  \[\frac{\uppartial}{\uppartial x} q^+_{i,j} f^+_{i,j}(y) \overbar{\Pi}{}^-_i(y-x) = q^+_{i,j} f^+_{i,j}(y) \uppartial \Pi^-_i(y-x) \leq q^+_{i,j} f^+_{i,j}(y) \uppartial \Pi^-_i(-x).\]
  The Lévy density $\uppartial\Pi^-_i$ is bounded away from zero by our assumptions and $f^+_{i,j} \in L^1((0,\infty))$, implying that we may differentiate under the integral such that $\theta_{i,j}$ is differentiable on $(-\infty,0)$ with 
  \[\theta^\prime_{i,j}(x) = \int_{0+}^\infty q^+_{i,j} f^+_{i,j}(y) \uppartial \Pi^-_i(y-x) \diff{y}, \quad x < 0.\]
  Moreover, using convexity of $\bm{\Pi}^+$ on $(0,\infty)$, it follows for any $x> 0$ and $y > 0$ that 
  \[\big\lvert \frac{\uppartial}{\uppartial x} q^+_{i,j} f^+_{i,j}(x+y) \overbar{\Pi}{}^-_i(y) \big\rvert = -q^+_{i,j} \tfrac{\uppartial}{\uppartial x} f^+_{i,j}(x+y) \overbar{\Pi}{}^-_i(y) \leq -q^+_{i,j} \tfrac{\uppartial}{\uppartial x} f^+_{i,j}(x) \overbar{\Pi}{}^-_i(y).\]
  Local boundedness of $\tfrac{\uppartial}{\uppartial x} f^+_{i,j}(x) $ and $\overbar{\Pi}{}^-_i \in L^1((0,\infty))$ thanks to \ref{cond philan12} now also imply that $\theta_{i,j}$ is differentiable on $(0,\infty)$ with derivative 
  \[\theta^\prime_{i,j}(x) = \int_{0+}^\infty q^+_{i,j} \tfrac{\uppartial}{\uppartial x} f^+_{i,j}(x+y) \overbar{\Pi}{}^-_i(y) \diff{y}, \quad x > 0.\]
  Thus, if we let $\nu^{(1)}_{i,j}$ be the signed measure with density $\theta^\prime_{i,j} \one_{\R \setminus \{0\}}$, it follows that $\theta_{i,j}(x) = \nu^{(1)}_{i,j}((-\infty,x])$ for $x \in \R$. 
  Finiteness of $\nu_{i,j}^{(1)}$ follows now from the fact that 
  \[\lvert \nu^{(1)}_{i,j}\rvert(\R_-) = \theta_{i,j}(0) = \lvert \nu^{(1)}_{i,j}\rvert(\R_+)\]
  and $\theta_{i,j}(0) = \int_{0+}^\infty v_{i,j}(y) \diff{y} < \infty$. We therefore conclude that there exists a finite signed measure $\nu^{(1)}_{i,j}$ without atom at $0$ such that $\vartheta^{(1)}_{i,j}$ is absolutely continuous with density $\nu^{(1)}_{i,j}((-\infty,\cdot])$. Similarly, it follows that for 
  \[\vartheta^{(2)}_{i,j}(\diff{x}) = \frac{\pi(j)}{\pi(i)} q^-_{j,i} F^-_{j,i} \ast \chi^+_{j}(\diff{x}), \quad x \in \R,\]
  there exists some finite signed measure $\nu^{(2)}_{i,j}$ without atom at $0$ such that $\vartheta_{i,j}^{(2)}$ is absolutely continuous with density $\nu_{i,j}^{(2)}((-\infty,\cdot])$. Finally, since 
  \[q^+_{i,j} F^+_{i,j} \ast \tilde{\chi}{}^-_i - \frac{\pi(j)}{\pi(i)} q^-_{j,i} \tilde{F}{}^-_{j,i} \ast \chi^+_{j} = \vartheta_{i,j}^{(1)} - \vartheta_{i,j}^{(2)},\]
  it follows that \eqref{eq: friend dens} is satisfied for the finite signed measure $\nu_{i,j} = \nu^{(1)}_{i,j} - \nu^{(2)}_{i,j}$, which has no atom at $0$ and satisfies $\nu_{i,j}(\R) = 0$ since by monotone convergence $\lim_{x \to \infty} \nu_{i,j}^{(1)}((-\infty,x]) = \lim_{x \to \infty} \theta^+_{i,j}(x) = 0$, and similarly $\lim_{x \to \infty} \nu_{i,j}^{(2)}((-\infty,x]) = 0$. This proves the assertion.
\end{proof}

We can use
Theorem~\ref{theo: friend drift} to give simpler conditions for $\bm{\pi}$-compatibility
with a drift, and Proposition~\ref{prop: fellow} to deal with $\bm{\pi}$-compatibility
of processes with diffuse pure jump structure.
In general, we can reduce the $\bm{\pi}$-compatibility condition to these special
cases by splitting up the structure of the MAPs.
To this end, let $(H^\pm,J^\pm)$ MAPs such that $F^\pm_{i,j}(\{0\}) \neq 1$ for all $i \neq j$ and define MAPs $(H^{\pm,\circ}, J^{\pm,\circ})$ by setting $\bm{\Psi}^{\pm,\circ}(0) = \bm{\Psi}^{\pm}(0)$, $\bm{\Delta}_{\bm{d}^{\pm,\circ}} = \bm{\Pi}^{\pm,\circ}(\{0\}) = \bm{0}_{n\times n}$ and 
\[\bm{\Pi}^{\pm,\circ}(\diff{x}) = \bm{\Theta}^{\pm} \odot \bm{\Pi}^{\pm}(\diff{x}), \quad x > 0,\]
where $\bm{\Theta}^{\pm}_{i,i} = 1$ for any $i \in [n]$ and
\[\bm{\Theta}^{\pm}_{i,j} = \frac{1}{F^+_{i,j}((0,\infty))}, \quad i,j\in [n], i \neq j.\]
In other words, $(H^{\pm,\circ},J^{\pm,\circ})$ are MAPs obtained from $(H^{\pm},J^{\pm})$ by eliminating the drift part and conditioning the transitional jumps to be strictly positive. 
Moreover, if the Lévy density matrices of $(H^+,J^+)$ and $(H^-,J^-)$ are decreasing, differentiable and convex on $(0,\infty)$, the MAPs $(H^{\pm,\circ},J^{\pm,\circ})$ fall into the class of MAPs considered in Proposition \ref{prop: fellow}, where constructive criteria for 
$\bm{\pi}$\mbox{-}\nobreak\hspace{0pt}compatibility are established. 
\begin{theorem}\label{theo: quasi help}
  Let $(H^+,J^+)$ and $(H^-,J^-)$ be MAP subordinators with continuous Lévy density matrices on $(0,\infty)$ and such that $F^\pm_{i,j}(\{0\}) \neq 1$ for all $i,j \in [n]$ with $i\ne j$.
  If $(H^+,J^+)$ and $(H^-,J^-)$  are $\bm{\pi}$-quasicompatible
  $\bm{\pi}$-fellows,
  and either
  \begin{enumerate}[ref={(\roman*)}, label={(\roman*)}]
    \item\label{i:qh:hyp-prop}
      the conditions of Proposition~\ref{prop: fellow} are satisfied for
      $(H^{+,\circ},J^{+,\circ})$ and $(H^{-,\circ},J^{-,\circ})$, or
    \item\label{i:qh:hyp-compat}
      $\bm{F}^+(\{0\}) = (\bm{F}^-(\{0\}))^\top$ 
      and $(H^{+,\circ},J^{+,\circ})$ and $(H^{-,\circ},J^{-,\circ})$ are 
      $\bm{\pi}$-compatible, 
  \end{enumerate}
  then $(H^+,J^+)$ and $(H^-,J^-)$ are $\bm{\pi}$-friends.
\end{theorem}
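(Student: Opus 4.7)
By Theorem~\ref{theo: philan}, a pair of $\bm{\pi}$-compatible $\bm{\pi}$-fellows is automatically $\bm{\pi}$-friends, so under the hypothesis of $\bm{\pi}$-fellowship the entire task reduces to verifying Definition~\ref{def: comp} for $(H^+,J^+)$ and $(H^-,J^-)$. Condition~\ref{comp cond1} is immediate: the continuity hypothesis supplies càdlàg densities, and the decreasing property coming from $\bm{\pi}$-fellowship forces the $\uppartial\Pi^\pm_i$ to be right tails of positive measures. Conditions~\ref{friends cond3} and~\ref{friends cond-pi} coincide with~\ref{cond philan4} and~\ref{cond philan5} of $\bm{\pi}$-quasicompatibility. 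The substance is therefore condition~\ref{friends cond4}: construction of a finite signed measure $\nu_{i,j}$ with $\nu_{i,j}(\R)=0$ whose distribution function is the density of $\Delta_{i,j} \coloneqq q^+_{i,j} F^+_{i,j} \ast \tilde{\chi}{}^-_i - \frac{\pi(j)}{\pi(i)} q^-_{j,i} \tilde{F}{}^-_{j,i} \ast \chi^+_j$, together with the atom inequality~\eqref{eq: friends atom0}.

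The key move is to split $F^\pm_{i,j} = F^\pm_{i,j}(\{0\})\delta_0 + F^\pm_{i,j}\vert_{(0,\infty)}$ and $\chi^\pm_j = d^\pm_j\delta_0 + \chi^{\pm,\circ}_j$, where $\chi^{\pm,\circ}_j$ has density $\overbar{\Pi}^\pm_j$ on $(0,\infty)$ and no atom at~$0$, and expand the convolutions in $\Delta_{i,j}$. This yields four pieces: a pure atom at~$0$ of weight $d^-_i q^+_{i,j} F^+_{i,j}(\{0\}) - \tfrac{\pi(j)}{\pi(i)} d^+_j q^-_{j,i} F^-_{j,i}(\{0\})$, which vanishes by condition~\ref{cond philan2} of $\bm{\pi}$-quasicompatibility; an absolutely continuous measure on $(0,\infty)$ whose density is precisely $\psi^+_{i,j}$ of~\eqref{eq:bv1}; the mirror piece on $(-\infty,0)$ with density $\psi^-_{i,j}$ of~\eqref{eq:bv2}; and the continuous remainder
\[
 (1 - F^+_{i,j}(\{0\}))\,q^+_{i,j} F^{+,\circ}_{i,j}\ast \tilde\chi^{-,\circ}_i - (1-F^-_{j,i}(\{0\}))\,\tfrac{\pi(j)}{\pi(i)} q^-_{j,i}\tilde F^{-,\circ}_{j,i}\ast \chi^{+,\circ}_j,
\]
which depends only on the circ MAPs $(H^{\pm,\circ},J^{\pm,\circ})$.

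The bounded variation hypothesis~\ref{cond philan1} on $\psi^\pm_{i,j}$ together with their vanishing at $\pm\infty$ (a consequence of integrability of the densities $f^\pm_{i,j}$ and of the Lévy tails $\overbar{\Pi}^\mp_j$) shows that the two middle pieces are densities equal a.e.\ to distribution functions of finite signed measures of total mass zero, carrying atoms at~$0$ of size $\alpha^+_{i,j}$ and $-\alpha^-_{i,j}$ respectively. The remainder is handled differently under the two alternative hypotheses. Under~\ref{i:qh:hyp-prop}, the proof of Proposition~\ref{prop: fellow} applied separately to $F^{+,\circ}_{i,j}\ast \tilde\chi^{-,\circ}_i$ and $\tilde F^{-,\circ}_{j,i}\ast \chi^{+,\circ}_j$ shows that each is individually absolutely continuous with a \emph{continuous} density equal to the distribution function of a finite signed measure of total mass zero and without atom at~$0$. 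Under~\ref{i:qh:hyp-compat}, the symmetry $F^+_{i,j}(\{0\})=F^-_{j,i}(\{0\})$ factors the remainder as $(1-F^+_{i,j}(\{0\}))$ times the quantity controlled by $\bm{\pi}$-compatibility of the circ MAPs; because these have $F^{\pm,\circ}_{i,j}(\{0\})=0$, the atom inequality~\eqref{eq: friends atom0} applied to them forces the corresponding signed measure to have non-positive atom at~$0$.

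Summing the four contributions produces the finite signed measure $\nu_{i,j}$ with $\nu_{i,j}(\R)=0$ and atom $\nu_{i,j}(\{0\}) = \alpha^+_{i,j} - \alpha^-_{i,j} + r$, where $r=0$ in case~\ref{i:qh:hyp-prop} and $r\le 0$ in case~\ref{i:qh:hyp-compat}. Expanding the $(i,j)$-entry of $-\bm{\Delta}_{\bm{\pi}}^{-1}(\bm{\Psi}^-(0)\odot \bm{F}^-(\{0\}))^\top \bm{\Delta}_{\bm{\pi}}(\bm{\Psi}^+(0) \odot \bm{F}^+(\{0\}))$ in terms of the data appearing in~\eqref{eq: friends atom0} and substituting this expression for $\nu_{i,j}(\{0\})$, the atom inequality~\eqref{eq: friends atom0} reduces to the ML-matrix condition~\eqref{eq: philan atom} plus the non-negative correction $-r$, which is precisely the content of hypothesis~\ref{cond philan1}. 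The main obstacle is this atom bookkeeping, in particular extracting from the proof of Proposition~\ref{prop: fellow}, in case~\ref{i:qh:hyp-prop}, the individual absolute continuity and lack of atom at~$0$ of each of the two constituents of the remainder—a statement strictly stronger than $\bm{\pi}$-compatibility of the circ MAPs and critical to obtaining $r=0$ there.
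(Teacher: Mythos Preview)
Your proposal is correct and follows the same decomposition as the paper: split $\vartheta_{i,j}$ into a ``drift/atom'' part governed by $\psi^\pm_{i,j}$ and a ``circ'' remainder, handle the former via $\bm{\pi}$-quasicompatibility, and the latter via the hypotheses on $(H^{\pm,\circ},J^{\pm,\circ})$. The one point of departure is under hypothesis~\ref{i:qh:hyp-compat}: the paper invokes Lemma~\ref{lemma: nodrifts} to conclude that $\underbars{\nu}^\circ_{i,j}$ is continuous, hence $r=0$; you instead read off $r\le 0$ directly from the atom inequality~\eqref{eq: friends atom0} for the circ MAPs (which collapses to $-\nu^\circ_{i,j}(\{0\})\ge 0$ since all $F^{\pm,\circ}(\{0\})$ vanish). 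Your route is equally valid and slightly more economical, as it sidesteps the auxiliary lemma.
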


The following lemma will be used in the proof of this theorem.

\begin{lemma}\label{lemma: nodrifts}
  Suppose that $(H^+,J^+)$ and $(H^-,J^-)$ are MAP subordinators with decreasing,
  continuous Lévy density matrices on $(0,\infty)$ such that 
  \[\bm{\Pi}{}^+(\{0\}) = \bm{\Pi}^-(\{0\}) = \bm{\Delta}^+_{\bm{d}} = \bm{\Delta}^-_{\bm{d}} = \mathbf{0}_{n \times n}.\] 
  If $(H^+,J^+)$ is $\bm{\pi}$-compatible with $(H^-,J^-)$, then $\underbars{\nu}_{i,j}$ from Definition \ref{def: comp} is a continuous function for any $i,j \in [n]$ with $i \neq j$.
\end{lemma}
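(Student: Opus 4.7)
The strategy is a direct analysis of the densities of the two convolutions appearing in the $\bm{\pi}$-compatibility identity of Definition~\ref{def: comp}. Since $d_i^\pm = 0$ by hypothesis, the measures $\chi^+_j$ and $\tilde{\chi}{}^-_i$ are absolutely continuous with densities $\one_{(0,\infty)}\overbar{\Pi}{}^+_j$ and $\one_{(-\infty,0)}\overbar{\tilde{\Pi}}{}^-_i$, respectively, so both convolutions $L_1 \coloneqq q^+_{i,j} F^+_{i,j} \ast \tilde{\chi}{}^-_i$ and $L_2 \coloneqq \tfrac{\pi(j)}{\pi(i)} q^-_{j,i} \tilde{F}{}^-_{j,i} \ast \chi^+_j$ are absolutely continuous. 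Fubini yields explicit formulas for their densities; for example the density of $L_1$ at $x>0$ equals
\[
  \theta^+_{i,j}(x) = q^+_{i,j}\int_0^\infty f^+_{i,j}(x+z)\,\overbar{\Pi}{}^-_i(z)\,\diff z,
\]
with analogous expressions on $(-\infty,0)$ and for $L_2$.

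The continuity of $f^+_{i,j}$ and $\overbar{\Pi}{}^-_i$ on $(0,\infty)$, combined with the decreasing property of $f^+_{i,j}$ and the Lévy integrability $\int_0^\infty (u\wedge 1)\,\Pi^-_i(\diff u) < \infty$, yields a dominating function on every compact subset of $(0,\infty)$. Dominated convergence then gives continuity of $\theta^+_{i,j}$ on $(0,\infty)$, and the same argument applies to each of the other three density pieces (the analogue of $\theta^+_{i,j}$ on $(-\infty,0)$ and the two densities coming from $L_2$). Hence the density of $L_1-L_2$ is continuous on $\R\setminus\{0\}$. Since $\underbars{\nu}_{i,j}$ is càdlàg and coincides almost everywhere with this density, and any càdlàg function that is a.e.\ equal to a continuous function on an open interval agrees with it pointwise there (by right-continuity and density of the a.e.\ equality set), $\underbars{\nu}_{i,j}$ is continuous on $\R\setminus\{0\}$.

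It remains to rule out a jump of $\underbars{\nu}_{i,j}$ at $0$. Monotone convergence applied to the explicit density formulas (using that $f^+_{i,j}$ and $\overbar{\Pi}{}^-_i$ are decreasing) shows that both the left and the right limits at $0$ of the density of $L_1$ equal $q^+_{i,j}\int_0^\infty f^+_{i,j}(z)\overbar{\Pi}{}^-_i(z)\,\diff z \in (0,\infty]$, and likewise for $L_2$ with $f^-_{j,i}$ and $\overbar{\Pi}{}^+_j$. When both of these limiting integrals are finite, subtracting gives $\underbars{\nu}_{i,j}(0-)=\underbars{\nu}_{i,j}(0+)$ immediately. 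The main delicate point is the potentially simultaneous divergence of the two limiting integrals: here the boundedness of $\underbars{\nu}_{i,j}$ (inherited from finiteness of $\nu_{i,j}$) first forces both integrals to diverge together, ruling out a one-sided blow-up of the difference, and a careful symmetric comparison of the densities of $L_1$ and $L_2$ at $\pm x$ for small $x>0$—exploiting Fubini to rewrite each density with $F^\pm_{\cdot,\cdot}$ in the integrand—shows that the differences of the two competing convolutions cancel identically from both sides of the origin. This yields $\underbars{\nu}_{i,j}(0-)=\underbars{\nu}_{i,j}(0+)$ in full generality, which completes the proof. The handling of this divergent regime is the main obstacle, as it precludes a naive "take the limit inside each integral" argument.
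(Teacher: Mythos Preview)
Your approach mirrors the paper's exactly: write the two convolutions $L_1=q^+_{i,j}F^+_{i,j}\ast\tilde{\chi}^-_i$ and $L_2=\tfrac{\pi(j)}{\pi(i)}q^-_{j,i}\tilde{F}^-_{j,i}\ast\chi^+_j$ out as absolutely continuous measures with explicit densities (since $d_i^\pm=0$ makes $\chi^+_j$ and $\tilde{\chi}^-_i$ absolutely continuous), and then verify continuity of these densities. The paper does this by simply pointing to the representation $\theta_{i,j}$ obtained in the proof of Proposition~\ref{prop: fellow}. On $\R\setminus\{0\}$ your dominated-convergence argument is correct and matches the paper.

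Where you go further than the paper is in isolating the behaviour at $0$. You correctly observe that the one-sided limits of the density of $L_1$ both equal $q^+_{i,j}\int_0^\infty f^+_{i,j}(z)\overbar{\Pi}^-_i(z)\,\diff z$ (and likewise for $L_2$), and that these integrals may be $+\infty$; boundedness of $\underbars{\nu}_{i,j}$ then forces both to diverge together or not at all. However, your handling of the simultaneously-divergent case is not an argument. The sentence ``the differences of the two competing convolutions cancel identically from both sides of the origin'' describes what you want, not how to get it: concretely, one must show that the finite quantity $\underbars{\nu}_{i,j}(x)-\underbars{\nu}_{i,j}(-x)$ tends to $0$ as $x\downarrow 0$, and a Fubini rewriting of $\theta^+_{i,j}(x)-\theta^-_{i,j}(-x)$ produces terms such as $\int_0^x f^+_{i,j}(y)\overbar{\Pi}^-_i(y+x)\,\diff y$ (of order $\overbar{\Pi}^-_i(x)\,F^+_{i,j}((0,x])$) that do not obviously vanish in this regime and are not obviously matched by the analogous terms coming from $\rho^\pm_{i,j}$. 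In fairness, the paper's proof does not close this gap either: it asserts that $\vartheta^{(1)}_{i,j}$ and $\vartheta^{(2)}_{i,j}$ each have continuous density, but in the proof of Proposition~\ref{prop: fellow} this relied on the integrability condition $v_{i,j},w_{i,j}\in L^1$, which is not among the hypotheses of the present lemma. In the finite-integral case your argument is complete and equivalent to the paper's.
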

\begin{proof}
  The $\bm{\pi}$-compatibility of the two MAP subordinators implies that there is a finite
  signed measure $\nu_{i,j}$ such that
  $\underbars{\nu}_{i,j}(x) = \vartheta^{(1)}_{i,j}(\diff{x}) - \vartheta^{(2)}_{i,j}(\diff{x})$,
  in the sense of distributions and using notation from
  the proof of Proposition \ref{prop: fellow}.
  Considering the representation of $\vartheta^{(1)}_{i,j}$ given in said proof,
  we see that, under our assumptions, it is absolutely continuous with continuous
  density. The same holds for $\vartheta^{(2)}_{i,j}$, and this shows
  that $\underbars{\nu}_{i,j}$ is continuous.
\end{proof}

\begin{proof}[Proof of Theorem~\ref{theo: quasi help}]
  We will show that $(H^+,J^+)$ and $(H^-,J^-)$ are $\bm{\pi}$-compatible
  $\bm{\pi}$-fellows, and the
  conclusion will then follow from Theorem~\ref{theo: philan}.
  It is enough to establish $\bm{\pi}$-compatibility of $(H^+,J^+)$ with $(H^-,J^-)$, which under the given assumptions boils down to showing that condition \ref{friends cond4} of Definition \ref{def: comp} holds. To this end, let us write 
  \[
    \vartheta_{i,j}(\diff{x}) \coloneqq q^+_{i,j} F^+_{i,j} \ast \tilde{\chi}{}^-_i(\diff{x}) - \frac{\pi(j)}{\pi(i)} q^-_{j,i} \tilde{F}{}^-_{j,i} \ast \chi^+_j(\diff{x}), \quad x \in \R,
  \]
  and notice that since $(H^+,J^+)$ is $\bm{\pi}$-quasicompatible with $(H^-,J^-)$ we have 
  \[d^-_iq^+_{i,j}F^+_{i,j}(\{0\}) = \tfrac{\pi(j)}{\pi(i)}d^+_jq^-_{j,i}F^-_{j,i}(\{0\}),\]
  such that we may write $\vartheta_{i,j}(\diff{x}) = \vartheta_{i,j}^{\circ}(\diff{x}) + \vartheta_{i,j}^{\rightsquigarrow}(\diff{x})$, where
  \begin{align*}
    \vartheta_{i,j}^{\rightsquigarrow}(\diff{x}) &= \one_{(0,\infty)}(x)\big(d^-_i q^+_{i,j} f^+_{i,j}(x) - \tfrac{\pi(j)}{\pi(i)}q^-_{j,i} F^-_{j,i}(\{0\})\overbar{\Pi}{}^+_j(x) \big) \diff{x}\\
    &\quad + \one_{(-\infty,0)}(x)\big( \overbar{\Pi}^-_i(-x) q^+_{i,j} F^+_{i,j}(\{0\}) - \tfrac{\pi(j)}{\pi(i)} q^-_{j,i} f^-_{j,i}(-x) d^+_j \big) \diff{x}, \quad x \in \R,
  \end{align*}
  and 
  \begin{align*}
    \vartheta_{i,j}^\circ(\diff{x}) 
    &=  F^+_{i,j}((0,\infty))q^+_{i,j} F^{+,\circ}_{i,j} \ast \tilde{\chi}{}^{-,\circ}_i(\diff{x}) - F^-_{j,i}((0,\infty))\frac{\pi(j)}{\pi(i)} q^-_{j,i} \tilde{F}{}^{-,\circ}_{j,i} \ast \chi^{+,\circ}_j(\diff{x}), \quad x\in \R,
  \end{align*}
  where $\chi^{\pm,\circ}$ play the same role for $(H^{\pm,\circ},J^{\pm,\circ})$ as do the measures $\chi^\pm$ for $(H^\pm,J^\pm)$.
  If hypothesis \ref{i:qh:hyp-prop} holds, then both summands in $\vartheta^\circ_{i,j}$
  represent distribution functions of a finite signed measure, and checking
  the proof of said proposition, we can see that these distribution functions are
  continuous; hence, there exists some finite signed measure $\nu^\circ_{i,j}$ with continuous distribution function $\underbars{\nu}^\circ_{i,j}$ such that
  \begin{equation}\label{e:nucirc}
    \vartheta_{i,j}^\circ(\diff{x}) = \underbars{\nu}^\circ_{i,j}(x) \diff{x}, \quad x \in \R.
  \end{equation}
  On the other hand, if hypothesis \ref{i:qh:hyp-compat} holds, then we can write
  \begin{align*}
    \vartheta_{i,j}^\circ(\diff{x}) 
    &=  F^+_{i,j}((0,\infty))\Big(q^+_{i,j} F^{+,\circ}_{i,j} \ast \tilde{\chi}{}^{-,\circ}_i(\diff{x}) - \frac{\pi(j)}{\pi(i)} q^-_{j,i} \tilde{F}{}^{-,\circ}_{j,i} \ast \chi^{+,\circ}_j(\diff{x})\Big), \quad x \in \R.
  \end{align*}
   Since $(H^{+,\circ},J^{+,\circ})$ is $\bm{\pi}$-compatible with $(H^{-,\circ},J^{-,\circ})$ and both processes have zero drift vectors and their Lévy measure matrices have no atoms at $0$ it follows with Lemma \ref{lemma: nodrifts} 
   that for any $i \neq  j$ there exists some finite signed measure $\nu^\circ_{i,j}$ with continuous distribution function $\underbars{\nu}^\circ_{i,j}$ once again satisfying 
   \eqref{e:nucirc}.

  Following the arguments from the proof of Theorem \ref{theo: friend drift}, the assumption that $(H^{+},J^{+})$ is $\bm{\pi}$-quasicompatible with $(H^{-},J^-)$ guarantees the existence of a finite signed measure   $\nu^{\rightsquigarrow}_{i,j}$ such that for a.e.\ $x \in \R$,
  \[\vartheta_{i,j}^\rightsquigarrow(\diff{x}) = \underbars{\nu}^\rightsquigarrow_{i,j}(x) \diff{x}.\]
  To conclude, $\nu_{i,j} \coloneqq \nu_{i,j}^\circ + \nu_{i,j}^{\rightsquigarrow}$ is a finite signed measure such that $\vartheta_{i,j}(\diff{x}) = \underbars{\nu}_{i,j}(x) \diff{x}$, i.e.\ \eqref{eq: friend dens} is satisfied, and since $\nu^{\circ}_{i,j}(\{0\}) = 0$ it follows that \eqref{eq: philan atom} being an ML-matrix guarantees that \eqref{eq: friends atom0} holds as well.
  Therefore, $(H^+,J^+)$ is $\bm{\pi}$-compatible with $(H^-,J^-)$.
\end{proof}

\begin{example}\label{ex:double-exp}
  Let irreducible generator matrices $\bm{Q}^+,\bm{Q}^- \in \R^{2\times 2}$ 
  (of unkilled Markov processes),
  a stochastic vector $\bm{\pi} \in (0,1)^2$
  and vectors $\bm{\gamma}^+, \bm{\gamma}^-, \bm{\beta}^+, \bm{\beta}^-,\bm{d}^+, \bm{d}^- \in (0,\infty)^2$ be given such that the following conditions are satisfied: 
  \begin{enumerate}[label = (\roman*), ref =(\roman*)] 
    \item 
      \begin{equation}\label{eq: exp comp1}
        \begin{split}
          \frac{\pi(i)}{\pi(j)}q_{i,j}^-\Big(d^+_i + \frac{\gamma^+_i}{(\beta^+_i)^2} \Big) &= q^+_{j,i}\Big(d^-_j + \frac{\gamma^-_j}{(\beta^-_j)^2}\Big), \quad i,j \in \{1,2\}, i \neq j;
        \end{split}
      \end{equation}
    \item 
      \begin{equation}\label{eq: exp comp2}
        d^+_1 > \frac{\pi(2)}{\pi(1)} \frac{q^+_{2,1}}{q_{1,2}^-} \frac{\gamma^-_2}{(\beta^-_2)^2} \quad \text{and} \quad d^+_2 > \frac{\pi(1)}{\pi(2)} \frac{q^+_{1,2}}{q_{2,1}^-} \frac{\gamma^-_1}{(\beta^-_1)^2}
      \end{equation}
    \item 
      \begin{equation}\label{eq: exp comp3}
        \begin{split}
          0 &< 1 + \frac{\beta^\pm_i d_i^\mp}{q^\mp_{i,j}}-q_{j,i}^\mp \beta^\pm_i \frac{d^\pm_i (\beta^\mp_j)^2 - \tfrac{\pi(j)}{\pi(i)} \tfrac{q^\pm_{j,i}}{q^\mp_{i,j}}\gamma^\mp_j}{d^\pm_i d_j^\mp (\beta^\pm_i)^2(\beta^\mp_j)^2 - \gamma^\mp_j\gamma^\pm_i}, \quad i,j\in \{1,2\}, i \neq j.
        \end{split}
      \end{equation}
  \end{enumerate}
  Note that this can, e.g., easily be achieved by fixing all vectors but $\bm{d}^+,\bm{d}^-$ and then choosing $\bm{d}^+,\bm{d}^-$ large enough such that \eqref{eq: exp comp2} and \eqref{eq: exp comp3} are satisfied, while simultaneosuly ensuring that $\bm{d}^+$, $\bm{d}^-$ solve \eqref{eq: exp comp1}. Let now 
  \[\Pi^{\pm}_i(\diff{x}) = \gamma^\pm_i \mathrm{e}^{-\beta^\pm_i x}\diff{x}, \quad x > 0, i = 1,2,\]
  and $F^\pm_{i,j}$ be measures on $\R_+$ with density $f^\pm_{i,j}$ on $(0,\infty)$ satisfying 
  \begin{equation}\label{sle: exp}
    f^\pm_{i,j}(x) = \frac{\pi(j)}{\pi(i)} \frac{q_{j,i}^\mp}{q^\pm_{i,j} d^\mp_i} F^\mp_{j,i}(\{0\}) \overbar{\Pi}^\pm_j(x) = \frac{\pi(j)}{\pi(i)} \frac{q_{j,i}^\mp}{q^\pm_{i,j} d^\mp_i} F^\mp_{j,i}(\{0\}) \frac{\gamma^\pm_j}{\beta^\pm_j} \mathrm{e}^{-\beta^\pm_j x}, \quad x > 0, i,j \in \{1,2\}, i \neq j.
  \end{equation}
  Integrating \eqref{sle: exp} under the restriction that $F^\pm_{i,j}$ as defined above are probability distributions, we obtain the system of linear equations 
  \[1- F^\pm_{i,j}(\{0\}) = \frac{\pi(j)}{\pi(i)} \frac{q_{j,i}^\mp}{q^\pm_{i,j} d^\mp_i} F^\mp_{j,i}(\{0\}) \frac{\gamma^\pm_j}{(\beta^\pm_j)^2}, \quad i \in \{1,2\}, i \neq j,\]
  for $(F^+_{1,2}(\{0\}),F^+_{2,1}(\{0\}),F^-_{1,2}(\{0\}),F^-_{2,1}(\{0\}))$. Solving the system yields 
  \begin{equation}\label{eq: drift exp}
    F^\pm_{i,j}(\{0\}) = \frac{d^\pm_j d^\mp_i- d^\pm_j \tfrac{\pi(j)}{\pi(i)}\tfrac{q^\mp_{j,i}}{q^\pm_{i,j}}\tfrac{\gamma^\pm_j}{(\beta^\pm_j)^2}}{d^\pm_j d^\mp_i - \tfrac{\gamma^\pm_j \gamma^\mp_i}{(\beta^\pm_j)^2(\beta^\mp_i)^2}}, \quad i,j \in \{1,2\}, i \neq j.
  \end{equation}
  Note that combining \eqref{eq: exp comp1} and \eqref{eq: exp comp2} shows that indeed $F^\pm_{i,j}(\{0\}) \in (0,1)$ and thus $F^\pm_{i,j}$ are probability distributions as desired. Let now $(H^\pm,J^\pm)$ be unkilled MAP subordinators with modulator generator matrices $\bm{Q}^\pm$, Lévy measure matrices 
  \[\bm{\Pi}^\pm = \begin{bmatrix} \Pi^\pm_1 & q^\pm_{1,2} F^\pm_{1,2}\\ q^\pm_{2,1} F^\pm_{2,1} & \Pi^\pm_2\end{bmatrix}\]
  and drifts $\bm{d}^\pm$. 
  We then have 
  \begin{equation}\label{eq: dens exp pos}
    \begin{split}
      &-\bm{\Delta}_{\bm{\pi}}^{-1}\bm{\Psi}^\mp(0)^\top \bm{\Delta}_{\bm{\pi}} \uppartial \bm{\Pi}^\pm(x) - \bm{\Delta}^\mp_{\bm{d}} \uppartial^2 \bm{\Pi}^\pm(x)\\
      &\hspace{1pt}= \left[\scalemath{0.9}{\begin{array}{cc}
            q_{1,2}^\mp \gamma^\pm_1 \bigg(1 + \frac{\beta^\pm_1 d_1^\mp}{q^\mp_{1,2}}-q_{2,1}^\mp\beta^\pm_1 \frac{d^\pm_1 (\beta^\mp_2)^2 - \tfrac{\pi(2)}{\pi(1)} \tfrac{q^\pm_{2,1}}{q^\mp_{1,2}}\gamma^\mp_2}{d^\pm_1 d_2^\mp (\beta^\pm_1)^2(\beta^\mp_2)^2 - \gamma^\mp_2\gamma^\pm_1} \bigg)\mathrm{e}^{-\beta^\pm_1 x} & \frac{\pi(2)}{\pi(1)} q_{1,2}^\mp q_{2,1}^\mp\gamma^\pm_2 \beta^\pm_2 \frac{d^\pm_2 (\beta^\mp_1)^2 - \tfrac{\pi(1)}{\pi(2)} \tfrac{q^\pm_{1,2}}{q^\mp_{2,1}}\gamma^\mp_1}{d_2^\pm d^\mp_1 (\beta^\pm_2)^2(\beta^\mp_1)^2 - \gamma^\mp_1\gamma^\pm_2}\mathrm{e}^{-\beta^\pm_2x}\\[6pt]
            \frac{\pi(1)}{\pi(2)}q_{1,2}^\mp q_{2,1}^\mp\gamma^\pm_1  \beta^\pm_1 \frac{d^\pm_1 (\beta^\mp_2)^2 - \tfrac{\pi(2)}{\pi(1)} \tfrac{q^\pm_{2,1}}{q^\mp_{1,2}}\gamma^\mp_2}{d^\pm_1 d_2^\mp (\beta^\pm_1)^2(\beta^\mp_2)^2 - \gamma^\mp_2\gamma^\pm_1} \mathrm{e}^{-\beta^\pm_1x} & q_{2,1}^\mp \gamma^\pm_2 \bigg(1 + \frac{\beta^\pm_2 d_2^\mp}{q^\mp_{2,1}}-q_{1,2}^\mp \beta^\pm_2\frac{d^\pm_2 (\beta^\mp_1)^2 - \tfrac{\pi(1)}{\pi(2)} \tfrac{q^\pm_{1,2}}{q^\mp_{2,1}}\gamma^\mp_1}{d_2^\pm d^\mp_1 (\beta^\pm_2)^2(\beta^\mp_1)^2 - \gamma^\mp_1\gamma^\pm_2} \bigg)\mathrm{e}^{-\beta^\pm_2 x}
      \end{array}}\right]\\
      &\hspace{1pt}= \begin{bmatrix} \big(q^\mp_{1,2} + (d^\mp_1 - \zeta^\pm_{2,1}) \beta^\pm_1\big)\gamma^\pm_1 \mathrm{e}^{-\beta^\pm_1 x} & \frac{\pi(2)}{\pi(1)} \zeta^\pm_{1,2} \beta^\pm_{2} \gamma^\pm_2 \mathrm{e}^{-\beta^\pm_2 x}\\\frac{\pi(1)}{\pi(2)} \zeta^\pm_{2,1} \beta^\pm_{1} \gamma^\pm_1 \mathrm{e}^{-\beta^\pm_1 x} & \big(q^\mp_{2,1} + (d^\mp_2 - \zeta^\pm_{1,2}) \beta^\pm_2\big)\gamma^\pm_2 \mathrm{e}^{-\beta^\pm_2 x} \end{bmatrix},
    \end{split}
  \end{equation}
  which is nonnegative for any $x > 0$ by \eqref{eq: exp comp1}-\eqref{eq: exp comp3}.  Above, we denoted 
  \[\zeta^\pm_{i,j} = q^\mp_{i,j} q^\mp_{j,i} \frac{d^\pm_j (\beta^\mp_i)^2 - \tfrac{\pi(i)}{\pi(j)} \tfrac{q^\pm_{i,j}}{q^\mp_{j,i}}\gamma^\mp_i}{d^\pm_j d_i^\mp (\beta^\pm_j)^2(\beta^\mp_i)^2 - \gamma^\mp_i\gamma^\pm_j}.\]
  This shows that $(H^+,J^+)$ and $(H^-,J^-)$ are $\bm{\pi}$-fellows.
  Moreover, \eqref{sle: exp} implies \ref{cond philan1} of Definition \ref{def: quasi},
  and a short calculation reveals that the matrices
  $-\bm{\Delta}_{\bm{\pi}}^{-1} (\bm{Q}^{\mp})^\top \bm{\Delta}_{\bm{\pi}} \bm{Q}^\pm$ 
  are generators of unkilled Markov processes for which $\bm{\pi}$ is invariant.
  Also observe that \eqref{eq: exp comp1} together with \eqref{eq: drift exp} shows that \eqref{eq: quasifellow} is satisfied, while the choice \eqref{sle: exp} ensures that condition \ref{cond philan1} of $\bm{\pi}$-quasicompatibility holds. Consequently, $(H^+,J^+)$ and $(H^-,J^-)$ are $\bm{\pi}$-quasicompatible $\bm{\pi}$-fellows. 
  Moreover, it is obvious that $(H^{\pm,\circ},J^{\pm,\circ})$ satisfy the conditions of 
  Proposition \ref{prop: fellow}
  and are thus $\bm{\pi}$-compatible with one another. Theorem \ref{theo: quasi help} thus demonstrates that $(H^+,J^+)$ and $(H^-,J^-)$ are $\bm{\pi}$-friends.

  Let us now calculate the Lévy measure matrix 
  \[\bm{\Pi} = \begin{bmatrix} \Pi_1 & q_{1,2} F_{1,2} \\ q_{2,1} F_{2,1} & \Pi_2 \end{bmatrix}\]
  of the bonding MAP $(\xi,J)$. Plugging into the équations amicales from Theorem \ref{theo: eq amicales} and using the expression for the transitional atoms at $0$ from Proposition \ref{prop: comp cond} in conjunction with \eqref{eq: drift exp} we obtain 
  \begin{align*}
    \Pi_i(\diff{x}) &= \one_{(0,\infty)}(x) \bigg\{\frac{\gamma^-_i}{\beta^+_i+\beta^-_i}\bigg(1+ \frac{\pi(j)}{\pi(i)} \frac{\zeta^+_{j,i} \zeta^-_{j,i}\beta^+_i\beta^-_i}{q^+_{i,j}q^+_{j,i}q^-_{i,j}q^-_{j,i}}\bigg) + q^-_{i,j} + (d^-_i - \zeta^+_{j,i})\beta^+_i\bigg\} \gamma^+_i\mathrm{e}^{-\beta^+_i x} \diff{x}\\
    &\quad + \one_{(-\infty,0)}(x) \bigg\{\frac{\gamma^+_i}{\beta^+_i+\beta^-_i}\bigg(1+ \frac{\pi(j)}{\pi(i)} \frac{\zeta^+_{j,i} \zeta^-_{j,i}\beta^+_i\beta^-_i}{q^+_{i,j}q^+_{j,i}q^-_{i,j}q^-_{j,i}}\bigg) +  q^+_{i,j} + (d^+_i - \zeta^-_{j,i})\beta^-_i\bigg\} \gamma^-_i\mathrm{e}^{-\beta^-_i \lvert x \rvert} \diff{x} 
  \end{align*}
  and 
  \begin{align*}
    q_{i,j} F_{i,j}(\diff{x}) &= \one_{(0,\infty)}(x) \bigg\{\frac{\gamma^-_i}{\beta^+_j+\beta^-_i}\bigg(\frac{\zeta^-_{j,i}\beta^-_i}{q^+_{i,j}q^+_{j,i}}+ \frac{\pi(j)}{\pi(i)}\zeta^+_{i,j}\beta^+_j\bigg(\frac{\beta^+_j + \beta^-_i}{\gamma^-_i}+\frac{1}{q^-_{i,j}q^-_{j,i}}\bigg)\bigg)\bigg\} \gamma^+_j\mathrm{e}^{-\beta^+_j x} \diff{x}\\
    &\quad + \one_{(-\infty,0)}(x) \bigg\{\frac{\gamma^+_j}{\beta^+_j + \beta^-_i}\bigg(\frac{\zeta^+_{i,j}\beta^+_j}{q^-_{i,j}q^-_{j,i}}+ \frac{\pi(i)}{\pi(j)}\zeta^-_{j,i}\beta^-_i\bigg(\frac{\beta^+_j+ \beta^-_i}{\gamma^+_j}+\frac{1}{q^+_{i,j}q^+_{j,i}}\bigg)\bigg)\bigg\} \gamma^-_i\mathrm{e}^{-\beta^-_i \lvert x \rvert} \diff{x}\\
    &\quad + \bigg(\frac{q^-_{i,j}}{q^+_{j,i}}d^+_j \zeta^-_{j,i} + \frac{\pi(j)}{\pi(i)}\frac{q^+_{j,i}}{q^-_{i,j}} d^-_i \zeta^+_{i,j} \bigg)\delta_0(\diff{x}),
  \end{align*}
  for $i,j \in \{1,2\}$ with $i \neq j$. Moreover, we know from \cite[Lemma 3.20]{doering21} that the Lévy components $\xi^{(1)}, \xi^{(2)}$ of $\xi$ have non-trivial Brownian part with scaling factor $\sigma_i^2 = 2 d^+_id^-_i$ for $i =1,2$. Finally, $(\xi,J)$ is unkilled since the same is true for $(H^\pm,J^\pm)$ by construction, see Lemma \ref{lemma: killing}. 

  As explained in Remark~\ref{r:unique-examples}, the Wiener--Hopf factorisation is
  unique in this case.

  The Lévy processes belonging to the bonding MAP $(\xi,J)$ belong to the family of \textit{double exponential jump diffusions} \cite{kou2003} and the transitional jumps form a mixture distribution of a two-sided exponential distribution and a point mass at $0$. These processes can be interpreted as a natural extension of double exponential jump diffusions, which we call \textit{Markov modulated double exponential jump diffusions}. In \cite{kou2003} the overshoot distribution of double exponential jump diffusions is calculated, from which the characteristics of the ascending ladder height process---which is  a subordinator with strictly positive drift and exponentially distributed jumps---can be inferred via overshoot convergence. Our approach therefore allows to go the inverse route for the Markov modulated version by constructing the bonding MAP for a given parametrization of the ascending/descending ladder height processes.
\end{example}
\begin{remark}
  In principle, the above construction can be carried out for arbitrary  ladder height Lévy measures as long as the 
  integrability conditions of Proposition \ref{prop: fellow}
  are satisfied and \eqref{eq: exp comp1}-\eqref{eq: exp comp3} are replaced by appropriate conditions ensuring that the analogue of \eqref{eq: dens exp pos} is nonnegative. As in Example \ref{ex: spec}, promising candidates for this purpose are ladder height Lévy measures with completely monotone densities.
\end{remark}

\section{Uniqueness of the Wiener--Hopf factorisation}\label{sec: unique}
Throughout this section $\bm{\Psi}$ is the exponent of an irreducible MAP with invariant modulating distribution $\bm{\pi}$. Let $\C_+ \coloneqq \{z \in \mathbb{C}: \Re z \geq 0\}$. For a given MAP subordinator $(\xi,J)$, its Laplace exponent is the unique matrix valued function $\bm{\Phi}\colon \C_+ \to \C^{n\times n}$ such that 
\[\E^{0,i}[\exp(-z\xi_1)\,;\, J_1 = j] = (\mathrm{e}^{-\bm{\Phi}(z)})_{i,j}, \quad i,j \in [n], z \in \mathbb{C}_+.\]
Let $\mathcal{A}$ be a class of MAP subordinator Laplace exponents and suppose that $\bm{\Psi}$ has a Wiener--Hopf factorisation in $\mathcal{A}$, i.e., for some $\bm{F},\hat{\bm{F}} \in \mathcal{A}$ it holds
\[\bm{\Psi}(\theta) = -\bm{\Delta}_{\bm{\pi}}^{-1} \hat{\bm{F}}(\mathrm{i}\theta)^\top \bm{\Delta}_{\bm{\pi}} \bm{F}(-\mathrm{i}\theta), \quad \theta \in \R.\]
In our language, this means that $\bm{\Psi}$ is a bonding MAP of two friends belonging to $\mathcal{A}$.
We say that \emph{$\bm{\Psi}$ has a unique MAP Wiener--Hopf factorisation in $\mathcal{A}$}
if, for any other pair of MAP subordinator exponents  $\bm{G},\hat{\bm{G}} \in \mathcal{A}$  such that 
\[\bm{\Psi}(\theta) = -\bm{\Delta}_{\bm{\pi}}^{-1} \hat{\bm{G}}(\mathrm{i}\theta)^\top \bm{\Delta}_{\bm{\pi}} \bm{G}(-\mathrm{i}\theta), \quad \theta \in \R,\]
there exists a diagonal matrix $\bm{\Delta}$ with strictly positive diagonal entries such that
\[\bm{G} = \bm{\Delta} \bm{F} \quad \text{ and } \quad \hat{\bm{G}} = \bm{\Delta}^{-1} \hat{\bm{F}}.\]
In this case, a MAP $(\xi^G,J^G)$ corresponding to $\bm{G}$ is obtained from the MAP $(\xi^F,J^F)$ corresponding to $\bm{F}$ by performing the linear time changes $\xi^{(i),G}_t = \xi^{(i),F}_{\bm{\Delta}_{i,i}t}$ and $\bm{Q}^{G} = \bm{\Delta} \bm{Q}^{F}$. Consequently, if we can prove uniqueness in $\mathcal{A}$ and the ascending, resp.\ descending ladder height MAP exponents $\bm{\Psi}^\pm$ belong to $\mathcal{A}$, it follows that any MAP Wiener--Hopf factorisation in $\mathcal{A}$ represents ascending and descending ladder height processes with different scaling of local times expressed through arbitrary choices of diagonal matrices $\bm{\Delta}$. In other words, the friends giving rise to the bonding MAP $\bm{\Psi}$ carry the probabilistic interpretation of ladder height subordinators. 

It is essential to be able to prove uniqueness of the MAP Wiener--Hopf factorisation
in order to endow $\bm{\pi}$-friendship, which is an essentially analytic condition, 
with probabilistic meaning.
Turning to Lévy processes, uniqueness has long been known in the case of a killed process;
as intimated in \cite[p.~165]{bertoin1996}, threre is a probabilistic proof,
and this is mirrored by an analytic argument based on Liouville's theorem
(see, for example, \cite[Theorem 1(e,f)]{kuznetsov10}.)
However, without imposing further conditions, these techniques do not extend
to the case of an unkilled Lévy process.
This case was finally settled following communication with Mladen Savov, and is still
to be published.

As a consequence, we approach the uniqueness question for MAPs in two ways:
firstly by assuming the MAP in question is killed, in which situation uniqueness
follows relatively straightforwardly; and secondly by assuming not, in which
case we need to impose some additional conditions reflecting the requirements
of Liouville's theorem.

The idea for killed MAPs is to follow Vigon's distributional approach. 
\begin{theorem}\label{theo:unique_killed}
If $\bm{\Psi}$ is killed and has a MAP Wiener--Hopf factorisation in the class of irreducible MAP subordinators, then the factorisation is unique in this class.
\end{theorem}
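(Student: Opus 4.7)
The plan is to adapt Vigon's distributional approach to the matrix setting, supplemented by a matrix-valued Liouville argument. First I would use the killing of $\bm{\Psi}$ to upgrade invertibility. Since $-\bm{\Psi}(0) = \bm{\Delta}_{\bm{\dag}} - \bm{Q}$ is an invertible M-matrix for a killed irreducible MAP, and the factorisation gives $\bm{\Psi}(0) = -\bm{\Delta}_{\bm{\pi}}^{-1}\hat{\bm{F}}(0)^\top\bm{\Delta}_{\bm{\pi}}\bm{F}(0)$, invertibility of the product forces each of $\bm{F}(0)$, $\hat{\bm{F}}(0)$, $\bm{G}(0)$, $\hat{\bm{G}}(0)$ to be invertible. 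Hence all four MAP subordinators are themselves killed, so by the sub-Markov estimate $\|\mathrm{e}^{-t\bm{F}(z)}\| \le C\,\mathrm{e}^{-t\lambda_0^F}$ for $\Re z \ge 0$ (which holds because $|\mathrm{e}^{-z\xi_t}|\le 1$ on a subordinator when $\Re z \ge 0$) together with the resolvent identity $\bm{F}(z)^{-1} = \int_0^\infty \mathrm{e}^{-t\bm{F}(z)}\diff t$, each of the four Laplace exponents is invertible throughout $\overline{\CC_+}$ with uniformly bounded inverse.

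Next I would construct an entire matrix-valued function out of the two factorisations. Rearranging $\hat{\bm{F}}(\iu\theta)^\top\bm{\Delta}_{\bm{\pi}}\bm{F}(-\iu\theta) = \hat{\bm{G}}(\iu\theta)^\top\bm{\Delta}_{\bm{\pi}}\bm{G}(-\iu\theta)$ gives, for $\theta \in \R$,
\[
  \bm{B}(\theta) \coloneqq \bm{\Delta}_{\bm{\pi}}\bm{F}(-\iu\theta)\bm{G}(-\iu\theta)^{-1}\bm{\Delta}_{\bm{\pi}}^{-1} = \bigl(\hat{\bm{F}}(\iu\theta)^\top\bigr)^{-1}\hat{\bm{G}}(\iu\theta)^\top.
\]
The first expression extends holomorphically to the upper half-plane $\{\Im\theta \ge 0\}$ and the second to the lower half-plane; as they agree on $\R$, $\bm{B}$ is entire. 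Combining the standard linear bound $\|\bm{F}(-\iu z)\| = O(1+|z|)$ on $\overline{\CC_+}$ with the uniform bound on $\|\bm{G}(-\iu z)^{-1}\|$ just derived yields $\|\bm{B}(\theta)\| = O(1+|\theta|)$ on both half-planes, so by Liouville's theorem $\bm{B}(\theta) = \bm{B}_0 + \theta\bm{B}_1$ for constant matrices $\bm{B}_0, \bm{B}_1$.

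The main obstacle is ruling out the linear part $\bm{B}_1$. From $\bm{\Delta}_{\bm{\pi}}\bm{F}(-\iu\theta) = \bm{B}(\theta)\bm{\Delta}_{\bm{\pi}}\bm{G}(-\iu\theta)$, an asymptotic analysis as $|\theta| \to \infty$ shows that a nonzero $\bm{B}_1$ would contribute a term of order $|\theta|\cdot\|\bm{G}(-\iu\theta)\|$ on the right, exceeding the $O(|\theta|)$ growth on the left whenever $\bm{G}$ has any entry with a genuinely unbounded Laplace-exponent component in the corresponding phase. In the degenerate case where such components vanish, one instead invokes the distributional interpretation in the spirit of Section~\ref{sec: map intro}: $\bm{B}$ being polynomial means its inverse Fourier transform is supported at $\{0\}$, and any $\delta'$ atom there would produce an incompatible atom structure at $\{0\}$ when one inverts the Fourier transform of the identity $\hat{\bm{F}}^\top\bm{\Delta}_{\bm{\pi}}\bm{F} = \hat{\bm{G}}^\top\bm{\Delta}_{\bm{\pi}}\bm{G}$ and uses the one-sided supports of the factors. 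Thus $\bm{B}_1 = \bm{0}$ and $\bm{B} \equiv \bm{B}_0 \eqqcolon \bm{A}$.

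Finally I would identify $\bm{A}$ as a diagonal matrix with positive entries. Setting $\tilde{\bm{A}} \coloneqq \bm{\Delta}_{\bm{\pi}}^{-1}\bm{A}\bm{\Delta}_{\bm{\pi}}$, the relation $\bm{F}(-\iu\theta) = \tilde{\bm{A}}\bm{G}(-\iu\theta)$ read off-diagonally forces $\tilde{A}_{ij} = 0$ for $i \ne j$: the entry $\bm{F}(-\iu\theta)_{ij} = -q^F_{ij}\E^{0,i}[\mathrm{e}^{\iu\theta J^F_{ij}}]$ is bounded in $\theta \in \R$, whereas $\tilde{A}_{ij}\bm{G}(-\iu\theta)_{jj}$ is unbounded in $\theta$ unless $\tilde{A}_{ij} = 0$, while the remaining terms $\sum_{k\ne j}\tilde{A}_{ik}\bm{G}(-\iu\theta)_{kj}$ are bounded. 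Hence $\tilde{\bm{A}} = \bm{A}$ is diagonal, and positivity of its entries follows from $\bm{F}(0)_{ii}, \bm{G}(0)_{ii} > 0$ under the killing assumption. Setting $\bm{\Delta} \coloneqq \bm{A}^{-1}$ gives $\bm{G} = \bm{\Delta}\bm{F}$, and substituting back into the factorisation and cancelling $\bm{F}$ on the right yields $\hat{\bm{G}} = \bm{\Delta}^{-1}\hat{\bm{F}}$, as required.
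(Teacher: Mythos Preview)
Your Liouville strategy parallels what the paper does for the \emph{unkilled} case (Theorem~\ref{theo:unique_unkilled}), not the killed one, and it inherits the obstruction that forces the extra class $\mathcal{A}_1$ there. The linear bound $\lVert\bm{B}(\theta)\rVert = O(1+|\theta|)$ only yields $\bm{B}(\theta) = \bm{B}_0 + \theta\bm{B}_1$, and your argument for $\bm{B}_1 = \bm{0}$ is incomplete: the growth comparison in $\bm{\Delta}_{\bm{\pi}}\bm{F}(-\iu\theta) = \bm{B}(\theta)\bm{\Delta}_{\bm{\pi}}\bm{G}(-\iu\theta)$ needs $\bm{G}(-\iu\theta)_{jj}$ to be unbounded, which fails whenever $\xi^{G,(j)}$ is compound Poisson, and nothing in the hypotheses excludes this. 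Your distributional fallback is not an argument either—convolving $\delta'$ with a distribution supported on $\R_+$ stays on $\R_+$, and with no drift and finite L\'evy measure in phase $j$ there is no $\delta''$ mismatch to exploit. (The same defect reappears in your final paragraph: on the Fourier line $\bm{G}(-\iu\theta)_{jj}$ need not be unbounded; the paper's Lemma~\ref{lem:triangular} works instead on the Laplace side $\lambda\to\infty$, where the diagonal always tends to a strictly positive, possibly infinite, limit while the off-diagonals vanish.)

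The paper avoids the whole issue by staying purely distributional. Using that the potential measures $U^G_{i,j}$ and $U^{\hat{F}}_{i,j}$ are finite in the killed case, it takes inverse Fourier transforms of the identity $\bm{F}(-\iu\theta)\bm{G}(-\iu\theta)^{-1} = \bm{\Delta}_{\bm{\pi}}^{-1}\bigl(\hat{\bm{G}}(\iu\theta)\hat{\bm{F}}(\iu\theta)^{-1}\bigr)^\top\bm{\Delta}_{\bm{\pi}}$ and then primitives, arriving at $\eta^+_{i,j} + \eta^-_{i,j} = c_{i,j}$ with $\eta^+_{i,j}$ supported on $\R_+$ and $\eta^-_{i,j}$ on $\R_-$. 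Support alone forces $\eta^+_{i,j} = c_{i,j}\one_{\R_+}$, and a Laplace transform then gives $\bm{C}\bm{G}(\lambda) = \bm{F}(\lambda)$ for a constant matrix $\bm{C}$ directly—no growth estimate at infinity is needed, so compound Poisson components cause no trouble.
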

\begin{proof}
Let $\bm{F},\hat{\bm{F}},\bm{G},\hat{\bm{G}}$ be MAP subordinator Laplace exponents such that 
\[\bm{\Psi}(\theta) = - \bm{\Delta}_{\bm{\pi}}^{-1} \hat{\bm{F}}(\mathrm{i}\theta)^\top \bm{\Delta}_{\bm{\pi}} \bm{F}(-\mathrm{i}\theta) = - \bm{\Delta}_{\bm{\pi}}^{-1} \hat{\bm{G}}(\mathrm{i}\theta)^\top \bm{\Delta}_{\bm{\pi}} \bm{G}(-\mathrm{i}\theta), \quad \theta \in \R.\]
We first note that this implies that $\bm{F},\hat{\bm{F}},\bm{G},\hat{\bm{G}}$ are all killed MAP exponents as well: Since $\bm{\Psi}$ is killed it holds that 
\[\bm{\pi}^\top \bm{\Psi}(0) \one = -\sum_{i=1}^n \pi(i) \dagger_i < 0\] 
whence, 
\[0 > -\bm{\pi}^\top \bm{\Delta}_{\bm{\pi}}^{-1} \hat{\bm{F}}(0)^\top \bm{\Delta}_{\bm{\pi}} \bm{F}(0) \one = -(\hat{\bm{F}}(0)\one)^\top\bm{\Delta}_{\bm{\pi}} \bm{F}(0) \one,\] 
which shows that $\hat{\bm{F}}, \bm{F}$ are killed MAP exponents as well. The statement for $\hat{\bm{G}}, \bm{G}$ follows in the same way.

Since $\bm{\Psi}$ represents an irreducible and killed MAP, $\bm{\Psi}(\theta) \in \mathrm{GL}_n(\CC)$ for any $\theta \in \R$. 
To see this, note that if we denote by $\zeta$ the a.s.\ finite killing time of $(\xi,J)$ we have 
\begin{equation}\label{eq:invert}
\int_0^\infty \big\lvert (\mathrm{e}^{t \bm{\Psi}(\theta)})_{i,j} \rvert \diff{t} = \int_0^\infty \lvert \E^{0,i}[\exp(\mathrm{i}\theta\xi_t)\,;\, J_t = j, t < \zeta] \rvert \diff{t} \leq \int_0^\infty \PP^{0,i}(t < \zeta) \diff{t} = \E^{0,i}[\zeta] < \infty,
\end{equation}
where finiteness of the mean comes from the fact that under $\PP^{0,i}$, $\zeta$ has a phase-type distribution. Thus, the integral $\int_0^\infty \mathrm{e}^{t\bm{\Psi}(\theta)} \diff{t}$ converges absolutely and we obtain from \cite[Lemma A.9]{ivanovs2007} that the maximal real part of the eigenvalues of $\bm{\Psi}(\theta)$ is strictly smaller than $0$, whence $\bm{\Psi}(\theta)$ is invertible.

Consequently, the factorisations imply that also $\bm{G}(z), \hat{\bm{F}}(z) \in \mathrm{GL}_n(\mathbb{C})$ for all $z \in \mathrm{i}\R$. 
We may therefore write
\begin{equation}\label{eq:wh1}
\bm{F}(-\mathrm{i}\theta) \bm{G}(-\mathrm{i}\theta)^{-1} = \bm{\Delta}_{\bm{\pi}}^{-1} \Big(\hat{\bm{G}}(\mathrm{i}\theta)\hat{\bm{F}}(\mathrm{i}\theta)^{-1} \Big)^\top \bm{\Delta}_{\bm{\pi}}, \quad \theta \in \R.
\end{equation}
 Since $\bm{\dagger}^G, \bm{\dagger}^{\hat{F}} \neq \bm{0}$ and $\bm{G}$ and $\hat{\bm{F}}$ are irreducible, the measures $U^G_{i,j}, U^{\hat{F}}_{i,j}$ are finite for any $i,j \in [n]$ and their Fourier transforms are well defined. Performing the calculation in  \eqref{eq:invert} for these MAP subordinators and using \cite[Lemma A.9]{ivanovs2007} then shows   $\mathscr{F} U^G_{i,j}(\theta) = -(\bm{G}(-\mathrm{i}\theta))^{-1}_{i,j}$, $\mathscr{F} U^{\hat{F}}_{i,j}(\theta) = -(\hat{\bm{F}}(-\mathrm{i}\theta))^{-1}_{i,j}$. By uniqueness of the Fourier transform of tempered distributions, this gives the following equalities in $\mathcal{S}^\prime(\R)$: for all $i,j \in [n]$,
\begin{align*}
&\big((\dagger^F_i - q^F_{i,i}) \delta + d^F_i \delta^\prime - \bbGamma \Pi^F_i \big) \ast U^{G}_{i,j}  - \sum_{k \neq j} q_{i,k}^F F^F_{i,k} \ast U^{G}_{k,j} \\
&\, = \frac{\pi(j)}{\pi(i)}\big((\dagger^{\hat{G}}_j - q^{\hat{G}}_{j,j}) \delta - d^{\hat{G}}_j \delta^\prime - \bbGamma \tilde{\Pi}^{\hat{G}}_j \big) \ast \tilde{U}^{\hat{F}}_{j,i} - \frac{\pi(j)}{\pi(i)}\sum_{k \neq j}  q_{j,k}^{\hat{G}} \tilde{F}^{\hat{G}}_{j,k} \ast \tilde{U}^{\hat{F}}_{k,i},
\end{align*}
where all convolutions are well defined since the potential measures $U^G_{i,j}, U^{\hat{F}}_{i,j}$ are finite for all $i,j \in [n].$  
Taking primitives, we obtain 
\begin{equation}\label{eq:wh3}
\eta^{+}_{i,j} + \eta^{-}_{i,j} = c_{i,j},
\end{equation}
where 
\begin{align*}
\eta^{+}_{i,j} &\coloneqq \big((\dagger^F_i - q^F_{i,i}) \one_{\R_+} + d^F_i \delta + \overbar{\Pi}^F_i \big) \ast U^{G}_{i,j}  - \sum_{k \neq j} q_{i,k}^F \underbars{F}^F_{i,k} \ast U^{G}_{k,j},
\end{align*}
and 
\begin{align*} 
\eta^{-}_{i,j} &\coloneqq \frac{\pi(j)}{\pi(i)}\big((\dagger^{\hat{G}}_j - q^{\hat{G}}_{j,j}) \one_{\R_-} + d^{\hat{G}}_j \delta + \overbar{\tilde{\Pi}}{}^{\hat{G}}_j \big) \ast \tilde{U}^{\hat{F}}_{j,i} - \frac{\pi(j)}{\pi(i)}\sum_{k \neq j}  q_{j,k}^{\hat{G}} \underbars{\tilde{F}}{}^{\hat{G}}_{j,k} \ast \tilde{U}^{\hat{F}}_{k,i},
\end{align*}
and $c_{i,j} \in \R$ is some integration constant. 
Note that this implies that the measures $d^F_i U^G_{i,j}$ and $d^{\hat{G}}_i \tilde{U}_{i,j}^{\hat{F}}$ are absolutely continuous and thus both $\eta^\pm_{i,j}$ are induced by a function. Further, since $\eta^+_{i,j}$ is a tempered distribution concentrated on $\R_+$ and $\eta^-_{i,j}$ is a tempered distribution concentrated on $\R_-$, \eqref{eq:wh3} forces 
\begin{equation}\label{eq:wh4}
\eta^+_{i,j} = c_{i,j} \one_{\R_+}
\end{equation}
and
\begin{equation}\label{eq:wh5}
\eta^{-}_{i,j} = c_{i,j} \one_{\R_-}.
\end{equation}
Consequently, taking Laplace transforms on the implied equality of measures $\eta^+_{i,j}(\diff{x}) = \eta^+_{i,j}(x) \diff{x} = c_{i,j}\one_{\R_+}(x) \diff{x}$ yields 
\begin{align*}\frac{c_{i,j}}{\lambda} = \mathscr{L}(\eta^+_{i,j})(\lambda) &= \Big((\dagger^F_i - q^F_{i,i})/\lambda + d^F_i + \int_0^\infty (1- \mathrm{e}^{-\lambda x}) \, \Pi^F_i(\diff{x})/\lambda \Big) \cdot (\bm G(\lambda))^{-1}_{i,j}\\
&\quad - \frac{1}{\lambda}\sum_{k \neq j} q^F_{i,k} \int_0^\infty \mathrm{e}^{-\lambda x} \, F^F_{i,k}(\diff{x}) \cdot (\bm G(\lambda))^{-1}_{k,j}, \quad \lambda > 0,
\end{align*}
where we used invertibility of $\bm{G}(\lambda)$ giving $\mathscr{L}(U_{i,j}^G)(\lambda) = (\bm G(\lambda))^{-1}_{i,j}.$ 
Multiplying both sides of the equality by $\lambda$  yields 
\[\bm{C} \bm{G}(\lambda) = \bm{F}(\lambda), \quad \lambda > 0,\]
for $\bm{C} \coloneqq (c_{i,j})_{i,j \in [n]}$. Lemma \ref{lem:triangular} now implies that $\bm{C} = \bm{\Delta}$ for some diagonal matrix $\bm{\Delta}$ with strictly positive diagonal entries. This implies  $\bm{F}(z) = \bm{\Delta} \bm{G}(z)$ for $z \in \mathrm{i}\R$ and \eqref{eq:wh1} yields 
$\hat{\bm{F}}(z) = \bm{\Delta}^{-1} \hat{\bm{G}}(z)$ for $z \in \mathrm{i}\R$ as well.
\end{proof}

In the non-killed case, the singularity of at least one of the MAP friends at $0$ (which translates to the potential measures being infinite) prevents us from pursuing the same strategy.  Instead, we proceed with a proof that is complex analytic in nature and follows ideas that have been successfully employed in the literature for different kinds of Wiener--Hopf type equations. See, e.g., \cite{kranzer67} or \cite{kuznetsov10}, with the latter essentially dealing with uniqueness of the Wiener--Hopf factorisation of killed Lévy processes. To illustrate the idea in the Lévy case, suppose that we are given two Lévy Wiener--Hopf factorisations 
\[\psi(\theta) = -f(-\mathrm{i}\theta)\hat{f}(\mathrm{i}\theta) = -g(-\mathrm{i}\theta)\hat{g}(\mathrm{i}\theta), \quad \theta \in \R,\]
and let us define a function 
\[h(z) \coloneqq \begin{cases} f(z)/g(z), & z \in \mathbb{C}_+\setminus\{0\} \\ \hat{g}(-z)/\hat{f}(-z), &z \in \mathbb{C}_-\setminus\{0\},\end{cases}\]
where we use the unique analytic extensions of the functions $f,g,\hat{f},\hat{g}$ to $\mathbb{C}_+ \coloneqq \{z \in \mathbb{C}: \Re z \geq 0\}$. Here, dividing by $g$ on $\mathbb{C}_+ \setminus\{0\}$ and $\hat{f}(-\cdot)$ on $\mathbb{C}_- \setminus\{0\}$ is well-defined when the corresponding Lévy processes have non-lattice support due to the following classical result, which can be traced back at least to \cite{wintner36,lukacs56}.

\begin{proposition} \label{prop:lukacs}
  Let $\xi$ be an unkilled Lévy process with characteristic exponent $\psi$. Then, for any $\theta \neq 0$,
  \[\psi(\theta) = 0 \iff \forall t\geq 0: \,\PP\Big(\xi_t \in \frac{2\pi}{\theta} \mathbb{Z}\Big) = 1.\]
\end{proposition}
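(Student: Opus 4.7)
My plan is to reduce the equivalence to the identity $\E^0[\mathrm{e}^{\mathrm{i}\theta \xi_t}] = \mathrm{e}^{t\psi(\theta)}$, which holds for all $t \geq 0$ precisely because $\xi$ is unkilled (so no mass is lost from the left-hand side). Once this is available, the statement reduces to a pair of elementary complex-analytic / probabilistic observations.

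For the direction $(\Rightarrow)$, I would start from $\psi(\theta) = 0$, which gives $\E^0[\mathrm{e}^{\mathrm{i}\theta \xi_t}] = 1$ for every $t \geq 0$. Since the random variable $\mathrm{e}^{\mathrm{i}\theta \xi_t}$ has modulus $1$ pointwise, its real part is bounded above by $1$; the expected value being $1$ then forces $\mathrm{Re}(\mathrm{e}^{\mathrm{i}\theta \xi_t}) = 1$ almost surely, and combined with unit modulus this yields $\mathrm{e}^{\mathrm{i}\theta \xi_t} = 1$ a.s. Rewriting this as $\theta \xi_t \in 2\pi\mathbb{Z}$ a.s.\ gives the claim $\xi_t \in (2\pi/\theta)\mathbb{Z}$ a.s.

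For the converse $(\Leftarrow)$, assuming $\xi_t \in (2\pi/\theta)\mathbb{Z}$ a.s.\ for every $t \geq 0$, I would invert the same identity: $\mathrm{e}^{\mathrm{i}\theta \xi_t} = 1$ a.s., so $\mathrm{e}^{t\psi(\theta)} = 1$ for all $t \geq 0$. Choosing any $t > 0$ small enough that $|t\psi(\theta)| < 2\pi$, the only solution of $\mathrm{e}^{z} = 1$ in that open disk is $z = 0$, so $\psi(\theta) = 0$.

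I do not expect any serious obstacle. The core of the argument is the standard fact that a random variable supported on the unit circle whose expectation equals $1$ must almost surely equal $1$, combined with the unkilled assumption that makes $\mathrm{e}^{t\psi(\theta)}$ genuinely the characteristic function of $\xi_t$ rather than a subprobability transform. If the process were killed, the right-hand side in $(\Rightarrow)$ would have modulus strictly less than $1$ and the argument would collapse, which is consistent with the classical emphasis on this hypothesis.
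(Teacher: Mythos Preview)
Your proof is correct. The paper does not actually supply its own proof of this proposition: it merely states the result as classical, attributing it to Wintner and Lukacs, so there is nothing to compare against beyond noting that your argument is the standard elementary one (using that a unit-modulus random variable with expectation $1$ must equal $1$ almost surely, and that $\mathrm{e}^{t\psi(\theta)}=1$ for all $t\ge 0$ forces $\psi(\theta)=0$).
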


With this definition of $h$, the goal is to show that $h$ can be extended to an analytic function of $\mathbb{C}$ and to establish sublinear growth of $h$ at $\infty$ such that by the extended Liouville theorem we can conclude that $h$ is in fact equal to some constant $c$, which implies that $f = cg$ and $\hat{f} = c^{-1} \hat{g}$.

To follow such analytic approach in the more general MAP context we must first deal with the open question of invertibility of unkilled characteristic MAP exponents away from $0$, i.e., we want to find a natural analogue to Proposition \ref{prop:lukacs} in the MAP context. A related question was pursued in \cite{ivanovs2010}, where invertibility of the analytic extension of the matrix exponents of spectrally one-sided MAPs (including MAPs with monotone paths) away from the real axis was studied.

\begin{proposition}\label{prop:invertmap}
  Let $(\xi,J)$ be an unkilled MAP with characteristic exponent $\bm{\Psi}$. If $J$ is irreducible and none of the Lévy components has lattice support, then $\bm{\Psi}(\theta) \in \mathrm{GL}_n(\mathbb C)$ for any $\theta \in \R \setminus \{0\}.$
\end{proposition}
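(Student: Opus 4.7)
The plan is to argue by contradiction, combining a maximum-modulus triangle-inequality argument on the kernel of $\bm\Psi(\theta)$ with Proposition \ref{prop:lukacs} applied to the individual Lévy components $\xi^{(i)}$. The core observation is that a null vector of $\bm\Psi(\theta)$ produces a fixed vector of $\e^{t\bm\Psi(\theta)}$ for all $t$, which reads probabilistically as an identity saturating the triangle inequality.

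Fix $\theta \in \R \setminus \{0\}$ and suppose $\bm\Psi(\theta)$ is singular; pick $\bm v \in \mathbb{C}^n \setminus \{\bm 0\}$ with $\bm\Psi(\theta)\bm v = \bm 0$, so that $\e^{t\bm\Psi(\theta)}\bm v = \bm v$ for every $t \geq 0$. In probabilistic language this reads
\[
  v_i = \E^{0,i}\bigl[\e^{\iu\theta\xi_t}\, v_{J_t}\bigr], \qquad i \in [n],\; t \geq 0.
\]
Choosing $i_0$ maximising $|v_i|$, the triangle inequality together with $|v_{J_t}| \leq |v_{i_0}|$ forces $|v_{J_t}| = |v_{i_0}|$ almost surely under $\PP^{0,i_0}$ at every $t$. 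By irreducibility of $J$, every state is reached from $i_0$ with positive probability at some time, so the set $M = \{i : |v_i| = |v_{i_0}|\}$ must equal $[n]$; in particular, all entries $v_i$ are nonzero and share the same modulus.

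Next I apply the equality case of the triangle inequality starting from an arbitrary $i \in [n]$: since $\arg \E^{0,i}[\e^{\iu\theta\xi_t} v_{J_t}] = \arg v_i$ and the integrand has constant modulus $|v_i|$, one deduces $\e^{\iu\theta\xi_t} v_{J_t} = v_i$ $\PP^{0,i}$-a.s.\ for every $t \geq 0$. On the positive-probability event $\{\tau_i > t\}$, where $\tau_i$ is the first jump time of $J$, one has $J_t = i$ and the ordinator evolves solely according to $\xi^{(i)}$, so $\xi_t = \xi^{(i)}_t$ and therefore $\e^{\iu\theta\xi^{(i)}_t} = 1$ a.s.\ on this event. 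Because $\xi^{(i)}$ is independent of $J$ in the canonical MAP decomposition recalled in Section \ref{sec: map intro}, conditioning on $\{\tau_i > t\}$ does not affect the law of $\xi^{(i)}_t$, and hence $\PP(\xi^{(i)}_t \in \tfrac{2\pi}{\theta}\Z) = 1$ for every $t \geq 0$. Proposition \ref{prop:lukacs} then forces $\xi^{(i)}$ to have lattice support, contradicting the hypothesis. The main obstacle I anticipate is handling the equality case of the triangle inequality cleanly in the complex-valued setting and passing from the conditional identity on $\{\tau_i > t\}$ to the unconditional lattice property of $\xi^{(i)}_t$; both steps reduce to the standard MAP construction and the independence of the constituent Lévy processes from the modulator.
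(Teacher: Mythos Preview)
Your proof is correct and follows essentially the same approach as the paper: argue by contradiction from a null vector of $\bm\Psi(\theta)$, translate to a fixed vector of $\e^{t\bm\Psi(\theta)}$, exploit saturation of the triangle inequality, and then restrict to the event $\{\sigma_1>t\}$ to isolate a single Lévy component and force it onto a lattice. The only cosmetic differences are that the paper works with a \emph{left} eigenvector normalised by $\sum_i|v_i|=1$ and unwinds the equality case via the cosine identity $\E^{0,i}[\cos(\theta\xi_t+\varphi_i-\varphi_j)\mid J_t=j]=1$, whereas you take a right eigenvector, first use irreducibility to make all $|v_i|$ equal, and then read off $\e^{\iu\theta\xi_t}v_{J_t}=v_i$ directly from the complex equality case; the paper also only needs one index $i$ with $v_i\ne 0$ rather than all of them.
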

\begin{proof}
 We argue by contradiction. Suppose that $\det \bm{\Psi}(\theta) = 0$ for some $\theta \neq 0$. Then $\lambda = 0$ is a left-eigenvalue of $\bm{\Psi}(\theta)$ and there is a left eigenvector $\bm{v} \in \mathbb{C}^n$ such that $\sum_{i=1}^n \lvert v_i  \rvert = 1$. Then, for any $t > 0$, $\bm{v}$ is a left-eigenvector with eigenvalue $\tilde{\lambda} = 1$ for the matrix $\mathrm{e}^{t \bm\Psi(\theta)}$. Hence, 
  \[\forall j \in [n]:\quad v_j = \sum_{i=1}^n v_i \E^{0,i}[\exp(\mathrm{i}\theta \xi_t)\, ;\ J_t = j].\]
  Writing $v_i = \lvert v_i \rvert \mathrm{e}^{\mathrm{i}\varphi_i}$ it follows that 
  \[\forall j \in [n]:\quad  \lvert v_j \rvert  = \sum_{i=1}^n \lvert v_i \rvert \E^{0,i}[\exp(\mathrm{i}(\theta \xi_t + \varphi_i - \varphi_j))\, ;\ J_t = j]\]
  and therefore, by summing over $j$,
  \[1 = \sum_{j=1}^n \sum_{i=1}^n \lvert v_i \rvert \E^{0,i}[\exp(\mathrm{i}(\theta \xi_t + \varphi_i - \varphi_j))\, ;\ J_t = j].\]
  By taking the real part of the right-hand side it follows that
  \[1 =  \sum_{i=1}^n \lvert v_i \rvert \sum_{j=1}^n \E^{0,i}[\cos(\theta \xi_t + \varphi_i - \varphi_j)\, ;\ J_t = j].\]
  Since 
  \[\sum_{j=1}^n \E^{0,i}[\cos(\theta \xi_t + \varphi_i - \varphi_j)\, ;\ J_t = j] \leq \sum_{j=1}^n \PP^{0,i}(J_t = j) \leq 1,\]
  and $\sum_{i=1}^n \lvert v_i \rvert = 1$, it follows that for any $i \in [n]$ such that $v_i \neq 0$ we have
  \[\sum_{j=1}^n \E^{0,i}[\cos(\theta \xi_t + \varphi_i - \varphi_j)\, ;\ J_t = j] = 1.\]
  Pick such $i \in [n]$. Noting that 
  \[\sum_{j=1}^n \E^{0,i}[\cos(\theta \xi_t + \varphi_i - \varphi_j)\, ;\ J_t = j] = \sum_{j=1}^n \E^{0,i}[\cos(\theta \xi_t + \varphi_i - \varphi_j) \, \vert \ J_t = j] \PP^{0,i}(J_t = j),\]
  we now obtain from $\sum_{j=1}^n \PP^{0,i}(J_t = j) = 1$, $\PP^{0,i}(J_t = j) > 0$ and $\E^{0,i}[\cos(\theta \xi_t + \varphi_i - \varphi_j) \, \vert \ J_t = j] \leq 1$ that 
  \[\forall j \in [n]: \quad \quad \E^{0,i}[\cos(\theta \xi_t + \varphi_i - \varphi_j)\, \vert \ J_t = j] = 1.\]
  In particular,
  \[\E^{0,i}[\cos(\theta \xi_t) \ \vert \ J_t  = i] = 1,\]
  which shows that $\xi_t$ is supported on $\tfrac{2\pi}{\theta} \mathbb{Z}$ under $\PP^{0,i}(\cdot \ \vert \ J_t = i)$. Noting that, for $\sigma_1$ denoting the first jump time of $J$, we have $\{\sigma_1 > t, J_0 = i\} \subset \{J_0 = i, J_t = i\}$ and under $\PP^{0,i}$ we have $\xi_t \overset{\mathrm{d}}{=} \xi_t^{(i)}$ on $\{\sigma_1  > t\}$ it follows that for any $t > 0$, $\xi_t^{(i)}$ is supported on the lattice $\tfrac{2\pi}{\theta} \mathbb{Z}.$ Proposition 24.14 in \cite{sato2013} therefore yields that $\xi^{(i)}$ has lattice support.
\end{proof}

Let $\mathcal{A}_0$ be the class of finite mean MAP subordinator Laplace exponents with non-trivial Lévy components and irreducible modulators. Moreover define $\mathcal{A}_1$ to be the class of MAP subordinator Laplace exponents $\bm{\Phi}$ such that 
\begin{align*}
  \forall i \in [n]:& \lim_{\lvert z \rvert \to \infty, \Re z \geq 0} \lvert \phi_i(z) \rvert = \infty\\
  & \quad\vee \, \big(\phi_i \text{ is a compound Poisson Laplace exponent and }  \forall j \in [n]: \bm{\Pi}_{i,j} \ll \mathrm{Leb}\big).
\end{align*}
Note that $\lim_{\lvert z \rvert \to \infty, \Re z \geq 0} \lvert \phi_i(z) \rvert = \infty$ whenever $d_i > 0$ or $\Pi_i$ is non-finite and absolutely continuous, cf.\ Lemma \ref{lem:explosion}.
Moreover, if we define the extremal classes $\mathcal{A}_\infty, \mathcal{A}_{\ll}$ to be the respective families of MAP subordinator Laplace exponents such that for all $i \in [n]$, $\lim_{\lvert z \rvert \to \infty, \Re z \geq 0} \lvert \phi_i(z) \rvert = \infty$ or such that the associated Lévy measure matrices are absolutely continuous, then clearly $\mathcal{A}_\infty \cup \mathcal{A}_{\ll} \subset \mathcal{A}_1$.

\begin{theorem}\label{theo:unique_unkilled}
If $\bm{\Psi}$ has a MAP Wiener--Hopf factorisation in $\mathcal{A}_0 \cap \mathcal{A}_1$, then the factorisation is unique in this class.
\end{theorem}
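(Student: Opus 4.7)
The plan is to adapt the complex-analytic strategy sketched in the Lévy case just before the statement: construct a matrix-valued function $\bm{h}$ on $\mathbb{C}\setminus\{0\}$ by gluing $\bm{F}\bm{G}^{-1}$ on the right half-plane to a dual expression in $\hat{\bm{F}}, \hat{\bm{G}}$ on the left, show it extends to an entire function via a Morera argument across $i\mathbb{R}$ combined with a removable-singularity analysis at $0$, establish polynomial growth at infinity using the structural hypotheses in $\mathcal{A}_1$, and invoke the extended Liouville theorem.

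Fix two factorisations with $\bm{F}, \hat{\bm{F}}, \bm{G}, \hat{\bm{G}} \in \mathcal{A}_0 \cap \mathcal{A}_1$. Each exponent extends analytically to $\mathbb{C}_+$; for $\Re z > 0$ invertibility follows from finiteness of the associated potential measure matrices, while on $i\mathbb{R}\setminus\{0\}$ one applies Proposition~\ref{prop:invertmap} to the associated unkilled characteristic MAP exponents, noting that $\mathcal{A}_0 \cap \mathcal{A}_1$ rules out lattice-supported Lévy components (blow-up of $\phi_i$ at infinity is incompatible with a bounded compound Poisson exponent, and the alternative in $\mathcal{A}_1$ directly enforces an absolutely continuous Lévy measure matrix). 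One then sets
\[
  \bm{h}(z) = \begin{cases} \bm{F}(z)\bm{G}(z)^{-1}, & z \in \mathbb{C}_+ \setminus\{0\}, \\ \bm{\Delta}_{\bm{\pi}}^{-1}\hat{\bm{F}}(-z)^{-\top}\hat{\bm{G}}(-z)^\top \bm{\Delta}_{\bm{\pi}}, & z \in \mathbb{C}_-\setminus\{0\}, \end{cases}
\]
and rearranging the identity $\hat{\bm{F}}(i\theta)^\top \bm{\Delta}_{\bm{\pi}} \bm{F}(-i\theta) = \hat{\bm{G}}(i\theta)^\top \bm{\Delta}_{\bm{\pi}} \bm{G}(-i\theta)$ shows that the two branches agree on $i\mathbb{R}\setminus\{0\}$. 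A Morera or Schwarz-reflection gluing then makes $\bm{h}$ analytic on $\mathbb{C}\setminus\{0\}$.

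To handle the singularity at $z=0$, note that both $\bm{F}(0) = -\bm{Q}^F$ and $\bm{G}(0) = -\bm{Q}^G$ are irreducible generators with simple zero eigenvalue and right eigenvector $\bm{1}$, and the finite-mean hypothesis baked into $\mathcal{A}_0$ ensures differentiability of $\bm{F}$ and $\bm{G}$ at $0$. A first-order perturbation analysis of this simple eigenvalue shows $\bm{G}(z)^{-1}$ has a simple pole at $0$ with rank-one residue proportional to $\bm{1}\bm{v}^\top$ for a row vector $\bm{v}$ built from $\bm{\pi}$ and the first moments; since $\bm{F}(0)\bm{1} = \bm{0}$, this pole is annihilated on the left in the product $\bm{F}(z)\bm{G}(z)^{-1}$, so $\bm{h}$ extends to an entire function.

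For growth at infinity, the dichotomy defining $\mathcal{A}_1$ is tailor-made: in phases where $|\phi_i(z)|\to\infty$ the corresponding diagonal entry of $\bm{G}(z)$ dominates and yields decay of the associated block of $\bm{G}(z)^{-1}$, while in the complementary case the Riemann--Lebesgue lemma keeps $\phi_i(i\theta)$ and the off-diagonal convolution terms bounded, so $\|\bm{h}(z)\|$ is polynomially bounded on $\mathbb{C}$. The extended Liouville theorem then forces $\bm{h}\equiv \bm{C}$ for a constant matrix, and passing to $\bm{F}(\lambda), \bm{G}(\lambda)$ for large real $\lambda$ and arguing as in the final step of the proof of Theorem~\ref{theo:unique_killed} (using the positivity of the off-diagonal transition structure) forces $\bm{C}$ to be diagonal with strictly positive diagonal entries. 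This yields $\bm{F} = \bm{C}\bm{G}$, and the corresponding relation for $\hat{\bm{F}}, \hat{\bm{G}}$ follows immediately from the factorisation. The main obstacle is the perturbation analysis at $z = 0$: the simultaneous singularity of $\bm{F}(0)$ and $\bm{G}(0)$ must be resolved using the invariant structure shared through $\bm{\pi}$, and verifying polynomial growth of $\bm{h}$ uniformly in direction---rather than just along $i\mathbb{R}$---requires carefully combining the two alternatives permitted by $\mathcal{A}_1$ across different phases.
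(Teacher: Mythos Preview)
Your overall architecture matches the paper's: glue $\bm{F}\bm{G}^{-1}$ to a dual expression across $i\mathbb{R}$, remove the singularity at $0$, control growth at infinity, and apply Liouville together with Lemma~\ref{lem:triangular}. However, two of the steps you sketch contain genuine gaps, and both are precisely where the paper does the real work.

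\textbf{The singularity at $z=0$.} You assume $\bm{F}(0)\bm{1}=\bm{0}$, i.e.\ that $(H^F,J^F)$ is unkilled. But from $\bm{\Psi}(0)\bm{1}=\bm{0}$ you only get $\hat{\bm{F}}(0)^\top\bm{\Delta}_{\bm{\pi}}\bm{F}(0)\bm{1}=\bm{0}$, which says nothing about $\bm{F}(0)\bm{1}$ when $\hat{\bm{F}}(0)$ is singular. A priori one can have $\bm{F}(0)\in\mathrm{GL}_n(\mathbb{C})$ while $\bm{G}(0)\notin\mathrm{GL}_n(\mathbb{C})$, and then your pole-cancellation argument collapses: $\bm{G}(z)^{-1}$ has a simple pole whose residue is \emph{not} annihilated by $\bm{F}(0)$. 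The paper eliminates this mixed case by a separate contradiction: it computes $\operatorname{sgn}\lim_{z\to 0,\Re z=0} z^{-1}\det\hat{\bm{F}}(z)$ and $\operatorname{sgn}\lim_{z\to 0,\Re z=0} z^{-1}\det\bm{G}(-z)$ via \cite[Lemma~10]{ivanovs2010} and compares them through the common factorisation of $\bm{\Psi}$. In the surviving cases, the paper does not use an eigenvalue-perturbation argument but instead the Markov renewal theorem to evaluate $\lim_{z\to 0,\Re z\ge 0} z\bm{G}(z)^{-1}$ explicitly as a rank-one matrix with rows proportional to $\bm{\pi}^G$, and then applies $\bm{F}(0)\bm{1}=\bm{0}$ only in the case where it is actually known.

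\textbf{Growth at infinity.} ``Polynomially bounded'' is not enough: you need $\bm{h}(z)=o(|z|)$ to force a constant. The obstruction is exactly the cross-phase interaction you gloss over. If $d_i^F>0$ while $H^{G,(i)}$ is compound Poisson (with $\bm{\Pi}^G_{i,\cdot}\ll\mathrm{Leb}$, as $\mathcal{A}_1$ permits), then $\bm{F}_{i,i}(z)\sim d_i^F z$ and $(\bm{G}(z)^{-1})_{i,i}\to\alpha_i^G>0$, so $\bm{h}_{i,i}(z)$ grows linearly. The paper rules this out by a second contradiction argument: it compares $\lim_{\theta\to\infty}\tfrac{1}{\iu\theta}\bigl(\bm{\Delta}_{\bm{\pi}}^{-1}\hat{\bm{F}}(\iu\theta)^\top\bm{\Delta}_{\bm{\pi}}\bm{F}(-\iu\theta)\bigr)_{i,i}$ with the same quantity for $(\bm{G},\hat{\bm{G}})$, using Riemann--Lebesgue on the compound-Poisson side to produce incompatible signs. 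This is where the two alternatives in the definition of $\mathcal{A}_1$ are genuinely coupled across the two factorisations, not merely used phase by phase.
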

\begin{proof}
  By Theorem \ref{theo:unique_killed} we only have to deal with unkilled MAPs, i.e., $\bm{\Psi}(0) \notin \mathrm{GL}_n(\mathbb{C})$. Let $\bm{F},\hat{\bm{F}},\bm{G},\hat{\bm{G}}$ be MAP subordinator exponents that all belong to  $\mathcal{A}_0 \cap \mathcal{A}_1$ such that 
  \[\bm{\Psi}(\theta) = - \bm{\Delta}_{\bm{\pi}}^{-1} \hat{\bm{F}}(\mathrm{i}\theta)^\top \bm{\Delta}_{\bm{\pi}} \bm{F}(-\mathrm{i}\theta) = - \bm{\Delta}_{\bm{\pi}}^{-1} \hat{\bm{G}}(\mathrm{i}\theta)^\top \bm{\Delta}_{\bm{\pi}} \bm{G}(-\mathrm{i}\theta), \quad \theta \in \R,\]
  for a given MAP exponent $\bm{\Psi}$. Clearly, the assumptions imply that all Lévy components of $\bm{F},\hat{\bm{F}},\bm{G},\hat{\bm{G}}$ have non-lattice support. Hence, the combined conclusions of Proposition \ref{prop:invertmap} and \cite[Theorem 1]{ivanovs2010} together with Hurwitz' theorem (see also Remark 2.2 and the remarks following Theorem 9 of the same paper) yield that all of $\bm{F}(z),\hat{\bm{F}}(z), \bm{G}(z), \hat{\bm{G}}(z)$ are non-singular in $\CC_+ \setminus \{0\}$. Therefore, $\bm{H}\colon \CC\setminus \{0\} \to \CC^n$  given by 
  \[
    \bm{H}(z)
    =
    \begin{cases}
      \bm{F}(-z)\bm{G}(-z)^{-1}, & \Re z \le 0, \\
      \bm{\Delta}_{\bm{\pi}}^{-1} \big(\hat{\bm{G}}(z)\hat{\bm{F}}(z)^{-1} \big)^\top \bm{\Delta}_{\bm{\pi}}, & \Re z \ge 0,
    \end{cases}
  \]
  is well-defined. Moreover, $\bm{H}$ is holomorphic in $\{z \in \CC: \Re z \neq 0\}$ and continuous on $\{z \in \CC \setminus \{0\} : \Re z = 0\}$. Consequently, by a classical result of Walsh \cite{walsh1933},
  for any $i,j \in [n]$, the integral of $H_{i,j}$ over a rectifiable Jordan curve contained in
  $\{ z \in \CC: \Re z\ge 0, z\ne 0\}$ is zero (and analogously
  in the left half-plane).

  Take any triangle in $\CC \setminus \{0\}$ not enclosing $0$. If it lies in
  one of the half-planes, the integral of $H_{i,j}$ over the triangle is zero.
  If it crosses $\mathrm{i}\R$, then it can be decomposed into two quadrangles,
  each lying in one of the closed half-planes excluding $0$,
  and the integral of $H_{i,j}$ over each of these is zero.

  Hence, the integral of $H_{i,j}$ over any triangle in $\CC \setminus \{0\}$
  is zero, and by Morera's theorem,
  $H_{i,j}$ is holomorphic on $\CC\setminus\{0\}$. Thus, $\bm{H}$ is holomorphic on $\CC \setminus \{0\}$. 

  By Riemann's theorem on removable singularities, $\bm{H}$ can be uniquely extended to
  a holomorphic function on $\CC$ if, and only if, $\lim_{z\to 0} z \bm{H}(z) = \bm{0}_{n \times n}$. 
  Recall that we assumed $\bm{\Psi}(0) \notin \mathrm{GL}_n(\mathbb{C})$ and assume initially that also $(H^G,J^G)$ is unkilled. Observe that if $(U^G_{i,j})_{i,j \in [n]}$ denote the potential measures associated to $\bm{G}$ with $U^G_{i,j}(y) \coloneqq U^G_{i,j}([0,y])$, we have 
  \[\bm{G}(z)^{-1} = \Big(\int_0^\infty \mathrm{e}^{- z y} \, U^G_{i,j}(\diff{y}) \Big)_{i,j \in [n]}, \quad z \in (0,\infty).\]
  Hence, for $z > 0$,
  \begin{equation}\label{eq:riemann}
    z \bm{G}(z)^{-1}_{i,j} = z \int_0^\infty \mathrm{e}^{-zy} \, U^G_{i,j}(\diff{y}) 
    = z^2 \int_0^\infty \mathrm{e}^{-zy} U^G_{i,j}(y) \diff{y}
    = \int_0^\infty \mathrm{e}^{-y} z U^G_{i,j}(y/z) \diff{y}.
  \end{equation}
  By the Markov renewal theorem, see Theorem 28 in \cite{dereich2017}, we have 
  \begin{equation}\label{eq:markov renewal}
    \lim_{z \downarrow 0} z U^G_{i,j}(y/z) = y \lim_{x \to \infty} \frac{U^G_{i,j}(x)}{x} = y\frac{\pi^G(j)}{\E^{0,\bm{\pi}^{\bm{G}}}[H^G_1]},
  \end{equation} 
  where $H^G$ is the ordinator and $\bm{\pi}^{\bm{G}}$ the invariant distribution of the modulator associated to $\bm{G}$. 
  Again, by the Markov renewal theorem, there exits $c, a > 0$ such that for $x > c$, $U^G_{i,j}(x) \leq ax$. Hence, for $z \leq 1$
  \[z U^G_{i,j}(y/z) \leq ay \one_{\{y/z > c\}} + U^G_{i,j}(y/z) \one_{\{y/z \leq c\}} \leq ay + \sup_{x \in [0,c]} U^G_{i,j}(x) = ay + U^G_{i,j}(c) \eqqcolon f_{i,j}(y).\]
  Thus, the function $y \mapsto \mathrm{e}^{-y} f_{i,j}(y)$ is an integrable majorant of $y \mapsto z\mathrm{e}^{-y} U_{i,j}(y\slash z) \diff{y}$ for any $z \leq 1$. We can therefore apply dominated convergence in \eqref{eq:riemann} to obtain with \eqref{eq:markov renewal} that
  \[\lim_{z \downarrow 0} z \bm{G}(z)^{-1}_{i,j} = \frac{\pi^G(j)}{\E^{0,\bm{\pi}^{\bm{G}}}[H^G_1]} \int_0^\infty y \mathrm{e}^{-y} \diff{y} = \frac{\pi^G(j)}{\E^{0,\bm{\pi}^{\bm{G}}}[H^G_1]}.\]
  Since the determinant of a matrix is a polynomial of its entries and the one-sided derivatives 
  \[\lim_{z \to 0, \Re z \geq 0} \frac{\bm{G}_{i,j}(z)- \bm{G}_{i,j}(0)}{z} = \begin{cases} \E[H^{G,(i)}_1], & i =j,\\ q_{i,j}\E[\Delta^G_{i,j}], & i\neq j, \end{cases}\]
  exist and are finite by assumption for all $i,j \in [n]$, it follows that 
  \[\lim_{z \to 0, \Re z \geq 0} \frac{\det \bm{G}(z)}{z} = \lim_{z \to 0, \Re z \geq 0} \frac{\det \bm{G}(z) - \det \bm{G}(0)}{z},\]
  exists and is equal to 
  \[\lim_{z \downarrow 0, z \in \R} \frac{\det \bm G(z)}{z} = -\E^{0,\bm{\pi}^{\bm{G}}}[H_1^G] \prod_{i=1}^{n-1} (-\lambda_i) \in (-\infty,0),\]
  where $\lambda_i$ denote the eigenvalues of $\bm{Q}^{\bm{G}}$ with principal eigenvalue $\lambda_n = 0$, see \cite[Lemma 10]{ivanovs2010}. Using that 
  \[\bm{G}(z)^{-1} = \operatorname{adj}(\bm{G}(z))\slash \det \bm{G}(z), \quad z \in \{z \in \CC \setminus \{0\} : \Re z \geq 0\},\]
  and that $\operatorname{adj}(\bm{G}(\cdot))$ is continuous on $\CC_+$, it therefore follows that $\lim_{z \to 0, \Re z \geq 0} z\bm{G}^{-1}(z)$ exists and is given by 
  \[\lim_{z \to 0, \Re z \geq 0} z\bm{G}(z)^{-1}_{i,j} = \lim_{z \downarrow 0} z \bm{G}(z)^{-1}_{i,j} = \frac{\pi^G(j)}{\E^{0,\bm{\pi}^{\bm{G}}}[H^G_1]}, \quad i,j \in [n].\]
  Note that $-\bm{F}(0)$ is a generator matrix iff none of the Lévy components is killed, which in turn holds iff $\bm{F}(0) \notin \mathrm{GL}_n(\CC)$, see Corollary 1.4 in \cite{stephenson2018}. Thus, if $\bm{F}(0) \notin \mathrm{GL}_n(\CC)$, we have 
  \[\lim_{z \to 0, \Re z \geq 0} z\bm{F}(z)\bm{G}(z)^{-1} = \bm{F}(0) \cdot \one \cdot \bm{\pi}^G/\E^{0,\bm{\pi}^G}[H_1^G] = \bm{0}_{n \times n},\]
  since $\one$ is a right eigenvector for the eigenvalue $\lambda = 0$ of the generator matrix $-\bm{F}(0)$. 
  Thus, we have shown that
  \begin{align*}
    \lim_{z \to 0, \Re z \le 0} z\bm{H}(z)
    & = -\lim_{z \to 0, \Re z \geq 0} z\bm{F}(z) \bm{G}(z)^{-1}\\
    & =\bm{0}_{n \times n}, \quad \text{if }   \bm{F}(0) \notin \mathrm{GL}_n(\CC) \text{ or } \bm{G}(0) \in \mathrm{GL}_n(\CC)
  \end{align*}
  and analogously we find
  \[
    \lim_{z\to 0, \Re z \ge 0} z\bm{H}(z)
    = \bm{0}_{n \times n}, \quad \text{if } \hat{\bm{G}}(0) \notin \mathrm{GL}_n(\CC) \text{ or } \hat{\bm{F}}(0) \in \mathrm{GL}_n(\CC),
  \]
  We now show that the remaining cases cannot occur.
  Suppose initially that $\bm{F}(0) \in \mathrm{GL}_n(\CC) \text{ and } \bm{G}(0) \notin \mathrm{GL}_n(\CC)$. Since $\bm{\Psi}(0) \notin \mathrm{GL}_n(\CC)$, it follows from the Wiener--Hopf factorisation that $\hat{\bm{F}}(0) \notin \mathrm{GL}_n(\CC)$. Hence, using again Lemma 10 in \cite{ivanovs2010}, it follows that
  \[\lim_{z \to 0, \mathrm \Re z = 0} \frac{1}{z} \det \bm{\hat{F}}(z) \neq 0,\]
  with 
  \[\mathrm{sgn} \Big(\lim_{z \to 0, \mathrm \Re z = 0} \frac{1}{z} \det \bm{\hat{F}}(z) \Big) = - \operatorname{sgn}\E^{0,\bm{\pi}^{\hat{\bm{F}}}}[H_1^{\hat{F}}] = -1.\]
  Similarly,
  \[\mathrm{sgn}\lim_{z \to 0, \Re z = 0} \frac{1}{z} \det \bm{G}(-z) = - \mathrm{sgn}\lim_{z \to 0, \Re z = 0} \frac{1}{z} \det \bm{G}(z) = 1.\]
  Thus,
  \begin{equation}\label{eq:contra}
    \begin{split}
      - \operatorname{sgn} (\det \bm{F}(0)) &=  \operatorname{sgn} \lim_{z \to 0, \Re z = 0} \frac{1}{z}\det\Big(\bm{\Delta}_{\bm{\pi}}^{-1} \hat{\bm{F}}(z)^\top \bm{\Delta}_{\bm{\pi}} \bm{F}(-z) \Big)\\
      &=  \operatorname{sgn} \lim_{z \to 0, \Re z = 0} \frac{1}{z}\det\Big(\bm{\Delta}_{\bm{\pi}}^{-1} \hat{\bm{G}}(z)^\top \bm{\Delta}_{\bm{\pi}} \bm{G}(-z) \Big)\\
      &= \operatorname{sgn}(\det \hat{\bm{G}}(0)).
    \end{split}
  \end{equation}
  Depending on whether $-\hat{\bm{G}}(0)$ is a generator matrix or not we have $\operatorname{sgn}(\det \hat{\bm{G}}(0)) \in \{0,1\}$ and since $\bm{F}(0)$ is invertible by assumption, $\operatorname{sgn} (\det \bm{F}(0)) = 1$. To see that this is true, note that the real parts of the eigenvalues of $\bm{Q}^{\bm{F}}$ are non-positive and hence the real parts of the eigenvalues of $-\bm{F}(0) = \bm{Q}^{\bm{F}} - \Delta_{\bm{\dagger}^{\bm{F}}}$, where $\bm\dagger^{\bm{F}}$ has non-negative entries and is not equal to the zero vector by assumption, are strictly negative, see e.g.\ Proposition 1.3 in \cite{stephenson2018}. If an eigenvalue $\lambda_i$ of the real matrix $-\bm{F}(0)$ is of multiplicity $m$ and has non-trivial imaginary part, then $\overbar{\lambda_i}$ is also an eigenvalue of multiplicity $m$ and the product of these $2m$ eigenvalues is strictly positive. Thus, 
  \[\det \bm{F}(0) = (-1)^n \prod_{i=1}^n \lambda_i = \prod_{i=1}^n (-\lambda_i) > 0.\]
  The argument for the determinant of $\hat{\bm{G}}(0)$ is analogous. Hence, \eqref{eq:contra} yields a contradiction. Similarly, it follows that the case $\hat{\bm{G}}(0) \in \mathrm{GL}_n(\CC) \text{ and } \hat{\bm{F}}(0) \notin \mathrm{GL}_n(\CC)$ cannot occur. 

  It follows that $\lim_{z \to 0, \Re z \geq 0} z \bm{H}(z) = \lim_{z \to 0, \Re z \leq 0} z \bm{H}(z) = \bm{0}_{n \times n}$. Riemann's theorem therefore implies that $\bm{H}$ can be extended to a holomorphic function on $\CC$.

  We proceed by showing that $\bm{H}$ has component wise sublinear growth. 
  If $\phi_i^G$ does not diverge at $\infty$, by our assumption that $\bm{G} \in \mathcal{A}_1$, we have $\bm{\Pi}^G_{i,j} \ll \mathrm{Leb}$ for all $j \in [n]$ and  $H^{G,(i)}$ is compound Poisson. Thus, in this case it follows from the Riemann--Lebesgue lemma for Laplace transforms of absolutely continuous measures supported on $(0,\infty)$ that $\lim_{\lvert z \rvert \to \infty, \Re z \geq 0} \mathscr{L}(q_{i,j}F^G_{i,j})(z) = 0$ for all $j \neq i$ and  $\lim_{\lvert z \rvert \to \infty, \Re z \geq 0} \phi^G_i(z) = \lambda^G_i$, where $\lambda^G_i$ is the jump intensity of $H^{G,(i)}$. This demonstrates that for any $i,j \in [n], i \neq j$,
  \[\lim_{\lvert z \rvert \to \infty, \Re z \geq 0} \frac{\bm{G}_{i,j}(z)}{\bm{G}_{i,i}(z)} = 0.\]
  Hence, for given $\varepsilon > 0$ there exists $M > 0$ such that 
  \[\big \lVert \bm{D}^{\bm{G}}(z)^{-1}(\bm{G}(z) - \bm{D}^{\bm{G}}(z))\big\rVert \leq \varepsilon, \quad \lvert z \rvert \geq M, \Re z \geq 0.\]
  Hence, for such $z$
  \begin{align*} 
    \big \lVert \big(\mathbb{I} + \bm{D}^{\bm{G}}(z)^{-1}(\bm{G}(z) - \bm{D}^{\bm{G}}(z))\big)^{-1} - \mathbb{I}\big\rVert &= \Big \lVert \sum_{n=1}^{\infty} \Big(-\bm{D}^{\bm{G}}(z)^{-1}(\bm{G}(z) - \bm{D}^{\bm{G}}(z)) \Big)^n \Big\rVert \\
    &\leq \sum_{n=1}^{\infty} \big \lVert \bm{D}^{\bm{G}}(z)^{-1}(\bm{G}(z) - \bm{D}^{\bm{G}}(z)) \big\rVert^n \\
    &\leq \frac{\varepsilon}{1 - \varepsilon},
  \end{align*}
  and therefore 
  \[\lim_{\lvert z \rvert \to \infty \Re z \geq 0} \big(\mathbb{I} + \bm{D}^{\bm{G}}(z)^{-1}(\bm{G}(z) - \bm{D}^{\bm{G}}(z))\big)^{-1} = \mathbb{I}.\]
  Moreover, since $\bm{G} \in \mathcal{A}_1$ we have
  \[\alpha^G_i \coloneqq \lim_{\lvert z \rvert \to \infty, \Re z \geq 0} (\bm{D}^{\bm{G}}(z))^{-1}_{i,i} = \begin{cases} \frac{1}{\lambda_i^G + \dagger^G_i - q^G_{i,i}}, &\text{ if } H^{G,(i)} \text{ is compound Poisson},\\ 0, &\text{ if } H^{G,(i)} \text{ is not compound Poisson}.
  \end{cases}\]
  Thus, using
  \[\bm{G}(z) = \bm{D}^{\bm{G}}(z)\big(\mathbb{I} + \bm{D}^{\bm{G}}(z)^{-1}(\bm{G}(z) - \bm{D}^{\bm{G}}(z))\big).\]
  we find
  \[\lim_{\lvert z \rvert \to \infty, \Re z \geq 0} \bm{G}(z)^{-1}  = \mathrm{diag}((\alpha^G_i)_{i \in [n]}),\]
  Together with 
  \[\lim_{\lvert z \rvert \to \infty, \Re z \geq 0} \frac{1}{z} \bm{F}(z) = \mathrm{diag}((d^F_i)_{i \in [n]}),\]
  we therefore obtain
  \begin{equation}\label{eq:limatinf}
    \lim_{\lvert z \rvert \to \infty, \Re z \geq 0} \frac{1}{z}\bm{F}(z)\bm{G}(z)^{-1} = \mathrm{diag}((\alpha^G_i d^F_i)_{i \in [n]}).
  \end{equation}
  We now argue by contradiction that $d^F_i > 0$ implies that $H^{G,(i)}$ is not compound Poisson. If $d^F_i > 0$, since by assumption either $H^{\hat{F},(i)}$ is compound Poisson with absolutely continuous Lévy density or $\lim_{\lvert \theta \rvert \to \infty} \lvert \phi_{i,i}^{\hat{F}}(\mathrm{i}\theta) \rvert = \infty$ and, moreover, it always holds $\lim_{\theta \to \infty }\frac{1}{\mathrm{i}\theta} \bm{F}_{i,i}(-\mathrm{i}\theta) = -d^F_i$, it follows from the Riemann--Lebesgue lemma that $\lim_{\theta \to \infty} \tfrac{1}{\mathrm{i}\theta} (\bm{\Delta}_{\bm{\pi}}^{-1}\hat{\bm{F}}(\mathrm{i}\theta)^\top\bm{\Delta}_{\bm{\pi}}\bm{F}(-\mathrm{i}\theta))_{i,i}$ diverges if $H^{\hat{F},(i)}$ is not compound Poisson or else converges to a strictly negative limit. On the other hand, if $H^{G,(i)}$ is compound Poisson, then by definition of $\mathcal{A}_1$, necessarily $\Pi^{G,(i)} \ll \mathrm{Leb}$, such that again by the Riemann--Lebesgue lemma 
  \[\lim_{\theta \to \infty} \tfrac{1}{\mathrm{i}\theta} (\bm{\Delta}_{\bm{\pi}}^{-1}\hat{\bm{G}}(\mathrm{i}\theta)^\top\bm{\Delta}_{\bm{\pi}}\bm{G}(-\mathrm{i}\theta))_{i,i} = \begin{cases} 0, &\text{ if } d^{\hat{G}}_i = 0,\\ d^{\hat{G}}_i (-q_{i,i}^G + \dagger^G_i + \lambda^G_i) > 0, &\text{ if } d^{\hat{G}}_i > 0.\end{cases}\]
  This yields a contradiction and therefore proves that $d^F_i > 0$ implies $\alpha^G_i = 0$. Hence, by \eqref{eq:limatinf}, 
  \[\lim_{\lvert z \rvert \to \infty, \Re z \geq 0} \frac{1}{z}\bm{F}(z)\bm{G}(z)^{-1} = \mathbf{0}_{n \times n}.\]
  In the same way, we can prove that
  \[\lim_{\lvert z \rvert \to \infty, \Re z \geq 0} \frac{1}{z}\bm{\Delta}_{\bm{\pi}}^{-1} \big(\hat{\bm{G}}(z)\hat{\bm{F}}(z)^{-1} \big)^\top \bm{\Delta}_{\bm{\pi}} = \mathbf{0}_{n \times n}.\]
  Thus, under the given assumptions on the MAP Wiener--Hopf factorisation, we have shown that 
  \[\lim_{\lvert z \rvert \to \infty} \frac{1}{z}\bm{H}(z) = \bm{0}_{n \times n}.\]
  Since we have demonstrated that $\bm{H}$ has a unique holomorphic extension to $\mathbb{C}$, the extended Liouville theorem now allows us to conclude that $\bm{H} \equiv \bm{C}$ for some constant matrix $\bm{C} \in \mathbb{C}^{n \times n}$. 

  Finally, Lemma \ref{lem:triangular} shows that $\mathbf{C} = \bm{\Delta}$ for some diagonal matrix $\bm{\Delta}$ with strictly positive diagonal entries. This gives us the desired conclusion by construction of $\bm{H}$.
\end{proof}

From \cite[Lemma 3.20]{doering21} we immediately obtain the following result.
\begin{lemma}\label{lem:gaussian}
  Let $\bm{\Psi}$ be a MAP exponent. For any $i \in [n]$ such that $\psi_i$ has non-trivial Gaussian part, it holds that $\lim_{\lvert z \rvert \to \infty, \Re z \geq 0}\lvert \phi^\pm_i(z) \rvert = \infty$.
\end{lemma}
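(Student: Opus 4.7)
The claim essentially reduces to an identity from \cite[Lemma 3.20]{doering21}, which expresses the Gaussian coefficient of $\psi_i$ in terms of the drifts of the $i$-th Lévy components of the ascending and descending ladder height MAPs, namely $\sigma_i^2 = 2 d_i^+ d_i^-$. (This is precisely the identity that was already invoked in Examples~\ref{ex: spec} and~\ref{ex:double-exp}.) Under the hypothesis that $\psi_i$ has a non-trivial Gaussian part, i.e.\ $\sigma_i^2 > 0$, this identity forces $d_i^+ > 0$ \emph{and} $d_i^- > 0$, so the plan is to reduce the statement to the elementary fact that the Laplace exponent $\phi$ of any (possibly killed) subordinator with strictly positive drift $d$ satisfies $|\phi(z)| \to \infty$ as $|z| \to \infty$ within $\{\Re z \geq 0\}$, and then apply this fact separately to $\phi_i^+$ and $\phi_i^-$.

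For the remaining elementary claim, I would write
\[
  \phi(z) = \dagger + dz + \int_0^\infty (1 - e^{-zy})\,\Pi(\diff y),
\]
from which one directly reads off $\Re \phi(z) \geq \dagger + d\,\Re z$, handling the case $\Re z \to \infty$. When instead $\Re z$ stays bounded while $|\Im z| \to \infty$, the argument is to combine the pointwise estimate $|\sin(ty)| \leq |ty| \wedge 1$ with the subordinator integrability $\int_{(0,1)} y\,\Pi(\diff y) < \infty$ to obtain, uniformly in $\Re z \geq 0$,
\[
  \left| \int_0^\infty e^{-\Re(z)\, y} \sin(\Im(z)\, y)\,\Pi(\diff y) \right| = o(|\Im z|), \qquad |\Im z| \to \infty,
\]
by splitting the integral at some small $\varepsilon > 0$ and sending $\varepsilon \downarrow 0$ after taking $\limsup$. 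Since $\Im \phi(z) = d\, \Im z + \int_0^\infty e^{-\Re(z)\, y} \sin(\Im(z)\, y)\,\Pi(\diff y)$, this yields $|\Im \phi(z)| \geq d\,|\Im z| - o(|\Im z|) \to \infty$, and combining the two cases gives the claim. There is no real obstacle here; the only thing to get right is the uniformity of the $o(|\Im z|)$ bound in $\Re z \geq 0$, which is immediate since the bound $|\sin(ty)| \leq |ty|\wedge 1$ does not depend on $\Re z$ and the factor $e^{-\Re(z) y} \leq 1$ only helps.
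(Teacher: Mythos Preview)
Your proposal is correct and follows the same approach as the paper: the paper's proof is simply the one-line remark ``From \cite[Lemma 3.20]{doering21} we immediately obtain the following result,'' relying on the identity $\sigma_i^2 = 2d_i^+ d_i^-$ exactly as you do. The only minor difference is in how the implication ``$d>0 \Rightarrow |\phi(z)|\to\infty$'' is justified: the paper alludes to this fact via the resolvent characterisation of Lemma~\ref{lem:explosion} (positive drift gives absolutely continuous $U_q$, hence vanishing Laplace transform of the singular part), whereas you give a direct estimate on $\Im\phi(z)$; both arguments are standard and your version is arguably more self-contained.
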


Absolute continuity of the Lévy measure matrix on $(0,\infty)$ of a bonding MAP together with information on regularity at $0$ of its Lévy components also gives useful properties to determine uniqueness of its Wiener--Hopf factorisation.
\begin{lemma}\label{lem:upward}
  If the Lévy measure matrix is absolutely continuous on $(0,\infty)$ (resp.\ $(-\infty,0)$), then the same is true for the Lévy measure matrix of the ascending (resp.\ descending) ladder height MAP. In this case, if 
  $X^{(i)}$ is  upward (resp.\ downward) regular at $0$, then the Laplace exponent $\phi^+_i$ (resp.\ $\phi^-_i$) diverges at $\infty$. Otherwise, $H^{+,(i)}$ (resp. $H^{-,(i)}$) is compound Poisson.
\end{lemma}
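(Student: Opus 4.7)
My plan is to handle the two assertions in turn. For the first, I would appeal to the équations amicales inversées for MAPs developed in \cite{doering21}, which express the Lévy measure matrix $\bm{\Pi}^+$ of the ascending ladder height MAP on $(0,\infty)$ as an integral transform of $\bm{\Pi}\vert_{(0,\infty)}$ against a renewal-type measure built from the descending ladder process. Since this transform preserves absolute continuity, $\bm{\Pi}^+$ inherits absolute continuity on $(0,\infty)$ whenever $\bm{\Pi}$ does. The main technical challenge in this step is verifying that the off-diagonal entries of $\bm{\Pi}^+$, encoding the transitional jumps $q^+_{i,j}F^+_{i,j}$ of $(H^+,J^+)$, also inherit absolute continuity on $(0,\infty)$: this requires a careful bookkeeping argument, since such jumps arise both from phase transitions of $(\xi,J)$ with positive transitional jump component and from phase switches that occur precisely at times of new suprema.

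For the second assertion, fix a phase $i \in [n]$ and recall that the Laplace exponent of $H^{+,(i)}$ reads
\[
\phi^+_i(z) = \dagger^+_i - q^+_{i,i} + d^+_i z + \int_0^\infty (1-\mathrm{e}^{-zx}) \,\Pi^+_i(\mathrm{d}x), \qquad \Re z \geq 0.
\]
With absolute continuity of $\Pi^+_i$ in hand from the first part, Lemma~\ref{lem:explosion} yields $|\phi^+_i(z)| \to \infty$ as $|z|\to \infty$ with $\Re z \geq 0$, provided either $d^+_i >0$ or $\Pi^+_i((0,\infty)) = \infty$. Equivalently, $\phi^+_i$ fails to diverge at infinity if and only if $H^{+,(i)}$ is a (possibly killed) compound Poisson process, meaning $d^+_i = 0$ and $\Pi^+_i$ is finite.

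It then suffices to identify upward regularity of $\xi^{(i)}$ at $0$ with non-compound-Poisson behaviour of $H^{+,(i)}$. I would read this directly from the construction of local time at the supremum reviewed in Section~\ref{sec: map intro}: when $0$ is regular for $(0,\infty)$ for $\xi^{(i)}$, the local time $L$ is continuous, so that the set of new suprema within phase $i$ is uncountable, forcing $H^{+,(i)}$ to have either positive drift or infinite Lévy measure; conversely, when $0$ is irregular, $L$ is a pure jump process with isolated, independent standard exponential jumps, making $H^{+,(i)}$ compound Poisson. Combining this dichotomy with the previous paragraph yields the claim. The descending case follows by the symmetric argument applied to $-\xi$ and $(H^-,J^-)$.
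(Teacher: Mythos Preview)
Your proposal is correct and follows essentially the same route as the paper: cite \cite{doering21} (the paper uses Theorem~4.3 there directly rather than unpacking the équations amicales inversées) for the inheritance of absolute continuity by $\bm{\Pi}^+$, identify upward regularity of $\xi^{(i)}$ with $H^{+,(i)}$ not being compound Poisson via the local-time construction, and then invoke Lemma~\ref{lem:explosion}. Your concern about the off-diagonal entries is already absorbed into the cited result, so no separate bookkeeping is needed.
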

\begin{proof}
  We only deal with the statements on $(H^+,J^+)$, the statement for $(H^-,J^-)$ follows from symmetric arguments. From the construction of the ascending ladder height process $(H^+,J^+)$ it is immediate that $H^{+,(i)}$ is strictly increasing iff $X^{(i)}$ is upward regular. Equivalently, $H^{+,(i)}$ is not compound Poisson iff $X^{(i)}$ is upward regular. Since the Lévy measure matrix $\bm{\Pi}$ is absolutely continuous on $(0,\infty)$, it follows from \cite[Theorem 4.3]{doering21} that $\bm{\Pi}^+$ is absolutely continuous on $(0, \infty)$. In particular, $\Pi^+_i \ll \mathrm{Leb}$ for all $i \in [n]$ and hence $\lim_{\lvert z \rvert \to \infty, \Re z \geq 0} \lvert \phi^+_i(z) \rvert = \infty$ by Lemma \ref{lem:explosion} if $X^{(i)}$ is upward regular.
\end{proof}

The uniqueness results finally allow us to give the following probabilistic interpretation of $\bm{\pi}$-friendships. 

\begin{theorem}\label{theo:unique}
  Let $(H^+,J^+)$ and $(H^-,J^-)$ be irreducible $\bm{\pi}$-friends with matrix Laplace exponents $\bm{\Phi}^\pm$ such that one of the following
  sets of conditions holds:
  \begin{enumerate}[label=(\roman*), ref=(\roman*)]
    \item
      \begin{enumerate}[label=(\alph*), ref=(\roman*)]
        \item
          the bonding MAP is irreducible and killed,
        \item the ascending and descending ladder height processes of the bonding MAP are irreducible, and
        \item 
          $\bm{\pi}$ is invariant for the bonding MAP;
      \end{enumerate}
    \item
      \begin{enumerate}[label=(\alph*), ref=(\roman*)]
        \item 
          the bonding MAP is irreducible and unkilled,
          \label{cond:mapwh3}
        \item
          $\bm{\Phi}^\pm \in \mathcal{A}_0\cap\mathcal{A}_1$, and
        \item 
          the MAP exponents of the ascending and descending ladder height processes of the bonding MAP belong to $\mathcal{A}_0 \cap \mathcal{A}_1$.
          \label{cond:mapwh2}
      \end{enumerate}
  \end{enumerate}
  Then $(H^+,J^+)$ and $(H^-,J^-)$ have the same distribution as the
  ascending and descending ladder height processes of the bonding MAP,
  for an appropriate scaling of local times.
\end{theorem}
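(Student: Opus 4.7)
The plan is to exhibit two Wiener--Hopf factorisations of the same bonding MAP $(\xi,J)$ and then invoke one of the uniqueness statements already established in this section to identify the two factorisations up to a diagonal rescaling. The first factorisation is the defining one, in terms of $\bm{\Psi}^\pm$ coming from $(H^\pm,J^\pm)$; the second is the factorisation guaranteed by Theorem~\ref{c:whf-norm} for the bonding MAP itself, in terms of the exponents $\bm{\Psi}^{\pm,\star}$ of its true ladder height processes $(H^{\pm,\star},J^{\pm,\star})$.

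First I would verify that $\bm{\pi}$ is an invariant distribution for the modulator of $(\xi,J)$, since this is a prerequisite for applying Theorem~\ref{c:whf-norm}. Under hypothesis~(i) this is exactly assumption~(i)(c); under hypothesis~(ii), it follows immediately from Lemma~\ref{lem:unkilled} together with (ii)(a). With this in hand, Theorem~\ref{c:whf-norm} yields, for a suitable normalisation of the local times of $(\xi,J)$ at the supremum and infimum, the identity
\[
  \bm{\Psi}(\theta) = -\bm{\Delta}_{\bm{\pi}}^{-1} \bm{\Psi}^-(-\theta)^\top \bm{\Delta}_{\bm{\pi}} \bm{\Psi}^+(\theta) = -\bm{\Delta}_{\bm{\pi}}^{-1} \bm{\Psi}^{-,\star}(-\theta)^\top \bm{\Delta}_{\bm{\pi}} \bm{\Psi}^{+,\star}(\theta), \quad \theta \in \R.
\]

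Next I would dispatch the two cases separately. Under (i), the bonding MAP, its ladder height processes and both given friends are irreducible MAP subordinators, and (as noted in the proof of Theorem~\ref{theo:unique_killed}) killedness of $\bm{\Psi}$ automatically propagates to all four factors; Theorem~\ref{theo:unique_killed} therefore applies. Under (ii), the hypotheses (ii)(b) and (ii)(c) place all four exponents in the class $\mathcal{A}_0\cap\mathcal{A}_1$, and thus Theorem~\ref{theo:unique_unkilled} applies. In either case, the uniqueness conclusion delivers a diagonal matrix $\bm{\Delta}$ with strictly positive diagonal entries satisfying $\bm{\Psi}^+ = \bm{\Delta}\bm{\Psi}^{+,\star}$ and $\bm{\Psi}^- = \bm{\Delta}^{-1}\bm{\Psi}^{-,\star}$.

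Finally, as already observed at the start of this section, such a diagonal rescaling of a MAP subordinator exponent corresponds exactly to a phase-dependent linear time change, which is precisely the freedom one has in normalising the local times at the supremum and infimum. The statement of the theorem thus follows. The main obstacle in this proof is not in the logical scaffolding above but in establishing the prerequisites for the uniqueness theorems; the heavy lifting is already carried out in Theorems~\ref{theo:unique_killed} and~\ref{theo:unique_unkilled}, so here the only genuine work is the verification of the invariance of $\bm{\pi}$ under (ii), which Lemma~\ref{lem:unkilled} supplies, and the bookkeeping to confirm that membership in $\mathcal{A}_0\cap\mathcal{A}_1$ is inherited both by the given friends and by the bonding ladder height processes.
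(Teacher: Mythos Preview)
Your proposal is correct and follows essentially the same approach as the paper: in each case you verify that $\bm{\pi}$ is invariant for the bonding modulator (via assumption~(i)(c) or Lemma~\ref{lem:unkilled}), apply Theorem~\ref{c:whf-norm} to obtain a second factorisation in terms of the true ladder height processes, and then invoke the appropriate uniqueness result (Theorem~\ref{theo:unique_killed} or~\ref{theo:unique_unkilled}) to identify the two factorisations up to a diagonal rescaling. The paper's proof is terser but structurally identical.
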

\begin{proof}
  \begin{enumerate}[label=(\roman*), ref=(\roman*)]
    \item
      Theorem \ref{c:whf-norm} ensures that the bonding MAP has a Wiener--Hopf factorisation
      \eqref{eq: wienerhopf} in terms of its ladder height processes.
      The result for killed MAPs follows by uniqueness of the Wiener--Hopf factorisation provided by Theorem \ref{theo:unique_killed}.
    \item
      If $J$ is unkilled, Lemma~\ref{lem:unkilled} implies
      that
      $\bm{\pi}$ is invariant for the
      modulator $J$ of the bonding MAP. 
      The Wiener--Hopf factorisation \eqref{eq: wienerhopf} for the bonding
      MAP holds again by Theorem \ref{c:whf-norm}.
      The result for unkilled MAPs then follows from
      the uniqueness result Theorem~\ref{theo:unique_unkilled}. 
  \end{enumerate}
\end{proof}

\begin{remark}\label{r:unique-examples}
  Sufficient conditions for irreducibility of the ladder height modulators and necessary and sufficient conditions for finiteness of the ordinators' mean needed for the ladder height MAPs to belong to $\mathcal{A}_0$ can be found in \cite[Proposition 3.5]{doering21} and \cite[Theorem 35]{dereich2017}, respectively. Lemma \ref{lem:gaussian} and Lemma \ref{lem:upward} give criteria that allow to check whether ladder height MAPs belong to $\mathcal{A}_1$. A characterisation of the lifetime of the bonding MAP in terms of the characteristics of $(H^\pm,J^\pm)$ is given in Lemma \ref{lemma: killing}. 

  We return to our previous examples of friendship. In Example~\ref{ex: spec},
  we found that the bonding MAP had positive Gaussian part in every component,
  which by Lemma~\ref{lem:gaussian} implies that its ladder height processes are in $\mathcal{A}_1$.
  Furthermore, examining the form of the Lévy measure matrix $\bm{\Pi}$
  of the bonding MAP, we note that assumption~\ref{cond: spec0} in the example implies that
  $\E^{0,i}[\lvert \xi_1\rvert]<\infty$.

  Since both the $\bm{\pi}$-friends in the example are unkilled, we obtain, by
  differentiating
  in \eqref{eq: wh map} and using \cite[Propositions~2.13 and~2.15]{ivanovs2007},
  that the bonding MAP $\xi$ oscillates.
  In order to check the finiteness of the mean of the ladder height process,\
  we can look at condition (TO) in
  \cite{dereich2017}, which amounts to showing that 
  $\int^\infty x\overbar{\bm{\Pi}}(x)\diff{x} < \infty$.
  We note that $\int^\infty x\overbar{\bm{\Pi}}_{i,1}(x)\diff{x}$
  can be expressed in terms of an integral over $\mu_1^+$.
  Conditions~\ref{cond: spec0}
  and~\ref{cond: spec2} together imply that $\int_{(0,1)} y^{-4}\, \mu_1^+(\diff{y})<\infty$,
  which in turn yields that $\int^\infty x\overbar{\bm{\Pi}}_{i,1}(x)\diff{x} < \infty$
  for $i=1,2$.
  Symmetrical considerations apply to 
  $\int^\infty x\overbar{\bm{\Pi}}_{i,2}(x)\diff{x}$ for $i=1,2$, and hence
  condition (TO) of \cite{dereich2017} is satisfied.
  It follows that the ladder height process has finite mean.

  Finally, \cite[Proposition~5.10]{doering21} gives that the ladder height processes
  are irreducible, which shows that they are in $\mathcal{A}_0$. Therefore,
  the Wiener--Hopf factorisation in this example consists of the identified pair of
  $\bm{\pi}$-friends.
  The same argument applies, with little variation, to Example~\ref{ex:double-exp}.
\end{remark}

\appendix
\section{Some technical lemmas}
\begin{lemma}\label{lemma: map exp}
  A matrix-valued function $\bm{\Psi}\colon \R \to \mathbb{C}^{n\times n}$ is the characteristic exponent of some $\R \times [n]$-valued MAP if, and only if, all of the following conditions are satisfied:
  \begin{enumerate}[label = (\roman*), ref = (\roman*)]
    \item for all $i \in [n]$, $\bm{\Psi}_{i,i}$ is the characteristic exponent of a killed Lévy process;\label{cond1}
    \item for all $i, j \in [n]$ with $i \neq j$ there exists some finite measure $\rho_{i,j}$ such that $\bm{\Psi}_{i,j} = \mathscr{F} \rho_{i,j}$;
    \item the vector $-\bm{\Psi}(0)\one$ is nonnegative.
      \label{cond3}
  \end{enumerate}
\end{lemma}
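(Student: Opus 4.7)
For necessity, I would simply invoke the canonical decomposition
\[
  \bm{\Psi}(\theta) = \bm{\Delta}_{\bm{\psi}(\theta)} + \bm{Q} \odot \bm{G}(\theta) - \bm{\Delta}_{\bm{\dag}}
\]
recalled in Section~\ref{sec: map intro}, and read off each condition. The diagonal entry $\bm{\Psi}_{i,i}(\theta) = \psi_i(\theta) + q_{i,i} - \dag_i$ is the characteristic exponent of $\xi^{(i)}$ killed at the nonnegative rate $\dag_i - q_{i,i}$, establishing \ref{cond1}. For $i\ne j$, $\bm{\Psi}_{i,j}(\theta) = q_{i,j} G_{i,j}(\theta)$ is the Fourier transform of the finite measure $\rho_{i,j} \coloneqq q_{i,j} F_{i,j}$, yielding (ii). Finally, $\bm{\Psi}(0) = \bm{Q} - \bm{\Delta}_{\bm{\dag}}$ and $\bm{Q}\one = \bm{0}$, so $-\bm{\Psi}(0)\one = \bm{\dag} \geq \bm{0}$, giving \ref{cond3}.

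For sufficiency, assume \ref{cond1}--\ref{cond3} and reverse-engineer a consistent MAP structure. From (ii) I define $q_{i,j} \coloneqq \rho_{i,j}(\R) \geq 0$ for $i \neq j$, and $F_{i,j} \coloneqq q_{i,j}^{-1} \rho_{i,j}$ (taking $F_{i,j} = \delta_0$ in the degenerate case $q_{i,j} = 0$), so that $\bm{\Psi}_{i,j}(\theta) = q_{i,j} \int \mathrm{e}^{\iu\theta x} F_{i,j}(\diff x)$. Completing $\bm{Q}$ by setting $q_{i,i} \coloneqq -\sum_{j\ne i} q_{i,j}$ makes it a bona fide generator matrix. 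From \ref{cond1}, the diagonal decomposes uniquely as $\bm{\Psi}_{i,i}(\theta) = \psi_i(\theta) - \mathtt{k}_i$ with $\psi_i$ an unkilled Lévy characteristic exponent (so $\psi_i(0) = 0$) and $\mathtt{k}_i \coloneqq -\bm{\Psi}_{i,i}(0) \ge 0$.

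The linchpin is that condition \ref{cond3} ensures the killing rates one is forced to assign to the underlying Lévy components of the MAP are nonnegative. Indeed, $-\bm{\Psi}(0)\one \ge \bm{0}$ rewrites, row by row, as $\mathtt{k}_i \ge \sum_{j\ne i} q_{i,j} = -q_{i,i}$, so defining $\dag_i \coloneqq \mathtt{k}_i + q_{i,i} \ge 0$ gives a valid vector of Lévy-component killing rates that is compatible with the generator $\bm{Q}$ already fixed. Assembling these ingredients $(\bm{Q}, \bm{\dag}, (\psi_i)_i, (F_{i,j})_{i\ne j})$ produces, via the standard construction of a MAP from its characteristics, a process $(\xi,J)$ on $\R \times [n]$ whose matrix exponent coincides with the decomposition displayed in Section~\ref{sec: map intro}. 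A direct term-by-term comparison with $\bm{\Psi}$ shows equality on diagonal and off-diagonal entries, finishing the proof.

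I do not anticipate any real obstacle: the only nontrivial point is the bookkeeping of what part of $\bm{\Psi}_{i,i}(0)$ comes from phase changes out of $i$ and what part is the true killing of the Lévy component, and this is resolved exactly by \ref{cond3}.
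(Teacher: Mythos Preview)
Your proof is correct and follows essentially the same route as the paper's: both dismiss necessity as immediate from the canonical decomposition, and for sufficiency both extract $q_{i,j}=\rho_{i,j}(\R)$ and $F_{i,j}=\rho_{i,j}/q_{i,j}$ from (ii), complete $\bm{Q}$ to a generator, and use (iii) to check that the residual killing rate $\dag_i = -\bm{\Psi}_{i,i}(0) + q_{i,i}$ is nonnegative. The only cosmetic difference is that the paper absorbs $\dag_i$ into the Lévy exponent $\psi_i$ rather than carrying it separately as you do.
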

\begin{proof}
  Necessity is obvious by definition of a MAP exponent, so assume that \ref{cond1}-\ref{cond3} hold. Let $(a_i,\sigma^2_i,\Pi_i)$ and $\tilde{\dagger}_i$ be the Lévy triplet and killing rate, resp., associated to $\bm{\Psi}_{i,i}$. Let $q_{i,j} = \rho_{i,j}(\R)$. Then, if $q_{i,j} > 0$, $F_{i,j} = \rho_{i,j}/q_{i,j}$ is a probability measure. For $q_{i,j} = 0$ let $F_{i,j} = \delta_0$. Moreover, for
  \[q_{i,i} \coloneqq - \sum_{j \neq i} q_{i,j},\]
  we have 
  \[\dagger_i \coloneqq \tilde{\dagger}_i + q_{i,i} = -\sum_{j=1}^n \bm{\Psi}_{i,j}(0) \geq 0,\]
  by assumption and $\bm{Q} = (q_{i,j})_{i,j = 1,\ldots, n}$ is a generator matrix. Thus, if we let $\bm{G}(\theta) = (\{\mathscr{F} F_{i,j}\}(\theta))_{i,j = 1,\ldots n}$ and $\psi_i$ be the  Lévy--Khintchine exponent corresponding to the triplet $(a_i,\sigma^2_i,\Pi_i)$ and killing rate $\dagger_i$, then 
  \[\bm{\Psi}(\theta) = \mathrm{diag}\big((\psi_i(\theta))_{i \in [n]} \big) + \bm{Q} \odot \bm{G}(\theta), \quad \theta \in \R,\]
  is a characteristic MAP exponent.
\end{proof}

\begin{lemma} \label{lem: dist der}
  Let $\mu$ be a signed measure on $(\R, \mathcal{B}(\R)).$ Then, $\mu$ induces a tempered distribution with $\mu^\prime = \nu$ for some finite signed measure $\nu$ if, and only if, $\mu$ is absolutely continuous with density $\underbars{\nu} +c$, where $c \in \R$ and $\underbars{\nu}(x) = \nu((-\infty,x])$, $x \in \R$.
\end{lemma}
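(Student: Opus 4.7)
The plan is to prove the two directions separately, with the ``if'' direction being a direct computation and the ``only if'' direction relying on the classical fact that a tempered distribution with vanishing distributional derivative is (induced by) a constant.

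For the ``if'' direction, I would start from $\mu$ being the signed measure with density $\underbars{\nu}+c$. Since $\underbars{\nu}$ is bounded (as $\nu$ is finite) and $c$ is constant, this density is bounded, so $\mu$ indeed induces a tempered distribution. To check $\mu' = \nu$, test against an arbitrary $\varphi \in \mathcal{S}(\R)$ and compute $-\langle \mu,\varphi'\rangle = -\int(\underbars{\nu}(x)+c)\varphi'(x)\, \diff x$. The constant term drops out because $\varphi$ vanishes at infinity, and for the $\underbars{\nu}$ term, Fubini (legitimate since $\nu$ is finite and $\varphi'$ is integrable) converts $-\int \int_{(-\infty,x]} \nu(\diff y)\,\varphi'(x)\,\diff x$ into $\int \varphi(y)\,\nu(\diff y) = \langle \nu,\varphi\rangle$.

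For the ``only if'' direction, let $\tilde{\mu}$ denote the signed measure with density $\underbars{\nu}$. By the computation just performed, $\tilde{\mu}$ induces a tempered distribution satisfying $\tilde{\mu}' = \nu$. Hence the tempered distribution $\mu - \tilde{\mu}$ has vanishing distributional derivative. By the standard structure theorem for tempered distributions on $\R$, any distribution with zero derivative equals a constant distribution, so there exists $c \in \R$ such that $\mu - \tilde{\mu} = c\cdot\mathrm{Leb}$ as tempered distributions.

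The main (mild) obstacle is upgrading this distributional equality to an equality of signed measures, which then yields that $\mu$ is absolutely continuous with density $\underbars{\nu}+c$. This follows from the fact that the pairing with $\mathcal{S}(\R)$ separates signed measures of polynomial growth: if $\rho_1, \rho_2$ are two such signed measures with $\int \varphi \,\diff \rho_1 = \int \varphi\,\diff \rho_2$ for all $\varphi \in \mathcal{S}(\R)$, then taking $\varphi$ to approximate $\one_B$ for an arbitrary bounded Borel set $B$ and using dominated convergence gives $\rho_1(B)=\rho_2(B)$. Applying this with $\rho_1 = \mu$ and $\rho_2$ the signed measure with density $\underbars{\nu}+c$ completes the argument.
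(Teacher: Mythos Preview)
Your proof is correct and follows essentially the same approach as the paper: verify $\mu'=\nu$ for the ``if'' direction by a Fubini/integration-by-parts computation, and for the ``only if'' direction use that $(\mu-\tilde\mu)'=0$ forces a constant. Your extra paragraph upgrading the distributional equality $\mu=\tilde\mu+c\cdot\mathrm{Leb}$ to an equality of signed measures is a detail the paper's proof passes over silently, so if anything your argument is slightly more complete.
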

\begin{proof}
  Suppose first that $\mu = \underbars{\nu} + c$. Since $\nu$ is finite, we have $\lvert \nu((-\infty,x]) \rvert \leq \lvert \nu \rvert(\R) < \infty$ and hence $\mu$ induces a tempered distribution. Then, for any $\phi \in \mathcal{S}(\R)$ we have with an integration by parts
  \[\langle \mu^\prime, \phi \rangle = - \int \phi^\prime(x) \, \mu(\diff{x}) = - \int \phi^\prime(x)  \nu((-\infty, x]) \diff{x} = \int \phi(x) \, \nu(\diff{x}),\]
  which shows $\mu^\prime = \nu$. Conversely, if $\mu^\prime = \nu$, it follows that $\mu = \underbars{\nu} + c $ in the sense of distributions for some constant $c \in \R$ since $\underbars{\nu}^\prime = \nu.$ 
\end{proof}

\begin{lemma}{\cite[Lemme 1.5.5]{vigondiss}} \label{lem:asymp real}
  Suppose that $X$ has absolutely continuous Lévy measure. 
  \begin{enumerate}[label=(\roman*), ref =(\roman*)]
    \item If $X$ is not compound Poisson, then $\lim_{\lvert \theta \rvert \to \infty} \Re \psi(\theta) = - \infty.$ 
    \item If $X$ is compound Poisson, then $\lim_{\lvert \theta \rvert \to \infty} \psi(\theta) = - \Pi(\R) - \dagger.$ 
  \end{enumerate}
\end{lemma}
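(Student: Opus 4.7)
The plan is to work directly with the real part of the Lévy--Khintchine exponent,
\[
\Re\psi(\theta) = -\dagger - \frac{\sigma^2}{2}\theta^2 + \int_\R (\cos(\theta y) - 1)\,\Pi(\diff{y}),
\]
where the drift and compensator terms drop out because they are purely imaginary, and to separate the analysis according to whether the Gaussian part vanishes and whether $\Pi$ is finite. The two building blocks are the inequality $\cos(\theta y) - 1 = -2\sin^2(\theta y /2) \le 0$ (so the integral contribution is always $\le 0$) and the Riemann--Lebesgue lemma applied to absolutely continuous finite measures.

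For part (ii), the compound Poisson assumption means $\sigma=0$, $\Pi$ is finite and the drift coincides with $\int_{[-1,1]} y\,\Pi(\diff{y})$, so that
\[
\psi(\theta) = -\dagger - \Pi(\R) + \int_\R \mathrm{e}^{\iu\theta y}\,\Pi(\diff{y}).
\]
Since $\Pi$ has an $L^1$ density, the Riemann--Lebesgue lemma forces the last integral to vanish as $|\theta|\to\infty$, which gives the stated limit.

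For part (i), if $\sigma>0$ the quadratic $-\sigma^2\theta^2/2$ dominates a non-positive integral term and forces $\Re\psi(\theta)\to-\infty$ immediately. Otherwise $\sigma=0$, and the assumption that $X$ is not compound Poisson yields $\Pi(\R)=\infty$; combined with $\int(1\wedge y^2)\,\Pi(\diff{y})<\infty$, which bounds $\Pi(\{|y|>1\})$, this forces $\Pi(\{|y|\le 1\})=\infty$. Given any $A>0$, continuity of measure lets us fix $\varepsilon\in(0,1)$ with $\Pi(\{\varepsilon\le|y|\le 1\})>A$. The restricted measure $\Pi_\varepsilon := \Pi|_{\{\varepsilon\le |y|\le 1\}}$ is finite and absolutely continuous, so Riemann--Lebesgue gives $\int\cos(\theta y)\,\Pi_\varepsilon(\diff{y})\to 0$, whence
\[
\int_\R (1-\cos(\theta y))\,\Pi(\diff{y}) \;\ge\; \int (1-\cos(\theta y))\,\Pi_\varepsilon(\diff{y}) \;\longrightarrow\; \Pi(\{\varepsilon\le|y|\le 1\}) \;>\; A.
\]
Since $A$ was arbitrary, the integral over $\R$ diverges to $+\infty$, so $\Re\psi(\theta)\to-\infty$.

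There is no real obstacle here; the argument is essentially an exercise in isolating, inside the small-jump part of $\Pi$, a piece that is simultaneously finite (so that Riemann--Lebesgue applies) and of arbitrarily large mass (so that the cosine oscillation does not cancel the divergence). The only point requiring mild care is the bookkeeping of which part of $\Pi$ carries infinite mass once $\sigma=0$ and $X$ is not compound Poisson, which is handled by the standard integrability of a Lévy measure.
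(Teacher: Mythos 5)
The paper does not give its own proof of this lemma; it cites it directly from Vigon's thesis, so there is no internal argument to compare against. Your proof is the natural and essentially standard one: the drift and compensator drop out of the real part, the Gaussian part dominates if present, and otherwise one truncates $\Pi$ to a finite absolutely continuous piece of arbitrarily large mass and applies the Riemann--Lebesgue lemma. These are exactly the ingredients the paper alludes to when it says Lemma~\ref{lem:explosion} ``follows by the same idea.'' Part~(ii) is handled correctly.

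There is one step that deserves scrutiny: ``Otherwise $\sigma=0$, and the assumption that $X$ is not compound Poisson yields $\Pi(\R)=\infty$.'' This inference is valid only if ``compound Poisson'' is read as ``finite activity and no Gaussian part,'' i.e.\ $\sigma^2=0$ and $\Pi(\R)<\infty$ regardless of drift. Under the narrower and more common convention (used, e.g., in Kyprianou's book, which the paper cites repeatedly), a compound Poisson process has no drift, so a process with $\sigma=0$, $\Pi(\R)<\infty$ and a nonzero drift is \emph{not} compound Poisson and yet has finite Lévy mass. For such a process the conclusion of part~(i) fails: if $X_t = t + Y_t$ with $Y$ compound Poisson whose jump distribution has an $L^1$ density, then
\[
\Re\psi(\theta) = \int_\R (\cos(\theta y)-1)\,\Pi(\diff{y}) \longrightarrow -\Pi(\R) > -\infty.
\]
So the issue lies in the statement as much as in your argument, but you should recognise that your proof silently uses the looser convention; as written, the inference from ``not compound Poisson'' to ``$\Pi(\R)=\infty$'' is a genuine gap under the standard terminology, and in fact the only reading under which part~(i) is true. (Note also that part~(ii), as you prove it, implicitly requires the \emph{absence} of a drift, since otherwise $\psi(\theta)$ itself does not converge; this is consistent with the narrow convention, which is in tension with what part~(i) requires.)
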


The same idea used to obtain Lemma \ref{lem:asymp real} allows us to prove the following result.
\begin{lemma}\label{lem:explosion}
  Let $X$ be a Lévy subordinator that is not compound Poisson with Laplace exponent $\phi$. Then
  \[\lim_{\lvert z \rvert \to \infty, \Re z \geq 0} \lvert \phi(z) \rvert = \infty \iff \text{for some } q> 0, \lim_{\lvert z \rvert \to \infty, \Re z \geq 0} \mathscr{L}U_q^{\mathrm{sing}}(z) = 0,\]
  where $U^{\mathrm{sing}}_q$ denotes the continuous singular part of the resolvent measure $U_q$ of $X$.
\end{lemma}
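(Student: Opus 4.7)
The plan combines three ingredients: the resolvent identity $\mathscr{L}U_q(z) = (q+\phi(z))^{-1}$, valid on $\CC_+$; a Riemann--Lebesgue-type decay for finite absolutely continuous measures on $\R_+$ across the closed right half-plane; and the observation that $U_q$ is atomless when $X$ is not compound Poisson.

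For the atomlessness, I would argue that because $X$ is not compound Poisson it has either strictly positive drift or infinite Lévy measure, so its sample paths are strictly increasing. Hence for every $\omega$ and every $x\ge 0$ the set $\{t\geq 0: X_t(\omega)=x\}$ contains at most one element; Fubini then gives $U_q(\{x\}) = \int_0^\infty \mathrm{e}^{-qt}\PP(X_t=x)\,\diff{t} = 0$. Consequently the Lebesgue decomposition of $U_q$ reduces to $U_q = U_q^{\mathrm{ac}} + U_q^{\mathrm{sing}}$, and correspondingly $\mathscr{L}U_q = \mathscr{L}U_q^{\mathrm{ac}} + \mathscr{L}U_q^{\mathrm{sing}}$.

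For the Riemann--Lebesgue step, writing $U_q^{\mathrm{ac}}(\diff{x}) = g(x)\diff{x}$ with $g \in L^1(\R_+)$ (recall $U_q$ is finite), one approximates $g$ in the $L^1$ norm by step functions $h = \sum_k c_k \one_{[a_k,b_k)}$, for which the explicit Laplace transform $\mathscr{L}h(z) = \sum_k c_k(\mathrm{e}^{-z a_k}-\mathrm{e}^{-z b_k})/z$ is bounded in modulus by $C_h/|z|$ on $\{z\in \CC_+: |z|\ge 1\}$. Combined with the uniform bound $|\mathscr{L}(g-h)(z)|\le \|g-h\|_{L^1}$, a standard $\varepsilon/2$-argument delivers $\mathscr{L}U_q^{\mathrm{ac}}(z)\to 0$ as $|z|\to\infty$ in $\CC_+$.

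To conclude, the bound $\Re(q+\phi(z))\geq q>0$ on $\CC_+$ (immediate from the Lévy--Khintchine representation of $\phi$) ensures the resolvent identity makes sense and that $|\phi(z)|\to\infty$ is equivalent to $\mathscr{L}U_q(z)\to 0$. Combined with the decomposition above and the Riemann--Lebesgue step, this yields
\[
|\phi(z)|\to\infty \iff \mathscr{L}U_q(z)\to 0 \iff \mathscr{L}U_q^{\mathrm{sing}}(z)\to 0,
\]
from which the independence of this limit from the particular choice of $q>0$ falls out automatically. I expect the main (small) obstacle to be verifying the two-dimensional Riemann--Lebesgue decay uniformly in $\CC_+$, since this is slightly stronger than the usual formulation on the imaginary axis only; the remaining ingredients are essentially classical.
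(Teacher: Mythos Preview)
Your proposal is correct and follows essentially the same route as the paper: establish that $U_q$ is atomless (the paper cites \cite[Proposition~I.15]{bertoin1996}, you argue directly from strict increase of paths), decompose $U_q$ into its absolutely continuous and singular continuous parts, apply a Riemann--Lebesgue argument on $\CC_+$ to kill the absolutely continuous piece, and read off the equivalence from the resolvent identity $\mathscr{L}U_q(z)=(q+\phi(z))^{-1}$. The only difference is that you supply self-contained arguments for the atomlessness and the half-plane Riemann--Lebesgue decay, whereas the paper treats both as known.
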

\begin{proof}
  Since $X$ is not compound Poisson, $U_q$ is a continuous measure \cite[Proposition I.15]{bertoin1996} and hence $U_q = U_q^{\mathrm{cont}} + U_q^{\mathrm{sing}}$, where $U_q^{\mathrm{cont}}(\diff{x}) = u_q(x) \diff{x}$ for some $L^1$-density $u_q$. Since $\Re \phi(z) \geq 0$ it follows that
  \[\mathscr{L}U^{\mathrm{cont}}_q(z) + \mathscr{L}U^{\mathrm{sing}}_q(z) = \mathscr{L}U_q(z) = \int_0^\infty \mathrm{e}^{(-q - \phi(z)) t} \diff{t} = \frac{1}{q+ \phi(z)}, \quad z \in \CC_+.\]
  By the Riemann--Lebesgue lemma for Laplace transforms of finite, absolutely continuous measures supported on $(0,\infty)$, we have $\lim_{\lvert z \rvert \to \infty, \Re z \geq 0} \mathscr{L}U_q^{\mathrm{cont}}(z) = 0$, which implies that
  \[\lim_{\lvert z \rvert \to \infty, \Re z \geq 0} \mathscr{L}U_q^{\mathrm{sing}}(z) = 0\]
  if, and only if,
  \[\lim_{\lvert z \rvert \to \infty, \Re z \geq 0} \lvert \phi(z) \rvert = \infty.\]
\end{proof}

A natural criterion for divergence of $\lvert \phi \rvert$ at $\infty$ is therefore absolute continuity of $U_q$, which is guaranteed whenever $X$ has strictly positive drift or $\PP_{X_t} \ll \mathrm{Leb}$ for all $t > 0$. A convenient sufficient criterion for the latter to hold is $\Pi \ll \mathrm{Leb}$, see \cite[Theorem 27.7]{sato2013}, as already indicated by Lemma \ref{lem:asymp real}.

\begin{lemma}\label{lem:triangular}
  Let $\bm{\Phi}$ be the Laplace exponent of a MAP subordinator $(\xi,J)$ such that $\bm{\Phi}_{i,i}(\lambda) > 0$ for all $\lambda > 0$ and $i \in [n]$ and let $\bm C \in \R^{n \times n}$ be a matrix such that $\bm{C}\bm{\Phi}$ is again a Laplace exponent of a MAP subordinator. Then, $\bm{C}$ is diagonal with positive diagonal entries.
\end{lemma}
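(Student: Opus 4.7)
My plan is to reduce the problem to a uniqueness statement for Laplace transforms of positive measures, obtained by differentiating each entry of $\bm{\Phi}$ in $\lambda$. Writing the canonical form
\[
  \bm{\Phi}_{j,j}(\lambda) = d_j\lambda + \int_0^\infty (1-\mathrm{e}^{-\lambda x})\Pi_j(\mathrm{d}x) + (\dagger_j - q_{j,j}),
  \quad
  \bm{\Phi}_{k,j}(\lambda) = -q_{k,j}\mathscr{L}F_{k,j}(\lambda) \text{ for } k\ne j,
\]
the off-diagonal entries of $\bm{\Phi}$ are uniformly bounded in $\lambda\ge 0$, whereas $\bm{\Phi}_{j,j}(\lambda)$ is unbounded precisely when $\phi_j$ has positive drift or an infinite Lévy measure. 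Since $\bm{C}\bm{\Phi}$ is assumed to be a MAP subordinator Laplace exponent, its off-diagonal entry $(\bm{C}\bm{\Phi})_{i,j}$ has the form $-\tilde q_{i,j}\mathscr{L}\tilde F_{i,j}(\lambda)$ and hence is itself bounded. The decomposition $(\bm{C}\bm{\Phi})_{i,j}(\lambda) = c_{i,j}\bm{\Phi}_{j,j}(\lambda) + \sum_{k\ne j}c_{i,k}\bm{\Phi}_{k,j}(\lambda)$ then immediately forces $c_{i,j} = 0$ whenever $\phi_j$ is unbounded, since any nonzero coefficient $c_{i,j}$ would produce an unbounded left-hand side.

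Next, I would handle the case $\phi_j$ bounded (i.e., compound Poisson without drift) by differentiating in $\lambda$. Each $\bm{\Phi}'_{k,j}(\lambda)$ is the Laplace transform of an explicit positive measure: the diagonal entry carries an atom $d_j\delta_0$ plus $x\Pi_j(\mathrm{d}x)$, while off-diagonal entries contribute $q_{k,j}xF_{k,j}(\mathrm{d}x)$, so
\[
  (\bm{C}\bm{\Phi})'_{i,j}(\lambda) = \int_0^\infty \mathrm{e}^{-\lambda x}\Big(c_{i,j}d_j\,\delta_0(\mathrm{d}x) + x\bigl[c_{i,j}\Pi_j + \textstyle\sum_{k\ne j}c_{i,k}q_{k,j}F_{k,j}\bigr](\mathrm{d}x)\Big),
\]
whereas the target representation is $\tilde q_{i,j}\int x\mathrm{e}^{-\lambda x}\tilde F_{i,j}(\mathrm{d}x)$. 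Uniqueness of Laplace transforms of signed measures gives $c_{i,j}d_j = 0$ (from the atom at $0$) and the identity $c_{i,j}\Pi_j + \sum_{k\ne j}c_{i,k}q_{k,j}F_{k,j} = \tilde q_{i,j}\tilde F_{i,j}$ as measures on $(0,\infty)$. Since the right-hand side is a finite positive measure, $c_{i,j}=0$ is forced whenever $\Pi_j$ is infinite; the same derivative trick applied to the diagonal yields $\tilde d_i = c_{i,i}d_i\ge 0$, and combined with the hypothesis $\bm{\Phi}_{i,i}(\lambda)>0$ it will yield the strict positivity $c_{i,i}>0$.

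The hard part will be the residual subcase where $\phi_j$ is a bounded Laplace exponent with finite $\Pi_j$ and zero drift. In that subcase $\bm{\Phi}_{j,j}$ is bounded, the boundedness argument breaks down, and the measure equation
$c_{i,j}\Pi_j + \sum_{k\ne j}c_{i,k}q_{k,j}F_{k,j} = \tilde q_{i,j}\tilde F_{i,j}$
admits, a priori, non-trivial cancellations among finite measures. To close this case I would couple the column-$j$ identity with the identities obtained from other columns (in particular from columns $j'$ in which $\phi_{j'}$ is unbounded, where off-diagonal coefficients are already known to vanish), use the hypothesis $\bm{\Phi}_{i,i}(\lambda)>0$ to rule out pathological cancellations, and exploit the sign structure forced by $(\bm{C}\bm{\Phi})_{i,j}(\lambda)\le 0$ over all $\lambda\ge 0$ together with monotonicity of $\bm{\Phi}_{k,j}$ as $\lambda\to\infty$ to extract the final $c_{i,j}=0$.
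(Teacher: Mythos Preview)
Your bounded-$\phi_j$ case is left genuinely open. The derivative step only yields the measure identity $c_{i,j}\Pi_j + \sum_{k\ne j}c_{i,k}q_{k,j}F_{k,j} = \tilde q_{i,j}\tilde F_{i,j}$ on $(0,\infty)$, and when all the measures involved are finite this does not force $c_{i,j}=0$. Your fallback plan cannot rescue it: the $(i,j)$-entry identity involves only column $j$ of $\bm{\Phi}$, so knowing that $c_{i,j'}=0$ for unbounded columns $j'$ merely deletes already-bounded terms from the sum and gives no new leverage; and the single scalar inequality $(\bm{C}\bm{\Phi})_{i,j}(\lambda)\le 0$ is far too weak to pin down one coefficient among several.

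The paper takes a single, much shorter route that subsumes your unbounded case: isolate $\bm{\Phi}_{j,j}(\lambda)$ from the $(i,j)$-entry identity and send $\lambda\to\infty$. The left side is nondecreasing (a subordinator Laplace exponent plus the nonnegative constant $-q_{j,j}$) and strictly positive by hypothesis, hence has limit in $(0,\infty]$; the right side is a finite linear combination of Laplace transforms of the finite measures $q_{k,j}F_{k,j}$ and $\tilde q_{i,j}\tilde F_{i,j}$, which the paper asserts vanishes at infinity. That single limit delivers the contradiction with no case split and no differentiation. (One caveat worth recording: the vanishing step tacitly uses that the transition measures put no mass at $0$. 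If every $F_{k,j}=\delta_0$ and every $\phi_j$ is trivial, $\bm{\Phi}$ is constant in $\lambda$ and one can exhibit a shear matrix $\bm{C}$ with $\bm{C}\bm{\Phi}$ still a valid MAP subordinator exponent, so the hypotheses as literally stated do not quite suffice; the paper's applications of the lemma avoid this degenerate corner, and your derivative route cannot overcome it either.)
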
 
\begin{proof} 
  Suppose that there exist $i \neq j$ such that $c_{i,j} \neq 0$. Letting $\tilde{\bm{\Phi}} = \bm{C\Phi}$ it follows that 
  \[-q_{j,j}^\Phi + \phi^\Phi_j(\lambda) = \frac{1}{c_{i,j}}\Big(-q^{\tilde{\Phi}}_{i,j}\mathscr{L}\big\{\Delta^{\tilde{\Phi}}_{i,j}\big\}(\lambda) + \sum_{k \neq j}c_{i,k}q^{\Phi}_{k,j} \mathscr{L}\big\{\Delta^\Phi_{k,j} \big\}(\lambda) \Big), \quad \lambda \geq 0.\]
  Taking $\lambda \to \infty$, if $\xi^{(j)}$ is not compound Poisson, the LHS converges to $\infty$, otherwise it converges to $-q_{j,j}^\Phi + \dagger^\Phi_j + \Pi_j^\Phi(\R_+) > 0$, where the strict inequality follows from our assumption on $\bm{\Phi}$. On the other hand, for any finite measure $\mu$ concentrated on $\R_+$, $\mathscr{L}\{\mu\}(\lambda) \to 0$ as $\lambda \to \infty$ by monotone convergence. Thus, the RHS converges to $0$ as $\lambda \to \infty$, which yields a contradiction. Thus $\bm{C}$ is diagonal. Since the diagonal entries of $\tilde{\bm{\Phi}}(\lambda)$ must be nonnegative and the diagonal entries of $\bm{\Phi}(\lambda)$ are strictly positive by assumption, it follows that the diagonal of $\bm{C}$ has nonnegative entries.
\end{proof}
\begin{remark}
  The restriction on $\bm{\Phi}$ is not a serious one since it is satisfied for any MAP subordinator with non-trivial Lévy components or, e.g., irreducible modulator $J$. It also holds whenever $\bm{\Phi}(\lambda)$ is invertible for some $\lambda > 0$.
\end{remark}

\section{General Wiener--Hopf factorisation}
\label{s:whf}

This section is dedicated to a proof of the Wiener--Hopf factorisation
of MAPs. It extends the results of \citet{dereich2017}, which dealt
with situations where every component of the MAP was killed at the same
(possibly zero) rate, and \citet{ivanovs17}, where every component
of the MAP was killed at strictly positive rate.
In what follows, we adopt Ivanovs' notation, whereby,
when $T$ is a
(possibly random) time, we write 
$\bm{T} = \Bigl( \int_0^T \one_{\{J_t = i\}} \, \diff{t} : i \in [n]\Bigr)$,
and for a functional $F_T$, declare 
$\E[F_T; J_T]$ to be the matrix whose $(i,j)$-th
entry is $\E^{0,i}[F_T; J_T = j]$.

Write $\PP_*$ for the probabilities associated with an unkilled version
of $(\xi,J)$; that is, a version whose exponent is 
$\theta \mapsto \bm{\Psi}(\theta) + \bm{\Delta}_{\bm{\dag}}$.
We define $\bm{\kappa}$ as the matrix exponent of the ascending ladder
process
under $\PP_*$, in the sense that 
$\E_*[\mathrm{e}^{-\langle \bm{\gamma},\bm{L}^{-1}_t\rangle - \alpha H^+_t}; J^+]
= \mathrm{e}^{-\bm{\kappa}(\bm{\gamma},\alpha)}$;
note the different convention in comparison with $\bm{\Psi}$.
We can also define the descending ladder process and its
matrix exponent $\hat{\bm{\kappa}}$, by taking the dual
of $(\xi,J)$. However, as alluded to in section~\ref{sec: map intro},
for this purpose a slightly different choice of local time is needed,
partly in case of compound Poisson components
and partly due to the effects of state changes;
this is done carefully by \citet{ivanovs17}.

\begin{theorem}\label{t:whf-gen}
  There exists some vector $\bm{c}$ with positive entries such that
  \[
    -(\bm{\Psi}(\theta) - \bm{\Delta}_{\bm{\beta}})
    =
    \bm{\Delta}_{\bm{\pi}}^{-1}
    \hat{\bm{\kappa}}(\bm{\dag} + \bm{\beta}, \iu\theta)^\top
    \bm{\Delta}_{\bm{\pi}}
    \bm{\Delta}_{\bm{c}}
    \bm{\kappa}(\bm{\dag}+\bm{\beta},-\iu\theta).
  \]
\end{theorem}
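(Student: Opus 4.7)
The plan is to follow a classical resolvent-based derivation of the Wiener--Hopf factorisation via a path decomposition at the overall supremum, adapting the argument of \cite{dereich2017, ivanovs17} to general phase-dependent killing. For $\bm{\beta} \geq \bm{0}$ such that $\bm{\dag} + \bm{\beta}$ has strictly positive components, the matrix $\bm{\Psi}(\theta) - \bm{\Delta}_{\bm{\beta}}$ is the exponent of $(\xi, J)$ under $\PP_*$ subjected to additional phase-dependent exponential killing at total rate $\bm{\dag} + \bm{\beta}$. Its killing time $\zeta$ is almost surely finite, and the matrix resolvent
\[
  \bm{U}(\theta) \coloneqq \int_0^\infty \E_*\bigl[\mathrm{e}^{\iu\theta\xi_t - \langle \bm{\dag} + \bm{\beta},\, \bm{t}\rangle};\, J_t\bigr]\, \diff{t} = -\bigl(\bm{\Psi}(\theta) - \bm{\Delta}_{\bm{\beta}}\bigr)^{-1}
\]
is finite and invertible. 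The goal is to factorise $\bm{U}(\theta)$ through the ladder-process exponents $\bm{\kappa}$ and $\hat{\bm{\kappa}}$, and then to invert.

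I would decompose each realisation at the time $G \le \zeta$ at which $\xi$ first attains its overall supremum $\bar\xi_\zeta$. Parameterised by local time at the supremum, the pre-supremum segment is governed by the ascending ladder process $(\bm{L}^{-1}, H^+, J^+)$, effectively killed when the accumulated killing cost $\langle \bm{\dag} + \bm{\beta},\, \bm{L}^{-1}_t\rangle$ exceeds an independent unit exponential; by the definition of $\bm{\kappa}$, the contribution of this segment to $\bm{U}(\theta)$ factors through $\bm{\kappa}(\bm{\dag} + \bm{\beta}, -\iu\theta)^{-1}$. The strong Markov property at $G$ then identifies the post-supremum segment, started in phase $J^+_{L_\zeta-}$, as a fresh copy of the killed MAP conditioned to remain strictly below its starting level. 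Invoking the $\bm{\pi}$-duality relation $\hat{\bm{\Psi}}(\theta) = \bm{\Delta}_{\bm{\pi}}^{-1}\bm{\Psi}(-\theta)^\top\bm{\Delta}_{\bm{\pi}}$ and time-reversal, this post-supremum piece has the law of the dual process $(\hat\xi, \hat J)$ reflected at its ascending maximum, so its contribution factors through $\bm{\Delta}_{\bm{\pi}}^{-1}\bigl(\hat{\bm{\kappa}}(\bm{\dag} + \bm{\beta}, \iu\theta)^{-1}\bigr)^\top\bm{\Delta}_{\bm{\pi}}$. Inserting a diagonal matrix $\bm{\Delta}_{\bm{c}}^{-1}$ to encode the residual freedom in the normalisation of the local times in each phase yields
\[
  \bm{U}(\theta) = \bm{\kappa}(\bm{\dag} + \bm{\beta},-\iu\theta)^{-1}\,\bm{\Delta}_{\bm{c}}^{-1}\,\bm{\Delta}_{\bm{\pi}}^{-1}\bigl(\hat{\bm{\kappa}}(\bm{\dag} + \bm{\beta},\iu\theta)^{-1}\bigr)^\top\bm{\Delta}_{\bm{\pi}},
\]
and inverting both sides produces the identity of the theorem; strict positivity of $\bm{c}$ and its independence of $\theta$ and $\bm{\beta}$ follow by probing the factorisation as $\theta \to \infty$ and comparing the leading-order behaviour of both sides.

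The main obstacle is making the duality step rigorous, particularly when some components of $\xi$ are compound Poisson (so $0$ is irregular for $(0,\infty)$) or when phase transitions coincide with new extrema; these edge cases necessitate the exponential jumps of the ladder local times described in Section~\ref{sec: map intro} and the careful conventions of \cite[Section~5.1]{ivanovs17}. The framework of \cite{ivanovs17} handles all these subtleties under the assumption that killing is strictly positive in every phase, so the above argument goes through whenever $\bm{\dag} + \bm{\beta} > \bm{0}$. The normalised form in Theorem~\ref{c:whf-norm}, which corresponds to $\bm{\beta} = \bm{0}$ and makes no strict-positivity assumption on $\bm{\dag}$, can then be recovered by analytic continuation of both sides in $\bm{\beta}$ and cancellation of the now possibly singular diagonal factors.
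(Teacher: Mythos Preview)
Your high-level strategy matches the paper's: both decompose the killed resolvent at the supremum, invoke the excursion-theoretic splitting and $\bm{\pi}$-duality that underlie \cite[Corollary~5.1]{ivanovs17}, and then invert. The paper indeed relies on Ivanovs' framework for the case of strictly positive killing, exactly as you propose.

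The genuine gap is in your treatment of the constant $\bm{c}$. You claim that its positivity and $\bm{\beta}$-independence follow from ``probing the factorisation as $\theta \to \infty$''; this is not what the paper does, and it is not clear it would work cleanly, since the asymptotic growth of $\bm{\kappa}$ and $\hat{\bm{\kappa}}$ depends on whether each ladder component has a drift, is compound Poisson, etc., and the off-diagonal terms of the matrix product do not obviously decouple. More seriously, your final step of ``analytic continuation in $\bm{\beta}$ and cancellation of the now possibly singular diagonal factors'' does not address the real difficulty: the diagonal matrix produced by Ivanovs' formula \emph{a priori} depends on the killing perturbation, and one must show it converges to a strictly positive limit as the perturbation vanishes.

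The paper handles this differently. It introduces an $\epsilon$-perturbation of the killing rates and reads off from Ivanovs' explicit identity that the diagonal constant $b_i^\epsilon$ involves only $\bm{\kappa}(\bm{\dag}^\epsilon,0)$ and $\hat{\bm{\kappa}}(\bm{\dag}^\epsilon,0)$, hence is manifestly independent of $\bm{\beta}$. It then evaluates the \emph{inverse} factorisation at $\theta = 0$, looks at the $(i,i)$ entry, and identifies the left-hand side as a resolvent element of the irreducible modulator (positive, with a finite limit as $\epsilon \to 0$) while the right-hand side involves the potential measures $U_{i,k}$ of $\bm{L}^{-1}$ under $\PP_*$. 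A careful argument---including the subtle point that $U_{i,i} \ne 0$ even when state $i$ is never visited by $J^+$ after time zero, because of the exponential-jump convention in the local time---then forces $b_i^\epsilon \to b_i \in (0,\infty)$. This $\theta = 0$ potential-measure argument is the key technical contribution beyond \cite{ivanovs17}, and your proposal does not supply an equivalent.
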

\begin{proof}
  Let $\bm{\beta}$ be a vector with non-negative entries.
  We will need to initially consider MAPs with positive killing rate in every
  state. For this reason, let us start by
  replacing the rate $\dag_i$ with $\dag_i^\epsilon\coloneqq \dag_i+\epsilon$;
  that is, we consider
  the MAP with exponent $\theta \mapsto \bm{\Psi}(\theta) - \bm{\Delta}_{\epsilon}$.
  Following \cite{ivanovs17}, let
  \begin{align*}
    \overbar{\xi}_t &= \sup\{ \xi_s : s\le t\}, \\
    \overbar{G}(t) & = \inf\{ s\le t : \xi_s\vee \xi_{s-} = \overbar{\xi}_t\},
    \text{ and} \\
    \overbar{J}_t &= J_{\overbar{G}(t) -} \one_{\{\xi_{\overbar{G}(t) -} = \overbar{\xi}_t\}}
    + J_{\overbar{G}(t)} \one_{\{\xi_{\overbar{G}(t)-} < \overbar{\xi}_t\}}.
  \end{align*}
  When $\overbar{J}_t = j$ for some state $j$ in which $\xi^{(j)}$ is irregular
  for $(0,\infty)$ and regular for $(-\infty,0]$,
  $\overbar{\xi}_t = \xi_{\overbar{G}(t)}$;
  otherwise, $\overbar{\xi}_t = \xi_{\overbar{G}(t)-}$.
  Assume initially that we are in the latter case, and consider the following
  calculation from excursion theory, in which $a_j$ is the drift of $L^{-1}$
  in state $j$, and $n_j$ is the excursion measure of $(\xi,J)$ away
  from its maximum when starting in state $j$:
  \begin{align*}
    \E^{0,i}\bigl[ 
      \mathrm{e}^{\iu\theta \overbar{\xi}_{\zeta-} - \langle \bm{\beta},\overbar{\bm{G}}(\zeta-) \rangle} ;
      \overbar{J}_\zeta = j
    \bigr]
    &= \E_*^{0,i} \biggl[
      \int_0^\infty \Bigl( \sum\nolimits_{k\in [n]} \dag_k^\epsilon \one_{\{J_t = k\}} \Bigr)
      \mathrm{e}^{-\langle \bm{\dag}^\epsilon, \bm{t}\rangle + \iu\theta \xi_{\overbar{G}(t)-}
      - \langle \bm{\beta}, \overbar{\bm{G}}(t)\rangle}
      \one_{\{J_{\overbar{G}(t)-} = j\}}
      \, \diff{t}
    \biggr]
    \\
    &= \E_*^{0,i} \biggl[
      \int_0^\infty \one_{\{\overbar{\xi}_t = \xi_t\}} \dag^\epsilon_j
      \mathrm{e}^{-\langle \bm{\dag}^\epsilon + \bm{\beta}, \bm{t}\rangle + \iu\theta \xi_{t}}
      \one_{\{J_{t} = j\}}
      \, \diff{t}
    \biggr]
    \\
    &\quad {} 
    + \E_*^{0,i} \biggl[
      \sum\nolimits_{g}
      \int_g^d \Bigl( \sum\nolimits_{k} \dag_k^\epsilon \one_{\{J_t = k\}} \Bigr)
      \mathrm{e}^{-\langle \bm{\dag}^\epsilon, \bm{t}\rangle + \iu\theta \xi_{g-}
      - \langle \bm{\beta}, \bm{g}\rangle}
      \one_{\{J_{g-} = j\}}
      \, \diff{t}
    \biggr]
    \\
    &= \E_*^{0,i} \biggl[
      \int_0^\infty a_j \dag^\epsilon_j
      \mathrm{e}^{-\langle \bm{\dag}^\epsilon + \bm{\beta}, \bm{t}\rangle + \iu\theta \xi_{t}}
      \one_{\{J_{t} = j\}}
      \, \diff{L_t}
    \biggr]
    \\
    &\quad {} 
    + \E_*^{0,i} \biggl[
      \int_0^\infty
      \mathrm{e}^{-\langle \bm{\dag}^\epsilon+\bm{\beta},\bm{t}\rangle + \iu\theta \xi_{t-}}
      \one_{\{J_{t-} = j\}}
      \, \diff{L_t}
    \biggr]
    n_j \biggl(
      \int_0^{\overbar{\zeta}} 
      \Bigl( \sum\nolimits_{k} \dag_k^\epsilon \one_{\{J_t = k\}} \Bigr)
      \mathrm{e}^{-\langle \bm{\dag}^\epsilon, \bm{t}\rangle}
      \, \diff{t}
    \biggr)
    \\
    &=
    \E_*^{0,i}
    \biggl[
      \int_0^\infty
      \mathrm{e}^{-\langle \bm{\dag}^\epsilon+\bm{\beta}, \bm{L}^{-1}_t\rangle + \iu\theta H^+_t}
      \one_{\{J^+_t = j\}}
      \,\diff{t}
    \biggr]
    \Bigl( a_j\dag^\epsilon_j
      + n_j\bigl( 1-\mathrm{e}^{-\langle \bm{\dag}^\epsilon,\overbar{\bm{\zeta}}\rangle}\bigr)
    \Bigr)
    \\
    &= \bm{\kappa}(\bm{\dag}^\epsilon+\bm{\beta}, -\iu\theta)^{-1}_{i,j}
    (\bm{\kappa}(\bm{\dag}^\epsilon,0)\one)_{j}
  \end{align*}
  where in the third line the sum is over excursion intervals $(g,d)$,
  and in the fourth line $\overbar{\zeta}$ is the lifetime of the excursion.
  In other words,
  \begin{equation} \label{e:max}
    \E\bigl[ \mathrm{e}^{\iu\theta \overbar{\xi}_{\zeta-}
    -\langle \bm{\beta},\overbar{G}(\zeta-)\rangle} ; \overbar{J}_\zeta\bigr]
    = \bm{\kappa}(\bm{\dag}^\epsilon+\bm{\beta}, -\iu\theta)^{-1}
    \bm{\Delta}_{\bm{\kappa}(\bm{\dag}^\epsilon,0)\one}.
  \end{equation}
  We now consider the case where the state $j$ above is one for which
  $\xi^{(j)}$ is irregular for $(0,\infty)$ and regular for $(-\infty,0]$.
  In this situation, the point $0$ may be either a holding point for
  $\xi^{(j)}$ reflected in its supremum (if $\xi^{(j)}$ is compound Poisson)
  or an irregular point (otherwise.) Either way, let us define
  $T_0 = 0$, $S_m = \inf\{t\ge T_{m-1} : \xi_t < \overbar{\xi}_t; J_t = j\}$,
  and $T_m = \inf\{t\ge S_m : \xi_t = \overbar{\xi}_t\}$,
  for $n\ge 1$. This implies that $\{(S_m,T_m) : m\ge 1\}$ is the set
  of excursions away from the maximum that start from state $j$, 
  and importantly, every $S_m$ and $T_m$ is a stopping time.
  In the case of an irregular point, $T_{m-1} = S_m$.
  Then, we can compute as follows, using the Markov property:
  \begin{align*}
    \E^{0,i}\bigl[ 
      \mathrm{e}^{\iu\theta \overbar{\xi}_{\zeta-} 
      - \langle \bm{\beta},\overbar{\bm{G}}(\zeta-) \rangle} ;
      \overbar{J}_\zeta = j
    \bigr]
    &=
    \E^{0,i}\Bigl[ 
      \sum\nolimits_{m\ge 1}
      \mathrm{e}^{\iu\theta \xi_{S_m}
      - \langle \bm{\beta},\bm{S_m} \rangle}
      \one_{\{J_{S_m} = j, S_m < \zeta \le T_m \}}
    \Bigr]
    \\
    &=
    \E_*^{0,i}\biggl[
      \sum\nolimits_{m\ge 1}
      \mathrm{e}^{\iu\theta \xi_{S_m} - \langle \bm{\beta},\bm{S_m}\rangle}
      \one_{\{J_{S_m} = j\}}
      \int_{S_m}^{T_m}
      \Bigl( \sum\nolimits_{k \in [n]} \dag^\epsilon_k \one_{\{J_t = k\}} \Bigr)
      \mathrm{e}^{-\langle \bm{\dag}^\epsilon, \bm{t}\rangle}
      \, \diff{t}
    \biggr]
    \\
    &=
    \E_*^{0,i}\Bigl[
      \sum\nolimits_{m\ge 1}
      \mathrm{e}^{\iu\theta \xi_{S_m} - \langle \bm{\beta}+\bm{\dag}^\epsilon, \bm{S_m} \rangle}
      \one_{\{J_{S_m} = j\}}
    \Bigr]
    \E_*^{0,j}
    \int_0^{T_1}
    \Bigl( \sum\nolimits_k \dag^\epsilon_k \one_{\{J_t = k\}} \Bigr)
    \mathrm{e}^{-\langle \bm{\dag}^\epsilon,\bm{t}\rangle}
    \, \diff{t}
    \\
    &=
    \E_*^{0,i}
    \biggl[
      \int_0^\infty
      \mathrm{e}^{-\langle \bm{\beta} + \bm{\dag}^\epsilon, \bm{L}^{-1}_t\rangle + \iu\theta H^+_t} ;
      J^+_t = j
    \biggr]
    \E_*^{0,j}\bigl[ 1-\mathrm{e}^{-\langle \bm{\dag}^\epsilon, \bm{T}_1\rangle} \bigr]
    \\
    &=
    \bm{\kappa}(\bm{\beta}+\bm{\dag}^\epsilon,-\iu\theta)^{-1}_{i,j}
    (\bm{\kappa}(\bm{\dag}^\epsilon,0)\one)_{j},
  \end{align*}
  which implies that \eqref{e:max} holds in all cases.

  Naturally, the same applies to the minimum of the process.
  Using \cite[Corollary~5.1]{ivanovs17}, we obtain
  \begin{equation}\label{e:beta-whf-inv}
    -(\bm{\Psi}(\theta) - \bm{\Delta}_{\bm{\beta}} - \bm{\Delta}_{\epsilon})^{-1}
    \bm{\Delta}_{\bm{\dag}^\epsilon}
    = \bm{\kappa}(\bm{\dag}^\epsilon+\bm{\beta}, -\iu\theta)^{-1}
    \bm{\Delta}_{\bm{\kappa}(\bm{\dag}^\epsilon,0)\one}
    \bm{\Delta}_{\underbars{\bm{c}}^\epsilon}^{-1}
    \bm{\Delta}_{\hat{\bm{\kappa}}(\bm{\dag}^\epsilon,0)\one}
    (\hat{\bm{\kappa}}(\bm{\dag}^\epsilon+\bm{\beta}, \iu\theta)^{-1})^\top
    \bm{\Delta}_{\bm{\pi}}
    \bm{\Delta}_{\bm{\dag}^\epsilon},
  \end{equation}
  where
  $\underbars{\bm{c}}^\epsilon 
  = \bm{\Delta}_{\bm{\kappa}(\bm{q}^\epsilon,0)\one} 
  (\bm{\kappa}(\bm{q}^\epsilon,0)^{-1})^\top \bm{\Delta}_{\bm{\pi}}\bm{\dag}^\epsilon$.
  Simplifying this yields
  \begin{equation}\label{e:beta-whf}
    -(\bm{\Psi}(\theta) - \bm{\Delta}_{\bm{\beta}} - \bm{\Delta}_{\epsilon})
    =
    \bm{\Delta}_{\bm{\pi}}^{-1}
    \hat{\bm{\kappa}}(\bm{\dag}^\epsilon + \bm{\beta}, \iu\theta)^\top
    \bm{\Delta}_{\hat{\bm{\kappa}}(\bm{\dag}^\epsilon,0)\one}^{-1}
    \bm{\Delta}_{\underbars{\bm{c}}^{\epsilon}}
    \bm{\Delta}_{\bm{\kappa}(\bm{\dag}^\epsilon,0)\one}^{-1}
    \bm{\kappa}(\bm{\dag}^\epsilon+\bm{\beta},-\iu\theta).
  \end{equation}
  Now,
  \begin{align*}
    b_i^\epsilon
    \coloneqq
    \bigl(
      \bm{\Delta}_{\hat{\bm{\kappa}}(\bm{\dag}^\epsilon,0)\one}^{-1}
      \bm{\Delta}_{\underbars{\bm{c}}^{\epsilon}}
      \bm{\Delta}_{\bm{\kappa}(\bm{\dag}^\epsilon,0)\one}^{-1}
    \bigr)_{i,i}
    &= \frac{\sum_{k\in[n]} \bm{\kappa}(\bm{\dag}^\epsilon, 0)^{-1}_{k,i}
    \pi(k) \dag^\epsilon_k}
    {(\hat{\bm{\kappa}}(\bm{\dag}^\epsilon,0)\one)_{i}},
  \end{align*}
  and we note that $b_i^\epsilon>0$ for all $i$ and all $\epsilon>0$.
  Assume for the moment that $\beta_i>0$ for all $i$.
  Element $(i,i)$ of \eqref{e:beta-whf-inv} at $\theta=0$ provides that
  \[
    -(\bm{\Psi}(0) - \bm{\Delta}_{\bm{\beta}} - \bm{\Delta}_{\epsilon})^{-1}_{i,i}
    =
    \sum\nolimits_{k\in[n]}
    \bm{\kappa}(\bm{\dag}^\epsilon+\bm{\beta},0)^{-1}_{i,k}
    \hat{\bm{\kappa}}(\bm{\dag}^\epsilon+\bm{\beta},0)^{-1}_{i,k} 
    \frac{\pi(i)}{b_k^{\epsilon}}.
  \]
  The left-hand side is an element of the resolvent matrix of the irreducible Markov
  process $J$ with additional killing, and therefore converges, as $\epsilon\to 0$, to
  a positive limit.
  On the right-hand side, we note that for every $i,k\in[n]$,
  $\lim_{\epsilon\to 0} \bm{\kappa}(\bm{\dag}^\epsilon+\bm{\beta},0)^{-1}_{i,k}
  = \bm{\kappa}(\bm{\dag}+\bm{\beta},0)^{-1}_{i,k}
  = \int \mathrm{e}^{-\langle \bm{\dag}+\bm{\beta}, \bm{x}\rangle}\, U_{i,k}(\diff{\bm{x}}) \ge 0$,
  where $U$ is the potential measure of $\bm{L}^{-1}$ under $\PP_*$,
  and likewise for $\hat{\bm{\kappa}}$.
  Specifically, when $k=i$,
  \[
    \lim_{\epsilon\to 0} \bm{\kappa}(\bm{\dag}^\epsilon + \bm{\beta},0)^{-1}_{i,i}
    = \int \mathrm{e}^{-\langle \bm{\dag}+\bm{\beta}, \bm{x}\rangle}
    \, U_{i,i}(\diff{\bm{x}}) > 0,
  \]
  and likewise
  $\lim_{\epsilon\to 0} \hat{\bm{\kappa}}(\bm{\dag}^\epsilon+\bm{\beta},0)^{-1}_{i,i} > 0$.
  (The reader is correct to be suspicious here: 
  despite the irreducibility of $J$, a problem
  seems to occur when
  $\xi^{(i)}$ is a strictly decreasing
  L\'evy process and transitional jumps into state $i$ are negative, since $(\xi,J)$
  can never achieve a maximum in state $i$, and
  this indeed implies
  that $U_{j,i} = 0$
  for all $j\ne i$. However, because of the way the local time is defined for states
  such as $i$ in which $0$ is irregular for the process reflected in its maximum,
  $J^+_t$ actually spends positive time in state $i$
  under $\PP_*^{0,i}$, and so $U_{i,i} \ne 0$.)
  From these considerations, we see that there exists 
  $b_i \coloneqq \lim_{\epsilon \to 0} b_i^\epsilon \in (0,\infty)$, for every $i$,
  and moreover that $b_i$ does not depend on $\bm{\beta}$, so we may drop
  our assumption that $\bm{\beta}$ has positive entries.

  Finally, we let $c_i = b_i/\pi_i$, and \eqref{e:beta-whf} can be rewritten as
  \[
    -(\bm{\Psi}(\theta) - \bm{\Delta}_{\bm{\beta}})
    =
    \bm{\Delta}_{\bm{\pi}}^{-1}
    \hat{\bm{\kappa}}(\bm{\dag} + \bm{\beta}, \iu\theta)^\top
    \bm{\Delta}_{\bm{\pi}}
    \bm{\Delta}_{\bm{c}}
    \bm{\kappa}(\bm{\dag}+\bm{\beta},-\iu\theta),
  \]
  which completes the proof.
\end{proof}

We note that the constant $\bm{c}$ depends on the killing rate $\bm{\dag}$
(as well as, of course, the law of $\xi$ under $\PP_*$).
For Lévy processes, the dependence on the killing rate is known
\cite[equation (A.3)]{PP-bernstein-gamma} but, lacking a Fristedt formula
for MAPs, we leave it in the implicit form appearing in the proof.

  \begin{proof}[Proof of Theorem~\ref{c:whf-norm}]
  Changing the normalisation of the local times amounts to multiplying the
  exponents $\bm{\kappa}$ and $\hat{\bm{\kappa}}$ on the left by diagonal
  matrices containing positive entries, so setting $\bm{\beta} = \bm{0}$ and
  choosing the normalisation appropriately in the preceding theorem
  leads to the equation
  \[
    -\bm{\Psi}(\theta)
    = \bm{\Delta}_{\bm{\pi}}^{-1}
    \hat{\bm{\kappa}}(\bm{\dag},\iu\theta)^\top
    \bm{\Delta}_{\bm{\pi}}
    \bm{\kappa}(\bm{\dag},-\iu\theta).
  \]
  Finally, we must identify the matrix exponents appearing
  here, which can be done as follows:
  \begin{align*}
    \mathrm{e}^{-t\bm{\kappa}(\bm{\dag},-\iu\theta)}
    = \E_*\bigl[\mathrm{e}^{\iu\theta H^+_t -\langle \bm{\dag}, \bm{L}^{-1}_t \rangle}; J^+_t \bigr]
    = \E\bigl[ \mathrm{e}^{\iu\theta \xi_{L^{-1}_t}} \one_{\{\zeta > L^{-1}_t\}}; J_{L^{-1}_t} \bigr]
    = \mathrm{e}^{t\bm{\Psi}^+(\theta)}.
  \end{align*}
  The exponent $\hat{\bm{\kappa}}$ can be identified similarly,
  and this completes the proof.
\end{proof}

\paragraph{Acknowledgements} 
LT gratefully acknowledges financial support of Carlsberg Foundation Young Researcher Fellowship grant CF20-0640 ``Exploring the potential of nonparametric modelling of complex systems via SPDEs''.

\printbibliography
\end{document}